\DeclareRobustCommand{\nick}[1]{
\ifthenelse{\boolean{show_comments}}
{\begingroup\color{Blue}{[\textbf{Nick:} #1]}\endgroup}
{}
}
\numberwithin{equation}{section}
\newtheorem{proposition}{Proposition}[section]
\newtheorem{theorem}[proposition]{Theorem}
\newtheorem{lemma}[proposition]{Lemma}
\newtheorem{definition}[proposition]{Definition}
\theoremstyle{definition}
\newtheorem{example}{Example}
\renewenvironment{proof}{\smallskip\noindent\emph{\textbf{Proof.}}%
  \hspace{1pt}}{\hspace{-5pt}{\nobreak\quad\nobreak\hfill\nobreak%
    $\square$\vspace{2pt}\par}\smallskip\goodbreak}
\newenvironment{proofof}[1]{\smallskip\noindent{\textbf{Proof~of~#1.}}%
  \hspace{1pt}}{\hspace{-5pt}{\nobreak\quad\nobreak\hfill\nobreak%
    $\square$\vspace{2pt}\par}\smallskip\goodbreak}
\newcommand{\Lip}{\mathrm{Lip}}
\renewcommand{\epsilon}{\varepsilon}
\renewcommand{\phi}{\varphi}
\renewcommand{\theta}{\vartheta}
\newcommand{\spt}{\mathop{\rm spt}}
\newcommand{\wto}{\rightharpoonup}
\newcommand{\diam}{\mathop{\rm diam}}
\renewcommand{\d}{{\,d}}
\newcommand{\graph}{\mathop{\rm graph}}
\newcommand{\id}{\mathbf{id}}
\begin{document}

\title{Modeling of crowds in the regions with moving obstacles via measure sweeping processes}

\author{Nadezhda Maltugueva\footnote{Matrosov Institute for System
    Dynamics and Control Theory, 134 Lermontov str., Irkutsk, 664033, Russia.
    Email: \href{mailto:nickpogo@gmail.com}{nickpogo@gmail.com}, \href{mailto:nmaltugueva@gmail.com}{nmaltugueva@gmail.com}}
  \footnote{The author is supported by the Russian
    Foundation for Basic Research, Project No. 18-31-20030.}\qquad
  Nikolay Pogodaev${}^{*}$\footnote{The author is supported by the Russian
    Foundation for Basic Research, Project No. 18-01-00026.}
}

\maketitle

\begin{abstract}

  \noindent
  We present a model of crowd motion in regions with moving obstacles, which is
  based on the notion of measure sweeping process~\cite{Marino2015}. The obstacle
  is modeled by a set-valued map, whose values are complements to
  \( r \)-prox-regular sets. The crowd motion obeys a nonlinear transport
  equation outside the obstacle and a normal cone condition (similar to that of the classical sweeping processes theory) on the boundary. We prove the well-posedness of the
  model, give an application to the environment
  optimization problems, and provide some results of numerical computations.
  \medskip

  \noindent\textit{2010~Mathematics Subject Classification:}
35F10, 
35R37, 
49J53, 
49Q20  

  \medskip

  \noindent\textit{Keywords:} Crowd dynamics, Sweeping process, Wasserstein
  space, Well-posedness, Catching-up scheme, Moving boundary problem
\end{abstract}

\section{Introduction}

Moving crowds are usually modeled, at the macroscopic level, by evolution PDEs
with nonlocal
terms~\cite{Carrillo2010,Colombo2011,ColomboNonloc2011,ColomboRossi2018,Mogilner1999,MauryBook2019,PiccoliRossi2018}. States of these equations are measures (or densities) which
describe the distribution of individuals (also called agents) on some configuration space, typically,
the space of agents' positions or position-velocity pairs. Nonlocal terms
appear due to the fact that the behavior of each agent depends on the positions
of other agents.
Such equations can often be expressed either as Wasserstein gradient
flows~\cite{Ambrosio2005} or nonlocal transport
equations~\cite{PiccoliRossi2013}. Each framework has its own
advantages:  the first one allows to deal with various
diffusion terms, the second one admits vector fields that do not possess the
gradient structure.
Stationary obstacles in both cases are handled by imposing either the Neumann \( \rho v\cdot n = 0 \) or the Dirichlet \( \rho=0 \)
boundary condition. The latter condition is more demanding: to achieve it one
has to adjust nonlocal terms~\cite{ColomboRossi2018} or introduce specific
distances in the space of measures\cite{Figalli2010}.

Measures evolving inside moving domains were considered, probably for the
first time, by Di Marino, Maury and Santambrogio~\cite{Marino2015}. They
described, in particular, how a measure \( \rho_{t} \)
supported on a
time dependent convex set \( \bm C(t) \) evolves when it is pushed by the
boundary of \( \bm C(t) \). To deal with the problem
they introduced a notion of measure sweeping process and extend the classical
Moreau catching-up scheme to the space of measures.

Our goal is to extend the approach of~\cite{Marino2015} in two
respects: we admit here non-convex (more precisely, \( r \)-prox-regular) driving sets \( \bm C(t) \) and measures \( \rho_{t} \) which are not only pushed by
\(\partial \bm C(t)  \) but also drift along a given nonlocal vector field
\( \mathscr V(\rho_{t}) \). One can naturally think of such evolution models as
\emph{perturbed} measure sweeping processes with perturbation given by
\( \mathscr V \). We shall see that these modifications turn the concept of
measure sweeping process into a usable tool of crowd dynamics. Moreover, we
shall prove that any solution of the measure sweeping process satisfies the
underlying PDE together with the Neumann type condition
\( \rho(n_{t} + v \cdot  n_{x}) = 0 \) on the boundary of the time dependent domain.
Let us stress that below we deal only with Lipschitz non-local vector field, so
basically we stay within the framework of~\cite{PiccoliRossi2013}.

We define a moving obstacle by a set-valued map
\( \bm O\colon [0,T]\rightrightarrows \mathbb{R}^{d} \) taking values among open subsets of
\( \mathbb{R}^{d} \). Instead of dealing with \( \bm O(t) \) we prefer to look at its complement
\( \bm C(t) = \mathbb{R}^{d}\setminus \bm O(t) \), which we call the viability region.
The values of \( \bm C \colon [0,T]\rightrightarrows \mathbb{R}^{d} \) are assumed to be closed bounded \( r \)-prox-regular
sets, for a given \( r>0 \). The perturbed measure sweeping process is the system of
the form
\begin{equation}
  \label{eq:sp}
  \begin{cases} 
    \partial_t \rho_t + \nabla\cdot (v_t\rho_t) = 0,\\
    v_t(x) - \mathscr{V}(\rho_t)(x)\in -N_{\bm C(t)}(x),\\
    \spt(\rho_t)\subset \bm C(t).
\end{cases}
\end{equation}
where $\mathscr{V}$ maps measures to vector fields and
\( N_{A}(x) \) denotes the proximal normal cone to \( A\subset \mathbb{R}^{d} \) at
\( x \).  In what follows, \( \mathcal P_{2}(\mathbb{R}^{d}) \) denotes the
space of probability measures with finite second moments equipped with the
Wasserstein distance \( W_{2} \).

\begin{definition}
  An absolutely continuous curve
  \( \rho\colon [0,T]\to \mathcal P_{2}(\mathbb{R}^{d}) \) is said to be a solution
  of the measure sweeping process~\eqref{eq:sp} if
  \begin{itemize}
    \item there exists a Borel
  vector field \( (t,x)\mapsto v_{t}(x) \) such that \( (\rho,v) \) satisfies
  \begin{displaymath}
    \partial_{t}\rho_{t} + \nabla \cdot (v_{t}\rho_{t})=0
  \end{displaymath}
      in the sense of distributions;
    \item the normal cone
  condition
  \begin{displaymath}
    v_t(x) - \mathscr{V}(\rho_t)(x)\in -N_{\bm C(t)}(x)
  \end{displaymath}
      holds for a.e. \( t\in [0,T] \) and \( \rho_{t} \)-a.e. \( x\in \mathbb{R}^{d} \);
    \item \( \spt(\rho_t)\subset \bm C(t) \) for all \( t\in [0,T] \).
  \end{itemize}
\end{definition}

Remark that the normal cone
condition implies that \( v_{t}(x) = \mathscr{V}(\rho_{t})(x) \) if \( x \) lies in
the interior of \( \bm C(t) \). Therefore, in the interior of the viability domain the crowd moves according to the nonlocal transport equation
\begin{displaymath}
  \partial_{t}\rho_{t} + \nabla \cdot(\mathscr V(\rho_{t})\rho_{t}) = 0.
\end{displaymath}
The inclusion \( \spt \rho_{t}\subset \bm C(t) \) guaranties that the crowd never leaves
the viability region.

Throughout the paper, we impose the following assumptions:
\begin{enumerate}
  \item[(\(\mathbf{A_1}\))] There exist \( L > 0 \) such that
    \( \mathscr{V}\colon \mathcal P_{2}(\mathbb{R}^{d})\to C(\mathbb{R}^{d};\mathbb{R}^{d}) \)
    satisfies
    \begin{gather*}
      \left\|\mathscr{V}(\rho_{1})-\mathscr{V}(\rho_{2})\right\|_{\infty}\leq L W_{2}(\rho_{1},\rho_{2}) \quad \forall \rho_{1},\rho_{2}\in \mathcal{P}_{2}(\mathbb{R}^{d})\\
      \left|\mathscr{V}(\rho)(x)-\mathscr{V}(\rho)(y)\right| \leq L|x-y|, \quad \left|\mathscr V(\rho)(x)\right|\leq L\quad \forall x,y\in \mathbb{R}^{d},\;\forall \rho\in \mathcal P_{2}(\mathbb{R}^{d}),
    \end{gather*}
  \item[(\(\mathbf{A_2}\))] There exist \( M>0 \) and \( r > 0 \) such that
      the set-valued map \( \bm C\colon [0,T]\rightrightarrows\mathbb{R}^{d} \) is
    \( L \)-Lipschitz in the Hausdorff distance \( d_{H} \):
      \begin{displaymath}
        d_{H}\left(\bm C(t),\bm C(s)\right)\leq M|t-s|\quad \forall t,s\in [0,T],
      \end{displaymath}
      and its values are compact \( r \)-prox-regular sets.

\end{enumerate}

Now we are ready to state the main result of the paper.

\begin{theorem}
  \label{thm:main}
  Let \( \mathscr V \) satisfy \( (\mathbf{A_{1}})\) and \( \bm C \) satisfy
  \( (\mathbf{A_{2}}) \). Then the following assertions hold:
  \begin{enumerate}
    \item[\( (1) \)]
    For any
  initial measure \( \theta\in \mathcal P_{2}(\mathbb{R}^{d}) \) such that
  \( \spt \theta\subset \bm C(0) \), there exists a unique solution
  \( \rho\colon [0,T]\to \mathcal P_{2}(\mathbb{R}^{d}) \) of the measure sweeping
      process~\eqref{eq:sp} with \( \rho_{0} = \theta \).
    \item [\( (2) \)]
      The corresponding vector
      field \( v \) satisfies
      \begin{displaymath}
      |v_{t}(x)|\leq 2L+M, \quad\text{for a.e. \( t \) and
      \( \rho_{t} \)-a.e. \( x \)}.
      \end{displaymath}
      In particular,
      \( t\mapsto\rho_{t} \) is \( (2L+M) \)-Lipschitz.
    \item [\( (3) \)]
      If \( \graph \bm C \) is \( r' \)-prox-regular for some
      \( r'>0 \) then
      \begin{displaymath}
        \xi + v_{t}(x)\cdot \eta = 0\quad \forall (\xi,\eta)\in N_{\graph \bm C}(t,x)
      \end{displaymath}
      for a.e. \( t \) and \( \rho_{t} \)-a.e. \( x \).
    \item [\( (4) \)]
      If \( \tilde{\bm C} \) is another set-valued map satisfying
      \( (\mathbf{A_{2}}) \) and \( \tilde \rho \) is a solution of the corresponding measure
      sweeping process then
      the estimate
      \begin{equation}
        \label{eq:mainestim}
      \bm r(t) \leq \Big(\bm r(0) + (6L+2M)\int_{0}^{t}\Delta(s)\d s\Big)e^{\left( 4L + \frac{3L+M}{2r}\right)t}
      \end{equation}
      holds for all \( t\in [0,T] \), where
      \( \bm r(t) = \frac12 \mathcal W_{2}^{2}(\rho_{t},\tilde \rho_{t}) \) and
      \( \Delta(t) = d_{H}(\bm C(t),\tilde{\bm C}(t)) \).
  \end{enumerate}
\end{theorem}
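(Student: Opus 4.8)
The plan is to organize all four assertions around the \emph{catching-up} (Moreau) scheme transplanted to the Wasserstein setting, and to let the quantitative estimate~\eqref{eq:mainestim} carry uniqueness as a special case. Concretely, I would fix a partition \( 0=t_{0}<\dots<t_{N}=T \) of mesh \( \Delta t \) small enough that \( (L+M)\Delta t<r \), set \( \rho^{N}_{t_{0}}=\theta \), and on each subinterval first transport the measure along the frozen field \( \mathscr V(\rho^{N}_{t_{k}}) \) and then push it back by the projection push-forward \( (\pi_{\bm C(t_{k+1})})_{\#} \). The projection is single-valued on the \( r \)-neighbourhood of \( \bm C(t_{k+1}) \) precisely because the values of \( \bm C \) are \( r \)-prox-regular, and the smallness of the mesh keeps the transported mass inside that neighbourhood.

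Assertion (2) comes out of this construction first, since it supplies the compactness needed later. Over one step the drift moves a point \( x\in\bm C(t_{k}) \) by at most \( L\Delta t \), after which it sits within distance \( (L+M)\Delta t \) of \( \bm C(t_{k+1}) \) --- the \( L\Delta t \) from the drift plus \( M\Delta t \) from the Hausdorff motion of the set --- so the projection adds at most \( (L+M)\Delta t \), for a total displacement \( \le(2L+M)\Delta t \). Passing to the limit this yields \( |v_{t}(x)|\le 2L+M \) for a.e.\ \( t \) and \( \rho_{t} \)-a.e.\ \( x \), whence the \( W_{2} \)-Lipschitz bound through \( W_{2}(\rho_{t},\rho_{s})\le\int_{s}^{t}\|v_{\tau}\|_{L^{2}(\rho_{\tau})}\d\tau \).

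The stability estimate (4) I expect to be the technical heart. I would differentiate \( \bm r(t)=\tfrac12 W_{2}^{2}(\rho_{t},\tilde\rho_{t}) \); choosing an optimal plan \( \gamma_{t} \) between \( \rho_{t} \) and \( \tilde\rho_{t} \), the known time-derivative formula for the squared Wasserstein distance gives \( \bm r'(t)=\int(x-y)\cdot(v_{t}(x)-\tilde v_{t}(y))\d\gamma_{t} \) for a.e.\ \( t \). Splitting \( v_{t}=\mathscr V(\rho_{t})+w_{t} \) with \( w_{t}(x)\in-N_{\bm C(t)}(x) \), and likewise for \( \tilde v_{t} \), the drift part is controlled by \( (\mathbf A_{1}) \): the spatial Lipschitz bound contributes \( \le 2L\bm r(t) \) and the measure-Lipschitz bound another \( \le 2L\bm r(t) \), producing the \( 4L \) in the exponent. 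The normal part is handled by the \( \tfrac{1}{2r} \)-hypomonotonicity of the proximal normal cone, \( \langle\zeta,b-x\rangle\le\frac{|\zeta|}{2r}|b-x|^{2} \) for \( \zeta\in N_{\bm C(t)}(x) \) and \( b\in\bm C(t) \). Since \( y\in\spt\tilde\rho_{t}\subset\tilde{\bm C}(t) \) need not lie in \( \bm C(t) \), I would compare it with its projection \( y'=\pi_{\bm C(t)}(y) \), paying \( |y-y'|\le\Delta(t) \); inserting the velocity bound \( |w_{t}|\le|v_{t}|+|\mathscr V(\rho_{t})|\le 3L+M \) then yields the \( \frac{3L+M}{2r} \) coefficient, while the projection errors assemble into the source term with factor \( 6L+2M \). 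Grönwall's lemma gives~\eqref{eq:mainestim}. The delicate points are the rigorous justification of the differentiation formula along these absolutely continuous curves and the bookkeeping of the \( \Delta(t) \)-terms so that they enter linearly.

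With (4) in hand, uniqueness in (1) is immediate by taking \( \tilde{\bm C}=\bm C \) and \( \tilde\rho_{0}=\rho_{0} \): then \( \Delta\equiv 0 \) and \( \bm r(0)=0 \), forcing \( \bm r\equiv 0 \). For existence I would extract, from the uniform Lipschitz bound of (2) via an Ascoli-type compactness in \( C([0,T];\mathcal P_{2}) \), a limit curve \( \rho \); the remaining work is to verify that \( \rho \) solves the continuity equation and, above all, the normal cone inclusion in the limit, where \( r \)-prox-regularity furnishes the closedness (upper semicontinuity) of the proximal normal cone needed to pass to the limit in the frozen inclusion --- alternatively one runs the estimate of (4) between consecutive meshes to show the iterates are Cauchy. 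Finally, for (3) I would use the characteristics \( s\mapsto X_{s} \) of the limiting continuity equation, which remain in \( \bm C(s) \), so \( (s,X_{s})\in\graph\bm C \); feeding \( (s,X_{s}) \) into the prox-regularity inequality of \( \graph\bm C \) at \( (t,x) \) for \( (\xi,\eta)\in N_{\graph\bm C}(t,x) \), dividing by \( s-t \), and letting \( s\to t^{\pm} \) along points of differentiability yields both \( \xi+v_{t}(x)\cdot\eta\le 0 \) and \( \ge 0 \), hence the claimed orthogonality for a.e.\ \( t \) and \( \rho_{t} \)-a.e.\ \( x \).
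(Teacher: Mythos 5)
Your overall architecture is the same as the paper's: a catching-up scheme for existence, the derivative formula for \( W_2^2 \) plus the \( r \)-ball (hypomonotonicity) inequality and Gr\"onwall for assertion (4), uniqueness as the special case \( \tilde{\bm C}=\bm C \), \( \Delta\equiv 0 \), and, for (3), characteristics of the limit equation fed into the prox-regularity inequality of \( \graph\bm C \) with Fermat's rule --- that is literally the paper's argument. Your stability computation matches the paper's Section 4 term by term: \( 4L \) from the two Lipschitz properties of \( \mathscr V \), \( \tfrac{3L+M}{2r} \) from the normal part bounded by \( 3L+M \), and the \( (6L+2M)\Delta(t) \) source from comparing \( y \) with \( P_{\bm C(t)}(y) \).

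The genuine gap is in the existence part, at the step you compress into one clause. Closedness (graph upper semicontinuity) of \( x\mapsto N_{\bm C(t)}(x) \) would let you pass to the limit in the frozen inclusion only if the approximate velocities converged pointwise \( \rho_t \)-a.e.; but all the compactness the scheme provides is weak-\(*\) convergence of the momenta \( v^\tau\rho^\tau\wto v\rho \) (obtained via the Benamou--Brenier functional), and weak convergence of momenta is not enough to preserve a pointwise cone condition. The paper's way around this is the heart of its Section 3: it \emph{linearizes} the inclusion using the \( r \)-ball characterization of proximal normals, integrating the resulting inequality against test functions \( a(t)b(x) \) and against Lipschitz selections \( y(t)\in\bm C(t) \) (themselves built as solutions of unperturbed sweeping processes), shows the discretization mismatch between the continuous and piecewise-constant interpolants is \( O(\tau) \), passes to the limit in this integral form (which weak convergence does permit, after also extracting a limit \( \sigma\rho \) of \( |\bar v^\tau-\mathscr V(\bar\rho^\tau)|\bar\rho^\tau \)), and only then localizes back to a \( \rho_t \)-a.e. pointwise inclusion via a Lebesgue-point argument over a countable dense family of selections. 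Some such integral reformulation is indispensable; as stated, your limit passage fails. A secondary gap concerns (2): your per-step displacement count correctly gives \( W_2(\rho^N_{t_k},\rho^N_{t_{k+1}})\le(2L+M)\Delta t \), hence the \( (2L+M) \)-Lipschitz bound on the limit curve, but that only bounds the metric derivative, i.e. the \( L^2(\rho_t) \)-norm of the minimal velocity; the pointwise \( \rho_t \)-a.e. bound on the particular field \( v \) satisfying the cone condition does not follow. The paper derives it from the superposition principle --- \( \rho \)-a.e. \( (t,x) \) lies on a characteristic solving the classical perturbed sweeping process --- and then invokes the pointwise bound from the classical theory (Sene--Thibault); note that this same superposition step is also what justifies your use of characteristics in (3), where \( v \) is not regular enough to have a flow.
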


To prove the existence part we use the following version of Moreau's
catching-up algorithm, which yields, for every natural \( N \), a sequence of
probability measures \( \rho^{\tau}_{2k\tau} \), \( k=0,\ldots,N \).
\vspace{8pt}

\begin{algorithm}[H]
\SetAlgoLined
\caption{The catching-up scheme (see Figure~\ref{fig:catch})}
\kwInit{Split \( [0,T] \) into \( 2N \) segments of length \( \tau := T/(2N) \), then
  set \( \rho_{0}:=\theta \), \( k := 0 \).}
\While{\( 2k\tau < T \)}{
\nl Solve the linear continuity equation
    \begin{equation}
     \label{eq:loccont}
      \partial_{t}\mu_{t} + \nabla\cdot(2\mathscr V(\rho_{2k\tau})\mu_{t}) = 0,\quad \mu_{2k\tau} = \rho^{\tau}_{2k\tau},
    \end{equation}
    on the segment \( [2k\tau,(2k+1)\tau] \) and set
    \( \rho^{\tau}_{(2k+1)\tau} := \mu_{(2k+1)\tau} \).

\nl Project \( \rho^{\tau}_{(2k+1)\tau} \) onto \( \bm C((2k+2)\tau) \) and set
    \( \rho^{\tau}_{(2k+2)\tau} \) to be equal to this projection.

\nl    \( k:=k+1 \).
}
\end{algorithm}
\vspace{8pt}
With \( \rho_{2k\tau}^{\tau} \) at hand, we can construct two curves on
\( \mathcal P_{2}(\mathbb{R}^{d}) \):
\begin{itemize}
  \item a \emph{continuous} one \( \rho^{\tau} \), by connecting \( \rho_{2k\tau}^{\tau} \), \( \rho^{\tau}_{(2k+1)\tau} \) with a (unique)
trajectory of~\eqref{eq:loccont} and \( \rho^{\tau}_{(2k+1)\tau} \),
    \( \rho^{\tau}_{(2k+2)\tau} \) with a (unique) Wasserstein geodesic;
  \item a \emph{piecewise constant} one \( \bar \rho^{\tau} \), which equals to \( \rho^{\tau}_{2k\tau} \) on
\( [2k\tau,(2k+1)\tau] \) and \( \rho^{\tau}_{(2k+1)\tau} \) on \( [(2k+1)\tau,(2k+2)\tau] \).
\end{itemize}
It can be shown that \( \rho^{\tau} \) converges to some \( \rho \) as
\( \tau \to 0\). The latter curve, being absolutely continuous, satisfies
\(\partial_{t}\rho_{t}+\nabla\cdot(v_{t}\rho_{t})=0\), for some velocity \( v \), and
\( \spt\rho_{t}\subset \bm C(t) \), for all \( t \). The piecewise constant curve \( \bar \rho^{\tau} \), which has the same limit as \( \rho^{\tau} \), is used to
prove the normal cone condition.

Assertion (4) (and, thus, the uniqueness part) follows from the
standard representation for the time derivative of the squared Wasserstein
distance along a pair of absolutely continuous curves (see~\cite{Ambrosio2005} or Appendix~\ref{sec:Diff}).

\begin{figure}[h]
  \centering
  {\includegraphics[width=0.24\textwidth]{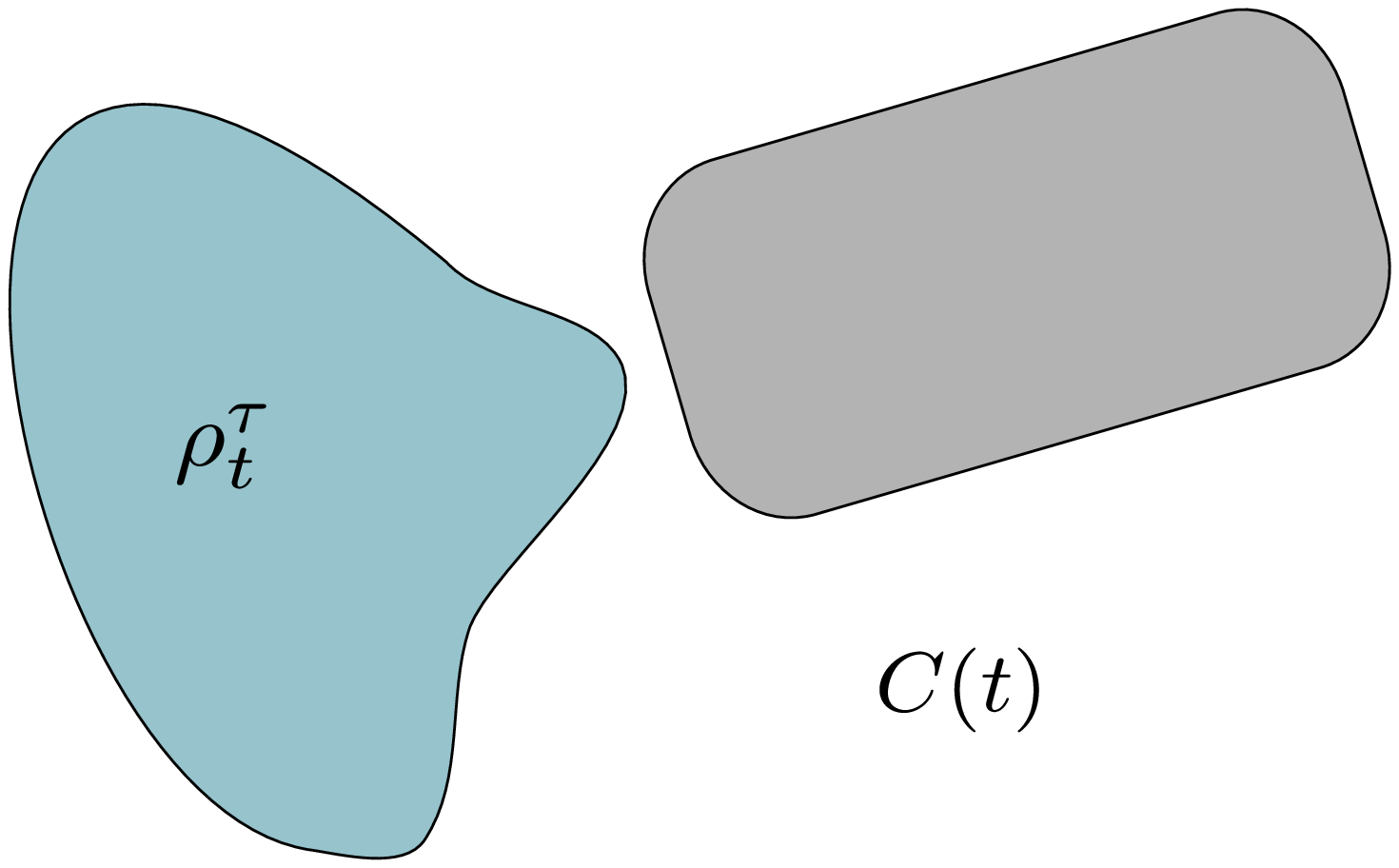}}
  {\includegraphics[width=0.24\textwidth]{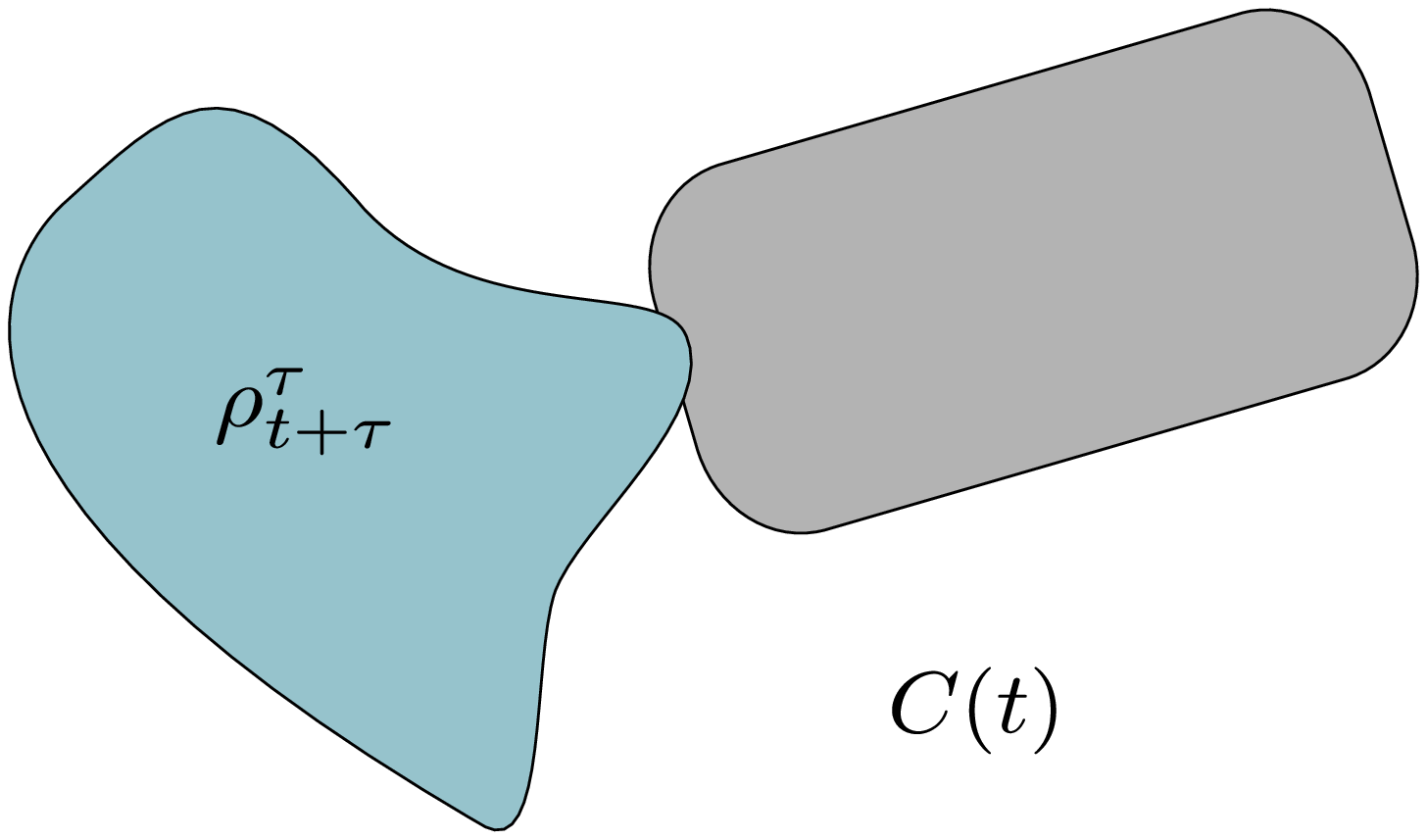}}
  {\includegraphics[width=0.24\textwidth]{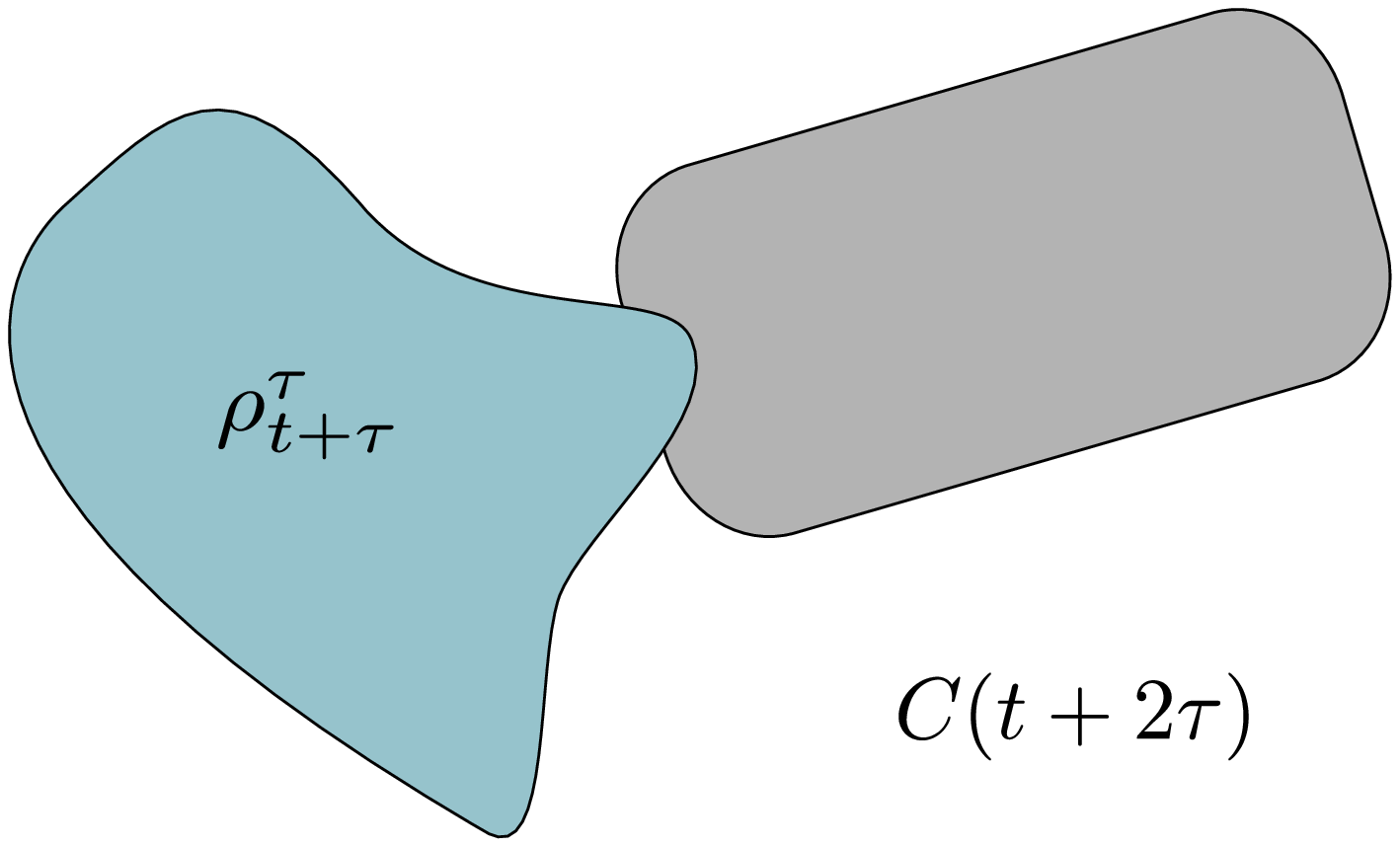}}
  {\includegraphics[width=0.24\textwidth]{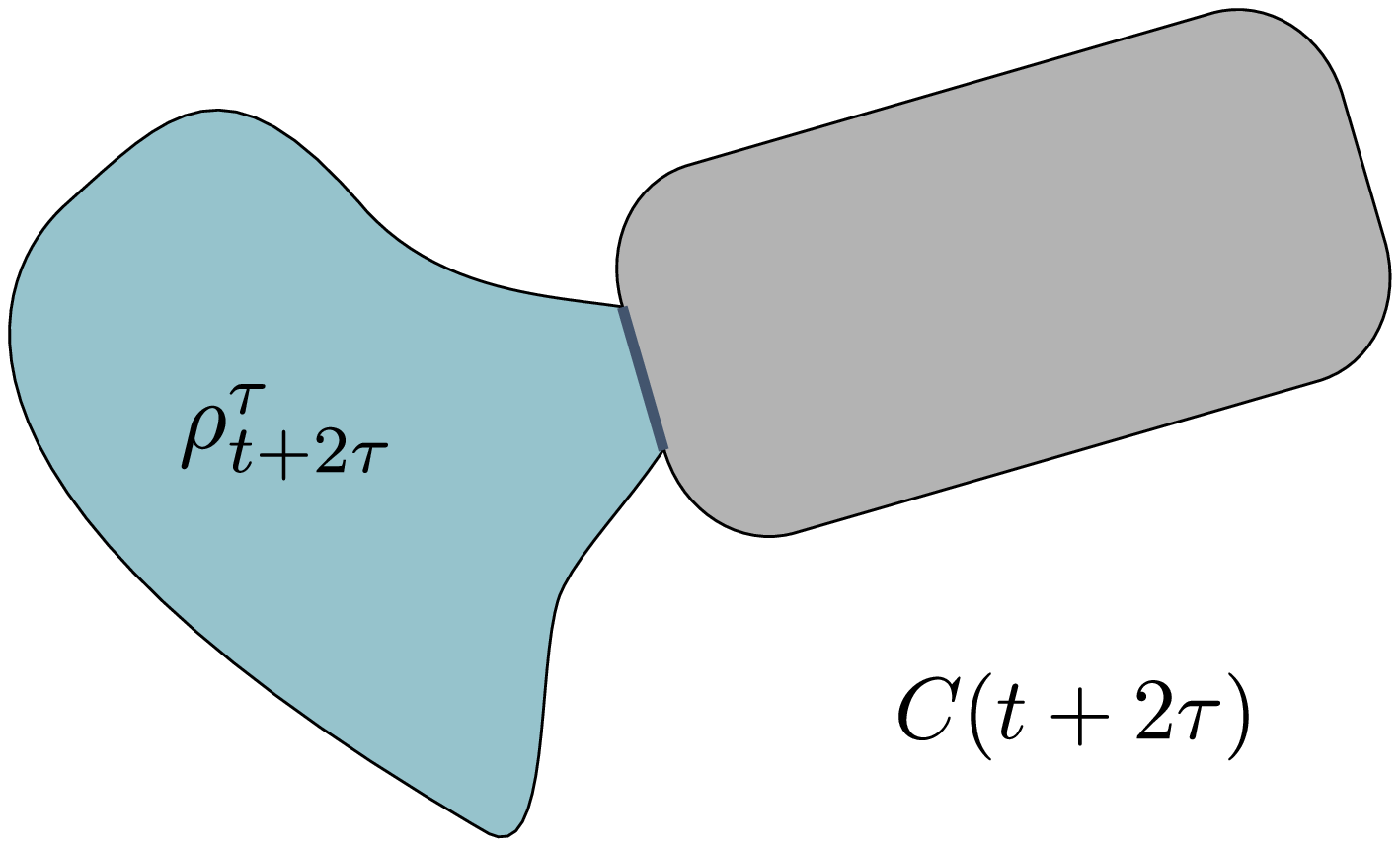}}
  \caption{One step of the catching-up algorithm. Here \( t = 2k\tau \), the gray
    rounded rectangle represents an obstacle.}
  \label{fig:catch}
\end{figure}

\paragraph{Structure of the paper.} In Section~\ref{sec:prelim} we introduce the notation,
recall basic properties of prox-regular sets and some standard
results concerning the geometry of \( \mathcal P_{2}(\mathbb{R}^{d}) \).
Section~\ref{sec:existence}, the most technical one, contains a proof of the
existence part of Theorem~\ref{thm:main}. The well-posedness part is proven in
Section~\ref{sec:cont}. We present in Section~\ref{sec:env} an application of
Theorem~\ref{thm:main} to the environment optimization problems.
Finally, Section~\ref{sec:numerics} contains some numerical computations
for the measure sweeping process~\eqref{eq:sp}.

\section{Preliminaries}
\label{sec:prelim}

\subsection{Notation}

Throughout this section, \( X \) and \( Y \) are metric spaces, \( U \) is an open subset of \( \mathbb{R}^{d} \).

\begin{longtable}{p{.15\textwidth} p{.85\textwidth}}
  \(\mathcal{P}(X) \) & the space of probability measures on \( X \)\\
  \(\mathcal{P}_{2}(\mathbb{R}^{d}) \) & the space of probability measures \( \mu \) on
                                         \( \mathbb{R}^{d} \) with \( \int |x|^{2}\d\mu<\infty \)\\
  \( \mathcal{P}_{c}(\mathbb{R}^{d}) \) & the space of compactly supported
                                          probability measures on \( \mathbb{R}^{d} \)\\
  \( \mathcal{M}(X; \mathbb{R}^d) \) & the space of finite Radon vector measures on \(X\)\\
  \( C(X;Y) \) & the space of continuous maps \( f\colon X\to Y \) \\
  \( C^{k}(U) \) & the space of \( k \) times continuously differentiable
  maps \( f\colon U\to \mathbb{R} \) \\
  \( C_{c}^{k}(U) \) & the space of all compactly supported maps from \( C^{k}(U) \)\\
  \( \mathcal{K}(\mathbb{R}^{d}) \) & the space of compact subsets of \( \mathbb{R}^{d} \)\\
  \( \mathcal{K}_{r}(\mathbb{R}^{d}) \) & the space of compact \( r \)-prox-regular subsets of \( \mathbb{R}^{d} \)\\
  \( \wto \) & the weak (narrow) convergence on \( \mathcal{M}(X;\mathbb{R}^{d}) \)\\
  \( W_{2} \) & the squared Wasserstein distance on \( \mathcal{P}_{2}(\mathbb{R}^{d}) \)\\
  \( d_{H} \) & the Hausdorff distance on \( \mathcal{K}(\mathbb{R}^{d}) \)\\

  \( \|\cdot\|_{\infty} \) & the supremum norm on \( C(X;Y) \)\\
  \( \Lip (f) \) & the minimal Lipschitz constant of \( f\in C(X;Y) \)\\
  \(\bm B\) & the closed unit ball in \( \mathbb{R}^{d} \) centered at \( 0 \)\\
  \(a+r\bm B\) & the closed ball in \( \mathbb{R}^{d} \) with center \( a\in \mathbb{R}^{d} \)
  and radius \( r\geq 0 \)\\
  \( d_{A}(x) \) & the distance between a compact set \( A\subset \mathbb{R}^{d} \) and a point \( x\in \mathbb{R}^{d} \)\\
  \( P_{A}(x) \) & the projection map, i.e., \( P_{A}(x) = \{a\in A\;\colon\;|x-a|=d_{A}(x)\} \)\\
  \( A^{\circ} \) & the interior of \( A\subset \mathbb{R}^{d} \)\\
  \( \partial A \) & the boundary of \( A\subset \mathbb{R}^{d} \)\\
  \( A^{c} \) & the complement of \( A\subset \mathbb{R}^{d} \)\\
  \( N_{A}(x) \) & the proximal normal cone to \( A\subset \mathbb{R}^{d} \) at \( x \)
  
\end{longtable}

\subsection{Prox-regular sets}

We collect here, for the future references, some basic properties of \emph{prox-regular sets} (also called sets with positive reach).

\begin{definition}
A closed set \( S\subset \mathbb{R}^{d} \) is called \textbf{\( r \)-prox-regular}, for \( r\in(0,+\infty] \), if the projection map \( P_{S} \) is single-valued and continuous within the open spherical neighborhood
\mbox{\( S +r\bm B^{\circ} =\{x\;\colon\;d_{S}(x)<r\} \).}
\end{definition}

A closely related notion of \emph{proximal normal} is defined as follows.
\begin{definition}
  Let \( S \) be a closed set and \( x \in S \).
  A vector \( v\in \mathbb{R}^{d} \) is called a \textbf{proximal normal} to the set \( S \) at
   \( x \) if there exists \( \sigma =  \sigma(x,v)\geq 0 \) such that
\begin{displaymath}
  \langle v, y-x\rangle \leq \sigma\, |y-x|^2\quad\forall y\in S.
\end{displaymath}
The set \( N_{S}(x) \) consisting of all such \( v \) defines the \textbf{proximal normal cone} to \( S \) at \( x \).
\end{definition}

The prox-regular sets can be characterized in several equivalent ways (see, e.g.,~\cite{Thibault2008}) gathered in the proposition below.

\begin{proposition}
  \label{prop:proxreg}
  The following assertions are equivalent:
  \begin{enumerate}
    \item[\( (a) \)] \( S \) is \( r \)-prox-regualar;
    \item[\( (b) \)] for any \( x\in S \), each nonzero proximal normal \( v\in N_{S}(x) \) is realized by an
      \( r \)-ball, i.e.,
      \begin{displaymath}
        \langle v, y-x\rangle \leq \frac{|v|}{2r}\, |x-y|^2\quad \forall y\in S;
      \end{displaymath}
    \item[\( (c) \)] \( d^{2}_{S} \) is continuously differentiable over \( S+r\bm B^{\circ} \).
  \end{enumerate}
      Moreover, one has
      \begin{displaymath}
        \frac{1}{2} \nabla d^{2}_{S}(x) = x - P_{S}(x)\quad \forall x\in S + r\bm B^{\circ},
      \end{displaymath}
      and, for any positive \( r' < r \),
      \begin{displaymath}
        \left|P_{S}(x) - P_{S}(y)\right|\leq \frac{r}{r-r'}\,|x-y|\quad \forall x,y\in S +r'\bm B^{\circ}.
      \end{displaymath}
    \end{proposition}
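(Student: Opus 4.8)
The geometric crux is to reformulate \( (b) \) as a ball condition. If \( v\in N_{S}(x) \) is a unit proximal normal and we set \( c=x+rv \), then expanding \( |y-c|^{2}=|y-x|^{2}-2r\langle v,y-x\rangle+r^{2} \) shows that the inequality \( \langle v,y-x\rangle\le\frac{1}{2r}|y-x|^{2} \) is equivalent to \( |y-c|\ge r \) for every \( y\in S \); that is, \( S \) avoids the open ball of radius \( r \) centered at \( x+rv \). With this picture in hand the plan is to close the cycle \( (a)\Rightarrow(b)\Rightarrow(a) \) through \( (c) \): I will establish \( (a)\Leftrightarrow(c) \) and \( (b)\Rightarrow(a) \) by short arguments, read off the gradient formula and the Lipschitz estimate along the way, and isolate \( (a)\Rightarrow(b) \) as the one genuinely delicate implication.

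For \( (a)\Leftrightarrow(c) \) I would exploit semiconcavity. The identity \( d_{S}^{2}(x)-|x|^{2}=\inf_{y\in S}\bigl(|y|^{2}-2\langle x,y\rangle\bigr) \) exhibits \( d_{S}^{2}-|\cdot|^{2} \) as an infimum of affine functions, hence concave; consequently \( d_{S}^{2} \) is semiconcave and its superdifferential at \( x \) equals \( \co\{\,2(x-p):p\in P_{S}(x)\,\} \). A semiconcave function is differentiable at \( x \) precisely when this superdifferential is a singleton, and since \( p\mapsto 2(x-p) \) is injective, that happens exactly when \( P_{S}(x) \) is single-valued. Thus single-valuedness on the tube \( S+r\bm B^{\circ} \) is equivalent to \( d_{S}^{2}\in C^{1} \) there, and in either case \( \tfrac12\nabla d_{S}^{2}(x)=x-P_{S}(x) \); continuity of \( \nabla d_{S}^{2} \) corresponds to continuity of \( P_{S} \), matching the two formulations of \( (a) \) and \( (c) \), and yielding the stated gradient formula.

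For \( (b)\Rightarrow(a) \) together with the Lipschitz estimate, fix \( x,y\in S+r'\bm B^{\circ} \) and write \( p=P_{S}(x) \), \( q=P_{S}(y) \), so \( |x-p|,|y-q|\le r' \). Applying the ball inequality to \( x-p\in N_{S}(p) \) tested at \( q \), and to \( y-q\in N_{S}(q) \) tested at \( p \), and adding, the left-hand side collapses to \( \langle x-p,q-p\rangle+\langle y-q,p-q\rangle=\langle x-y,q-p\rangle+|p-q|^{2} \), while the right-hand side is at most \( \tfrac{r'}{r}|p-q|^{2} \). Rearranging gives \( \bigl(1-\tfrac{r'}{r}\bigr)|p-q|^{2}\le\langle x-y,p-q\rangle\le|x-y|\,|p-q| \), whence \( |P_{S}(x)-P_{S}(y)|\le\frac{r}{r-r'}|x-y| \). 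Taking \( x=y \) forces \( p=q \), so \( (b) \) makes \( P_{S} \) single-valued, and the same bound gives its (local) Lipschitz continuity; this proves \( (b)\Rightarrow(a) \) and simultaneously establishes the final displayed estimate.

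It remains to prove \( (a)\Rightarrow(b) \), which I expect to be the main obstacle. Fix \( x\in S \) and a unit \( v\in N_{S}(x) \); by definition of proximal normal there is \( \sigma\ge0 \) with \( \langle v,y-x\rangle\le\sigma|y-x|^{2} \), and expanding \( |x+tv-y|^{2} \) shows \( x \) is a nearest point of \( x+tv \), hence \( P_{S}(x+tv)=x \), for all small \( t>0 \). An elementary radial computation shows that whenever \( p=P_{S}(z) \) then \( P_{S}(p+\lambda(z-p))=p \) for \( \lambda\in[0,1] \), so \( \{t\in[0,r):P_{S}(x+tv)=x\} \) is a subinterval \( [0,t_{0}) \); continuity of \( P_{S} \) yields \( P_{S}(x+t_{0}v)=x \). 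The heart of the matter is to rule out \( t_{0}<r \): a failure of the nearest-point property at \( t_{0} \) would produce, arbitrarily close to \( x+t_{0}v \) inside the tube, points admitting two distinct nearest points, contradicting single-valuedness. Once \( t_{0}=r \) is secured, \( P_{S}(x+tv)=x \) gives \( |x+tv-y|\ge t \) for all \( y\in S \) and \( t<r \); rewriting this as \( \langle v,y-x\rangle\le\frac{1}{2t}|y-x|^{2} \) and letting \( t\uparrow r \) yields \( (b) \) for unit \( v \), and homogeneity in \( v \) finishes the proof. The passages \( (a)\Leftrightarrow(c) \) and \( (b)\Rightarrow(a) \) are routine once the ball reformulation is available; only this outward propagation along the normal ray requires care.
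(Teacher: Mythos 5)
Your ball reformulation of \( (b) \), the equivalence \( (a)\Leftrightarrow(c) \) via semiconcavity of \( d_{S}^{2} \) (with the gradient formula read off from the superdifferential \( \co\{2(x-p)\,\colon\,p\in P_{S}(x)\} \)), and the implication \( (b)\Rightarrow(a) \) together with the Lipschitz bound via the hypomonotonicity trick are all correct. For context: the paper itself does not prove this proposition at all — it quotes it from the literature (Thibault's survey, ultimately Federer's theorem on sets of positive reach) — so these parts of your write-up already supply more than the paper does.

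The genuine gap is exactly where you flagged it: ruling out \( t_{0}<r \) in \( (a)\Rightarrow(b) \). Your proposed mechanism — that failure at \( t_{0} \) ``would produce, arbitrarily close to \( x+t_{0}v \), points admitting two distinct nearest points'' — is asserted, not proven, and it does not follow from the picture you set up. For \( t \) slightly larger than \( t_{0} \), the point \( z_{t}=x+tv \) lies in the tube and has a \emph{unique} projection \( p_{t}\neq x \), with \( p_{t}\to x \) as \( t\downarrow t_{0} \) by continuity of \( P_{S} \); nothing here contradicts single-valuedness, and playing the inequalities \( |z_{t_{0}}-p_{t}|\geq t_{0} \) and \( |z_{t}-p_{t}|<t \) against each other only yields \( \frac{|p_{t}-x|^{2}}{2t}<\langle v,p_{t}-x\rangle\leq\frac{|p_{t}-x|^{2}}{2t_{0}} \), which is perfectly consistent. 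A real argument is needed, and one clean route uses the part \( (c) \) you already established: off \( S \) inside the tube, \( \nabla d_{S}(z)=\bigl(z-P_{S}(z)\bigr)/d_{S}(z) \) is a \emph{continuous unit} vector field, so Peano's theorem gives a solution of \( \dot z=\nabla d_{S}(z) \) with \( z(0)=x+t_{0}v \); along it \( \frac{d}{ds}d_{S}(z(s))=|\nabla d_{S}(z(s))|^{2}=1 \), hence \( d_{S}(z(s))=t_{0}+s \), while \( |z(s)-x|\leq|z(s)-z(0)|+t_{0}\leq s+t_{0}=d_{S}(z(s))\leq|z(s)-x| \). Equality in the triangle inequality forces \( z(s)=x+(t_{0}+s)v \), and \( |z(s)-x|=d_{S}(z(s)) \) plus single-valuedness forces \( P_{S}\bigl(x+(t_{0}+s)v\bigr)=x \) for all \( s \) with \( t_{0}+s<r \), contradicting the maximality of \( t_{0} \). (Federer's original connectedness argument is an alternative.) Without some such step, the cycle \( (a)\Rightarrow(b)\Rightarrow(a) \) is not closed, and this step is precisely the hard core of the classical theorem.
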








\subsection{Space of measures}

Here we briefly recall basic facts about the Wasserstein space \(
\mathcal{P}_{2}(\mathbb{R}^{d} ) \). 
The corresponding proofs can be found, e.g., in~\cite{Ambrosio2005,Villani2009}.

\noindent
\paragraph*{Space of measures as a metric space.}
Recall that \( \mathcal P_{2}(\mathbb{R}^{d}) \) is a \emph{complete separable metric
space} when equipped
with the quadratic \emph{Wasserstein distance}
\begin{equation}
  \label{eq:wasserstein}
  W_{2}(\mu,\nu) = \left(\inf_{\pi\in\Gamma(\mu,\nu)}\int|x-y|^{2}\d \pi(x,y)\right)^{1/2}.
\end{equation}
The infimum above is taken over the set \( \Gamma(\mu,\nu) \) of all transport plans between the measures
\( \mu \) and \( \nu \). Recall that a transport plan \( \Pi\in \Gamma(\mu,\nu) \) is a probability measure on \( \mathbb{R}^{d}\times \mathbb{R}^{d} \) whose projections on the first and the second factor are
\( \mu \) and \( \nu \), respectively. In other words, 
\begin{displaymath}
  \pi^{1}_{\sharp}\Pi = \mu,\quad \pi^{2}_{\sharp}\Pi=\nu,
\end{displaymath}
where \( \pi^{1}, \pi^{2} \) are the projection maps on the factors. Here
$\sharp$ is the pushforward functor, which works as follows: for any Borel map \( f\colon X\to Y \)
between metric spaces, it generates a map \( f_{\sharp}\colon \mathcal{P}(X)\to\mathcal{P}(Y) \)
by the rule
\begin{displaymath}
  (f_{\sharp}\mu)(A)\doteq \mu\left(f^{-1}(A)\right),\quad \text{for all Borel sets }A\subset Y.
\end{displaymath}

Remark that the minimum in~\eqref{eq:wasserstein} can always be achieved. Any transport plan that provides the minimum is called \emph{optimal}. A plan \( \pi \) is optimal if and only if its support \( \spt \pi \) is contained in a \emph{cyclically monotone set} \( A\subset \mathbb{R}^{d}\times \mathbb{R}^{d} \), which means that any finite collection of points \( (x_{i},y_{i})\in A \),
\( i=1,\ldots,k \),
satisfies
\begin{displaymath}
  \langle y_{1},x_{2}-x_{1}\rangle + \langle y_{2},x_{3}-x_{2}\rangle +\cdots + \langle y_{k},x_{1}-x_{k}\rangle\leq 0.
\end{displaymath}

The convergence in the Wasserstein distance is slightly stronger then the weak
convergence of measures. More precisely, \( W_{2}(\rho_{k},\rho)\to 0 \) if and only if
\begin{displaymath}
  \int\phi\,d\rho_{k}\to\int\phi\,d\rho,
\end{displaymath}
for any continuous \( \phi \) satisfying \( \phi(x)\leq C\left(1+|x|^{2}\right) \)
for some \( C>0 \).

\paragraph{Space of measures as a length space.} Recall that the \emph{length} of a continuous
curve \( \rho\colon [a,b]\to \mathcal P_{2}(\mathbb{R}^{d}) \) is given by
\begin{displaymath}
  L(\rho) = \sup \sum_{i=1}^{N}W_{2}(\rho_{t_{i-1}},\rho_{t_{i}}),
\end{displaymath}
where the supremum is taken among all finite partitions \( a=t_{0}\leq t_{1}\leq \cdots\leq
t_{N}=b \) of the interval \( [a,b] \).

The space \( \mathcal P_{2}(\mathbb{R}^{d}) \) is a \emph{strictly intrinistic
length space}, meaning that any two measures \( \mu,\nu\in \mathcal
P_{2}(\mathbb{R}^{d}) \)
can be connected with a continuous curve whose length is \( W_{2}(\mu,\nu)
\) (see~\cite{Burago2001}). Such a curve is called a \emph{minimal geodesic} from
\( \mu \) to \( \nu \). Any
minimal geodesic joining \( \mu \) and \( \nu \) can be uniquely parametrized by \( t\in [0,1] \) so that
\begin{displaymath}
  W_{2}(\rho_{t},\rho_{s}) = |t-s|W_{2}(\mu,\nu).
\end{displaymath}
In what follows, by saying that \( \rho \) is a \emph{geodesic} joining \( \mu \) and \( \nu
\) we mean that \( \rho \) is a minimal geodesic from \( \mu \) to \( \nu \) parametrized in this way.

It is known that any geodesic \( \rho \) joining \( \mu \) and \( \nu \) takes the form
\( \rho_{t} = \left((1-t)\pi^{1}+t \pi^{2}\right)_{\sharp}\Pi \), where \( \Pi \) is an optimal plan between
\( \mu \) and \( \nu \). In particular, if the optimal plan is unique, the geodesic is unique as well.
If an optimal plan \( \Pi\in\Gamma(\mu,\nu) \) is realized by a transport map \( F\colon \mathbb{R}^{d}\to \mathbb{R}^{d} \), i.e.,
\begin{displaymath}
 F_{\sharp}\mu = \nu,\quad W_{2}(\mu,\nu)^{2} = \int |x-F(x)|^{2}\,d\mu(x),
\end{displaymath}
then \( \Pi = (\id,F)_{\sharp}\mu \) and thus the geodesic takes the form \( \rho_{t}=\left((1-t)\id + tF\right)_{\sharp}\mu \).

\paragraph{Curves in the space of measures.}
A map \( \rho\colon [0,T]\to \mathcal{P}_{2}(\mathbb{R}^{d}) \) is called Borel
measurable if \( t\mapsto \rho_{t}(B) \) is Borel measurable for any Borel set \( B\subset \mathbb{R}^{d} \).
Any Borel measurable \( \rho \) produces a measure \( \bm \rho \) on \( [0,T]\times
\mathbb{R}^{d} \) by the rule
\begin{displaymath}
  \int_{[0,T]\times \mathbb{R}^{d}}\phi(t,x)\d{\bm\rho(t,x)} \doteq
  \int_{0}^{T}\int_{\mathbb{R}^{d}}\phi(t,x) \d\rho_{t}(x)\d t,
  \quad \forall\phi \in C^{\infty}_{c}\left((0,T)\times \mathbb{R}^{d}\right).
\end{displaymath}
Below, we never distinguish between \( \rho \) and \( \bm \rho \).

\begin{lemma}
 \label{lem:rho_conv}
  If \( \rho^{k}\to \rho \)  in \( C\left([0,T];\mathcal P_{2}(\mathbb{R}^{d})\right) \)
  then \( \rho^{k}\wto \rho \) in \( \mathcal M\left([0,T]\times \mathbb{R}^{d}; \mathbb{R}\right) \)
\end{lemma}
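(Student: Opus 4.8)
The plan is to unpack both modes of convergence into statements about integrals against test functions and then match them up. By definition, $\rho^{k}\to\rho$ in $C\bigl([0,T];\mathcal P_{2}(\mathbb{R}^{d})\bigr)$ means $\sup_{t\in[0,T]}W_{2}(\rho^{k}_{t},\rho_{t})\to 0$, while the target conclusion $\rho^{k}\wto\rho$ in $\mathcal M\bigl([0,T]\times\mathbb{R}^{d};\mathbb{R}\bigr)$ means
\begin{displaymath}
  \int_{0}^{T}\!\!\int_{\mathbb{R}^{d}}\psi(t,x)\d\rho^{k}_{t}(x)\d t \;\longrightarrow\; \int_{0}^{T}\!\!\int_{\mathbb{R}^{d}}\psi(t,x)\d\rho_{t}(x)\d t
\end{displaymath}
for every $\psi\in C_{b}\bigl([0,T]\times\mathbb{R}^{d}\bigr)$ (or at least for $\psi\in C_{c}$, which suffices to characterize narrow convergence). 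So the first step is to fix such a $\psi$ and aim to control the difference of these two double integrals.

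Next I would use the fact that uniform Wasserstein convergence gives, for each fixed $t$, the convergence $\int\psi(t,\cdot)\d\rho^{k}_{t}\to\int\psi(t,\cdot)\d\rho_{t}$, since $W_{2}(\rho^{k}_{t},\rho_{t})\to 0$ implies narrow convergence of $\rho^{k}_{t}$ to $\rho_{t}$ and $\psi(t,\cdot)$ is a bounded continuous function (this is the characterization of Wasserstein convergence recalled in the excerpt). Then the plan is to integrate this pointwise-in-$t$ convergence over $[0,T]$ and pass the limit inside the $t$-integral. To justify the interchange I would invoke dominated convergence: the integrands $g_{k}(t):=\int\psi(t,\cdot)\d\rho^{k}_{t}$ are bounded uniformly by $\|\psi\|_{\infty}$ (since each $\rho^{k}_{t}$ is a probability measure), and $[0,T]$ has finite measure, so the constant $\|\psi\|_{\infty}$ is an integrable dominating function.

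The one point that needs a little care — and which I expect to be the main, if modest, obstacle — is the measurability of $t\mapsto g_{k}(t)$ and of the limit $t\mapsto\int\psi(t,\cdot)\d\rho_{t}$, so that dominated convergence genuinely applies. This is where the Borel-measurability framework set up just before the lemma is used: $\rho$ and each $\rho^{k}$ are Borel measurable curves, and $t\mapsto\int\psi(t,\cdot)\d\rho^{k}_{t}$ is measurable by a standard argument (approximate $\psi$ by simple functions, or note that $t\mapsto\rho^{k}_{t}(B)$ is Borel for every Borel $B$, hence $t\mapsto\int\phi\d\rho^{k}_{t}$ is measurable for every bounded Borel $\phi$, and a continuity-in-$t$ argument from uniform Wasserstein convergence confirms the same for the limit). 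A cleaner alternative to the pointwise-plus-dominated-convergence route is to estimate the difference of the two double integrals directly when $\psi$ is also Lipschitz in $x$: for such $\psi$,
\begin{displaymath}
  \Bigl|\int\psi(t,\cdot)\d\rho^{k}_{t} - \int\psi(t,\cdot)\d\rho_{t}\Bigr| \le \Lip_{x}(\psi)\,W_{1}(\rho^{k}_{t},\rho_{t}) \le \Lip_{x}(\psi)\,W_{2}(\rho^{k}_{t},\rho_{t}),
\end{displaymath}
which is uniform in $t$ and tends to $0$, giving the conclusion immediately after integrating over $[0,T]$; one then extends from Lipschitz test functions to all of $C_{c}$ by density. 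I would present the Lipschitz-test-function estimate as the main computation, since it is quantitative and sidesteps the dominated-convergence bookkeeping, and relegate the measurability remarks to a single sentence.
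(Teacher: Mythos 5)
Your proposal is correct, and your first route --- fix a bounded continuous test function, observe that \( W_{2} \)-convergence at each fixed \( t \) gives \( \int\phi(t,\cdot)\d\rho^{k}_{t}\to\int\phi(t,\cdot)\d\rho_{t} \) pointwise in \( t \), then apply dominated convergence with the constant dominating function \( 2\|\phi\|_{\infty} \) --- is exactly the paper's proof. The route you say you would actually present (the Kantorovich--Rubinstein estimate \( \bigl|\int\psi(t,\cdot)\d\rho^{k}_{t}-\int\psi(t,\cdot)\d\rho_{t}\bigr|\le \Lip_{x}(\psi)\,W_{1}(\rho^{k}_{t},\rho_{t})\le \Lip_{x}(\psi)\,W_{2}(\rho^{k}_{t},\rho_{t}) \), uniform in \( t \), followed by a density argument) is a genuinely different and equally valid argument: it is quantitative, it actually uses the uniformity \( \sup_{t}W_{2}(\rho^{k}_{t},\rho_{t})\to 0 \) (which the paper's proof never needs --- pointwise convergence in \( t \) suffices for dominated convergence), and it sidesteps both the dominated-convergence step and the measurability question. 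Its cost is the extra approximation layer: you must pass from Lipschitz-in-\( x \) test functions to general ones, and if you do this via \( C_{c} \) you should note that testing against \( C_{c} \) only gives vague convergence in general; it upgrades to narrow convergence here because all the measures \( \rho^{k} \), \( \rho \) on \( [0,T]\times\mathbb{R}^{d} \) have the same total mass \( T \), so no mass escapes to infinity. Two further small remarks: the measurability issue you flag in the dominated-convergence route is even easier than you suggest, since \( t\mapsto\int\phi(t,x)\d\rho^{k}_{t}(x) \) is in fact continuous (the curve \( t\mapsto\rho^{k}_{t} \) is \( W_{2} \)-continuous and hence tight along convergent sequences, and \( \phi \) is uniformly continuous on compact sets), which is presumably why the paper passes over it in silence; and in your Lipschitz estimate you should require \( \psi \) Lipschitz in \( x \) uniformly in \( t \), which the mollification or Stone--Weierstrass approximation indeed provides.
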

\begin{proof}
  For any \( \phi\in C_{b}\left([0,T]\times \mathbb{R}^{d}\right) \), one has
  \begin{displaymath}
    \int \phi(\d{\rho^{k}} - \d{\rho}) = \int_{0}^{T}\int
    \phi(t,x)\left(\d{\rho^{k}_{t}(x)}-\d{\rho_{t}(x)}\right)\d t.
  \end{displaymath}
  Since \( \phi \) is bounded and \( \rho^{k}_{t}\to\rho_{t} \) in \( \mathcal
  P_{2}(\mathbb{R}^{d}) \) for each \( t\in [0,T] \), we conclude that \( f^{k}(t)\doteq  \int
  \phi(t,x)\left(\d{\rho^{k}_{t}(x)}-\d{\rho_{t}(x)}\right)\) converges to \( 0 \) for
  all \( t\in [0,T] \).
  On the other hand, \( |f^{k}|\leq 2\|\phi\|_{\infty} \), hence the
  assertion follows from Lebesgue's dominated convergence theorem.
\end{proof}

A curve \( \rho\colon [0,T]\to \mathcal P_{2}(\mathbb{R}^{d}) \) is called \emph{absolutely
continuous} if there exists a function \( g\in L^{1}([0,T]) \) such that
\begin{displaymath}
  W_{2}(\rho_{t},\rho_{s})\leq \int_{s}^{t}g(\tau)\d\tau,\quad \forall t\geq s,\quad s,t\in [a,b].
\end{displaymath}
If \( \rho\colon [0,T]\to \mathcal{P}_{2}(\mathbb{R}^{d}) \) is absolutely continuous, the limit
\begin{displaymath}
  |\rho'|(t) \doteq \lim_{\epsilon \to 0} \frac{W_{2}(\rho_{t+\epsilon},\rho_{t})}{|\epsilon|}
\end{displaymath}
exists for a.e. \( t\in [0,T] \) and is called the \emph{speed} (or the \emph{metric derivative}) of
\( \rho \) at \( t \). Absolutely continuous curves has finite length which can be
expressed as
\begin{displaymath}
  L(\rho) = \int_{0}^{T}|\rho'|(t)\d t.
\end{displaymath}

The following theorem from~\cite[Section 8.3]{Ambrosio2005} shows that
absolutely continuous curves on \(
\mathcal P_{2}(\mathbb{R}^{d}) \) are
completely characterized by the continuity equations.
In the statement, \( L^{2}_{\mu}(\mathbb{R}^{d};\mathbb{R}^{d}) \) denotes the space of \( \mu
\)-measurable vector fields \( v\colon \mathbb{R}^{d}\to \mathbb{R}^{d} \) such that
\begin{displaymath}
  \|v\|_{\mu}\doteq \left(\int_{\mathbb{R}^{d}} |v|^{2}\d\mu\right)^{1/2}<\infty.
\end{displaymath}
As usual, two maps \( v \) and \( v' \) are considered equivalent if they
coincide for \( \mu \)-a.e. \( x \).

\begin{theorem}
  \label{thm:AC-curves}
Let  \( \rho\colon [0,T]\to \mathcal P_{2}(\mathbb{R}^{d}) \) be an absolutely
continuous curve. Then there exists a Borel vector field \( (t,x)\mapsto v_{t}(x) \) such that
\begin{displaymath}
  v_{t}\in L^{2}_{\rho_{t}}(\mathbb{R}^{d};\mathbb{R}^{d}),\quad  \|v_{t}\|_{\rho_{t}}\leq
  |\rho'|(t)\quad
  \text{for a.e. } t\in [0,T],
\end{displaymath}
and the continuity equation
\begin{equation}
  \label{eq:conteq}
  \partial_{t}\rho_{t} + \nabla \cdot (v_{t}\rho_{t}) = 0
\end{equation}
holds in the sense of distributions.
Conversely, if a Borel map \( \rho\colon [0,T]\to \mathcal P_{2}(\mathbb{R}^{d}) \) satisfies
equation~\eqref{eq:conteq} for some Borel vector field \( v \) with
\begin{equation}
  \label{eq:vfield_bound}
  \int_{0}^{T}\|v_{t}\|^{2}_{\rho_{t}}\d t<\infty,
\end{equation}
then \( t\mapsto \rho_{t} \) admits an absolutely continuous representative \( t\mapsto  \varrho_{t} \) with
\begin{displaymath}
  |\varrho'|(t)\leq \|v_{t}\|_{\varrho_{t}}\quad
  \text{for a.e. } t\in [0,T].
\end{displaymath}
\end{theorem}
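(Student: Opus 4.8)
The theorem is a two–sided characterization, so the plan is to prove the two implications separately. The converse implication (continuity equation $\Rightarrow$ absolute continuity, with the upper bound on the metric derivative) is the softer one and I would dispatch it first; the forward implication (absolute continuity $\Rightarrow$ existence of a velocity field with $\|v_t\|_{\rho_t}\le|\rho'|(t)$) carries essentially all the difficulty.

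\emph{Converse direction.} Assume $\rho$ solves $\partial_t\rho_t+\nabla\cdot(v_t\rho_t)=0$ distributionally with $\int_0^T\|v_t\|^2_{\rho_t}\d t<\infty$. The goal is the bound $W_2(\rho_s,\rho_t)\le\int_s^t\|v_\tau\|_{\rho_\tau}\d\tau$, from which absolute continuity and $|\varrho'|(t)\le\|v_t\|_{\varrho_t}$ follow by the definitions recalled above. I would mollify in space, setting $\rho^\epsilon_t=\rho_t*\eta_\epsilon$ and $v^\epsilon_t=\big((v_t\rho_t)*\eta_\epsilon\big)/\rho^\epsilon_t$, so that $(\rho^\epsilon,v^\epsilon)$ still solves the continuity equation with a now smooth, locally bounded field. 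The classical flow $\Phi^\epsilon_{s,\tau}$ of $\dot X=v^\epsilon_\tau(X)$ then satisfies $\rho^\epsilon_\tau=(\Phi^\epsilon_{s,\tau})_\sharp\rho^\epsilon_s$; using $(\id,\Phi^\epsilon_{s,t})_\sharp\rho^\epsilon_s$ as a competitor plan, bounding the displacement along trajectories by $\int_s^t|v^\epsilon_\tau|\d\tau$, and applying the Minkowski integral inequality together with the change of variables $\rho^\epsilon_\tau=(\Phi^\epsilon_{s,\tau})_\sharp\rho^\epsilon_s$ gives $W_2(\rho^\epsilon_s,\rho^\epsilon_t)\le\int_s^t\|v^\epsilon_\tau\|_{\rho^\epsilon_\tau}\d\tau$. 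A convexity (Jensen) estimate yields $\|v^\epsilon_\tau\|_{\rho^\epsilon_\tau}\le\|v_\tau\|_{\rho_\tau}$, and letting $\epsilon\to0$ (using $W_2(\rho^\epsilon_t,\rho_t)\to0$ and continuity of $W_2$) produces the claimed inequality.

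\emph{Forward direction.} Now let $\rho$ be absolutely continuous, so $g:=|\rho'|\in L^1(0,T)$ exists a.e. The idea is to build a velocity field out of optimal plans between nearby times. For $h>0$ choose optimal plans $\gamma^h_t\in\Gamma(\rho_t,\rho_{t+h})$ and define vector measures $\bm E_h$ on $(0,T)\times\mathbb{R}^d$ by $\int\zeta\cdot\d\bm E_h=\int_0^T\frac1h\int\zeta(t,x)\cdot(y-x)\,\d\gamma^h_t(x,y)\,\d t$. Testing the time-difference quotient of $t\mapsto\int\phi\,d\rho_t$ against $\phi\in C^\infty_c$ and Taylor-expanding $\phi(y)-\phi(x)=\nabla\phi(x)\cdot(y-x)+O(|y-x|^2)$ shows that the error is controlled by $W_2(\rho_t,\rho_{t+h})^2/h\to0$, so that $\partial_t\rho+\nabla\cdot\bm E_h\to0$ in the sense of distributions. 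The total variation obeys $|\bm E_h|\big((0,T)\times\mathbb{R}^d\big)\le\frac1h\int_0^T W_2(\rho_t,\rho_{t+h})\,\d t$, which stays bounded since $W_2(\rho_t,\rho_{t+h})\le\int_t^{t+h}g$; because the spatial marginals of $|\bm E_h|$ are dominated by the (uniformly tight) family $\{\rho_t\}$, the sequence $\bm E_h$ is weak-$*$ relatively compact. Extracting $\bm E_h\wto\bm E$ and passing to the limit in the linear relation gives $\partial_t\rho+\nabla\cdot\bm E=0$.

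The crux — and where I expect the real work to lie — is to show simultaneously that $\bm E=v\,\rho$ for a Borel field $v$ and that $\|v_t\|_{\rho_t}\le g(t)$ a.e. Both follow from the lower semicontinuity, under the weak-$*$ convergence $\bm E_h\wto\bm E$ (with $\rho$ held fixed), of the jointly convex action functional sending $(\mu,\bm F)$ to $\int|w|^2\,\d\mu$ when $\bm F=w\mu\ll\mu$ and to $+\infty$ otherwise. This lower semicontinuity, which rests on the convexity of the functional and its dual representation as a supremum of linear functionals, is the technical heart of the argument. For the discrete objects a Jensen (barycentric-projection) estimate bounds the action of $(\rho,\bm E_h)$ by $\int_0^T W_2(\rho_t,\rho_{t+h})^2/h^2\,\d t$, which tends to $\int_0^T g^2\,\d t$; lower semicontinuity then forces the action of $(\rho,\bm E)$ to be finite, whence $\bm E\ll\rho$, i.e. $\bm E=v\rho$, and bounded by $\int_0^T g^2\,\d t$. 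Repeating the construction on arbitrary subintervals and invoking Lebesgue points upgrades this integrated bound to the pointwise $\|v_t\|_{\rho_t}\le|\rho'|(t)$, while Borel measurability of $(t,x)\mapsto v_t(x)$ is inherited from the limit. Combining the two directions moreover shows that this minimal-norm field realizes $\|v_t\|_{\rho_t}=|\rho'|(t)$, although only the one-sided bounds are needed for the statement.
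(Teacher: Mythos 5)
This theorem is not proved in the paper at all: it is stated as a quoted result, with the proof deferred to Section~8.3 of Ambrosio--Gigli--Savar\'e \cite{Ambrosio2005} (Theorem~8.3.1 there). So the only meaningful comparison is with that standard proof, and your proposal reconstructs it faithfully: the converse direction by spatial mollification, a flow representation for the regularized field, the Jensen-type contraction $\|v^\epsilon_t\|_{\rho^\epsilon_t}\le\|v_t\|_{\rho_t}$, and a limit $\epsilon\to 0$; the forward direction by difference quotients built from optimal plans, a total-variation/compactness argument, and lower semicontinuity of the Benamou--Brenier action (the very functional the paper records as Proposition~\ref{prop:BB} and uses itself in Lemma~\ref{lem:same-limit}). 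Both halves follow the classical argument.

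One point needs repair if you want the statement in the generality in which it is phrased. The theorem assumes only that $\rho$ is absolutely continuous, i.e. $g:=|\rho'|\in L^1(0,T)$, whereas your forward argument is quadratic in nature: the bound of the action of $(\rho,\bm E_h)$ by $\int_0^T W_2(\rho_t,\rho_{t+h})^2/h^2\d t$ is only useful, and its convergence to $\int_0^T g^2\d t$ only justified (via $L^2$-convergence of the Steklov averages $h^{-1}\int_t^{t+h}g$), when $g\in L^2(0,T)$. For $g\in L^1\setminus L^2$ the right-hand side can be identically $+\infty$ and the lower-semicontinuity step yields nothing. The standard fix, used in \cite{Ambrosio2005}, is to reparametrize by (essentially) arclength $s(t)=\delta t+\int_0^t g\,\d\tau$, so that the reparametrized curve is Lipschitz, run your argument there, and pull the velocity back via $v_t=\dot s(t)\,\hat w_{s(t)}$. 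A more minor caveat concerns the converse direction: the mollified field $v^\epsilon$ is smooth but need not be globally Lipschitz or bounded, so the ``classical flow'' must be replaced by the maximal flow defined for $\rho^\epsilon$-a.e.\ starting point (AGS Proposition~8.1.8), which is available because $\int_0^T\int|v^\epsilon|\d\rho^\epsilon_t\d t<\infty$. With these two standard adjustments your proof is complete and coincides with the cited one.
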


Any Borel vector field \( v
\) satisfying~\eqref{eq:conteq} is called a \emph{velocity} of the absolutely
continuous curve \( \rho \). If a velocity
\( v \) is sufficiently regular, e.g.,
\begin{equation}
  \label{eq:CL}
  v_{t}\in C(\mathbb{R}^{d};\mathbb{R}^{d})\quad \forall t\in [0,T]\quad\text{and}\quad
  \int_{0}^{T}\left(\|v_{t}\|_{\infty} + \Lip(v_{t})\right)\d t<\infty,
\end{equation}
it generates a map \( \Psi\colon [0,T]\times [0,T]\times \mathbb{R}^{d}\to \mathbb{R}^{d} \) by
the rule
\begin{displaymath}
  \Psi_{s,t}(x)\doteq y(t),\quad s,t\in [0,T],\quad x\in \mathbb{R}^{d},
\end{displaymath}
where \( y\colon [0,T]\to \mathbb{R}^{d} \) is a unique solution of
the Cauchy problem
\begin{displaymath}
  \dot y(t) = v_{t}\left(y(t)\right),\quad y(s)=x.
\end{displaymath}

This map \( \Psi \), called the \emph{flow} of \( v \), is well-defined and satisfies the identities
\begin{displaymath}
  \Psi_{s,s} = \id,\quad \Psi_{s,t} = \Psi_{\tau,t}\circ\Psi_{s,\tau}\quad \forall s,t,\tau\in [0,T].
\end{displaymath}
Moreover, for any \( s,t\in [0,T] \), the map \( \Psi_{s,t}\colon \mathbb{R}^{d}\to
\mathbb{R}^{d} \) is a homeomorphism with
\begin{displaymath}
  \Lip(\Psi_{s,t})\leq \exp\left(\int_{s}^{t}\Lip(v_{\tau})\,d\tau\right).
\end{displaymath}

Absolutely continuous curves generated by regular vector fields admit a nice
representation formula given in following theorem
(see~\cite[Proposition 4]{AmbrosioCrippa2014} and~\cite[Proposition 8.1.7]{Ambrosio2005}).

\begin{theorem}
  \label{thm:regAC}
Let \( \rho\colon [0,T]\to \mathcal{P}_{2}(\mathbb{R}^{d}) \) be an absolutely
continuous curve that starts at \( \theta \) and  whose velocity \( v \) satisfies~\eqref{eq:CL}. Then
\begin{enumerate}
  \item[\( (a) \)] \( \rho_{t} = \Psi_{0,t\sharp}\theta \) for all \( t\in [0,T] \),
  \item[\( (b) \)] \( \rho \) is a unique solution of~\eqref{eq:conteq} that satisfies \( \rho_{0}=\theta \).
\end{enumerate}
\end{theorem}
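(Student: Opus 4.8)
The plan is to prove (a) and the uniqueness assertion in (b) together, since existence of a solution is furnished by the explicit candidate itself. Set $\mu_{t}:=\Psi_{0,t\sharp}\theta$. I would first check that $\mu$ is a legitimate absolutely continuous curve solving~\eqref{eq:conteq} with velocity $v$ and $\mu_{0}=\theta$, and then establish that any distributional solution of~\eqref{eq:conteq} with the same velocity and initial datum must coincide with $\mu$. Applying this uniqueness to the given curve $\rho$ yields $\rho_{t}=\mu_{t}=\Psi_{0,t\sharp}\theta$, which is (a), and simultaneously gives (b).

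For the first step I would use the properties of the flow recorded above. The estimates $|\Psi_{0,t}(x)-x|\leq\int_{0}^{t}\|v_{\tau}\|_{\infty}\,d\tau$ and $\Lip(\Psi_{0,t})\leq\exp\!\big(\int_{0}^{t}\Lip(v_{\tau})\,d\tau\big)$, both finite by~\eqref{eq:CL}, show that $\mu_{t}\in\mathcal P_{2}(\mathbb{R}^{d})$ and that $t\mapsto\mu_{t}$ is absolutely continuous. To verify the continuity equation, fix $\phi\in C_{c}^{\infty}(\mathbb{R}^{d})$ and differentiate
\begin{displaymath}
  \int\phi\,d\mu_{t}=\int\phi\big(\Psi_{0,t}(x)\big)\,d\theta(x)
\end{displaymath}
under the integral sign; since $\partial_{t}\Psi_{0,t}(x)=v_{t}\big(\Psi_{0,t}(x)\big)$ and the integrand is dominated using the bounds in~\eqref{eq:CL}, this produces $\frac{d}{dt}\int\phi\,d\mu_{t}=\int\nabla\phi\cdot v_{t}\,d\mu_{t}$ for a.e.\ $t$, which is exactly the weak form of~\eqref{eq:conteq}.

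The uniqueness step is a duality argument. Fix $s\in(0,T]$ and $\psi\in C_{c}^{\infty}(\mathbb{R}^{d})$, and consider the backward-transported function $\phi_{t}(x):=\psi\big(\Psi_{t,s}(x)\big)$, which formally solves the transport equation $\partial_{t}\phi_{t}+v_{t}\cdot\nabla\phi_{t}=0$ with terminal value $\phi_{s}=\psi$. Using the narrow continuity of a solution $\rho$ to write the weak formulation of~\eqref{eq:conteq} with boundary terms, testing against $\phi$ on $[0,s]$ should yield
\begin{displaymath}
  0=\int_{0}^{s}\!\!\int\big(\partial_{t}\phi_{t}+v_{t}\cdot\nabla\phi_{t}\big)\,d\rho_{t}\,dt=\int\psi\,d\rho_{s}-\int\psi\big(\Psi_{0,s}(x)\big)\,d\theta(x)=\int\psi\,d\rho_{s}-\int\psi\,d\mu_{s}.
\end{displaymath}
As $\psi$ is arbitrary, this forces $\rho_{s}=\mu_{s}$.

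The main obstacle is that under~\eqref{eq:CL} the flow maps $\Psi_{t,s}$ are only Lipschitz in the space variable, so $\phi_{t}=\psi\circ\Psi_{t,s}$ is merely Lipschitz and the identity $\partial_{t}\phi_{t}+v_{t}\cdot\nabla\phi_{t}=0$ holds only for a.e.\ $x$; such a $\phi$ is not an admissible $C^{1}$ test function in the distributional formulation. I would resolve this by regularization: mollify $v$ in space to obtain smooth fields $v^{\epsilon}$ whose flows $\Psi^{\epsilon}$ are $C^{1}$, so that $\phi_{t}^{\epsilon}:=\psi\circ\Psi_{t,s}^{\epsilon}$ are genuine test functions; plugging them into the exact equation for $\rho$ produces an error term $\int(v_{t}-v_{t}^{\epsilon})\cdot\nabla\phi_{t}^{\epsilon}\,d\rho_{t}$ that vanishes as $\epsilon\to0$, by the uniform convergence $v^{\epsilon}\to v$ together with the $\epsilon$-uniform Lipschitz bounds on $\Psi^{\epsilon}$ inherited from~\eqref{eq:CL}. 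Passing to the limit then delivers the displayed identity. Alternatively, one could invoke the superposition principle, which represents any solution of~\eqref{eq:conteq} as a superposition of integral curves of $v$; since the Cauchy problem for $v$ has a unique solution under~\eqref{eq:CL}, the representation collapses to $\rho_{t}=\Psi_{0,t\sharp}\theta$.
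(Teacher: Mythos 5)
You should know at the outset that the paper does not prove Theorem~\ref{thm:regAC} at all: it quotes it as a known result, citing \cite[Proposition~8.1.7]{Ambrosio2005} and \cite[Proposition~4]{AmbrosioCrippa2014}. Your proposal essentially reconstructs the classical argument behind those citations, and its structure is sound. The existence half (the pushforward $\mu_t=\Psi_{0,t\sharp}\theta$ solves \eqref{eq:conteq}) works because the dominating bound $\|\nabla\phi\|_\infty\|v_t\|_\infty\in L^1(0,T)$ justifies differentiation under the integral sign. For uniqueness, you correctly identified the one real obstacle --- under \eqref{eq:CL} the flow is only Lipschitz in $x$, so $\psi\circ\Psi_{t,s}$ is not an admissible test function --- and the mollification fix is the right one: since $\|v_t-v^\epsilon_t\|_\infty\le\epsilon\,\Lip(v_t)$ and $\Lip(\Psi^\epsilon_{t,s})\le\exp\big(\int_0^T\Lip(v_\tau)\,d\tau\big)$ uniformly in $\epsilon$, the error term is bounded by $\epsilon\,\Lip(\psi)\,e^{\int_0^T\Lip(v_\tau)d\tau}\int_0^T\Lip(v_t)\,dt$, which vanishes. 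Two points deserve flagging, though neither is a genuine gap. First, your duality step silently uses an extended weak formulation of \eqref{eq:conteq} --- test functions that are only $C^1$ in $x$, absolutely continuous in $t$, and do not vanish at $t=0,s$, with the corresponding boundary terms; this is standard for narrowly continuous solutions but is itself a lemma (proved by mollifying in time) and should be stated as such, since it also fixes in what class uniqueness is asserted. Second, your alternative route via superposition cannot invoke the paper's Theorem~\ref{thm:evaluation} as stated: that theorem requires the $L^2$-in-time bound \eqref{eq:vfield_bound}, which does not follow from the $L^1$-in-time bound in \eqref{eq:CL} (consider $\|v_t\|_\infty=t^{-1/2}$); one would need the $L^1$ version of the superposition principle, as in \cite{AmbrosioCrippa2014}, for that shortcut to be legitimate.
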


The following result shows that the boundedness of a regular vector field implies the Lipschitz continuity of the
corresponding curve. 
\begin{lemma}
  \label{lem:Lip1}
  Let \( \Psi \) be the flow of a bounded vector field \( w \) satisfying~\eqref{eq:CL}, and
  \( \theta\in \mathcal{P}_2(\mathbb{R} ^d) \).
  Then the curve \(\rho\colon [0,T]\to \mathcal{P}_2(\mathbb{R} ^d) \) defined by \(\rho_t =  \Psi_{0,t\sharp}\theta \) is \( \|w\|_\infty \)-Lipschitz.
\end{lemma}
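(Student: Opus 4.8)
The plan is to bound the Wasserstein distance $W_2(\rho_s,\rho_t)$ directly by exhibiting a single admissible transport plan built from the flow, rather than searching for the optimal one. Since $\rho_s = \Psi_{0,s\sharp}\theta$ and $\rho_t = \Psi_{0,t\sharp}\theta$ are both pushforwards of the same measure $\theta$, the natural candidate is the coupling $\pi \doteq (\Psi_{0,s},\Psi_{0,t})_{\sharp}\theta \in \mathcal{P}(\mathbb{R}^d\times\mathbb{R}^d)$. First I would check that $\pi$ is indeed a valid transport plan: its marginals are $\pi^{1}_{\sharp}\pi = \Psi_{0,s\sharp}\theta = \rho_s$ and $\pi^{2}_{\sharp}\pi = \Psi_{0,t\sharp}\theta = \rho_t$, so $\pi\in\Gamma(\rho_s,\rho_t)$. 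By the very definition of $W_2$ as an infimum over such plans, this yields
\[
W_{2}^{2}(\rho_s,\rho_t) \le \int_{\mathbb{R}^d\times\mathbb{R}^d}|x-y|^2\,d\pi(x,y) = \int_{\mathbb{R}^d}\big|\Psi_{0,s}(z)-\Psi_{0,t}(z)\big|^2\,d\theta(z).
\]

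Next I would estimate the integrand pointwise along each trajectory of the flow. Fixing $z$ and writing $y(\tau) \doteq \Psi_{0,\tau}(z)$, the defining ODE $\dot y(\tau)=w_\tau(y(\tau))$ gives, for $s\le t$,
\[
\big|\Psi_{0,t}(z)-\Psi_{0,s}(z)\big| = \Big|\int_s^t w_\tau(y(\tau))\,d\tau\Big| \le \int_s^t |w_\tau(y(\tau))|\,d\tau \le \|w\|_\infty\,|t-s|,
\]
using only the boundedness of $w$. Since this bound is uniform in $z$ and $\theta$ is a probability measure, integrating over $\theta$ produces $W_{2}^{2}(\rho_s,\rho_t) \le \|w\|_\infty^{2}|t-s|^2$, i.e.\ the claimed $\|w\|_\infty$-Lipschitz estimate after taking square roots.

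As a preliminary I should confirm that the curve is well posed, namely that $\rho_t\in\mathcal{P}_2(\mathbb{R}^d)$ for every $t$: this follows because each flow map $\Psi_{0,t}$ is Lipschitz, with constant $\exp\!\big(\int_0^t\Lip(w_\tau)\,d\tau\big)$ by the bound recalled just before the lemma, so $\int|x|^2\,d\rho_t = \int|\Psi_{0,t}(z)|^2\,d\theta(z)<\infty$ because $\theta$ has finite second moment. I do not expect a genuine obstacle here — the argument is entirely elementary once one resists the temptation to work with the optimal plan. The only points requiring minor care are the validity and measurability of the product coupling $\pi$ (immediate from the pushforward definition and the joint Borel regularity of $\Psi$), and the fact that the pointwise bound on $|w_\tau(y(\tau))|$ need only hold for a.e.\ $\tau$, which is still enough for the integral estimate above.
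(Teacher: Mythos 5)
Your proof is correct and follows essentially the same route as the paper: the paper couples $\rho_s$ and $\rho_t$ via the transport map $\Psi_{s,t}$ (i.e.\ the plan $(\id,\Psi_{s,t})_{\sharp}\rho_s$, which by the flow identity $\Psi_{0,t}=\Psi_{s,t}\circ\Psi_{0,s}$ is exactly your plan $(\Psi_{0,s},\Psi_{0,t})_{\sharp}\theta$), and then bounds the displacement along trajectories by $\|w\|_\infty|t-s|$ using the integral form of the ODE, just as you do. Your additional checks (marginals of the coupling, finiteness of second moments) are sound but routine.
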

\begin{proof}
  Take two time moments \( s,t\in [a,b] \) such that \( s< t \). Since \( \Psi_{s,t}
  \) is a transport map between  \( \rho_s \) and \(\rho_t \), we have
  \begin{displaymath}
    W_2^{2}(\rho_t,\rho_s) \leq \int |x - \Psi_{s,t}(x)|^2\d\rho_s(x).
  \end{displaymath}
  On the other hand,
  \begin{displaymath}
    \Psi_{s,t}(x) - x = \int_s^t w_\tau \left( \Psi_{s,\tau}(x) \right)\d \tau.
  \end{displaymath}
  Hence the boundedness of \( w \) implies the Lipschitz continuity.
\end{proof}

General absolutely continuous curves admit another useful representation
formula. To describe it, we first define, for every \( t\in [0,T] \), the
evaluation map \( e_{t}\colon \mathbb{R}^{d}\times
C\left([0,T];\mathbb{R}^{d}\right)\to \mathbb{R}^{d} \) by \(e_{t}(x,\gamma) = \gamma(t)\).

\begin{theorem}
  \label{thm:evaluation}
  Let \( \rho\colon [0,T]\to \mathcal{P}_{2}(\mathbb{R}^{d}) \) be an absolutely
  continuous curve and \( v \) be its velocity field such
  that~\eqref{eq:vfield_bound} holds. Then \( \rho_{t} =
  (e_{t})_{\sharp}\eta \) for a suitable Borel probability measure \( \eta \) on \( \mathbb{R}^{d}\times
  C\left([0,T];\mathbb{R}^{d}\right) \). This measure is concentrated on the set
  \( \Gamma \) of pairs \( (x,\gamma) \) such that \( \gamma \) is an absolutely continuous
  solutions of the equation \( \dot x(t) = v_{t}\left(x(t)\right) \), for a.e.
  \( t\in [0,T] \), with \( \gamma(0)=x \).
\end{theorem}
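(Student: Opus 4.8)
\emph{The plan} is to establish this \emph{superposition principle} by regularizing the velocity field, invoking the smooth-flow representation of Theorem~\ref{thm:regAC} to lift each regularized curve to a measure on the path space $\mathbb{R}^{d}\times C([0,T];\mathbb{R}^{d})$, and then passing to the limit. First I would mollify in the space variable: writing the momentum field as $\bm E_{t}=v_{t}\rho_{t}$, I fix a smooth strictly positive kernel $\chi_{\epsilon}$ and set $\rho^{\epsilon}_{t}=\rho_{t}*\chi_{\epsilon}$, $\bm E^{\epsilon}_{t}=\bm E_{t}*\chi_{\epsilon}$, and $v^{\epsilon}_{t}=\bm E^{\epsilon}_{t}/\rho^{\epsilon}_{t}$. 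The pair $(\rho^{\epsilon},v^{\epsilon})$ still solves the continuity equation~\eqref{eq:conteq}, the field $v^{\epsilon}$ is smooth, and Jensen's inequality yields the uniform energy bound $\int_{0}^{T}\|v^{\epsilon}_{t}\|^{2}_{\rho^{\epsilon}_{t}}\d t\leq\int_{0}^{T}\|v_{t}\|^{2}_{\rho_{t}}\d t$, which is finite by~\eqref{eq:vfield_bound}. After a routine truncation guaranteeing that $v^{\epsilon}$ meets the regularity requirement~\eqref{eq:CL}, Theorem~\ref{thm:regAC} applies and gives a flow $\Psi^{\epsilon}$ with $\rho^{\epsilon}_{t}=\Psi^{\epsilon}_{0,t\sharp}\theta^{\epsilon}$, where $\theta^{\epsilon}=\rho^{\epsilon}_{0}$.

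Next I would lift the flow by defining $\eta^{\epsilon}:=(\id,\Psi^{\epsilon}_{0,\cdot})_{\sharp}\theta^{\epsilon}$ on $\mathbb{R}^{d}\times C([0,T];\mathbb{R}^{d})$; by construction $\eta^{\epsilon}$ is concentrated on pairs $(x,\gamma)$ with $\gamma$ the $\Psi^{\epsilon}$-trajectory solving $\dot\gamma=v^{\epsilon}_{t}(\gamma)$, $\gamma(0)=x$, and $(e_{t})_{\sharp}\eta^{\epsilon}=\rho^{\epsilon}_{t}$ for every $t$. To extract a limit I would prove tightness of $\{\eta^{\epsilon}\}$ using the coercive functional $(x,\gamma)\mapsto|x|^{2}+\int_{0}^{T}|\dot\gamma|^{2}\d t$: its $\eta^{\epsilon}$-integral is controlled by the initial second moment and the energy bound above, while its sublevel sets are relatively compact in $\mathbb{R}^{d}\times C([0,T];\mathbb{R}^{d})$ by the Arzelà--Ascoli theorem (the $H^{1}$ bound forces equicontinuity). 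Prokhorov's theorem then yields a subsequence $\eta^{\epsilon}\wto\eta$. Passing to the limit in $(e_{t})_{\sharp}\eta^{\epsilon}=\rho^{\epsilon}_{t}$, using the continuity of $e_{t}$ together with $\rho^{\epsilon}_{t}\to\rho_{t}$ (cf.\ Lemma~\ref{lem:rho_conv}), identifies the marginals: $(e_{t})_{\sharp}\eta=\rho_{t}$.

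It remains to show that $\eta$ is concentrated on solutions of $\dot\gamma=v_{t}(\gamma)$. The key device is to test against the \emph{integrated} form of the equation, which avoids the time derivative. For a bounded continuous field $w$ the functional
\begin{displaymath}
  G_{w}(x,\gamma)=\min\Big\{1,\ \sup_{t\in[0,T]}\Big|\gamma(t)-x-\int_{0}^{t}w_{s}(\gamma(s))\d s\Big|\Big\}
\end{displaymath}
is bounded and continuous on the path space. Since the $\eta^{\epsilon}$-paths satisfy $\gamma(t)-x=\int_{0}^{t}v^{\epsilon}_{s}(\gamma(s))\d s$, one gets $G_{w}\leq\int_{0}^{T}|v^{\epsilon}_{s}-w_{s}|(\gamma(s))\d s$ on $\spt\eta^{\epsilon}$, hence, using $(e_{s})_{\sharp}\eta^{\epsilon}=\rho^{\epsilon}_{s}$,
\begin{displaymath}
  \int G_{w}\d\eta^{\epsilon}\leq\int_{0}^{T}\!\!\int|v^{\epsilon}_{s}-w_{s}|\d\rho^{\epsilon}_{s}\d s.
\end{displaymath}
Letting $\epsilon\to0$ (weak convergence on the left, strong convergence of the mollified momentum on the right) gives $\int G_{w}\d\eta\leq\int_{0}^{T}\int|v_{s}-w_{s}|\d\rho_{s}\d s$. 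Finally I would approximate the Borel field $v$ by continuous bounded fields $w^{n}$ converging to it in $L^{1}(\rho_{t}\,\d t)$, so that the right-hand side tends to $0$; a measurability and subsequence argument then upgrades this to $\int G_{v}\d\eta=0$, i.e.\ $\eta$-a.e.\ pair $(x,\gamma)$ satisfies the integral equation and thus the ODE.

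\emph{The main obstacle} is this last concentration step. Two points require care: passing to the limit on the right-hand side above, which needs strong $L^{1}$-convergence of $v^{\epsilon}\rho^{\epsilon}$ to $v\rho$ rather than mere weak convergence of measures, and replacing the continuous $w$ by the merely Borel field $v$, which forces one to approximate $v$ in $L^{1}(\d\rho_{t}\,\d t)$ and to argue that the limit measure genuinely concentrates on exact trajectories. The tightness and marginal-identification steps, by contrast, are comparatively routine.
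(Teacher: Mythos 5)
This theorem is the superposition principle of Ambrosio--Gigli--Savar\'e: the paper itself gives no proof, but quotes it as a preliminary from~\cite{Ambrosio2005} (Theorem 8.2.1 there), so the natural benchmark is the proof in that reference. Your proposal reconstructs essentially that standard proof: mollify \( \rho \) and the momentum \( v\rho \), lift the regularized flow to a measure on \( \mathbb{R}^{d}\times C([0,T];\mathbb{R}^{d}) \), prove tightness via the coercive functional \( |x|^{2}+\int_{0}^{T}|\dot\gamma|^{2}\d t \) (with the implicit constraint \( \gamma(0)=x \), needed for compact sublevels), identify the marginals, and obtain concentration by testing against the integrated functionals \( G_{w} \) with continuous fields \( w \) approximating \( v \) in \( L^{1}(\rho_{t}\d t) \). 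The tightness, marginal-identification and concentration steps are correct in outline and match the reference.

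There is, however, one genuine gap: the sentence ``after a routine truncation guaranteeing that \( v^{\epsilon} \) meets the regularity requirement~\eqref{eq:CL}, Theorem~\ref{thm:regAC} applies.'' This step fails as stated. The mollified field \( v^{\epsilon}_{t}=E^{\epsilon}_{t}/\rho^{\epsilon}_{t} \) is smooth but does \emph{not} satisfy~\eqref{eq:CL}: with a Gaussian kernel \( \rho^{\epsilon}_{t} \) decays at infinity, so the quotient is in general unbounded and its global Lipschitz constant infinite; only local bounds of the form \( \sup_{B_{R}}|v^{\epsilon}_{t}|+\Lip\left(v^{\epsilon}_{t};B_{R}\right)\leq C(\epsilon,R)\|v_{t}\|_{\rho_{t}} \) hold. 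Nor can you repair this by truncating \( v^{\epsilon} \): once the field is modified, the pair consisting of \( \rho^{\epsilon} \) and the truncated field no longer solves the continuity equation, and that identity is exactly the hypothesis under which Theorem~\ref{thm:regAC} yields \( \rho^{\epsilon}_{t}=\Psi^{\epsilon}_{0,t\sharp}\theta^{\epsilon} \); without it the lift \( \eta^{\epsilon} \) has the wrong marginals and the entire limiting argument collapses. What this step actually requires is the stronger representation result of~\cite{Ambrosio2005} (Proposition 8.1.8), which assumes only \emph{local} boundedness and Lipschitz continuity in \( x \) together with the global integrability \( \int_{0}^{T}\int|v^{\epsilon}_{t}|\d\rho^{\epsilon}_{t}\d t<\infty \), and whose proof contains the nontrivial argument that for \( \rho^{\epsilon}_{0} \)-a.e. \( x \) the maximal solution of \( \dot\gamma=v^{\epsilon}_{t}(\gamma) \) does not blow up before time \( T \). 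That non-blowup argument is the real technical content of the regularized step; it is a missing idea in your proposal, not a routine truncation.
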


\subsection{Projecting measures on sets}

Let \( C\subset \mathbb{R}^{d} \) be a bounded \( r \)-prox-regular set. Consider all measures \( \rho \)
supported in \( C \):
\begin{displaymath}
  \mathscr{C} = \left\{ \rho\in \mathcal{P}_{2}(\mathbb{R}^{d})\;\colon\; \spt\rho \subset C  \right\}.
\end{displaymath}

\begin{lemma}
  \label{lem:projection}
  Let \( \theta\in \mathcal{P}_2(\mathbb{R} ^d) \) satisfy \( \spt \theta\subset C + r\bm B^\circ \).
  Then
  \begin{enumerate}[(i)]
    \item there exists \( \theta_C\in \mathscr{C} \) such that
      \( W_2(\theta,\theta_C) = \inf_{\rho\in \mathscr{C}} W_2(\theta,\rho) \);
    \item \( \theta_C \) is unique and given by \( \theta_C = (P_C)_\sharp\theta \);
    \item \( \theta \) and \( \theta_C \) are connected with the unique geodesic \( \rho_t = \left( (1-t)\id + tP_C \right)_\sharp\theta \).
  \end{enumerate}
\end{lemma}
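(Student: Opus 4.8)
The plan is to prove that the candidate \( \theta_{C}:=(P_{C})_{\sharp}\theta \) is itself the unique minimizer, with optimal value \( \int d_{C}^{2}\d\theta \). First I would verify admissibility. Since \( \spt\theta\subset C+r\bm B^{\circ} \), the projection \( P_{C} \) is single-valued and continuous on a neighborhood of \( \spt\theta \) (by \( r \)-prox-regularity), hence Borel there, and it takes values in \( C \); as \( C \) is bounded, \( \theta_{C} \) is compactly supported and thus lies in \( \mathscr{C} \). Moreover \( d_{C} \) is bounded on the bounded set \( \spt\theta \), so \( \int d_{C}^{2}\d\theta<\infty \).

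The heart of the argument is a lower bound valid for every competitor. Fix \( \rho\in\mathscr{C} \) and an optimal plan \( \Pi\in\Gamma(\theta,\rho) \). For \( \Pi \)-a.e. \( (x,y) \) one has \( y\in\spt\rho\subset C \), whence \( |x-y|\geq d_{C}(x)=|x-P_{C}(x)| \). Integrating and using \( \pi^{1}_{\sharp}\Pi=\theta \) gives
\[
W_{2}^{2}(\theta,\rho)=\int|x-y|^{2}\d\Pi\geq\int d_{C}^{2}\d\theta.
\]
On the other hand, the (a priori non-optimal) plan \( (\id,P_{C})_{\sharp}\theta \) yields \( W_{2}^{2}(\theta,\theta_{C})\leq\int|x-P_{C}(x)|^{2}\d\theta=\int d_{C}^{2}\d\theta \). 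Applying the lower bound to the admissible \( \rho=\theta_{C} \) forces equality throughout, so \( \int d_{C}^{2}\d\theta \) is the minimum and it is attained at \( \theta_{C} \). This settles (i) and identifies the value in (ii).

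For the remaining claims I would exploit the rigidity hidden in the lower bound. Suppose \( \rho\in\mathscr{C} \) also attains the minimum, with optimal plan \( \Pi \). Then \( \int\bigl(|x-y|^{2}-d_{C}(x)^{2}\bigr)\d\Pi=0 \), while the integrand is nonnegative \( \Pi \)-a.e.; hence \( |x-y|=d_{C}(x) \) for \( \Pi \)-a.e. \( (x,y) \). Since \( x\in\spt\theta\subset C+r\bm B^{\circ} \), the nearest point of \( C \) to \( x \) is unique, so \( y\in C \) together with \( |x-y|=d_{C}(x) \) forces \( y=P_{C}(x) \). Thus \( \Pi=(\id,P_{C})_{\sharp}\theta \) and \( \rho=\pi^{2}_{\sharp}\Pi=\theta_{C} \), which proves (ii). The very same computation applied to any optimal plan between \( \theta \) and \( \theta_{C} \) shows that such a plan must equal \( (\id,P_{C})_{\sharp}\theta \); the optimal plan is therefore unique and induced by the transport map \( P_{C} \). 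By the structure of geodesics recalled above, the unique geodesic joining \( \theta \) and \( \theta_{C} \) is then \( \rho_{t}=\bigl((1-t)\id+tP_{C}\bigr)_{\sharp}\theta \), giving (iii).

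The only genuinely delicate point is the rigidity step, namely upgrading the scalar equality \( |x-y|=d_{C}(x) \) to the pointwise identity \( y=P_{C}(x) \). This is precisely where \( r \)-prox-regularity enters, through the single-valuedness of \( P_{C} \) on \( C+r\bm B^{\circ} \), and it is the standing hypothesis \( \spt\theta\subset C+r\bm B^{\circ} \) that keeps every relevant point inside the region where this single-valuedness holds.
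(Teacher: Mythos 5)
Your proof is correct, and your uniqueness and geodesic steps (the rigidity argument upgrading $|x-y|=d_{C}(x)$ to $y=P_{C}(x)$ via single-valuedness of $P_{C}$ on $C+r\bm B^{\circ}$, then uniqueness of the optimal plan, then uniqueness of the geodesic) coincide with the paper's. Where you genuinely depart from the paper is in how optimality of the plan $(\id,P_{C})_{\sharp}\theta$ is established. The paper invokes the support characterization of optimal plans: it verifies that $\graph P_{C}$ is cyclically monotone, using
\begin{displaymath}
  |x_{i}-P_{C}(x_{i})| = d_{C}(x_{i})\leq |x_{i}-P_{C}(x_{i+1})|,
\end{displaymath}
and concludes from that criterion that the plan is optimal between $\theta$ and $\theta_{C}$; the minimality of $\theta_{C}$ over $\mathscr{C}$ is then only implicit in its uniqueness computation. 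You instead run a sandwich argument: the elementary bound $|x-y|\geq d_{C}(x)$ for $y\in C$ gives $W_{2}^{2}(\theta,\rho)\geq \int d_{C}^{2}\d\theta$ for \emph{every} competitor $\rho\in\mathscr{C}$, while the explicit (a priori suboptimal) plan $(\id,P_{C})_{\sharp}\theta$ gives the matching upper bound for $\theta_{C}$, so equality holds throughout. This delivers assertion (i), the value of the minimum, and the optimality of the candidate plan in one stroke, and it is more self-contained, since it never needs the cyclical-monotonicity theorem. What the paper's route buys is a statement of independent interest --- plans supported on the graph of a nearest-point projection are optimal regardless of whether the target measure solves any minimization problem --- but for the lemma at hand your argument is, if anything, the more economical of the two.
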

\begin{proof}
  First note that \( P_C \) is single-valued and continuous on \( C + r\bm B^\circ \) by the definition of \( r \)-prox-regularity. Consider the transport plan \( \Pi \) given by
  \begin{displaymath}
    \Pi(A) = \theta \left(\left\{ x\in C+r\bm B^\circ\;\colon\; (x,P_C(x))\in A \right \}  \right).
  \end{displaymath}
  Its support belongs to \( \graph P_C \). Hence the cyclical monotonicity of \( \graph P_C \) would imply the optimality of \( \Pi \). Cyclical monotonicity can be expressed as follows:
  \begin{displaymath}
    \sum_i|x_i-P_C(x_i)|^2 \leq \sum_i |x_i-P_C(x_{i+1})|^2\quad\text{for any}\quad x_1,\ldots,x_k\in 
    C+r\bm B^\circ.
  \end{displaymath}
  This property clearly holds because
  \begin{displaymath}
    |x_i-P_C(x_i)| = d_C(x_i)\leq |x_i - P_C(x_{i+1})|.
  \end{displaymath}
  
  Let \( \theta_C'\in \mathscr{C} \) satisfy
  \( W_2(\theta_C,\theta)=W_2(\theta_C',\theta) \) and
  \( \Pi' \) be the corresponding optimal transport plan. Since \( \pi^1_\sharp\Pi' = \theta \), the previous identity can be rewritten as follows:
  \begin{displaymath}
    \int \left(|x-y|^2 - |x-P_C(x)|^2\right)\d\pi'(x,y) = 0.
  \end{displaymath}
  The integrand is nonnegative on \( \spt\Pi' \) because
  \( |x-y| \geq |x-P_C(x)| \) for all \( x\in C+r\bm B^\circ \) and \( y\in C \).
  Therefore, for \( \Pi' \)-a.e. \( (x,y) \), we have
  \begin{displaymath}
    |x-y|^2 = |x-P_C(x)|^2.
  \end{displaymath}
  Since in the open \( r \)-neighborhood of \( C \) the projection \( P_C \) is unique, we conclude that \( y=P_C(x) \), for \( \Pi' \)-a.e. \( (x,y) \). In other words, \( \Pi' \) is supported on \( \graph P_C \).
Now from \(
\pi^1_\sharp\Pi= \pi^1_\sharp\Pi'=\theta\) it follows that \( \Pi=\Pi' \).

There are two consequences of this fact: 1)  \( \theta_C=\theta_C' \) (so we established uniqueness) and 2) 
the optimal plan \( \Pi \) between \( \theta_C \) and \( \theta \) is unique. Recall that any geodesic between \( \theta \) and \( \theta_C \) takes the form \( \rho_{t}=\left( (1-t)\pi^1 + t\pi^2 \right)_\sharp\Pi  \),
where \( \Pi \) is an optimal plan between \( \theta \) and \( \theta_C \).
Thus, the geodesic is unique due to the uniqueness of the optimal plan.
\end{proof}

\begin{definition}
The measure \( \theta_C \) defined in the previous lemma is called a \textbf{projection} of \( \theta \) on \( C \).
\end{definition}

\section{Existence}
\label{sec:existence}

\subsection{Continuous approximation}

We consider two \emph{processes} on \( \mathbb{R} ^d \). The first one \( \Phi_{s,t}^\theta \) is the flow of the vector field \( 2\mathscr{V}(\theta) \). The second one \( \Psi_{s,t}^\tau \) is defined only
for \( x\in \bm C(s+\tau) + r\bm B^\circ \) and \( t\in [s, s+\tau] \) by
\begin{displaymath}
  \Psi^\tau_{s,t}(x) =
  \left[
  \left(
    1-\frac{t-s}{\tau}
  \right)\id
  +
  \frac{t-s}{\tau}P_{\bm C(s+\tau)}
\right](x).
\end{displaymath}
Both processes generate maps in the space of measures:
\begin{displaymath}
  \widetilde{\Phi}_{s,t}(\theta) = \left( \Phi^\theta_{s,t} \right)_\sharp\theta, \quad
  \widetilde{\Psi}^\tau_{s,t}(\theta) = \left( \Psi^\tau_{s,t} \right)_\sharp(\theta).
\end{displaymath}

We merge these maps to construct a curve \(t\mapsto \rho^\tau_{t}\) in the following
way:
\begin{equation}
  \label{eq:contrho}
  \rho^\tau_0 = \theta,\quad
  \rho^\tau_{t} =
  \begin{cases}
    \widetilde\Phi_{2k\tau,t}\left(\rho^\tau_{2k\tau}\right), & t\in
    \left[2k\tau,(2k+1)\tau\right],\\
    \widetilde\Psi^\tau_{(2k+1)\tau,t}\left(\rho^\tau_{(2k+1)\tau}\right) & t\in
    \left[(2k+1)\tau,(2k+2)\tau\right],
  \end{cases}
  \quad k = 0,1,\ldots
\end{equation}
Let us find a velocity of this curve. To that end, take
\begin{displaymath}
  w_{s}^\tau(x) \doteq \frac{P_{\bm C(s+\tau)}(x) - x}{\tau},\quad x\in\bm C(s+\tau) + r \bm B^{\circ},
\end{displaymath}
and note that
\begin{displaymath}
  \Psi_{s,t}^\tau(x) = x + (t-s) w_s^\tau(x).
\end{displaymath}
Hence the time dependent vector field \((t,x)\mapsto w^{\tau}_{s,t}(x) \) generating the map
\((t,x)\mapsto \Psi^{\tau}_{s,t}(x) \) satisfies
\begin{displaymath}
  \frac{d}{dt}\Psi^\tau_{s,t}(x) = w_{s,t}^\tau\left(\Psi^\tau_{s,t}(x)\right) = w_{s}^\tau(x).
\end{displaymath}

Thus we conclude that \( \rho^{\tau} \) satisfies the continuity equation with the vector field given by
\begin{equation}
  \label{eq:contv}
  v^\tau_t =
  \begin{cases}
    2\mathscr{V}\left(\rho^\tau_{2k\tau}\right), & t\in 
    \left[2k\tau,(2k+1)\tau\right),\\
    w_{(2k+1)\tau}^\tau\circ \left[\Psi^\tau_{(2k+1)\tau,t}\right]^{-1} & t\in
    \left[(2k+1)\tau,(2k+2)\tau\right],
  \end{cases}
  \quad k = 0,1,\ldots
\end{equation}

\subsection{Properties of the continuous approximation}

\begin{lemma}
  \label{lem:rhoLip}
  For all sufficiently small \( \tau \), the curve \( \rho^\tau\colon [0,T] \to \mathcal{P}_2(\mathbb{R} ^d) \) is well-defined and Lipschitz with constant \( 2(L+M) \). 
  Moreover, \( |v^{\tau}_{t}(x)|\leq 2(L+M) \) for all \( t \) and \( \rho^{\tau}_{t} \)-a.e. \( x \).
\end{lemma}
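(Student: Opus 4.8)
The plan is to argue by induction over the index $k$, tracking the support of the iterates through the alternating flow and projection steps, and then to read off both the velocity bound and the Lipschitz constant from the construction.

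First I would establish well-definedness. The only place the construction can break down is the projection step, since $P_{\bm C((2k+2)\tau)}$ is single-valued and continuous only on $\bm C((2k+2)\tau)+r\bm B^\circ$. So I would prove by induction that $\spt\rho^\tau_{2k\tau}\subset\bm C(2k\tau)$ for every $k$. The base case is the hypothesis $\spt\theta\subset\bm C(0)$. For the inductive step, on the even interval $[2k\tau,(2k+1)\tau]$ the curve flows along $2\mathscr V(\rho^\tau_{2k\tau})$, which by $(\mathbf{A_1})$ is bounded by $2L$; hence each trajectory is displaced by at most $2L\tau$, giving $\spt\rho^\tau_{(2k+1)\tau}\subset\bm C(2k\tau)+2L\tau\bm B$. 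Combining this with the Hausdorff bound $d_H(\bm C(2k\tau),\bm C((2k+2)\tau))\le 2M\tau$ from $(\mathbf{A_2})$, I obtain $\spt\rho^\tau_{(2k+1)\tau}\subset\bm C((2k+2)\tau)+2(L+M)\tau\bm B$. Choosing $\tau<r/\bigl(2(L+M)\bigr)$ forces this support into the open $r$-neighborhood, so the projection is well-defined, and Lemma~\ref{lem:projection} then yields $\spt\rho^\tau_{(2k+2)\tau}\subset\bm C((2k+2)\tau)$, closing the induction. The inequality $\tau<r/\bigl(2(L+M)\bigr)$ is precisely the meaning of ``sufficiently small $\tau$''.

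Next I would bound the velocity. On even intervals $|v^\tau_t(x)|=2|\mathscr V(\rho^\tau_{2k\tau})(x)|\le 2L$ directly from $(\mathbf{A_1})$. On odd intervals, writing $s=(2k+1)\tau$, the identity $v^\tau_t(\Psi^\tau_{s,t}(y))=w^\tau_s(y)$ together with $|w^\tau_s(y)|=d_{\bm C(s+\tau)}(y)/\tau$ reduces the estimate to the support bound already established: for $y\in\spt\rho^\tau_s$ one has $d_{\bm C((2k+2)\tau)}(y)\le 2(L+M)\tau$, whence $|w^\tau_s(y)|\le 2(L+M)$. Since $\rho^\tau_t$-a.e.\ point on an odd interval is of the form $\Psi^\tau_{s,t}(y)$ with $y\in\spt\rho^\tau_s$, this gives $|v^\tau_t(x)|\le 2(L+M)$ for $\rho^\tau_t$-a.e.\ $x$, and as $2L\le 2(L+M)$ the bound holds on both types of intervals.

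Finally, for the Lipschitz constant I would run the argument of Lemma~\ref{lem:Lip1} on each subinterval. On each piece the curve has the form $\rho^\tau_t=(\Psi_{s,t})_\sharp\rho^\tau_s$ for the relevant flow $\Psi$, and the velocity along every trajectory emanating from $\spt\rho^\tau_s$ is bounded by $2(L+M)$; this is exactly the quantity entering the estimate $W_2^2(\rho^\tau_t,\rho^\tau_s)\le\int|x-\Psi_{s,t}(x)|^2\,d\rho^\tau_s(x)$ in that lemma, so each piece is $2(L+M)$-Lipschitz. Since~\eqref{eq:contrho} glues the pieces continuously at the nodes $k\tau$, a piecewise Lipschitz curve with a common constant is globally Lipschitz with the same constant, giving the claim. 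The main obstacle is the first step: one must verify the support containment carefully, since it is what simultaneously guarantees (via prox-regularity) that every projection is well-defined and supplies the pointwise bound $d_{\bm C}/\tau\le 2(L+M)$ on odd intervals. A secondary subtlety is that on odd intervals the velocity bound holds only on the support, not globally, so one must invoke the support-based form of Lemma~\ref{lem:Lip1} rather than a global sup-norm bound.
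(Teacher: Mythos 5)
Your proof is correct and follows essentially the same route as the paper: the same chain of support inclusions $\spt\rho^\tau_{(2k+1)\tau}\subset \bm C((2k+2)\tau)+2\tau(L+M)\bm B$ to get well-definedness and the bound $|w^\tau_s|\le 2(L+M)$ on the support, followed by the argument of Lemma~\ref{lem:Lip1} for the Lipschitz constant. If anything, you are slightly more careful than the paper in making the induction explicit and in noting that on odd intervals the velocity bound holds only on the relevant neighborhood of $\bm C$, so that Lemma~\ref{lem:Lip1} must be applied in its support-based form.
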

\begin{proof}
  \textbf{1.} Assume that \( \rho^{\tau} \) is well-defined up to a time moment \( s=2k\tau \) (we can always choose \( k=0 \)). Then \( \spt\rho^\tau_{s}\subset \bm C(s) \) because the image of \( \Psi^{\tau}_{s-\tau,s} \)
  belongs to \( \bm C(s) \). In order to construct \( \rho^{\tau} \) on the interval
  \( \left[s,s+2\tau\right] \), we must show that \( \spt \rho^{\tau}_{s+\tau} \) lies in
  the domain of \( \Psi^{\tau}_{s+\tau,t} \) for all sufficiently small \( \tau \). Indeed,
  by Lipschitz continuity of \( \bm C \),
  \begin{displaymath}
    \bm C(s) \subset \bm C\left(s+2\tau\right) + 2\tau M\bm B.
  \end{displaymath}
  This inclusion together with \( \spt\rho^\tau_{s}\subset \bm C(s) \) implies
\begin{equation}
  \label{eq:sptincl}
  \spt \rho^\tau_{s+\tau}\subset \Phi_{s,s+\tau}\left(\bm C(s)\right) \subset \bm C(s) + 2\tau L\bm B\subset
  \bm C\left(s+ 2\tau\right) + 2\tau(L+M)\bm B.
\end{equation}
Thus we conclude that
\(  \spt \rho^\tau_{s+\tau}\subset\bm C\left(s+ 2\tau\right) + r\bm B^\circ\), whenever \( \tau \) is sufficiently small.
This proves that \( \rho^{\tau} \) is well-defined on \( [0,T] \).

\textbf{2.} Let us estimate \( v^{\tau} \). For each \( t\in \left[s,s+\tau\right)  \) (here again
\(s=2k\tau\)),
we have
\begin{displaymath}
  |v^\tau_t(x)| = |2\mathscr{V}(\rho^{\tau}_{s})(x)| \leq  2L\quad \forall x\in \mathbb{R}^{d}.
\end{displaymath}
Now suppose that \( t\in [s+\tau,s+2\tau]\) and let
\begin{displaymath}
  \alpha^{*} \doteq \inf\left\{\alpha\;\colon\; \spt \rho^{\tau}_{s+\tau}\subset \bm C(s+2\tau)+\alpha\bm B\right\}.
\end{displaymath}
Then, by construction,
\begin{displaymath}
  |v^\tau_t(x)| \leq \frac{\alpha^{*}}{\tau}\quad \forall x\in \bm C(s+2\tau)+\alpha^{*} \bm B.
\end{displaymath}
It follows from~\eqref{eq:sptincl} that \( \alpha^{*}\leq 2\tau(L+M) \).
Thus, for each \( t\in [s+\tau,s+2\tau] \), we have
\begin{displaymath}
  |v^\tau_t(x)| \leq 2(L+M)\quad \forall x\in \bm C(s+2\tau) + \alpha^{*}\bm B.
\end{displaymath}
Since \( \spt \rho^{\tau}_{t} \subset \bm C(s+2\tau) + \alpha^{*}\bm B  \), for any \( t\in [s+\tau,s+2\tau] \),
we get the desired estimate on \( v^{\tau} \).
The lipschitzeanity of \( \rho^{\tau} \) now
follows from Lemma~\ref{lem:Lip1}.
\end{proof}


\subsection{Piecewise constant approximation}

Let us introduce the map
\begin{displaymath}
  R^\tau(t) = k\tau,\quad t\in [k\tau, (k+1)\tau),
\end{displaymath}
which can be roughly thought as a ``projection'' of \( t \)
on the mesh  \( \{k\tau\}_{k=0}^{\infty} \).

Taking the curves \( t\mapsto \rho^\tau_t \) and \( t\mapsto v^\tau_t \), we construct two piecewise constant curves \( t\mapsto \bar\rho^\tau_t \) and \( t\mapsto \bar v^\tau_t \) in the following way:
\begin{displaymath}
  \bar\rho^{\tau}_{t} =  \rho^\tau_{R^\tau(t)},\quad \bar v^{\tau}_{t}= v^\tau_{R^\tau(t)}, \quad t\in [0,T].
\end{displaymath}


The next lemmas establish the relationship between the continuous and the piecewise constant
approximations.

\begin{lemma}
  \label{lem:same-limit}
  Consider two families of vector measures \( E^\tau = v^\tau \rho^\tau \) and
  \( \bar E^\tau_t = E^\tau_{R^\tau(t)} \), where \( \rho^{\tau} \) and \( v^{\tau} \) are defined
  by~\eqref{eq:contrho} and~\eqref{eq:contv}. Assume that \( E^\tau\wto E \), \( \bar E^\tau \wto \bar E \),
  \( \rho^{\tau}\wto \rho \), \( \bar \rho^{\tau}\wto\bar \rho \).
  Then
  \begin{enumerate}
    \item[\((1)\)]  \( E = \bar E \) and \( \rho = \bar \rho \);
    \item[\((2)\)]  \( E = v\rho \) for some \( v\);
  \end{enumerate}

\end{lemma}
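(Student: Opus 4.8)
Both assertions follow once I compare the continuous approximation with its piecewise constant counterpart, using the uniform Lipschitz and velocity bounds of Lemma~\ref{lem:rhoLip}. Since the four limit objects are measures on \( [0,T]\times\mathbb{R}^d \), it suffices to test against vector fields \( \phi\in C_c^\infty([0,T]\times\mathbb{R}^d;\mathbb{R}^d) \) (and scalars \( \psi \) for \( \rho,\bar\rho \)): these are Lipschitz in \( x \) and separate Radon measures, so agreement of the pairings forces agreement of the measures. First I would prove \( \rho=\bar\rho \). Because \( \bar\rho^\tau_t=\rho^\tau_{R^\tau(t)} \) with \( |t-R^\tau(t)|\le\tau \) and \( \rho^\tau \) is \( 2(L+M) \)-Lipschitz, one has \( W_2(\rho^\tau_t,\bar\rho^\tau_t)\le 2(L+M)\tau \) for every \( t \). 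The Kantorovich--Rubinstein inequality together with \( W_1\le W_2 \) then gives
\[
\left|\int\psi\,\d\rho^\tau-\int\psi\,\d\bar\rho^\tau\right|\le\int_0^T\Lip\big(\psi(t,\cdot)\big)\,W_2(\rho^\tau_t,\bar\rho^\tau_t)\,\d t\le 2(L+M)\tau\int_0^T\Lip\big(\psi(t,\cdot)\big)\,\d t,
\]
which tends to \( 0 \); passing to the limit in \( \int\psi\,\d\rho^\tau\to\int\psi\,\d\rho \) and \( \int\psi\,\d\bar\rho^\tau\to\int\psi\,\d\bar\rho \) yields \( \rho=\bar\rho \).

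For \( E=\bar E \) I would split \( [0,T] \) into the drift intervals \( [2k\tau,(2k+1)\tau) \) and the projection intervals \( [(2k+1)\tau,(2k+2)\tau) \). On a drift interval the velocity in~\eqref{eq:contv} is constant in time, so \( v^\tau_t=\bar v^\tau_t=2\mathscr V(\rho^\tau_{2k\tau}) \) and the difference of the two \( E \)-pairings reduces to \( \int\phi(t,\cdot)\cdot 2\mathscr V(\rho^\tau_{2k\tau})\,(\d\rho^\tau_t-\d\rho^\tau_{2k\tau}) \); since \( 2\mathscr V(\rho^\tau_{2k\tau}) \) is bounded and Lipschitz by~\( (\mathbf{A_1}) \), the integrand is a bounded Lipschitz field and this is controlled exactly as in the previous display. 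On a projection interval, writing \( s=(2k+1)\tau \), I would use \( \rho^\tau_t=(\Psi^\tau_{s,t})_\sharp\rho^\tau_s \) together with the identity \( v^\tau_t\circ\Psi^\tau_{s,t}=w^\tau_s \) to change variables and rewrite the difference as
\[
\int\Big[\phi\big(t,\Psi^\tau_{s,t}(y)\big)-\phi(t,y)\Big]\cdot w^\tau_s(y)\,\d\rho^\tau_s(y);
\]
because \( |\Psi^\tau_{s,t}(y)-y|=(t-s)|w^\tau_s(y)|\le 2(L+M)\tau \) and \( |w^\tau_s|\le 2(L+M) \), each such interval contributes a term of order \( \tau^2 \). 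Summing over the two families of intervals (of which there are order \( 1/\tau \) of each) leaves a bound of order \( \tau \), whence \( \int\phi\cdot\d E^\tau-\int\phi\cdot\d\bar E^\tau\to0 \) and \( E=\bar E \).

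For assertion~(2), the bound \( |v^\tau_t(x)|\le 2(L+M) \) of Lemma~\ref{lem:rhoLip} shows that the total variation satisfies \( |E^\tau|=|v^\tau|\rho^\tau\le 2(L+M)\rho^\tau \) as nonnegative measures. Fixing \( \psi\ge0 \) in \( C_c \) and any \( \phi \) with \( |\phi|\le\psi \) pointwise, I would pass to the limit in the chain \( \big|\int\phi\cdot\d E^\tau\big|\le\int\psi\,\d|E^\tau|\le 2(L+M)\int\psi\,\d\rho^\tau \), using \( E^\tau\wto E \) on the left and \( \rho^\tau\wto\rho \) on the right, to obtain \( \big|\int\phi\cdot\d E\big|\le 2(L+M)\int\psi\,\d\rho \). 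Taking the supremum over all admissible \( \phi \) and invoking the duality characterization of the total variation gives \( |E|\le 2(L+M)\rho \), so \( |E|\ll\rho \); the Radon--Nikodym theorem then furnishes a \( \rho \)-measurable vector field \( v \) with \( E=v\rho \) and \( |v|\le 2(L+M) \) for \( \rho \)-a.e. point.

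The step I expect to be the main obstacle is the projection-interval estimate for \( E=\bar E \): unlike the drift intervals, there \( v^\tau_t=w^\tau_s\circ[\Psi^\tau_{s,t}]^{-1} \) genuinely depends on \( t \), so I cannot simply peel off a common velocity and must instead push the comparison onto \( \rho^\tau_s \) through the flow \( \Psi^\tau \), relying on the affine form \( \Psi^\tau_{s,t}(y)=y+(t-s)w^\tau_s(y) \) and the displacement bound \( |\Psi^\tau_{s,t}(y)-y|\le 2(L+M)\tau \). The remaining pieces are routine applications of Kantorovich--Rubinstein duality and the uniform velocity bound.
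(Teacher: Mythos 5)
Your proof is correct. For assertion (1) you follow essentially the paper's own route: the same splitting of \([0,T]\) into drift intervals \([2k\tau,(2k+1)\tau)\) and projection intervals \([(2k+1)\tau,(2k+2)\tau)\), and on the projection intervals exactly the same change of variables through the pushforward \(\rho^\tau_t=(\Psi^\tau_{s,t})_\sharp\rho^\tau_s\) with the displacement bound \(|\Psi^\tau_{s,t}(y)-y|\le 2(L+M)\tau\); the only difference is stylistic, namely that on the drift intervals (and for \(\rho=\bar\rho\)) you invoke Kantorovich--Rubinstein duality together with \(W_1\le W_2\) and the \(2(L+M)\)-Lipschitz bound of Lemma~\ref{lem:rhoLip}, where the paper obtains the same \(O(\tau)\) estimate by transporting the test function through the flow \(\Phi\) explicitly and estimating term by term. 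For assertion (2), however, your argument is genuinely different from the paper's. The paper observes that \(\mathcal{B}_2(\rho^\tau,E^\tau)=\frac12\int|v^\tau|^2\d{\rho^\tau}\) is uniformly bounded and uses the lower semicontinuity of the Benamou--Brenier functional (Proposition~\ref{prop:BB}) to conclude \(\mathcal{B}_2(\rho,E)<+\infty\), hence \(E=v\rho\) with \(v\in L^2_\rho\). You instead pass the uniform total-variation bound \(|E^\tau|\le 2(L+M)\rho^\tau\) to the limit through the duality formula for the total variation of a vector Radon measure, conclude \(|E|\le 2(L+M)\rho\), and apply the Radon--Nikodym theorem. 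Both are sound; your version is more elementary (it avoids the Benamou--Brenier machinery of Appendix~\ref{sec:A-BB} entirely) and yields the pointwise bound \(|v|\le 2(L+M)\) \(\rho\)-a.e.\ for free, which is stronger than the \(L^2\)-type conclusion the paper extracts at this stage; the paper only recovers an \(L^\infty\) bound on the velocity later, in Lemma~\ref{lem:vbound}, by an entirely different argument through pointwise sweeping processes.
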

\begin{proof}
  \textbf{1.} Since \( C_{c}(\mathbb{R}^{d+1}) \) is dense in
  \( C_{0}(\mathbb{R}^{d+1}) \), two Radon measures \( E \) and \( \bar E \) coincide if
  \( \int \phi\cdot \d E = \int \phi\cdot \d{\bar E} \) for any \( \phi \in C^{\infty}_{c}(\mathbb{R}^{d+1}) \). Fix \( \phi\in C_{c}^{\infty}(\mathbb{R}
  ^{d+1}) \) and note that
  \begin{displaymath}
    \int \phi\cdot \d{E^\tau} - \int \phi\cdot \d{\bar E^\tau}
    = \int_{0}^{T}
    \Big(
      \int \phi \cdot v_t^\tau\d\rho^\tau_t
      -
      \int \phi \cdot \bar v_{t}^\tau\d{\bar \rho^\tau_{t}}
    \Big)
    \d t.
  \end{displaymath}

  If \( t\in \left[(2k+1)\tau,(2k+2)\tau\right) \),
  the definitions of \( \rho^\tau \) and \( v^\tau \) allow us to write
  \begin{align*}
    \int\phi\cdot v_t^\tau\d\rho_t^\tau &=
      \int \phi \cdot w_{(2k+1)\tau}\circ \left[\Psi_{(2k+1)\tau,t}^\tau\right]^{-1}
      \d{\left[\Psi^\tau_{(2k+1)\tau,t}\right]_\sharp\rho^\tau_{(2k+1)\tau}} \\
    &=
      \int \phi\circ \Psi_{(2k+1)\tau,t}^\tau \cdot w_{(2k+1)\tau}
    \d{\rho^\tau_{(2k+1)\tau}},\\
    \int\phi\cdot \bar v_t^\tau\d{\bar \rho_t^\tau} &=
      \int \phi\cdot w_{(2k+1)\tau} \d{\rho^\tau_{(2k+1)\tau}}.
  \end{align*}
  Hence, by Lemma~\ref{lem:rhoLip},
  \begin{align*}
    \Big|
      \int \phi \cdot v_t^\tau\d\rho^\tau_t
      -
      \int \phi \cdot \bar v_{t}^\tau\d{\bar\rho^\tau_{t}}
    \Big|
    \leq
    \int \left|\phi\circ\Psi^\tau_{(2k+1)\tau,t} - \phi\right|
    \cdot\left|w_{(2k+1)\tau}\right|
    \d{\rho_{(2k+1)\tau}^\tau}
    \\
    \leq
    2\Lip(\phi)(L+M)\int \left|\Psi^{\tau}_{(2k+1)\tau,t}(x)-x\right|\d\rho^\tau_{(2k+1)\tau}(x),
  \end{align*}
  Similarly, for \( t\in \left[2k\tau,(2k+1)\tau\right) \), we have
  \begin{align*}
    \int\phi\cdot v_t^\tau\d\rho_t^\tau &=
    2\int \phi \cdot \mathscr{V}(\rho_{2k\tau})
      \d{\left[\Phi^\tau_{2k\tau,t}\right]_\sharp\rho^\tau_{2k\tau}}
    =
    2\int \phi\circ \Phi_{2k\tau,t}^\tau \cdot \mathscr{V}(\rho_{2k\tau})\circ \Phi_{2k\tau,t}^\tau
    \d{\rho^\tau_{2k\tau}},\\
    \int\phi\cdot \bar v_t^\tau\d{\bar \rho_t^\tau} &=
    2\int \phi \cdot \mathscr{V}(\rho_{2k\tau})
    \d{\rho^\tau_{2k\tau}}.
  \end{align*}
  Again by Lemma~\ref{lem:rhoLip},
  \begin{align*}
    \Big|\int\phi\cdot v_t^\tau\d\rho_t^\tau - \int \phi \cdot \bar v_t^\tau\d{\bar\rho_t^\tau}\Big|
    &\leq
    2\Big|\int \phi\circ \Phi_{2k\tau,t}^\tau \cdot \left(\mathscr{V}(\rho_{2k\tau})\circ \Phi_{2k\tau,t}^\tau -  \mathscr{V}(\rho_{2k\tau})\right)
    \d{\rho^\tau_{2k\tau}}\Big|\\
    &+
    2\Big|\int \left(\phi\circ \Phi_{2k\tau,t}^\tau - \phi\right) \cdot \mathscr{V}(\rho_{2k\tau})
      \d{\rho^\tau_{2k\tau}}\Big|\\
    &\leq
    2\|\phi\|_{\infty}\int
    \left|
    \mathscr{V}(\rho_{2k\tau}^\tau)\circ \Phi_{2k\tau,t} - \mathscr{V}(\rho_{2k\tau}^\tau)
    \right|\d\rho^\tau_{2k\tau}\\
    &+2L\int \left|\phi\circ \Phi_{2k\tau,t} - \phi \right|\d\rho^\tau_{2k\tau}\\
    &\leq
    2L\left(\|\phi\|_{\infty}+\Lip(\phi)\right) \int \left|\Phi_{2k\tau,t}(x) - x\right|\d\rho^\tau_{2k\tau}(x).
  \end{align*}

  Recalling that 
  \begin{gather*}
    \left|\Phi_{2k\tau,t}(x) - x\right|\leq 2 \tau L,\quad t\in \left[2k\tau,(2k+1)\tau\right),\\
    \left|\Psi_{(2k+1)\tau,t}(x) - x\right|\leq 2 \tau (L+M),\quad t\in \left[(2k+1)\tau,(2k+2)\tau\right),
  \end{gather*}
  we conclude that
  \begin{displaymath}
   \Big|\int\phi\cdot v_t^\tau\d\rho_t^\tau - \int \phi \cdot \bar v_t^\tau\d{\bar\rho_t^\tau}\Big| \leq C\tau\quad \forall t\in [0,T],
 \end{displaymath}
 for some \( C>0 \) that does not depend on \( \tau \). As a consequence,
  \begin{displaymath}
    \Big|\int \phi\cdot \d{E^\tau} - \int \phi\cdot \d{\bar E^\tau}\Big|
    \leq TC\tau,
  \end{displaymath}
  which implies \( E = \bar E \).
  By the same arguments, one can show that \( \rho = \bar\rho \).

  \textbf{2.} We derive the last assertion from the properties of the Benamou-Brenier functional \( \mathcal{B}_{2} \) (see Appendix~\ref{sec:A-BB}). Since \( E^\tau =v^\tau\rho^\tau \), Proposition~\ref{prop:BB}(i) and Lemma~\ref{lem:rhoLip}
  yield
    \begin{displaymath}
      \mathcal{B}_2(\rho^\tau,E^\tau) = \frac{1}{2}\int |v^\tau|^2\d\rho^\tau \leq C,
    \end{displaymath}
    for some \( C>0 \) which does not depend on \( \tau \). Hence
    the lower semicontinuity of \( \mathcal{B}_{2} \) implies
    \begin{displaymath}
      \mathcal{B}_2(\rho, E)\leq \liminf_{\tau\downarrow 0} \mathcal{B}_2(\rho^\tau, E^\tau)<+\infty,
    \end{displaymath}
    which means, by Proposition~\ref{prop:BB}(iv), that there exists \( v \) such that \( E = v\rho \).
\end{proof}

\begin{lemma}
  \label{lem:Vlimit}
  Let \( u^{\tau}_{t} = \mathscr{V}(\rho^{\tau}_{t}) \) and \( \bar u^{\tau}_{t} =
  \mathscr{V}(\bar \rho^{\tau}_{t}) \). If \( \|\rho^{\tau}- \rho\|_{\infty}\to 0 \) then  \(
  \|\bar \rho^{\tau} - \rho\|_{\infty}\to 0 \) and the sequences of vector measures
  \( u^{\tau}\rho^{\tau} \) and \( \bar u^{\tau}\bar \rho^{\tau} \) converge to
  \( u\rho \), where \( u_{t} = \mathscr{V}(\rho_{t}) \).
\end{lemma}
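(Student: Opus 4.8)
The plan is to treat both convergence assertions as instances of a single statement: whenever \( \sigma^{\tau}\colon[0,T]\to\mathcal P_{2}(\mathbb R^{d}) \) are Borel curves with \( \|\sigma^{\tau}-\rho\|_{\infty}\to0 \), the vector measures \( \mathscr V(\sigma^{\tau})\sigma^{\tau} \) converge weakly to \( u\rho \); one then applies this with \( \sigma^{\tau}=\rho^{\tau} \) and with \( \sigma^{\tau}=\bar\rho^{\tau} \). First I would dispose of the claim \( \|\bar\rho^{\tau}-\rho\|_{\infty}\to0 \). Since \( \bar\rho^{\tau}_{t}=\rho^{\tau}_{R^{\tau}(t)} \) and \( |t-R^{\tau}(t)|\le\tau \), Lemma~\ref{lem:rhoLip} (the \( 2(L+M) \)-Lipschitz continuity of \( \rho^{\tau} \)) gives \( W_{2}(\bar\rho^{\tau}_{t},\rho^{\tau}_{t})\le2(L+M)\tau \) for every \( t \); hence \( \|\bar\rho^{\tau}-\rho\|_{\infty}\le2(L+M)\tau+\|\rho^{\tau}-\rho\|_{\infty}\to0 \) by the triangle inequality. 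In particular \( \bar\rho^{\tau}_{t}\to\rho_{t} \) in \( \mathcal P_{2}(\mathbb R^{d}) \) for every \( t \).

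For the vector measures I would fix a test field \( \phi\in C^{\infty}_{c}([0,T]\times\mathbb R^{d};\mathbb R^{d}) \) and, with \( \sigma^{\tau} \) as above, split
\[
\int\phi\cdot\mathscr V(\sigma^{\tau}_{t})\,d\sigma^{\tau}_{t}-\int\phi\cdot\mathscr V(\rho_{t})\,d\rho_{t}
=\underbrace{\int\phi\cdot\big(\mathscr V(\sigma^{\tau}_{t})-\mathscr V(\rho_{t})\big)\,d\sigma^{\tau}_{t}}_{I_{t}}
+\underbrace{\int\phi\cdot\mathscr V(\rho_{t})\,d(\sigma^{\tau}_{t}-\rho_{t})}_{II_{t}}.
\]
The first integral is controlled by \( (\mathbf A_{1}) \): since \( \|\mathscr V(\sigma^{\tau}_{t})-\mathscr V(\rho_{t})\|_{\infty}\le LW_{2}(\sigma^{\tau}_{t},\rho_{t})\le L\|\sigma^{\tau}-\rho\|_{\infty} \), we get \( |I_{t}|\le\|\phi\|_{\infty}L\|\sigma^{\tau}-\rho\|_{\infty} \) uniformly in \( t \), so \( \int_{0}^{T}|I_{t}|\,dt\to0 \). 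For the second term the key observation is that \( \psi(t,x):=\phi(t,x)\cdot\mathscr V(\rho_{t})(x) \) is a bounded continuous function on \( [0,T]\times\mathbb R^{d} \): boundedness is the estimate \( |\mathscr V(\rho)|\le L \) from \( (\mathbf A_{1}) \), while joint continuity follows from \( |\mathscr V(\rho_{t})(x)-\mathscr V(\rho_{s})(y)|\le L|x-y|+LW_{2}(\rho_{t},\rho_{s}) \) together with the continuity (indeed \( 2(L+M) \)-Lipschitzness, inherited from Lemma~\ref{lem:rhoLip} under \( \|\rho^{\tau}-\rho\|_{\infty}\to0 \)) of \( t\mapsto\rho_{t} \). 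Because \( \sigma^{\tau}_{t}\to\rho_{t} \) in \( \mathcal P_{2}(\mathbb R^{d}) \) for each \( t \), the dominated-convergence argument behind Lemma~\ref{lem:rho_conv} yields \( \int_{0}^{T}II_{t}\,dt=\int\psi\,d(\sigma^{\tau}-\rho)\to0 \).

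Combining the two estimates gives \( \int\phi\cdot d\big(\mathscr V(\sigma^{\tau})\sigma^{\tau}\big)\to\int\phi\cdot d(u\rho) \). Since, by \( (\mathbf A_{2}) \), all the supports \( \spt\sigma^{\tau}_{t} \) lie in a fixed compact set \( K \) (the sets \( \bm C(t) \) are compact and satisfy \( \bm C(t)\subset\bm C(0)+MT\bm B \)) and the total masses are bounded by \( |\mathscr V(\sigma^{\tau})\sigma^{\tau}|([0,T]\times\mathbb R^{d})\le LT \), testing against \( C^{\infty}_{c} \) identifies the weak limit and upgrades the convergence to \( \mathscr V(\sigma^{\tau})\sigma^{\tau}\wto u\rho \). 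Applying this with \( \sigma^{\tau}=\rho^{\tau} \) and with \( \sigma^{\tau}=\bar\rho^{\tau} \) finishes the proof.

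The step I expect to require the most care is the use of Lemma~\ref{lem:rho_conv} for the term \( II \) in the piecewise-constant case \( \sigma^{\tau}=\bar\rho^{\tau} \): that lemma is phrased for convergence in \( C([0,T];\mathcal P_{2}(\mathbb R^{d})) \), whereas \( \bar\rho^{\tau} \) is only Borel (piecewise constant). One must note that its proof uses nothing beyond the pointwise-in-\( t \) convergence \( \sigma^{\tau}_{t}\to\rho_{t} \) and the uniform bound \( |\psi|\le\|\phi\|_{\infty}L \), both of which are available here, so the conclusion is unaffected. A secondary point worth spelling out is the joint continuity of \( (t,x)\mapsto\mathscr V(\rho_{t})(x) \), which is precisely what makes \( \psi \) an admissible test function in the weak-convergence step.
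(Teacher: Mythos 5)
Your proof is correct, and its ingredients coincide with the paper's: the first assertion is the identical triangle-inequality/Lipschitz estimate, and the convergence of the vector measures rests on the same two pillars --- the bound \( \|\mathscr{V}(\sigma^{\tau}_{t})-\mathscr{V}(\rho_{t})\|_{\infty}\leq L\|\sigma^{\tau}-\rho\|_{\infty} \) from \( (\mathbf{A_1}) \) for the velocity difference, and pointwise-in-\( t \) narrow convergence plus dominated convergence (the mechanism of Lemma~\ref{lem:rho_conv}) for the measure difference, tested against the bounded continuous function \( \phi\cdot\mathscr{V}(\rho_{t}) \). Where you differ is in the organization: the paper proves \( u^{\tau}\rho^{\tau}\wto u\rho \) with exactly your two-term split, but then treats \( \bar u^{\tau}\bar\rho^{\tau} \) separately via the three-term decomposition \( \int\phi\cdot u\,(\d{\bar\rho^{\tau}}-\d\rho)+\int\phi\cdot(\bar u^{\tau}-u^{\tau})\,\d{\bar\rho^{\tau}}+\int\phi\cdot(u^{\tau}-u)\,\d{\bar\rho^{\tau}} \), i.e., it routes the piecewise-constant family through the continuous one. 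Your version is more economical: having established \( \|\bar\rho^{\tau}-\rho\|_{\infty}\to 0 \), you reuse the same two-term argument with \( \sigma^{\tau}=\bar\rho^{\tau} \), so a single statement covers both families; the paper's three-term split would only be forced if one had merely pointwise, rather than uniform, convergence of \( \bar\rho^{\tau} \), which is not the case here. Your explicit remarks --- that Lemma~\ref{lem:rho_conv} only needs pointwise narrow convergence plus a uniform bound, and that \( (t,x)\mapsto\mathscr{V}(\rho_{t})(x) \) is jointly continuous because the limit curve \( \rho \) is Lipschitz --- make precise points the paper leaves implicit. One small simplification: your detour through \( C^{\infty}_{c} \) test functions followed by a tightness upgrade is harmless but unnecessary, since your estimates for \( I_{t} \) and \( II_{t} \) never use compactness of \( \spt\phi \), so you could test against bounded continuous \( \phi \) directly, as the paper does.
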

\begin{proof}
According to Lemma~\ref{lem:rhoLip}, on has
\begin{align*}
  \|\bar \rho^{\tau}-\rho\|_{\infty} &\leq\|\bar \rho^{\tau}-\rho^{\tau}\|_{\infty} + \|\rho^{\tau}-\rho\|_{\infty}\\
                       &= \sup_{t\in [0,T]}W_{2}(\rho^{\tau}_{R^{\tau}(t)},\rho^{\tau}_{t}) + \|\rho^{\tau}-\rho\|_{\infty}\\
                       &\leq 2(L+M)\tau + \|\rho^{\tau}-\rho\|_{\infty}.
\end{align*}
This proves the first assertion.

Let us show that the limit of \( u^\tau\rho^\tau \) is \( u\rho \). Indeed, for any
    \( \phi\in C_{b}(\mathbb{R}^{d+1}) \), one has
\begin{align*}
  \int_{0}^{T}\int_{\mathbb{R}^{d}}\phi(t,x)&\cdot \mathscr{V}(\rho^\tau_t)(x)\d{\rho^\tau_t(x)}\d t-
  \int_{0}^{T}\int_{\mathbb{R}^{d}}\phi(t,x)\cdot \mathscr{V}(\rho_t)(x)\d{\rho_t(x)}\d t \\&=
  \int_{0}^{T}\int_{\mathbb{R}^{d}}\phi(t,x)\cdot \left[\mathscr{V}(\rho^\tau_t)(x) -
    \mathscr{V}(\rho_t)(x)\right]\d{\rho^\tau_t(x)}\d t \\&+
  \int_{0}^{T}\int_{\mathbb{R}^{d}}\phi(t,x)\cdot \mathscr{V}(\rho_t)(x)\left[\d\rho^{\tau}_{t}(x)-\d{\rho_t(x)}\right]\d t.
\end{align*}
The absolute value of the first integral from the right hand side is estimated
by
\begin{displaymath}
 L\|\phi\|_{\infty}\sup_{t\in [0,T]}W_{2}(\rho^{\tau}_{t},\rho_{t}),
\end{displaymath}
and thus tends to \( 0 \). The second integral converges to \( 0 \),
because \( (t,x)\mapsto u_t(x) \) is continuous and bounded.

It remains to check that \( \bar u^\tau\bar\rho^\tau\wto u\rho \). For any \( \phi\in
C_{b}(\mathbb{R}^{d+1}) \), one has
  \begin{align*}
     \int \phi\cdot \bar u^{\tau} \d {\bar \rho^{\tau}} - \int \phi\cdot
     u \d \rho =
     \int \phi \cdot u\left(\d{\bar \rho^{\tau}}-\d{\rho}\right)+
    \int \phi\cdot \left(\bar u^{\tau}-u^{\tau}\right) \d {\bar \rho^{\tau}}
    +
    \int \phi\cdot \left(u^{\tau}-u\right) \d {\bar \rho^{\tau}}.
  \end{align*}
  The first integral from the right-hand side, which can be rewritten as
  \begin{displaymath}
    \int_{0}^{T}\int_{\mathbb{R}^{d}}\phi(t,x)\cdot u_{t}(x)
    \left(\d{\bar\rho^{\tau}_{t}}-\d{\rho^{\tau}_{t}}\right)\d t,
  \end{displaymath}
  converges to \( 0 \) since \( \bar \rho^{\tau}_{t}\wto \rho_{t} \) for a.e. \( t\in [0,T]
  \). The absolute values of the last two integrals can be estimated by
\begin{displaymath}
 L\|\phi\|_{\infty}\sup_{t\in [0,T]}W_{2}(\bar \rho^{\tau}_{t},\rho_{t})\quad \text{and}\quad
 L\|\phi\|_{\infty}\sup_{t\in [0,T]}W_{2}(\rho^{\tau}_{t},\rho_{t}),
\end{displaymath}
respectively. This proves the last assertion.
\end{proof}


\subsection{Normal cone inclusion}

The curves \( \rho^{\tau} \) and the vector fields \( v^{\tau} \) are constructed so that
\begin{displaymath}
  \partial_{t}\rho^{\tau}_{t} + \nabla\cdot (v^{\tau}_{t}\rho^{\tau}_{t}) = 0.
\end{displaymath}
The sequence \( \rho^{\tau}\), being uniformly Lipschitz by Lemma~\ref{lem:rhoLip},
converges (up to a subsequence) to a Lipschitz map \( \rho \) in \(
C\left([0,T];\mathcal{P}_{2}(\mathbb{R}^{d})\right) \).
Lemmas~\ref{lem:rho_conv} and~\ref{lem:same-limit} yield that \( \rho^{\tau}\wto \rho \)
and \( v^{\tau}\rho^{\tau}\wto v\rho \) for some Borel map \( v \). In particular, it
follows that
\begin{displaymath}
  \partial_{t}\rho_{t} + \nabla\cdot (v_{t}\rho_{t}) = 0.
\end{displaymath}
The inclusion \( \spt\rho_t \subset \bm C(t) \) is a direct consequence of the
following lemma.
\begin{lemma}
  \label{lem:sptconv}
  Let \( \mu_{k}\in \mathcal{P}_{2}(\mathbb{R}^{d}) \) and \( \spt\mu_k\subset A+r_k\bm B \), where \( A \) is compact. If \( \mu_k\wto\mu \) and \( r_k \to 0 \) then \( \spt \mu\subset A \).
\end{lemma}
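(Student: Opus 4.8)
The plan is to show that the limit measure puts full mass on \( A \), i.e. \( \mu(A) = 1 \); since \( A \) is closed, this immediately gives \( \spt\mu \subset A \).

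First I would fix \( \epsilon > 0 \) and work with the closed \( \epsilon \)-neighborhood \( A_\epsilon := A + \epsilon\bm B = \{x\;\colon\; d_A(x) \leq \epsilon\} \), which is compact because \( A \) is compact. Since \( r_k \to 0 \), for all sufficiently large \( k \) we have \( r_k \leq \epsilon \), hence \( \spt\mu_k \subset A + r_k\bm B \subset A_\epsilon \), and therefore \( \mu_k(A_\epsilon) = 1 \) for large \( k \).

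Next I would invoke the Portmanteau characterization of narrow convergence in its closed-set form: for the closed set \( A_\epsilon \) one has \( \limsup_k \mu_k(A_\epsilon) \leq \mu(A_\epsilon) \). Combined with \( \mu_k(A_\epsilon) = 1 \) for large \( k \), this forces \( \mu(A_\epsilon) = 1 \) for every \( \epsilon > 0 \). Finally I would let \( \epsilon \downarrow 0 \): the sets \( A_\epsilon \) decrease to \( \bigcap_{\epsilon > 0} A_\epsilon = A \) (here closedness of \( A \) is what makes the intersection equal to \( A \)), so continuity of \( \mu \) from above yields \( \mu(A) = \lim_{\epsilon \downarrow 0}\mu(A_\epsilon) = 1 \). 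Thus \( \mu(A^c) = 0 \), and since \( A \) is closed and carries full mass, \( \spt\mu \subset A \).

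I do not expect a serious obstacle here; the only points demanding care are using the correct (closed-set) direction of the Portmanteau inequality and observing that \( \bigcap_{\epsilon>0} A_\epsilon = A \) relies on \( A \) being closed. If one prefers a self-contained argument avoiding Portmanteau, an equivalent route tests directly against a continuous bump function \( \phi \) with \( 0 \leq \phi \leq 1 \) supported in a small ball around a hypothetical point \( x_0 \in \spt\mu \setminus A \) and disjoint from some neighborhood \( A_\delta \); then \( \int \phi \d\mu_k = 0 \) for all large \( k \), so narrow convergence gives \( \int \phi \d\mu = 0 \), contradicting \( x_0 \in \spt\mu \).
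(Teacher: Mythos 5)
Your proof is correct and is essentially the paper's own argument in complementary form: where you apply the closed-set direction of Portmanteau to the compact neighborhoods \( A+\epsilon\bm B \) and conclude via continuity from above, the paper applies the open-set direction to the open complements \( U_n=(A+r_n\bm B)^{c} \) and concludes via a countable union, the two being related by taking complements. Both your main route and your alternative bump-function remark are sound.
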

\begin{proof}
  The set \( U_n = \left( A+r_n\bm B \right)^c \) is open. Hence, for each \( n \), we have
  \begin{displaymath}
    0= \liminf_{k\to\infty} \mu_k(U_n)\geq \mu(U_n).
  \end{displaymath}
  This means that \( \mu \left( \bigcup_{n}U_n \right) = 0\). It remains to note that
  \begin{displaymath}
    \bigcup_{n}U_n = \Big(\bigcap_n (A+ r_n\bm B) \Big)^c = A^c,
  \end{displaymath}
  completing the proof.

\end{proof}

To prove that \( \rho \) is a solution of~\eqref{eq:sp}, it remains to establish
the following Proposition~\ref{prop:normalcone}, whose proof heavily relies on
the properties of the piecewise constant approximation.

\begin{proposition}
  \label{prop:normalcone}
  For a.e. \( t\in [0,T] \), there exists a set \( A_{t}\subset \mathbb{R}^{d} \)
  such that
  \begin{equation}
    \label{eq:normalcone}
    v_t(x) - \mathscr{V}(\rho_t)(x)\in -N_{\bm C(t)}(x)\quad \forall x\in A_{t}
  \end{equation}
  and \( \rho_{t}(A_{t})=1 \).
\end{proposition}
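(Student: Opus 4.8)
The plan is to prove the normal cone inclusion by passing to the limit in the discrete inclusions satisfied by the piecewise constant approximation. The key observation is that at each projection step of the catching-up scheme, the displacement vector $P_{\bm C((2k+2)\tau)}(x) - x$ is, by the very definition of the projection onto a prox-regular set, an (outer) proximal normal to $\bm C((2k+2)\tau)$. This is precisely the discrete analogue of the normal cone condition, and the whole task reduces to showing this discrete inclusion survives the limit $\tau \to 0$.

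\medskip

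\textbf{Step 1 (Discrete normal cone inclusion).} First I would examine the velocity field on the projection intervals. For $t \in [(2k+1)\tau, (2k+2)\tau)$, the field $v^\tau_t(y) = w^\tau_{(2k+1)\tau}(\Psi^{-1}(y)) = \tau^{-1}(P_{\bm C((2k+2)\tau)}(x) - x)$ where $x = [\Psi^\tau_{(2k+1)\tau,t}]^{-1}(y)$. Using Proposition~\ref{prop:proxreg}(b), the vector $P_{\bm C((2k+2)\tau)}(x) - x$ lies in $-N_{\bm C((2k+2)\tau)}(P_{\bm C((2k+2)\tau)}(x))$, so $v^\tau$ points along an inner proximal normal at the projected point. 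Thus $\bar v^\tau_t(x)$ already satisfies a normal-cone-type inclusion with respect to the mesh-shifted set $\bm C(R^\tau(t) + \tau)$, modulo the perturbation $\mathscr V$ which only appears on the transport (odd) intervals. I would isolate the exact discrete inclusion: $\bar v^\tau_t - \mathscr V(\bar\rho^\tau_t) \in -N_{\bm C(t')}(\cdot)$ for the appropriate nearby time $t'$, valid $\bar\rho^\tau_t$-a.e.

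\medskip

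\textbf{Step 2 (Quantify the defect and integrate against test functions).} Using the characterization in Proposition~\ref{prop:proxreg}(b), the proximal normal inequality at each point can be written as a scalar inequality $\langle v^\tau_t(x) - \mathscr V(\cdot)(x), y - x\rangle \le \frac{|\cdot|}{2r}|x-y|^2$ for all $y \in \bm C(t')$. I would integrate this inequality, tested against a nonnegative $\zeta \in C_c$, against $\bar\rho^\tau_t$ and over $t$, producing a functional inequality in the vector measures $\bar v^\tau \bar\rho^\tau$ and $\bar E^\tau$. By Lemmas~\ref{lem:same-limit} and~\ref{lem:Vlimit}, these converge to $v\rho$ and $\mathscr V(\rho)\rho$ respectively, and by Lemma~\ref{lem:rhoLip} the velocities are uniformly bounded, so the quadratic remainder term is controlled.

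\medskip

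\textbf{The hard part will be} passing to the limit in an inclusion that is pointwise in $x$ but must be recovered $\rho_t$-a.e. for a.e.\ $t$. Weak convergence of $\bar v^\tau \bar\rho^\tau \wto v\rho$ only yields an integrated (tested) inequality, so I would recover the pointwise statement via a disintegration/measurable-selection argument: the integrated proximal normal inequality, holding for a countable dense family of test functions $\zeta$ and target points $y$, forces the inclusion $v_t(x) - \mathscr V(\rho_t)(x) \in -N_{\bm C(t)}(x)$ on a full-measure set $A_t$ with $\rho_t(A_t) = 1$. A delicate point is matching the shifted time $t' = R^\tau(t) + \tau$ with $t$ in the limit: the Lipschitz continuity of $\bm C$ in the Hausdorff distance (assumption $(\mathbf{A_2})$) guarantees $d_H(\bm C(t'), \bm C(t)) \le 2M\tau \to 0$, so proximal normals to the nearby sets converge appropriately, but making this rigorous at the level of normal cones — where upper semicontinuity can fail — is where the prox-regularity is essential and where I expect the main technical effort to concentrate.
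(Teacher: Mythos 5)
There is a genuine gap, and it sits at the center of your Step 1. The discrete inclusion you propose to isolate, \( \bar v^\tau_t - \mathscr V(\bar\rho^\tau_t) \in -N_{\bm C(t')}(\cdot) \), is false on both kinds of intervals. On the transport intervals \( [2k\tau,(2k+1)\tau) \) the scheme uses the \emph{doubled} field, so \( \bar v^\tau_t - \mathscr V(\bar\rho^\tau_t) = 2\mathscr V(\rho^\tau_{2k\tau}) - \mathscr V(\rho^\tau_{2k\tau}) = \mathscr V(\rho^\tau_{2k\tau}) \), which is a generic bounded vector, not a proximal normal (at interior points of the viability set the proximal normal cone is \( \{0\} \), while \( \mathscr V \neq 0 \) in general). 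On the projection intervals only \( w^\tau_{(2k+1)\tau} \) itself is a normal (and only at the projected point); subtracting \( \mathscr V(\rho^\tau_{(2k+1)\tau}) \) destroys the inclusion. In fact no single interval of the scheme satisfies the perturbed inclusion: it emerges only in a time-averaged sense over a \emph{pair} of adjacent intervals, where the excess drift \( +\mathscr V \) produced by the double-speed transport on the even interval cancels the defect \( -\mathscr V(\bar\rho^\tau_t) \) carried by the odd interval. This cancellation is exactly what the functional \( J^\tau \) in~\eqref{eq:Jtau} is engineered to capture: the even- and odd-interval contributions are paired through the shifted weights \( \int a(t)\,dt \) versus \( \int a(t+\tau)\,dt \), and Lemmas~\ref{lem:lim1}, \ref{lem:lim2} and~\ref{lem:JO} show that the paired \( \mathscr V \)-terms cancel to \( O(\tau) \), leaving only the genuinely nonpositive normal part \( J^\tau_3\leq 0 \). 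Without this pairing mechanism the integrated inequality of your Step 2 has no sign, and the limit passage cannot produce~\eqref{eq:normalcone}.

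A second, smaller omission concerns the final localization. You need target points \( y \) lying in \( \bm C(t) \) for \emph{all} \( t \) simultaneously, since the exceptional \( t \)-sets must be handled countably; fixed points \( y \) will exit the moving set, so they do not suffice. The paper resolves this by constructing a countable family of Lipschitz moving targets \( y_{n,k}(t) \), dense in \( \bm C(t) \) for every \( t \), as solutions of the unperturbed sweeping process \( -\dot y_{n,k}(t)\in N_{\bm C(t)}(y_{n,k}(t)) \), and then applies the localization Lemma~\ref{lem:int-ineq} twice (once in \( t \), once in \( x \)). Your remaining ingredients — the scalar proximal-normal inequality from Proposition~\ref{prop:proxreg}(b), the uniform velocity bounds from Lemma~\ref{lem:rhoLip}, the limit identification via Lemmas~\ref{lem:same-limit} and~\ref{lem:Vlimit}, and continuity of projections to absorb the time shift — do match the paper's Lemma~\ref{lem:int-below0}, but they cannot be assembled into a proof until the cancellation step and the moving dense targets are supplied.
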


Consider a set-valued map \( t\mapsto \bar {\bm C}^{\tau}(t) \) defined by
\begin{displaymath}
  \bar {\bm C}^{\tau}(t) = \bm C\left((2k+2)\tau\right),\quad t\in \left(2k\tau,(2k+2)\tau \right].
\end{displaymath}
Given a measurable selection \( y(t)\in \bm C(t) \), we define in the same way
\begin{displaymath}
  \bar y^{\tau}(t)= y\left((2k+2)\tau\right)=y_{(2k+2)\tau},\quad t\in \left(2k\tau,(2k+2)\tau \right].
\end{displaymath}

Let us introduce the integral
\begin{align}
  J^\tau =   \int_0^T\int_{\mathbb{R}^{d}}a(t)b(x)
&\Big(\left\langle 
  \bar v^\tau_{t}(x) - \mathscr{V}\left(\bar\rho^\tau_{t}\right)(x), P_{\bar {\bm C}^{\tau}(t)}(x)-\bar y^{\tau}(t)
\right\rangle\notag\\
&-\frac{1}{2r}\big|\bar v^\tau_{t}(x) - \mathscr{V}\left(\bar\rho^\tau_{t}\right)(x)\big|\cdot
\big| P_{\bar {\bm C}^{\tau}(t)}(x)-\bar y^{\tau}(t)\big|^2\Big)
\d{\bar\rho^\tau_{t}(x)}\d t,
\label{eq:Jtau}
\end{align}
where \( a \) and \( b \) are nonnegative bounded Lipschitz functions. Our aim
is to pass to the limit in the integral as \( \tau\to 0 \). Without loss of
generality, we can consider only those \( \tau \) that satisfy \( (2N+2)\tau = T \)
for some \( N\in \mathbb{N} \).

Recall that
\begin{displaymath}
  \bar v^\tau_{t} - \mathscr{V}(\bar\rho^\tau_{t}) =
  \begin{cases}
    \mathscr{V}(\rho^\tau_{2k\tau}), & t\in 
    \left[2k\tau,(2k+1)\tau\right],\\
    w_{(2k+1)\tau}^\tau- \mathscr{V}\left(\rho^\tau_{(2k+1)\tau}\right) & t\in
    \left[(2k+1)\tau,(2k+2)\tau\right].
  \end{cases}
\end{displaymath}
Hence \( J^\tau \) can be rewritten as
\begin{multline*}
  \sum_{k=0}^{N} \int_{2k\tau}^{(2k+1)\tau}a(t)\d t
\int b(x)\Big( \big\langle \mathscr{V}\left(\rho_{2k\tau}^\tau\right)(x),
    P_{\bm C\left((2k+2)\tau\right)}(x)-y_{(2k+2)\tau}
  \big\rangle\\
  -\frac{1}{2r}\big|\mathscr{V}\left(\rho^\tau_{2k\tau}\right)(x)\big|\cdot
\big| P_{\bm C\left((2k+2)\tau \right)}(x)-y_{(2k+2)\tau}\big|^2\Big)
\d{\rho_{2k\tau}^\tau(x)}\\
  +
  \sum_{k=0}^{N}
  \int_{2k\tau}^{(2k+1)\tau}a(t+\tau)\d t
  \int b(x) \Big(\big\langle
  w^\tau_{(2k+1)\tau}(x)-\mathscr{V}\left(\rho_{(2k+1)\tau}^\tau\right)(x),P_{\bm C\left((2k+2)\tau\right)}(x)-y_{(2k+2)\tau}
  \big\rangle\\
      -\frac{1}{2r}\big|w^\tau_{(2k+1)\tau}(x) - \mathscr{V}\left(\rho^\tau_{(2k+1)\tau}\right)(x)\big|\cdot
\big| P_{\bm C\left((2k+2)\tau \right)}(x)-y_{(2k+2)\tau}\big|^2\Big)
  \d{\rho_{(2k+1)\tau}^\tau(x)}.
\end{multline*}
Since 
\begin{displaymath}
  \big|w^\tau_{(2k+1)\tau}(x) - \mathscr{V}\left(\rho^\tau_{(2k+1)\tau}\right)(x)\big|
  \geq 
\big|w^\tau_{(2k+1)\tau}(x)\big| - \big|\mathscr{V}\left(\rho^\tau_{(2k+1)\tau}\right)(x)\big|,
\end{displaymath}
we conclude that \(J^\tau\leq J^\tau_1 +J^\tau_2+J^\tau_3\), where 
\begin{multline*}
J^\tau_1 =   \sum_{k=0}^{N}\Big( \int_{2k\tau}^{(2k+1)\tau}a(t)\d t
\int b(x)\big\langle \mathscr{V}\left(\rho_{2k\tau}^\tau\right)(x),
    P_{\bm C\left((2k+2)\tau\right)}(x)-y_{(2k+2)\tau}
  \big\rangle\d{\rho_{2k\tau}^\tau(x)}\\
  -
  \int_{2k\tau}^{(2k+1)\tau}a(t+\tau)\d t
  \int b(x) \big\langle \mathscr{V}\left(\rho_{(2k+1)\tau}^\tau\right)(x),P_{\bm C\left((2k+2)\tau\right)}(x)-y_{(2k+2)\tau}
                            \big\rangle\d{\rho_{(2k+1)\tau}^\tau(x)}\Big),
\end{multline*}
\begin{multline*}
J^\tau_2 =   -\frac{1}{2r}\sum_{k=0}^{N}\Big( \int_{2k\tau}^{(2k+1)\tau}a(t)\d t
\int b(x)\big|\mathscr{V}\left(\rho_{2k\tau}^\tau\right)(x)\big|\cdot
    \big|P_{\bm C\left((2k+2)\tau\right)}(x)-y_{(2k+2)\tau}\big|^2
  \d{\rho_{2k\tau}^\tau(x)}\\
  -
  \int_{2k\tau}^{(2k+1)\tau}a(t+\tau)\d t
  \int b(x) \big| \mathscr{V}\big(\rho_{(2k+1)\tau}^\tau\big)(x)\big|\cdot
  \big| P_{\bm C\left((2k+2)\tau\right)}(x)-y_{(2k+2)\tau}\big|^2
  \d{\rho_{(2k+1)\tau}^\tau(x)}\Big),
\end{multline*}
\begin{multline*}
  J^\tau_3 = \sum_{k=0}^{N}
  \int_{2k\tau}^{(2k+1)\tau}a(t+\tau)\d t
  \int b(x) \Big(\big\langle w^\tau_{(2k+1)\tau}(x),P_{\bm C\left((2k+2)\tau\right)}(x)-y_{(2k+2)\tau}
  \big\rangle\\
      -\frac{1}{2r}\big|w^\tau_{(2k+1)\tau}(x)\big|\cdot
\big| P_{\bm C\left((2k+2)\tau \right)}(x)-y_{(2k+2)\tau}\big|^2\Big)
  \d{\rho_{(2k+1)\tau}^\tau(x)}.
\end{multline*}

We are going to show that \(J^\tau_1 = O(\tau)\) and \(J^\tau_2 = O(\tau)\).

\begin{lemma}
  \label{lem:lim1}
  Given \( \mu_{1},\mu_2\in \mathcal{P}_2(\mathbb{R} ^d) \) and a Lipschitz function \(
  b\in C(\mathbb{R}^{d}) \), suppose that
  \begin{enumerate}[\((a)\)]
    \item  there exists \( K\in \mathcal K_{r}(\mathbb{R}^{d}) \) such that
      \( \spt \mu_1 \cup \spt\mu_2 \subset K + \frac{r}{2}\bm B \);
    \item
    there exist a Borel measurable map
      \( \psi\colon \mathbb{R} ^d\to \mathbb{R} ^d \) and \( C>0 \) such that \( \mu_2 = \psi_\sharp\mu_1 \)
      and \( |x-\psi(x)|\leq C\tau \) for all \( x \);
  \end{enumerate}
  Then there exists \( C_{1}>0 \) such that
\begin{align*}
  &\Big|\int b(x) \left\langle \mathscr{V}(\mu_1)(x),P_{K}(x)-y \right\rangle\d\mu_1(x)-
  \int b(x)\left\langle \mathscr{V}(\mu_2)(x),P_{K}(x)-y \right\rangle\d\mu_2(x)\Big|
  \leq C_1\tau,\\
  &\Big|\int b(x) \left|\mathscr{V}(\mu_1)(x)\right|\cdot
  \left|P_{K}(x)-y\right|^2\d\mu_1(x)-
  \int b(x)\left| \mathscr{V}(\mu_2)(x)\right|\cdot \left
  |P_{K}(x)-y \right|^2\d\mu_2(x)\Big|
  \leq C_1\tau,
\end{align*}
for all \( y\in K + \frac{r}{2} \bm B \).
\end{lemma}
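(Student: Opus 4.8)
The plan is to reduce both inequalities to a single pointwise estimate by means of the pushforward identity, and then to integrate against the probability measure \( \mu_{1} \). Since \( \mu_{2}=\psi_{\sharp}\mu_{1} \), the change-of-variables formula gives \( \int g(x)\d\mu_{2}(x) = \int g(\psi(x))\d\mu_{1}(x) \) for every Borel integrand \( g \). Hence the difference of the two integrals in the first inequality equals \( \int\big(g_{1}(x)-g_{2}(\psi(x))\big)\d\mu_{1}(x) \), where \( g_{1}(x)=b(x)\langle \mathscr V(\mu_{1})(x),P_{K}(x)-y\rangle \) and \( g_{2}(x)=b(x)\langle \mathscr V(\mu_{2})(x),P_{K}(x)-y\rangle \). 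It therefore suffices to prove the pointwise bound \( |g_{1}(x)-g_{2}(\psi(x))|\leq C_{1}\tau \) for \( \mu_{1} \)-a.e.\ \( x \); integrating then yields the claim because \( \mu_{1} \) has unit mass.

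Before the telescoping I would record the quantitative ingredients. By \( (\mathbf A_{1}) \), each \( \mathscr V(\mu_{i}) \) is bounded by \( L \) and \( L \)-Lipschitz in \( x \), and \( \|\mathscr V(\mu_{1})-\mathscr V(\mu_{2})\|_{\infty}\leq L\,W_{2}(\mu_{1},\mu_{2}) \). The key quantitative input is \( W_{2}(\mu_{1},\mu_{2})\leq C\tau \): indeed \( \psi \) transports \( \mu_{1} \) onto \( \mu_{2} \), so \( W_{2}^{2}(\mu_{1},\mu_{2})\leq \int|x-\psi(x)|^{2}\d\mu_{1}\leq (C\tau)^{2} \) by assumption \( (b) \). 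The point that needs a little care, and which I regard as the main technical check, is the behaviour of \( P_{K} \): assumption \( (a) \) guarantees that for \( \mu_{1} \)-a.e.\ \( x \) both \( x \) and \( \psi(x) \) lie in \( K+\tfrac r2\bm B\subset K+\tfrac{3r}{4}\bm B^{\circ} \), so Proposition~\ref{prop:proxreg} applies and \( P_{K} \) is single-valued and Lipschitz there with constant at most \( \tfrac{r}{\,r-3r/4\,}=4 \); in particular \( P_{K}(\psi(x)) \) is well-defined and \( |P_{K}(x)-P_{K}(\psi(x))|\leq 4C\tau \). Finally, \( b \) being Lipschitz gives \( |b(x)-b(\psi(x))|\leq \Lip(b)\,C\tau \), and the vector \( P_{K}(\cdot)-y \) is bounded by \( R\doteq \diam(K)+\tfrac r2 \) since \( P_{K}(\cdot)\in K \) and \( y\in K+\tfrac r2\bm B \).

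With these at hand the pointwise estimate follows from a standard telescoping of the triple product. Abbreviating by \( A_{i},B_{i},D_{i} \) the values of \( b \), \( \mathscr V \) and \( P_{K}(\cdot)-y \) at the two arguments \( x \) and \( \psi(x) \), I would write
\[
A_{1}\langle B_{1},D_{1}\rangle - A_{2}\langle B_{2},D_{2}\rangle
= (A_{1}-A_{2})\langle B_{1},D_{1}\rangle + A_{2}\langle B_{1}-B_{2},D_{1}\rangle + A_{2}\langle B_{2},D_{1}-D_{2}\rangle,
\]
and bound each summand by a constant times \( \tau \): the first by \( \Lip(b)\,C\tau\cdot LR \); the second via \( |B_{1}-B_{2}|\leq |\mathscr V(\mu_{1})(x)-\mathscr V(\mu_{2})(x)|+|\mathscr V(\mu_{2})(x)-\mathscr V(\mu_{2})(\psi(x))|\leq L\,W_{2}(\mu_{1},\mu_{2})+L|x-\psi(x)|\leq 2LC\tau \), giving \( \|b\|_{\infty}\,2LC\tau\cdot R \); and the third by \( \|b\|_{\infty}\,L\cdot 4C\tau \). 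Summing yields the first inequality. The second inequality is handled identically after replacing \( \langle B_{i},D_{i}\rangle \) by \( |B_{i}|\,|D_{i}|^{2} \): the analogous decomposition uses \( \big||B_{1}|-|B_{2}|\big|\leq|B_{1}-B_{2}|\leq 2LC\tau \) and \( \big||D_{1}|^{2}-|D_{2}|^{2}\big|=|D_{1}-D_{2}|\,|D_{1}+D_{2}|\leq 4C\tau\cdot 2R \), producing an estimate of the same form. Taking \( C_{1} \) to be the larger of the two resulting constants completes the argument; no genuine difficulty arises beyond the verification, noted above, that the displacement bound in \( (b) \) keeps \( x \) and \( \psi(x) \) in the region where \( P_{K} \) is Lipschitz.
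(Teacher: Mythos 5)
Your proof is correct and follows essentially the same route as the paper's: rewrite the $\mu_{2}$-integral via the pushforward identity, telescope the product of $b$, $\mathscr V$, and $P_{K}(\cdot)-y$ at the points $x$ and $\psi(x)$, and bound each difference using $W_{2}(\mu_{1},\mu_{2})\leq C\tau$, the Lipschitz estimate for $P_{K}$ from Proposition~\ref{prop:proxreg}, and the bounds in $(\mathbf A_{1})$. The only (harmless) deviations are cosmetic: you merge two of the paper's inner-product terms into one, and you pass through the open $\tfrac{3r}{4}$-neighborhood to get the projection Lipschitz constant $4$ where the paper uses $2$ — which is in fact slightly more careful, since the support hypothesis involves the closed $\tfrac r2$-ball.
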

\begin{proof}
  \textbf{1.} We start with the first inequality. Note that 
  \begin{displaymath}
    \int b(x) \left\langle\mathscr{V}(\mu_2)(x),P_{K}(x)-y\right\rangle\d{\mu_2(x)} =
    \int b\circ \psi(x)\left\langle\mathscr{V}(\mu_2)\circ{\psi}(x),P_{K}\circ\psi(x)-y\right\rangle\d{\mu_1(x)}.
  \end{displaymath}
  Now taking \( x_1 = x \) and \( x_2 = \psi(x) \), we may write
\begin{align*}
  b(x_1)&\left\langle \mathscr{V}(\mu_1)(x_1),P_{K}(x_1) -y \right\rangle -
  b(x_2)\left\langle \mathscr{V}(\mu_2)(x_2),P_{K}(x_2)-y \right\rangle
  \\
        & =
        \left(b(x_1)- b(x_2)\right)\cdot
        \left\langle \mathscr{V}(\mu_1)(x_1),P_{K}(x_1) -y \right\rangle\\
        & +
        b(x_2)
          \left(
            \left\langle \mathscr{V}(\mu_1)(x_1),P_{K}(x_1)-y \right\rangle
            -
            \left\langle \mathscr{V}(\mu_2)(x_2),P_{K}(x_2)-y \right\rangle
          \right).
\end{align*}
The first term from the right-hand side can be estimated by 
\begin{displaymath}
  \Lip(b) L |x_1-x_2|\cdot |P_K(x_1)-y|.
\end{displaymath}
To deal with the second term, consider the difference
\begin{align*}
 \left\langle \mathscr{V}(\mu_1)(x_1),P_{K}(x_1)-y \right\rangle
 & -
 \left\langle \mathscr{V}(\mu_2)(x_2),P_{K}(x_2)-y \right\rangle
 \\
 & = 
  \left\langle \mathscr{V}(\mu_1)(x_1) - \mathscr{V}(\mu_2)(x_1), P_{K}(x_1)-y\right\rangle
  \\
 & +
  \left\langle \mathscr{V}(\mu_2)(x_1) - \mathscr{V}(\mu_2)(x_2), P_{K}(x_1)-y\right\rangle
  \\
 & +
 \left\langle \mathscr{V}(\mu_2)(x_2),P_{K}(x_1)-P_K(x_2) \right\rangle.
\end{align*}
Our assumptions imply that the first term from the right-hand side can be estimated by 
\( LW_2(\mu_1,\mu_2)|P_{K}(x_1)-y|\),
the second term by 
\( L|x_1-x_2|\cdot |P_{K}(x_1)-y| \), and
the third term by
\( L \left| P_{K}(x_1) - P_{K}(x_2) \right| \).

Since \( b \) is bounded on \( K+\frac{r}{2}\bm B \) and, for all \( x_1\in \spt\mu_1 \), we have
\begin{align}
  & \left| x_1 -x_2  \right| = \left| x -\psi(x)  \right| \leq C\tau,\notag\\
  & W_2(\mu_1,\mu_2)\leq \Big(\int |x-\psi(x)|^2\d\mu_1(x)\Big)^{1/2}\leq C\tau,\notag\\
  & \left|P_{K}(x_1) - P_{K}(x_2)\right|\leq 2 |x_1-x_2|\leq 2C\tau,\quad \text{(by
    Proposition~\ref{prop:proxreg})}
    \label{eq:PP}
  \\
  & \left|P_{K}(x_{1})-y\right|\leq \diam K +r.\notag
\end{align}
After combining all the estimates, we get the desired inequality.

\textbf{2.} The second inequality can be proven in the similar way. We begin with the identity
  \begin{displaymath}
    \int b(x) \left|\mathscr{V}(\mu_2)(x)\right|\cdot
    \left|P_{K}(x)-y\right|^2\d{\mu_2(x)} =
    \int b\circ \psi(x)\left|\mathscr{V}(\mu_2)\circ{\psi}(x)\right|\cdot
    \left|P_{K}\circ\psi(x)-y\right|^2\d{\mu_1(x)}.
  \end{displaymath}
Again, by taking \( x_1 = x \) and \( x_2 = \psi(x) \), we get
\begin{align*}
  b(x_1)&\left|\mathscr{V}(\mu_1)(x_1)\right|\cdot \left|P_{K}(x_1) -y \right|^2 -
  b(x_2)\left|\mathscr{V}(\mu_2)(x_2)\right|\cdot \left|P_{K}(x_2)-y \right|^2  \\
        & =
        \left(b(x_1)- b(x_2)\right)\cdot
        \left|\mathscr{V}(\mu_1)(x_1)\right|\cdot\left|P_{K}(x_1) -y \right|^2\\
        & +
        b(x_2)
          \left(
            \left| \mathscr{V}(\mu_1)(x_1)\right|\cdot\left|P_{K}(x_1)-y \right|^2
            -
            \left| \mathscr{V}(\mu_2)(x_2)\right|\cdot\left|P_{K}(x_2)-y \right|^2
          \right).
\end{align*}
The first term from the right-hand side can be estimated by
\(\Lip(b)CL\left(\diam K +r\right)^{2}\tau\). As for the second term, we have
\begin{align}
\left| \mathscr{V}(\mu_1)(x_1)\right|\cdot\left|P_{K}(x_1)-y \right|^2
-
\left| \mathscr{V}(\mu_2)(x_2)\right|\cdot\left|P_{K}(x_2)-y \right|^2 \notag\\
=
\big(\left| \mathscr{V}(\mu_1)(x_1)\right| - \left| \mathscr{V}(\mu_2)(x_2)\right|\big)
\cdot\left|P_{K}(x_1)-y \right|^2\notag\\
+
\left| \mathscr{V}(\mu_2)(x_2)\right|
\cdot
  \big(\left|P_{K}(x_1)-y \right|^2 - \left|P_{K}(x_2)-y \right|^2\big)
  \label{eq:VP}
\end{align}
We can easily estimate the first term in~\eqref{eq:VP} because
\begin{displaymath}
  \big|\left| \mathscr{V}(\mu_1)(x_1)\right| - \left| \mathscr{V}(\mu_2)(x_2)\right|\big|
  \leq
  \left| \mathscr{V}(\mu_1)(x_1) - \mathscr{V}(\mu_2)(x_2)\right|
  \leq
  L|x_{1}-x_{2}| + LW_{2}(\mu_{1},\mu_{2})\leq 2LC\tau.
\end{displaymath}
Thanks to~\eqref{eq:PP} and the identity
\begin{displaymath}
  \left|P_{K}(x_1)-y \right|^2 - \left|P_{K}(x_2)-y \right|^2 = 
\left|P_{K}(x_1)-P_{K}(x_2)\right|\cdot
\left|P_{K}(x_1)+ P_{K}(x_2)-2y \right|,
\end{displaymath}
we can estimate the second term in~\eqref{eq:VP} by $4LC\tau\left(\diam K
  +r\right)$. Combining all the estimates above, we obtain the desired inequality.
\end{proof}

\begin{lemma}
  \label{lem:lim2}
  Let \( a\colon [0,T] \to \mathbb{R} \) be Lipschitz, \( s\in [0,T-\tau] \), and  \( \alpha,\beta\in \mathbb{R} \). Then
    \begin{displaymath}
      \Big| \alpha\int_s^{s+\tau}a(t)\d t - \beta \int_{s}^{s+\tau}a(t+\tau)\d t\Big|\leq |\alpha|\Lip(a) \tau^2 + |\alpha- \beta|\cdot \|a\|_\infty\tau.
    \end{displaymath}
\end{lemma}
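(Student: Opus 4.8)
The plan is to estimate the expression by an additive splitting that separates the coefficient mismatch $\alpha\neq\beta$ from the shift $t\mapsto t+\tau$ in the argument of $a$. Set $I_1 = \int_s^{s+\tau} a(t)\d t$ and $I_2 = \int_s^{s+\tau} a(t+\tau)\d t$. The single idea is the decomposition
$$\alpha I_1 - \beta I_2 = \alpha\,(I_1 - I_2) + (\alpha - \beta)\,I_2,$$
which, by the triangle inequality, yields
$$|\alpha I_1 - \beta I_2| \leq |\alpha|\,|I_1 - I_2| + |\alpha - \beta|\,|I_2|.$$

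For the second summand I would use the crude bound $|I_2| \leq \int_s^{s+\tau} |a(t+\tau)|\d t \leq \|a\|_\infty\,\tau$, which reproduces the term $|\alpha - \beta|\,\|a\|_\infty\,\tau$. For the first summand I would write $I_1 - I_2 = \int_s^{s+\tau}\big(a(t) - a(t+\tau)\big)\d t$ and invoke the Lipschitz continuity of $a$: since $|a(t) - a(t+\tau)| \leq \Lip(a)\,\tau$ for every $t$, integrating over an interval of length $\tau$ gives $|I_1 - I_2| \leq \Lip(a)\,\tau^2$. Multiplying by $|\alpha|$ contributes $|\alpha|\,\Lip(a)\,\tau^2$, and adding the two bounds gives precisely the asserted inequality.

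I do not expect any genuine obstacle in this lemma: once the decomposition above is written down, both estimates are immediate consequences of the boundedness and the Lipschitz continuity of $a$. The only point worth a moment's care is to keep track of the powers of $\tau$ correctly, namely the single power coming from the length of the integration interval in $|I_2|$ and the second power arising in the first term from the additional factor $\Lip(a)\,\tau$ produced by the Lipschitz estimate on $a(t)-a(t+\tau)$.
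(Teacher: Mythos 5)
Your decomposition $\alpha I_1 - \beta I_2 = \alpha\,(I_1-I_2) + (\alpha-\beta)\,I_2$ is exactly the rearrangement used in the paper, and the two bounds you apply (Lipschitz continuity for $|I_1-I_2|\leq \Lip(a)\,\tau^2$, the sup norm for $|I_2|\leq \|a\|_\infty\,\tau$) are the ones the paper invokes. The proof is correct and essentially identical to the paper's.
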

\begin{proof}
 After rearranging the left-hand side can be written as follows: 
    \begin{displaymath}
      \Big| \alpha\int_s^{s+\tau}\left[a(t)-a(t+\tau)\right]\d t +(\alpha- \beta) \int_{s}^{s+\tau}a(t+\tau)\d t\Big|.
    \end{displaymath}
    Now the required estimate easily follows from the Lipschitz continuity of \( a \).
\end{proof}

\begin{lemma}
  \label{lem:JO}
  One has \(J^\tau_1 = O(\tau)\), \(J^\tau_2 = O(\tau)\).
\end{lemma}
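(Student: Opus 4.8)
The plan is to estimate \( J^\tau_1 \) and \( J^\tau_2 \) summand by summand, since each term of the two sums is already in exactly the shape handled by Lemma~\ref{lem:lim2}. Concretely, I would write the \( k \)-th summand of \( J^\tau_1 \) as \( \alpha_k\int_{2k\tau}^{(2k+1)\tau}a(t)\d t-\beta_k\int_{2k\tau}^{(2k+1)\tau}a(t+\tau)\d t \), where
\[
  \alpha_k=\int b(x)\big\langle\mathscr V(\rho^\tau_{2k\tau})(x),P_{\bm C((2k+2)\tau)}(x)-y_{(2k+2)\tau}\big\rangle\d{\rho^\tau_{2k\tau}(x)}
\]
and \( \beta_k \) is the same expression with \( \rho^\tau_{2k\tau} \) replaced by \( \rho^\tau_{(2k+1)\tau} \). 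Lemma~\ref{lem:lim2} with \( s=2k\tau \) then bounds this summand by \( |\alpha_k|\Lip(a)\tau^2+|\alpha_k-\beta_k|\,\|a\|_\infty\tau \). The identical decomposition applies to \( J^\tau_2 \), with the inner products replaced by the products of norms.

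Next I would control the two factors uniformly in \( k \). For \( |\alpha_k| \) this is immediate: \( |\mathscr V|\le L \) by \( (\mathbf{A_1}) \), the function \( b \) is bounded, and \( |P_{\bm C((2k+2)\tau)}(x)-y_{(2k+2)\tau}|\le \diam\bm C((2k+2)\tau)+r \), which is uniformly bounded because \( (\mathbf{A_2}) \) forces all the sets \( \bm C(t) \) to lie in one fixed ball; hence \( |\alpha_k|\le C \) with \( C \) independent of \( k \) and \( \tau \). The real work is the estimate \( |\alpha_k-\beta_k|\le C_1\tau \), which I would extract from Lemma~\ref{lem:lim1} applied with \( \mu_1=\rho^\tau_{2k\tau} \), \( \mu_2=\rho^\tau_{(2k+1)\tau} \), \( K=\bm C((2k+2)\tau) \), and \( y=y_{(2k+2)\tau} \) (legitimate since \( y_{(2k+2)\tau}\in\bm C((2k+2)\tau)\subset K+\tfrac r2\bm B \)). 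To invoke that lemma I must verify its two hypotheses. Hypothesis (b) holds with \( \psi=\Phi^{\rho^\tau_{2k\tau}}_{2k\tau,(2k+1)\tau} \) and \( C=2L \): by construction \( \mu_2=\psi_\sharp\mu_1 \), and since \( \psi \) is the flow of the field \( 2\mathscr V(\rho^\tau_{2k\tau}) \) of norm at most \( 2L \) over a step of length \( \tau \), one has \( |x-\psi(x)|\le 2L\tau \). Hypothesis (a) is precisely the support inclusion recorded in~\eqref{eq:sptincl}, which gives \( \spt\rho^\tau_{(2k+1)\tau}\subset\bm C((2k+2)\tau)+2\tau(L+M)\bm B \), while \( \spt\rho^\tau_{2k\tau}\subset\bm C(2k\tau)\subset\bm C((2k+2)\tau)+2\tau M\bm B \) by the Lipschitz continuity of \( \bm C \); for \( \tau \) small both radii drop below \( r/2 \), so the two supports lie in \( K+\tfrac r2\bm B \). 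Because \( \diam\bm C((2k+2)\tau) \) is uniformly bounded and \( \Lip(b),L,r,C=2L \) are fixed, the constant \( C_1 \) produced by Lemma~\ref{lem:lim1} does not depend on \( k \).

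Combining these bounds, each summand of \( J^\tau_1 \), and — using the second inequality of Lemma~\ref{lem:lim1} — of \( J^\tau_2 \), is \( O(\tau^2) \) with an implied constant uniform in \( k \). There are \( N+1 \) summands, and \( N+1=T/(2\tau)=O(1/\tau) \) since \( (2N+2)\tau=T \), so summing gives \( J^\tau_1=O(\tau) \) and \( J^\tau_2=O(\tau) \). I expect the only delicate point to be the joint verification of hypothesis (a) of Lemma~\ref{lem:lim1} and the uniformity of \( C_1 \) across \( k \): everything hinges on \( (\mathbf{A_2}) \) confining all the \( \bm C(t) \) to a single compact set, so that \( \diam K \) and hence \( C_1 \) stay bounded as \( k \) ranges over \( 0,\dots,N \). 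The per-step smallness \( O(\tau^2) \) must survive being summed \( O(1/\tau) \) times, which is exactly why the quadratic gain furnished by Lemma~\ref{lem:lim2}, together with the linear-in-\( \tau \) gain of Lemma~\ref{lem:lim1}, is indispensable.
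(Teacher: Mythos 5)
Your proof is correct and takes essentially the same route as the paper's: Lemma~\ref{lem:lim1} gives \( |\alpha_k-\beta_k|\leq C_1\tau \), Lemma~\ref{lem:lim2} then makes each summand \( O(\tau^2) \), and summing the \( N+1=O(1/\tau) \) terms yields \( O(\tau) \). You are in fact more explicit than the paper on the points it leaves implicit — identifying \( \psi \) with the flow \( \Phi^{\rho^\tau_{2k\tau}}_{2k\tau,(2k+1)\tau} \) to verify hypothesis (b), checking the support inclusions for hypothesis (a), invoking the second inequality of Lemma~\ref{lem:lim1} for \( J^\tau_2 \), and noting the uniformity of the constants in \( k \) — but these are elaborations of the same argument, not a different one.
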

\begin{proof}
  \textbf{1.} We begin with \( J^\tau_1 \).
  Let us take 
  \begin{align*}
    \alpha &= \int b(x) \left\langle \mathscr{V}\left(\rho_{2k\tau}^\tau\right)(x),
  P_{\bm C\left((2k+2)\tau\right)}(x)-y_{(2k+2)\tau}
  \right\rangle\d{\rho_{2k\tau}^\tau(x)},
  \\
      \beta &= \int b(x) \left\langle \mathscr{V}\left(\rho_{(2k+1)\tau}^\tau\right)(x),
    P_{\bm C\left((2k+2)\tau\right)}(x)-y_{(2k+2)\tau}
  \right\rangle\d{\rho_{(2k+1)\tau}^\tau(x)}.
  \end{align*}
  We know that \( \spt \rho_{(2k+2)\tau}\subset \bm C\left((2k+2)\tau\right) \). Hence if \( \tau \)
  is small enough then 
  \begin{displaymath}
    \spt\rho_{2k\tau}\cup \spt\rho_{(2k+1)\tau}\subset \bm C\left((2k+2)\tau\right) +\frac{r}{2}\bm B.
  \end{displaymath}
  Lemma~\ref{lem:lim1} implies that \( |\alpha-\beta|\leq C_{1}\tau \) for some \( C_{1}>0 \).
  Now from Lemma~\ref{lem:lim2}
  it follows that
  \begin{equation}
    \label{eq:lemJ}
    \alpha \int_{2k\tau}^{(2k+1)\tau}a(t)\d t - \beta\int_{2k\tau}^{(2k+1)\tau}a(t+\tau)\d t = O(\tau^2).
  \end{equation}
  This gives \( J^\tau_1 = (N+1)O(\tau^{2}) = \frac{T}{2\tau}O(\tau^{2}) = O(\tau) \).

\textbf{2.} To deal with \( J^\tau_2 \) we take 
  \begin{align*}
    \alpha &= 
    \int b(x)\big|\mathscr{V}\left(\rho_{2k\tau}^\tau\right)(x)\big|\cdot 
    \big|P_{\bm C\left((2k+2)\tau\right)}(x)-y_{(2k+2)\tau}\big|^2
  \d{\rho_{2k\tau}^\tau(x)},\\
    \beta &= 
    \int b(x)\big|\mathscr{V}\left(\rho_{(2k+1)\tau}^\tau\right)(x)\big|\cdot 
    \big|P_{\bm C\left((2k+2)\tau\right)}(x)-y_{(2k+2)\tau}\big|^2
    \d{\rho_{(2k+1)\tau}^\tau(x)}.
  \end{align*}
  Then Lemma~\ref{lem:lim2} gives~\eqref{eq:lemJ} and, as a consequence, \(
  J^\tau_2=O(\tau) \), completing the proof.
\end{proof}

Since \(-w^\tau_{(2k+1)\tau}(x)\) is a proximal normal to \(\bm
C\left((2k+2)\tau\right)\) at \(P_{\bm C(2k+2)\tau}(x)\), we conclude that
\begin{multline*}
 \left\langle 
   w_{(2k+1)\tau}^\tau(x), P_{\bm C\left((2k+2)\tau\right)}(x)-y_{(2k+2)\tau}
\right\rangle  \\
-\frac{1}{2r}\big| w_{(2k+1)\tau}^\tau(x)\big|\cdot \big|P_{\bm C\left((2k+2)\tau\right)}(x)-y_{(2k+2)\tau}\big|^2
\leq 0,
\end{multline*}
for all \(x\in \bm C\left((2k+2)\tau\right)+r\bm B^\circ\). This means that \( J^\tau_3\leq 0 \). So we have
\begin{equation}
  \label{eq:Jbelow0}
  J^\tau + O(\tau)\leq 0.
\end{equation}

\begin{lemma}
  \label{lem:int-below0}
  Let \( y(\cdot) \) be a Lipschitz continuous selection of \( \bm C(\cdot) \) and \( a\in
  C(\mathbb{R}) \), \( b\in C(\mathbb{R}^{d}) \) be nonnegative bounded Lipschitz functions.
  Then
\begin{align*}
  \int_0^T\int_{\mathbb{R}^{d}} a(t)b(x)
\Big(\left\langle 
  v_{t}(x) - \mathscr{V}\left(\rho_{t}\right)(x), x-y(t)
\right\rangle
&-\sigma(t,x)\cdot
\big| x-y(t)\big|^2\Big)
\d{\rho_{t}(x)}\d t\leq 0,
\end{align*}
for some nonnegative Borel map \( \sigma\colon [0,T]\times \mathbb{R}^{d}\to \mathbb{R} \).
\end{lemma}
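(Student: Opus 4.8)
The plan is to pass to the limit $\tau\to 0$ in the integral $J^\tau$ from~\eqref{eq:Jtau}, to identify its limit with the left-hand side of the asserted inequality, and then to invoke~\eqref{eq:Jbelow0}, which already gives $\limsup_{\tau\to 0}J^\tau\le 0$. Throughout I would argue along a subsequence (not relabeled) realizing all the weak convergences below, and I would use that, by~$(\mathbf{A_2})$, every $\bm C(t)$ — and hence, by~\eqref{eq:sptincl}, the support of every $\bar\rho^\tau_t$ — lies in one fixed compact set $Q\subset\mathbb{R}^d$. Consequently all the ``test functions'' appearing below may be assumed bounded and continuous after multiplication by a cutoff equal to $1$ on a neighborhood of $Q$, and the key elementary fact I would rely on is that $\int g^\tau\d\mu^\tau\to\int g\d\mu$ whenever $\mu^\tau\wto\mu$ have uniformly bounded mass and $g^\tau\to g$ uniformly with $g^\tau$ uniformly bounded.

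First I would treat the linear part of $J^\tau$. Its integrand is tested against the vector measure $\big(\bar v^\tau_t-\mathscr{V}(\bar\rho^\tau_t)\big)\bar\rho^\tau_t\,\d t$, which converges weakly to $\big(v_t-\mathscr{V}(\rho_t)\big)\rho_t\,\d t$ by Lemmas~\ref{lem:same-limit} and~\ref{lem:Vlimit}. The accompanying factor $g^\tau(t,x)=a(t)b(x)\big(P_{\bar{\bm C}^\tau(t)}(x)-\bar y^\tau(t)\big)$ is $\tau$-dependent, but since $\big|P_{\bar{\bm C}^\tau(t)}(x)-x\big|=d_{\bar{\bm C}^\tau(t)}(x)=O(\tau)$ uniformly for $x\in\spt\bar\rho^\tau_t$ (again by~\eqref{eq:sptincl}) and $\bar y^\tau(t)\to y(t)$, the $g^\tau$ converge uniformly on $Q$ to $g(t,x)=a(t)b(x)(x-y(t))$. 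The elementary fact above then yields convergence of the linear part to $\int_0^T\int a(t)b(x)\langle v_t(x)-\mathscr{V}(\rho_t)(x),x-y(t)\rangle\d\rho_t(x)\d t$.

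The quadratic part is the main obstacle, because it carries the norm $\big|\bar v^\tau_t-\mathscr{V}(\bar\rho^\tau_t)\big|$, which is nonlinear in the measure and is \emph{not} controlled by the mere weak convergence of $\bar v^\tau_t\bar\rho^\tau_t$. To get around this I would introduce the nonnegative measures
\begin{displaymath}
  \d\lambda^\tau(t,x)=\tfrac{1}{2r}\big|\bar v^\tau_t(x)-\mathscr{V}(\bar\rho^\tau_t)(x)\big|\,\d\bar\rho^\tau_t(x)\,\d t
\end{displaymath}
on $[0,T]\times\mathbb{R}^d$. By Lemma~\ref{lem:rhoLip} and~$(\mathbf{A_1})$ their density is bounded by $\tfrac{3L+2M}{2r}$, so the $\lambda^\tau$ have uniformly bounded mass and live on the fixed compact $[0,T]\times Q$; hence $\lambda^\tau\wto\lambda$ along a further subsequence for some nonnegative $\lambda$. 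Testing the pointwise domination $\lambda^\tau\le\tfrac{3L+2M}{2r}\,\bar\rho^\tau_t\,\d t$ against nonnegative continuous functions and letting $\tau\to 0$ (using $\bar\rho^\tau\wto\rho$) gives $\lambda\le\tfrac{3L+2M}{2r}\,\rho_t\,\d t$; in particular $\lambda$ is absolutely continuous with respect to $\rho_t\,\d t$, and I take $\sigma$ to be a Borel representative of its density. This $\sigma$ is nonnegative and, notably, independent of $a$, $b$, and $y$.

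It then remains to pass to the limit in the quadratic part exactly as in the linear one: the factor $a(t)b(x)\big|P_{\bar{\bm C}^\tau(t)}(x)-\bar y^\tau(t)\big|^2$ converges uniformly on $Q$ to $a(t)b(x)|x-y(t)|^2$, while $\lambda^\tau\wto\lambda$, so the quadratic part converges to $\int_0^T\int a(t)b(x)|x-y(t)|^2\,\d\lambda=\int_0^T\int a(t)b(x)\,\sigma(t,x)\,|x-y(t)|^2\d\rho_t(x)\d t$. Adding the two limits identifies $\lim_\tau J^\tau$ with the left-hand side of the claimed inequality, and~\eqref{eq:Jbelow0} forces it to be nonpositive, completing the argument. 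The only genuinely delicate point is the treatment of the nonlinear norm, resolved by extracting the limit measure $\lambda$ and reading off $\sigma$ as its density rather than attempting to identify it explicitly.
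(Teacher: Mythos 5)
Your proof is correct, and its skeleton --- pass to the limit in \( J^\tau \) from~\eqref{eq:Jtau}, identify the limit with the left-hand side of the claimed inequality, then invoke~\eqref{eq:Jbelow0} --- is the same as the paper's. However, both technical pillars are executed by genuinely different (and more elementary) means. For the nonlinear norm term, the paper also extracts a weak limit of \( \bar\sigma^\tau\bar\rho^\tau \) (your \( \lambda^\tau \)), but identifies the limit as \( \sigma\rho \) via lower semicontinuity of the Benamou--Brenier functional (Proposition~\ref{prop:BB}), which yields only \( \sigma\in L^{2}(\rho) \); your domination argument \( \lambda^\tau\leq \frac{3L+2M}{2r}\,\bar\rho^\tau_t\,\d t \Rightarrow \lambda\leq \frac{3L+2M}{2r}\,\rho_t\,\d t \) uses softer machinery and gives the stronger conclusion that \( \sigma \) is bounded by \( \frac{3L+2M}{2r} \) \(\rho\)-a.e. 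For the discretized projection/selection factor, the paper replaces \( P_{\bar{\bm C}^{\tau}(t)}(x)-\bar y^{\tau}(t) \) by the fixed function \( f(t,x)=P_{\bm C(t)}(x)-y(t) \), which requires continuity of \( (t,x)\mapsto P_{\bm C(t)}(x) \) (Lemma~\ref{lem:prox}, proved by \( \Gamma \)-convergence in the appendix) plus uniform continuity of \( f \) on \( \mathcal C_{r} \), and only at the very end uses \( P_{\bm C(t)}(x)=x \) on \( \spt\rho_t \); you instead exploit \( d_{\bar{\bm C}^{\tau}(t)}(x)=O(\tau) \) on \( \spt\bar\rho^{\tau}_{t} \) (a consequence of~\eqref{eq:sptincl} and \( (\mathbf{A_2}) \)) to replace the projection by the identity at once, bypassing Lemma~\ref{lem:prox} entirely. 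Both routes keep \( \sigma \) independent of \( a \), \( b \), and \( y \), which is what the subsequent proof of Proposition~\ref{prop:normalcone} requires, and you rightly flag this point.

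One slip in wording, though not a gap: your \( g^{\tau} \) do \emph{not} converge to \( g \) uniformly on all of \( Q \) --- the estimate \( |P_{\bar{\bm C}^{\tau}(t)}(x)-x|=O(\tau) \) holds only for \( x\in\spt\bar\rho^{\tau}_{t} \), and for points of \( Q \) at positive distance from \( \bar{\bm C}^{\tau}(t) \) the discrepancy does not vanish. What you actually have, and all your argument needs, is uniform convergence on the supports of the integrating measures \( (\bar v^{\tau}-\mathscr V(\bar\rho^{\tau}))\bar\rho^{\tau}\,\d t \) and \( \lambda^{\tau} \), combined with continuity and boundedness of the limit factors \( a(t)b(x)(x-y(t)) \) and \( a(t)b(x)|x-y(t)|^{2} \), so that the remaining term \( \int g\,\d\mu^{\tau}-\int g\,\d\mu \) is handled by weak convergence alone.
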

\begin{proof}
  \textbf{1.} We shall prove the lemma by passing to the limit in~\eqref{eq:Jbelow0} as \( \tau\to 0 \). But
  first, let us show that \(\bar E^{\tau} \doteq \bar \sigma^{\tau}\bar \rho^{\tau} \) with
\begin{displaymath}
  \bar\sigma^\tau(t,x) = \frac{1}{2r}\big|\bar v_{t}^\tau(x) - \mathscr{V}\left(\bar \rho_{t}^\tau\right)(x)\big|
\end{displaymath}
  tends to \( \sigma\rho \) for some Borel map \( \sigma \). Since all \( \bar E^{\tau} \) are supported on the compact set
\begin{displaymath}
  \mathcal C_{r} = \left\{(t,x)\,\colon\, x\in \bm C(t) + \frac{r}{2}\bm B,\; t\in [0,T]\right\}
\end{displaymath}
and, by Lemma~\ref{lem:rhoLip}, their total variations are uniformly bounded:
\begin{displaymath}
  \|\bar E^{\tau}\| \doteq \int\bar \sigma^{\tau}\d{\bar \rho^{\tau}}\leq \frac{1}{2r}\int_{0}^{T}(3L+2M)\d t,
\end{displaymath}
we conclude that \( \bar E^{\tau} \) weakly converges (up to a subsequence) to some nonnegative measure \( E \).
  As in Lemma~\ref{lem:same-limit}, we the corresponding
  Benamou-Brenier functional is uniformly bounded:
  \begin{displaymath}
    \mathcal{B}_{2}(\bar \rho^{\tau},\bar E^{\tau})
    = \frac{1}{2}\int|\bar \sigma^{\tau}|^{2}\d{\bar
      \rho^{\tau}}
    \leq \int_{0}^{T}\!\!\left(3L+2M\right)^{2}\d t.
  \end{displaymath}
  Hence the lower semicontinuity of \( \mathcal B_{2} \)
  (Proposition~\ref{prop:BB}) implies that \( \mathcal{B}_{2}(\rho,E)<+\infty \), and
  therefore \( E=\sigma\rho \), for a Borel map \( \sigma \).

  \textbf{2.} Let us show that we get the desired limit if we replace
  \( P_{\bar {\bm C}^{\tau}(t)}(x) - \bar y^{\tau}(t) \) with \( f(t,x) = P_{\bm C(t)}(x) -y(t) \).
  Indeed, if \( \tau \) is small then \( \spt \bar\rho^\tau\subset \mathcal C_{r} \).  The function \( f \)
  is continuous inside \( \mathcal C_{r} \) thanks to Lemma~\ref{lem:prox}. Recalling
  Lemmas~\ref{lem:same-limit} and~\ref{lem:Vlimit}, we obtain
  \begin{displaymath}
    \int_{0}^{T}\int_{\mathbb{R}^{d}} a(t)b(x)\left(\left\langle v_{t}(x) -
      \mathscr{V}(\rho_{t})(x),f(t,x)\right\rangle - \sigma(t,x) f^{2}(t,x)\right)\d
  {\rho_{t}(x)}\d t
\end{displaymath}
in the limit. Since \( \spt\rho_t\subset \bm C(t) \), we have \( P_{\bm C(t)}(x)=x \) for
all \( x\in \spt\rho_t \). This gives the desired inequality.

\textbf{3.}  The function \( f \), being defined on a compact set, is uniformly continuous. In particular, for any \( \epsilon>0 \) there exists \( \delta \) such that for all \( \tau < \delta \)
  \begin{displaymath}
    \left|f(t,x) - f(R^{2\tau}(t),x)\right|\leq \epsilon \quad\forall (t,x)\in \mathcal{C}_{r}.
  \end{displaymath}
  Hence letting
  and \( \bar u^\tau_t = \bar v_t^\tau - \mathscr{V}(\bar \rho^\tau_t) \) we get
  \begin{displaymath}
    \Big|\iint a(t)b(x) \left[f(R^{2\tau}(t),x) - f(t,x)\right]\cdot \d {(\bar u^\tau \bar\rho^\tau)(t,x)}\Big|\to 0.
  \end{displaymath}
  Similarly using uniform continuity of \( |f|^2 \) we can show that
  \begin{displaymath}
    \Big|\iint a(t)b(x) \left[|f(R^{2\tau}(t),x)|^2 - |f(t,x)|^2\right]\cdot \d {(\bar \sigma^\tau \bar\rho^\tau)(t,x)}\Big|\to 0,
  \end{displaymath}
  which completes the proof.
\end{proof}

To proceed, we need one more technical lemma.
\begin{lemma}
  \label{lem:int-ineq}
  Let \( \mu \) be a Borel measure on \( \mathbb{R} ^d \) with compact support 
  and \( \phi\colon \mathbb{R} ^d\to \mathbb{R} \) 
  be a bounded Borel measurable function. If for any 
  smooth function \( a\colon \mathbb{R} ^d\to [0,1] \) we have
  \begin{displaymath}
    \int a(x)\phi(x)\d{\mu(x)}\leq 0
  \end{displaymath}
  then \( \phi(x)\leq 0 \) for \( \mu \)-a.e. \( x \).
\end{lemma}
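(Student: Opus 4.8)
The plan is to argue by contraposition: assuming \( \phi \) is strictly positive on a set of positive \( \mu \)-measure, I would produce a single smooth test function \( a\colon\mathbb{R}^{d}\to[0,1] \) for which \( \int a\phi\,d\mu>0 \), contradicting the hypothesis. Since \( \{\phi>0\}=\bigcup_{n}\{\phi\ge 1/n\} \), if \( \mu(\{\phi>0\})>0 \) then there is some \( \epsilon>0 \) with \( \mu(A_\epsilon)>0 \), where \( A_\epsilon:=\{x\colon \phi(x)\ge\epsilon\} \). In the degenerate case \( \|\phi\|_\infty=0 \) the conclusion is trivial, so I may assume \( m:=\|\phi\|_\infty>0 \). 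I would also use that \( \mu \) is a finite Borel measure (which we take to be the case in every application of this lemma), hence a Radon measure on \( \mathbb{R}^{d} \), so that both inner and outer regularity are at my disposal.

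Next I would localize using the regularity of \( \mu \). By inner regularity applied to the Borel set \( A_\epsilon \), I would choose a compact set \( K\subset A_\epsilon \) with \( \mu(K)>0 \); on \( K \) one has \( \phi\ge\epsilon \). The delicate point is that any smooth function supported near \( K \) unavoidably picks up the values of \( \phi \) on a whole neighborhood of \( K \), where \( \phi \) may be very negative (as low as \( -m \)). To keep this spurious contribution harmless I would invoke outer regularity of \( \mu \) at the compact set \( K \), picking an open set \( U\supset K \) with
\[
\mu(U\setminus K)<\delta,\qquad \delta:=\frac{\epsilon\,\mu(K)}{2m}.
\]

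Finally, by the smooth version of Urysohn's lemma there is \( a\in C^{\infty}_{c}(\mathbb{R}^{d}) \) with \( 0\le a\le 1 \), \( a\equiv 1 \) on \( K \), and \( \spt a\subset U \). Splitting the integral over \( K \), over \( U\setminus K \), and over the complement of \( U \) (where \( a=0 \)), and using \( |a\phi|\le m \) on \( U\setminus K \), I would obtain
\[
\int a\phi\,d\mu=\int_K\phi\,d\mu+\int_{U\setminus K}a\phi\,d\mu
\ge \epsilon\,\mu(K)-m\,\mu(U\setminus K)
> \epsilon\,\mu(K)-m\delta=\tfrac12\,\epsilon\,\mu(K)>0,
\]
which contradicts \( \int a\phi\,d\mu\le 0 \). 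Hence \( \mu(\{\phi>0\})=0 \), i.e. \( \phi\le 0 \) for \( \mu \)-a.e. \( x \). The main obstacle is precisely the reconciliation of the two regularity estimates: one must fix the compact core \( K \) first, thereby pinning down the positive gain \( \epsilon\mu(K) \), and only afterwards shrink the neighborhood \( U \) so that the loss \( m\,\mu(U\setminus K) \) coming from the possibly large negative part of \( \phi \) is dominated. Reversing this order, or attempting to approximate \( \mathbf{1}_{\{\phi>0\}} \) directly by smooth functions built out of the merely Borel function \( \phi \), would not yield the required control.
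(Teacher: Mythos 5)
Your proof is correct, and while it lives in the same family as the paper's argument (contradiction via regularity of \( \mu \) plus a smooth Urysohn function), the decomposition is genuinely different and somewhat more elementary. The paper works with the full set \( A=\{\phi>0\} \), sandwiching it between a compact \( F_\epsilon\subset A \) and an open \( G_\epsilon\supset A \), and controls the outer error using the sign of \( \phi \) on \( G_\epsilon\setminus A \); this yields \( \int a_\epsilon\phi\,d\mu\ge\int_A\phi\,d\mu-2c\epsilon \), which gives a contradiction only after one separately proves \( \int_A\phi\,d\mu>0 \) — a point the paper settles by invoking a density-point theorem from Bogachev. You instead pass at the outset to a superlevel set \( A_\epsilon=\{\phi\ge\epsilon\} \) of positive measure (via \( \{\phi>0\}=\bigcup_n\{\phi\ge 1/n\} \)), which makes the positive gain quantitative, \( \epsilon\,\mu(K) \), so the spillover over \( U\setminus K \) can be absorbed by measure smallness alone, with no sign information on \( U\setminus K \) and no density-point machinery; your insistence on the order of quantifiers (fix the compact core \( K \) first, then shrink \( U \)) is exactly what permits applying outer regularity to \( K \) rather than to the possibly wild Borel set \( A \). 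What each buys: your route is shorter and self-contained; the paper's keeps a single approximation parameter \( \epsilon \) for both the inner and outer approximations and never needs to split off superlevel sets. One shared caveat: both arguments require \( \mu \) to be regular with \( \mu(K)<\infty \), i.e., effectively a finite (hence Radon) measure — you state this hypothesis explicitly, whereas the paper uses it tacitly (``since \( A \) is measurable and \( \mu \) is regular''); since every application in the paper is to probability measures, this is harmless in both cases.
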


\begin{proof}
  Let \( A = \left\{x\colon \phi(x)>0\right\} \). Since \( A \) is measurable and \(\mu\) is regular
  then, for any \( \epsilon>0 \), there exist a compact set \( F_\epsilon\subset A\) and an open set 
  \( G_\epsilon\supset A \) such that \( \mu(A\setminus F_\epsilon) <\epsilon  \) and 
  \( \mu(G_\epsilon\setminus A)<\epsilon \). By Urysohn's lemma there exists a smooth function 
  \( a_\epsilon\colon \mathbb{R} ^d \to [0,1] \) which is \( 1 \) on \( F_\epsilon \) and \( 0 \) outside of \( G_\epsilon \). Consider the obvious identity
  \begin{displaymath}
    \int a_\epsilon\phi\d\mu = \int_{F_\epsilon}a_\epsilon\phi\d\mu + 
    \int_{A\setminus F_\epsilon}a_\epsilon\phi\d\mu + 
    \int_{G_\epsilon\setminus A}a_\epsilon\phi\d\mu.
  \end{displaymath}
  Since \( \phi>0 \) on \( A\setminus F_\epsilon \) and \( \phi\leq 0 \) on 
  \( G_\epsilon\setminus A \), we obtain
  \begin{align*}
    &\int_{F_\epsilon}a_\epsilon\phi \d\mu = \int_{F_\epsilon}\phi \d\mu \geq \int_A\phi\d\mu - c\mu(A\setminus F_\epsilon),\\
    &\int_{A\setminus F_\epsilon}a_\epsilon\phi\d\mu \geq 0,\\
    &\int_{G_\epsilon\setminus A}a_\epsilon\phi\d\mu \geq 
    \int_{G_\epsilon\setminus A}\phi\d\mu \geq -c \mu(G_\epsilon\setminus A),
  \end{align*}
  where \( c \) is chosen so that \( |\phi(x)|\leq c \) for all \( x\in \mathbb{R} ^d \).
  These inequalities imply 
  \begin{equation}
    \label{eq:integral}
    \int a_\epsilon \phi\d\mu\geq \int_A\phi\d\mu - 2c\epsilon.
  \end{equation}
  
  Now suppose that \( \mu(A)>0 \). In this case \( \int_A \phi\d\mu > 0 \). Indeed,
  \( A \) contains a density point \( y \) of \( \phi \) (see, e.g.,~\cite[Theorem 5.8.8]{Bogachev2007}) and from
  \( \phi(y)=\lim_{r\downarrow 0}\frac{1}{\mu(y+r\bm B)}\int_{y+r\bm B}\phi\d\mu>0 \) it follows that
  \( \int_A\phi\d\mu\geq \int_{y+r\bm B}\phi\d\mu>0 \) for some \( r \). Thus, choosing \(\varepsilon\) small enough makes the right-hand side of~\eqref{eq:integral} strictly positive and leads to a contradiction.
\end{proof}

\begin{proofof}{Proposition~\ref{prop:normalcone}}
  Take a countable dense subset subset \( \{t_n\}_n \) of \( [0,T] \). Then, for each \( t_n \),
  choose a countable dense subset \( \{x_n^k\}_k \) of \( \bm C(t_n) \). The set of pairs
  \( \{(t_n,x_n^k)\}_{n,k} \) is also countable. For each \( (t_n,x_n^k) \) we consider the map 
  \( y_{n,k}\colon [0,T]\to \mathbb{R} ^d \) defined by
  \begin{displaymath}
    -\dot y_{n,k}(t) \in N_{\bm C(t)},\quad y_{n,k}(t_n) = x_n^k.
  \end{displaymath}
  This map is uniquely defined and Lipschitz continuous. We state that the set
  \( \{y_{n,k}(t)\}_{n,k} \) is dense in \(\bm C(t) \) for each \( t\in [0,T] \). Indeed,
  since \( \bm C \) is lower semicontinuous, for any \( t \) and any open ball
  \( x+\epsilon \bm B^\circ \)
  such that \( \bm C(t)\cap \{x+ \epsilon\bm B^{\circ}\}\ne \varnothing \) there exists \( t_n \) such that
  \( \bm C(t_n)\cap \{x+ \epsilon\bm B^{\circ}\}\ne\varnothing \). The latter set has nonempty interior and we can
  select from it some \( x_n^k \). Since \( t_n \) can be arbitrary close to \( t \) 
  then \( y_{n,k}(t)\in x +\epsilon \bm B^{\circ} \), as desired.

  Now, for each \( y_{n,k} \), we apply Lemma~\ref{lem:int-ineq} to the inequality established in
  Lemma~\ref{lem:int-below0}. Then we get 
  \begin{displaymath}
    \int_{\mathbb{R}^{d}} b(x)
    \Big(\left\langle 
        v_{t}(x) - \mathscr{V}\left(\rho_{t}\right)(x), x-y_{n,k}(t)
    \right\rangle
    -\sigma(t,x)\cdot
    | x-y_{n,k}(t)|^2\Big)
    \d{\rho_{t}(x)}\leq 0
  \end{displaymath}
  for all \( t\in [0,T]\setminus I_{n,k} \), where each \( I_{n,k} \) is a set of Lebesgue 
  measure zero. The union \( I \) of these sets also has measure zero. Since \( y_{n,k}(t) \) are
  dense in \( \bm C(t) \), we have
  \begin{displaymath}
    \int_{\mathbb{R}^{d}} b(x)\max_{y\in \bm C(t)}
    \Big(\left\langle 
        v_{t}(x) - \mathscr{V}\left(\rho_{t}\right)(x), x-y
    \right\rangle
    -\sigma(t,x)\cdot
    \left| x-y\right|^2\Big)
    \d{\rho_{t}(x)}\leq 0,\quad t\in [0,T]\setminus I.
  \end{displaymath}
  Using again Lemma~\ref{lem:int-ineq}, we obtain that for \( \rho_t \)-a.e. \( x \) 
  \begin{displaymath}
    \left\langle 
        v_{t}(x) - \mathscr{V}\left(\rho_{t}\right)(x), x-y
    \right\rangle \leq \sigma(t,x)\cdot
    \left| x-y\right|^2
    \quad \forall y\in \bm C(t).
  \end{displaymath}
  This completes the proof.
\end{proofof}

\section{Continuous dependence}
\label{sec:cont}

Before passing to the continuous dependence, let us prove assertions (2) and (3)
of Theorem~\ref{thm:main}.

\begin{lemma}
  \label{lem:vbound}
  For each solution \( \rho \) of~\eqref{eq:sp} assertions \( (1) \) and \( (2) \)
  of Theorem~\ref{thm:main} hold.
\end{lemma}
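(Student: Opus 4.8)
The plan is to read assertion \((1)\) as the existence claim, which has been assembled over Section~\ref{sec:existence}, and to reserve the actual work for the sharp bound \((2)\). For \((1)\) I would simply collect what is already proved: by Lemma~\ref{lem:rhoLip} the curves \(\rho^\tau\) are uniformly \(2(L+M)\)-Lipschitz, so along a subsequence \(\rho^\tau\to\rho\) in \(C\left([0,T];\mathcal P_2(\mathbb R^d)\right)\); Lemmas~\ref{lem:rho_conv} and~\ref{lem:same-limit} give \(\rho^\tau\wto\rho\) and \(v^\tau\rho^\tau\wto v\rho\) for a Borel field \(v\), whence \(\partial_t\rho_t+\nabla\cdot(v_t\rho_t)=0\); Lemma~\ref{lem:sptconv} yields \(\spt\rho_t\subset\bm C(t)\); and Proposition~\ref{prop:normalcone} establishes the normal-cone condition for this very \(v\). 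Thus \(\rho\) is a solution with \(\rho_0=\theta\), which is the existence content of \((1)\); uniqueness, completing \((1)\), is read off from the contraction estimate~\eqref{eq:mainestim} proved later in Section~\ref{sec:cont}.

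For \((2)\) the point is that passing to the limit naively in \(|v^\tau_t(x)|\le 2(L+M)\) (Lemma~\ref{lem:rhoLip}) only gives \(2L+2M\), which is weaker than the claimed \(2L+M\). The improvement comes from the fact that over each period \([2k\tau,(2k+2)\tau]\) the drift half and the projection half each occupy exactly one half of the time, so the fast projection speed \(2(L+M)\) is \emph{halved} after averaging in time. To exploit this I would split the velocity measure \(E^\tau=v^\tau\rho^\tau\) according to the two phases, writing \(E^\tau=E^\tau_{\mathrm{dr}}+E^\tau_{\mathrm{pr}}\) with
\begin{displaymath}
  E^\tau_{\mathrm{dr}}=\mathbf 1_{\mathrm{dr}}(t)\,v^\tau\rho^\tau,\qquad
  E^\tau_{\mathrm{pr}}=\mathbf 1_{\mathrm{pr}}(t)\,v^\tau\rho^\tau,
\end{displaymath}
where \(\mathbf 1_{\mathrm{dr}}\) and \(\mathbf 1_{\mathrm{pr}}\) are the indicators of \(\bigcup_k[2k\tau,(2k+1)\tau]\) and \(\bigcup_k[(2k+1)\tau,(2k+2)\tau]\); on the first set \(v^\tau=2\mathscr V(\rho^\tau_{2k\tau})\), on the second \(|v^\tau|\le 2(L+M)\).

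The two limits are then computed as in Lemmas~\ref{lem:same-limit} and~\ref{lem:Vlimit}, but tracked phase by phase. The oscillating indicators satisfy \(\mathbf 1_{\mathrm{dr}}\rightharpoonup\tfrac12\) weak-\(*\), while the inner integrals \(g^\tau(t)=\int\phi\cdot 2\mathscr V(\rho^\tau_{2k\tau})\,d\rho^\tau_t\) converge uniformly to \(g(t)=\int\phi\cdot 2\mathscr V(\rho_t)\,d\rho_t\) by the estimate used in Lemma~\ref{lem:Vlimit}; hence \(\int_0^T\mathbf 1_{\mathrm{dr}}g^\tau\,dt\to\tfrac12\int_0^T g\,dt\), i.e.
\begin{displaymath}
  E^\tau_{\mathrm{dr}}\wto \tfrac12\cdot 2\mathscr V(\rho_\cdot)\,\rho=\mathscr V(\rho_\cdot)\,\rho .
\end{displaymath}
Since \(E^\tau\wto v\rho\) with full (not subsequential) limit by Lemma~\ref{lem:same-limit}, the projection part converges too: \(E^\tau_{\mathrm{pr}}=E^\tau-E^\tau_{\mathrm{dr}}\wto(v-\mathscr V(\rho_\cdot))\,\rho\). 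Using only \(|v^\tau|\le 2(L+M)\) on the projection phases, for every \(\phi\in C_b(\mathbb R^{d+1})\),
\begin{displaymath}
  \Big|\int\phi\cdot dE^\tau_{\mathrm{pr}}\Big|
  \le \int_0^T\mathbf 1_{\mathrm{pr}}(t)\,2(L+M)\!\int|\phi|\,d\rho^\tau_t\,dt
  \longrightarrow (L+M)\int|\phi|\,d\rho,
\end{displaymath}
so \(|v-\mathscr V(\rho_\cdot)|\le L+M\) for \(\rho_t\)-a.e.\ \(x\). Consequently \(|v_t(x)|\le|\mathscr V(\rho_t)(x)|+(L+M)\le 2L+M\) for a.e.\ \(t\) and \(\rho_t\)-a.e.\ \(x\), and the Lipschitz statement follows from Theorem~\ref{thm:AC-curves}, since \(\|v_t\|_{\rho_t}\le 2L+M\) forces \(|\rho'|(t)\le 2L+M\) and hence \(W_2(\rho_t,\rho_s)\le(2L+M)|t-s|\).

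The main obstacle is exactly this halving: one must show that the projection phase, although it transports mass at speed up to \(2(L+M)\), contributes to the limiting velocity a field of modulus at most \(L+M\), precisely because it is active only half of the time. Making this rigorous is a homogenization-type averaging of the rapidly oscillating indicators \(\mathbf 1_{\mathrm{dr}},\mathbf 1_{\mathrm{pr}}\) at the period scale; it parallels the oscillation-cancellation estimates already carried out in Lemmas~\ref{lem:same-limit} and~\ref{lem:Vlimit}, the one new bookkeeping point being that the factor \(\tfrac12\) must be retained throughout, so that \(2\mathscr V\) collapses to \(\mathscr V\) and the bound \(2(L+M)\) collapses to \(L+M\).
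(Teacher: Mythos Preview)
Your averaging argument for the drift/projection phases is correct and would indeed yield \(|v_t(x)|\le 2L+M\) for the particular velocity obtained as the limit of the catching-up scheme. But the lemma is stated for \emph{every} solution \(\rho\) of~\eqref{eq:sp}, not only the one built in Section~\ref{sec:existence}, and at this stage uniqueness is not yet available: the proof of the estimate~\eqref{eq:mainestim} in Section~\ref{sec:cont} explicitly invokes assertion \((2)\) for \emph{both} solutions being compared, so you cannot first identify an arbitrary solution with the catching-up limit and then prove the bound. Your appeal to uniqueness in the first paragraph is therefore circular, and the bound you obtain covers only one distinguished solution.

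The paper proceeds by an entirely different route that works for any solution. It applies the superposition principle (Theorem~\ref{thm:evaluation}) to represent an arbitrary solution as \(\rho_t=(e_t)_\sharp\bm\eta\) with \(\bm\eta\) concentrated on integral curves of \(v\); combined with the normal-cone condition, this shows that for \(\rho\)-a.e.\ \((t,x)\) there is a trajectory \(y(\cdot)\) of the classical perturbed sweeping process \(\dot y\in\mathscr V(\rho_\cdot)(y)-N_{\bm C(\cdot)}(y)\) with \(y(t)=x\) and \(\dot y(t)=v_t(x)\). The bound \(|v_t(x)|\le 2L+M\) then follows from the known estimate for such ODEs \cite[Theorem~2.4]{SeneThibault2014}, and assertion \((3)\) from Proposition~\ref{prop:minnorm}. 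Note also that the sentence preceding the lemma announces assertions \((2)\) and \((3)\); the ``\((1)\) and \((2)\)'' in the lemma's statement is evidently a slip, and the paper's proof handles \((2)\) and \((3)\) while saying nothing about existence or uniqueness. Your proposal addresses neither assertion \((3)\) nor the requirement that the bound hold for all solutions.
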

\begin{proof}
  {\bf 1.} Since the velocity \( v \) of \( \rho \) can be tweaked on a
\( \rho \)-negligible set without changing the solution of the continuity equation, we may assume that~\eqref{eq:normalcone} holds for all \( t \)
and \( x \).

{\bf 2.} Let us show that \( \rho = E_{\sharp}(\lambda\times \bm \eta) \), where \( \bm \eta \) is defined as
in Theorem~\ref{thm:evaluation}, \( \lambda \) is the one dimensional Lebesgue measure, and
\( E\colon (t,x,\gamma)\mapsto (t,\gamma(t)) \). Indeed, take \( A\subset [0,T]\times \mathbb{R}^{d} \) and
denote by \( A_{t} \) its slice \( \{\xi\;\colon\; (t,\xi)\in A\} \). Then
\begin{displaymath}
  \rho(A) = \int_{0}^{T}\rho_{t}(A_{t})\d t = \int_{0}^{T}\bm \eta\left(e_{t}^{-1}(A_{t})\right)\d t = \int_{0}^{T}\bm\eta(\tilde A_{t})\d t = (\lambda \times\bm\eta)(\tilde A),
\end{displaymath}
where \( \tilde A = \{(t,x,\gamma)\;\colon\; (t,\gamma(t))\in A\} \). It remains to note
that \( \tilde A = E^{-1}(A) \).

{\bf 3.} Let \( \Gamma \) be defined as in Theorem~\ref{thm:evaluation} and
\( \tilde A \subset [0,T]\times\Gamma \) be the set of all triples \( (t,x,\gamma) \) such
that \( \dot \gamma(t) \) exists and equals to \( v_{t}(\gamma(t)) \).
We are going to show that \( \tilde A \) is a set of full measure \( \lambda\times \bm \eta \).
By Fubini's theorem,
\begin{displaymath}
  (\lambda \times \bm \eta)(\tilde A) = \int_{\Gamma} \lambda(\tilde A_{(x,\gamma)})\d\bm \eta(x,\gamma),\quad\text{where}\quad
  \tilde A_{(x,\gamma)} = \{t\;\colon\; (t,x,\gamma)\in \tilde A\},
\end{displaymath}
Now we obtain \( (\lambda\times\bm\eta)(\tilde A)=T \) because \( \lambda(\tilde A_{(x,\gamma)}) = T \),
for all \( x \) and \( \gamma \).

{\bf 4.} Since \( \rho = E_{\sharp}(\lambda\times\bm \eta) \), we conclude that \( E(\tilde A) \) is a set
of full measure \( \rho \).
In other words, for \( \rho \)-a.e. \( (t,x) \) there exists a
solution \( y \) of the sweeping process
\begin{displaymath}
  \dot y(t) \in \mathscr{V}(\rho_t)(y(t)) - N_{\bm C(t)}(y(t)), \quad\text{for a.e. } t\in [0,T],
\end{displaymath}
such that \( y(t) = x \), \( \dot y(t) \) exists and equals to \( v_{t}(x) \).

{\bf 5.} Now we deduce from~\cite[Theorem 2.4]{SeneThibault2014} that
 \( |v_{t}(x)|\leq 2L +M \) for \( \rho \)-a.e. \( (t,x) \)  and from Proposition~\ref{prop:minnorm}
that
\begin{displaymath}
\xi + \eta\cdot v_{t}(x) = 0\quad \forall (\xi,\eta)\in N_{\graph \bm C}(t,x)
\end{displaymath}
for \( \rho \)-a.e. \( (t,x) \), when \( \graph \bm C \) is \( r' \)-prox-regular.
\end{proof}

Let \(\rho^1,\rho^2\colon [0,T] \to \mathcal{P}_{2}(\mathbb{R}^{d})\) be solutions of
the sweeping processes~\eqref{eq:sp} corresponding to the set-valued maps \( \bm
C^1,\bm C^2\colon [0,T]\to \mathcal{K}_r(\mathbb{R} ^d ) \), respectively. By
\(v^1_t,v^{2}_{t}\) we denote their velocity fields.


In order to prove the continuous dependence, we are going to differentiate
the function \(\bm r(t) \doteq \frac{1}{2}W_2^2(\rho^1_t,\rho^2_t)\). 
Since both curves \( \rho^{1} \) and \( \rho^{2} \) are absolutely continuous, we can use
the formula
  \begin{displaymath}
    \frac{d}{dt} W_{2}^2(\rho^{1}_t,\rho^{2}_t) = 2\iint \left\langle v^{1}_t(x)-v^{2}_t(y), x-y\right\rangle\d \Pi_{\rho^{1}_t,\rho^{2}_t}(x,y),
  \end{displaymath}
whose proof repeats that of Theorem 8.4.7~\cite{Ambrosio2005}
(we put it in Appendix~\ref{sec:Diff}, for completeness). The measure \(
\Pi_{\rho^{1}_{t},\rho^{2}_{t}} \) in the right-hand side denotes an \emph{optimal} plan between \( \rho^{1}_{t} \) and \( \rho^{2}_{t} \).


Let \( i=1,2 \). By definition,
\(
  \mathscr{V}(\rho^i_t)(x) - v^i_t(x) \in N_{\bm C^i(t)}(x),
\)
for a.e. \( t\in [0,T] \) and \( \rho^{i}_{t} \)-a.e. \( x\in \mathbb{R}^{d} \).
Since the values of \( \bm C^i \) are \( r \)-prox-regular,
Proposition~\ref{prop:proxreg}(b) implies that
\begin{equation}
  \label{eq:vVprox}
  \left\langle v^i_t(x)-\mathscr{V}(\rho^i_t)(x), x-y\right\rangle \leq \frac{1}{2r}
  \left|v^i_t(x)-\mathscr{V}(\rho^i_t)(x)\right|\, |x-y|^2,
\end{equation}
for a.e. \( t\in [0,T] \),  \(\rho^i_t\)-a.e. \( x\in \mathbb{R}^{d} \), and all  \(y\in \bm C^i(t)\).

According to Lemma~\ref{lem:Wderivative}, we have
\begin{align}
    \frac{1}{2}\frac{d}{dt} W_{2}^2(\rho^1_t,\rho^2_t) &=
    \iint \left\langle v^1_t(x) - v^2_t(y), x-y\right\rangle\d\Pi_{\rho^1_t,\rho^2_t}(x,y)\notag\\
    &=
    \iint \left\langle v^1_t(x) - \mathscr{V}(\rho^1_t)(x), x-y\right\rangle
    \d\Pi_{\rho^1_t,\rho^2_t}(x,y)\notag\\
    &+
    \iint \left\langle v^{2}_{t}(y) - \mathscr{V}(\rho^2_t)(y), y-x\right\rangle
    \d\Pi_{\rho^1_t,\rho^2_t}(x,y)\notag\\
    &+
    \iint \left\langle \mathscr{V}(\rho^1_t)(x)-\mathscr{V}(\rho^2_t)(y), x-y\right\rangle
      \d\Pi_{\rho^1_t,\rho^2_t}(x,y)\notag\\
  &= I_{1} + I_{2} + I_{3}.
      \label{eq:Wdiff}
\end{align}
for a.e. \( t\in [0,T] \).

We split the first integral \( I_{1} \) as follows:
\begin{align}
I_{1}&=
    \iint \left\langle v^1_t(x) - \mathscr{V}(\rho^1_t)(x), x-P_{\bm C^{1}(t)}(y)\right\rangle
    \d\Pi_{\rho^1_t,\rho^2_t}(x,y)\notag\\
    &+
      \iint \left\langle v^1_t(x) - \mathscr{V}(\rho^1_t)(x), P_{\bm C^{1}(t)}(y)-y\right\rangle
      \d\Pi_{\rho^1_t,\rho^2_t}(x,y).
      \label{eq:Wdiff1}
\end{align}
Note that \( t\mapsto P_{\bm C^{1}(t)}(y) \) is, in general, a set-valued map. Here,
slightly abusing the notation, we denoted by \( P_{\bm C^{1}(t)}(y)  \)
its measurable selection, which always exists. Indeed, since
\(  P_{\bm C^{1}(t)}(y) = \{y+d_{\bm C^{1}(t)}(y)\cdot \bm B\}\cap C(t) \), it is
measurable as an intersection of two measurable set-valued maps; hence it has a
measurable selection \( (t,y)\mapsto f(t,y) \).

Taking into account the inclusions
\begin{displaymath}
\spt \Pi_{\rho^1_t,\rho^2_t} \subset \spt\rho_t^1 \times \spt \rho_t^2\subset \bm C^{1}(t)\times \bm C^{2}(t),
\end{displaymath}
we deduce from~\eqref{eq:vVprox} and assertion (2) of Theorem~\ref{thm:main} that
\begin{align*}
  \iint \left\langle v^1_t(x) - \mathscr{V}(\rho^1_t)(x), x-P_{\bm C^{1}(t)}(y)\right\rangle
  \d\Pi_{\rho^1_t,\rho^2_t}(x,y)
  &\leq \frac{3L+M}{2r}W_{2}^{2}(\rho_{1},\rho_{2}),\\
      \iint \left\langle v^1_t(x) - \mathscr{V}(\rho^1_t)(x), P_{\bm C^{1}(t)}(y)-y\right\rangle
  \d\Pi_{\rho^1_t,\rho^2_t}(x,y)
  &\leq(3L+M)\Delta(t),
\end{align*}
where \(\Delta(t)\doteq d_{H}\left(\bm C^{1}(t),\bm C^{2}(t)\right) \). This gives
\begin{equation*}
  \label{eq:term1}
    I_{1}\leq (3L+M)\Delta(t) + \frac{3L+M}{4r}\bm r(t).
\end{equation*}
The same inequality holds for \( I_{2} \).

We rewrite the last integral \( I_{3} \) as the sum
\begin{displaymath}
  \int\!\! \left\langle \mathscr{V}(\rho^1_t)(x)-\mathscr{V}(\rho^2_t)(x), x-y\right\rangle\!
  \d\Pi_{\rho^1_t,\rho^2_t}(x,y)+
  \int\!\! \left\langle \mathscr{V}(\rho^2_t)(x)-\mathscr{V}(\rho^2_t)(y), x-y\right\rangle\!
  \d\Pi_{\rho^1_t,\rho^2_t}(x,y).
\end{displaymath}
The first integral above is bounded by
\begin{displaymath}
  \left(\int \left|\mathscr{V}(\rho^1_t)(x)-\mathscr{V}(\rho^2_t)(x)\right|^2
  \d\rho^1_t(x)\right)^{1/2}\cdot
  \left(\int \left|x-y\right|^2
  \d \Pi_{\rho^1_t,\rho^2_t}(x,y)\right)^{1/2}\leq L W_{2}^2(\rho^1_t,\rho^2_t),
\end{displaymath}
thanks to \( L \)-Lipschitz continuity of \( \mathscr{V}\colon
\mathcal{P}_{2}(\mathbb{R}^{d})\to C(\mathbb{R}^{d};\mathbb{R}^{d}) \).
The second one is bounded by
\( L W_{2}^2(\rho^1_t,\rho^2_t) \) due to \( L \)-Lipschitz continuity of \(
\mathscr{V}(\rho^2_t)\in C(\mathbb{R}^{d};\mathbb{R}^{d}) \). Thus, we have
\(
I_{3}\leq 4L\bm r(t)
\).

Plugging the above estimates into~\eqref{eq:Wdiff} gives
\begin{displaymath}
  \dot {\bm r}(t) \leq (6L+2M)\Delta(t) + \left(4L + \frac{3L+M}{2r}\right)\bm r(t).
\end{displaymath}
By Gr\"onwall's lemma, we obtain~\eqref{eq:mainestim}
which completes the proof of assertion (4). Finally, note that uniqueness in
assertion (1) is a direct consequence of the above estimate.

\section{Application to environment optimization}
\label{sec:env}
An important task of crowd dynamics is to understand how environment affects the
crowd motion. Consider a specific question: can an obstacle, such as a column,
placed at the right spot help the crowd to evacuate a room? 
We know that under some circumstances it happens in the real life~\cite{Helbing2002}.
Numerical experiments (see Section~\ref{sec:numerics}) show that this phenomenon, called Braess's paradox, can be reproduced in our model.
But can we find the best shape and position of the obstacle?

Let us formulate this problem within our framework.
Suppose that \( r \) is a fixed positive constant, \( \Omega \) a compact
\( r \)-prox-regular set that represents the region where the crowd can move,
\( \theta \) a compactly supported measure on \( \Omega \) which defines agents'
distribution. We assume that \( \Omega \) consists of two parts:
the safe \( S \) and the dangerous \( D \) regions.
The crowd leaves the dangerous region moving along a given nonlocal vector field
\( v_{t} = \mathscr{V}(\rho_{t}) \). Our aim is to place an obstacle \( O\subset\Omega \) so that the
number of agents staying in \( D \) by a time moment \( T \) were minimal.
As was discussed before, each obstacle defines the corresponding viability region
\( C = \Omega\setminus O \). We assume that admissible viability regions \( C \) belong to the set
\begin{displaymath}
  \mathcal C = \left\{C\in \mathcal K_{r}(\mathbb{R}^{d})\;\colon\; C\subset \Omega,\;\theta(C)=1 \right\}.
\end{displaymath}

The following theorem says that among all admissible viability regions one can always
choose an optimal one.

\begin{theorem}
  \label{thm:opt}
  Let \( \rho^{C}\colon [0,T]\to \mathcal{P}_{2}(\mathbb{R}^{d}) \) denote the
  trajectory of~\eqref{eq:sp} which corresponds to \( \mathbf C(t)\equiv C \), for
  \( C\in\mathcal{C} \). If \( D \) is open then the minimization problem
  \begin{displaymath}
    \min\left\{\rho^{C}_{T}(D)\;\colon\; C\in \mathcal{C}\right\}
  \end{displaymath}
  admits a solution.
\end{theorem}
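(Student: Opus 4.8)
The plan is to run the direct method of the calculus of variations: take a minimizing sequence of viability regions, extract a Hausdorff-convergent subsequence, show that the limit is still admissible, and then combine the continuous-dependence estimate~\eqref{eq:mainestim} with the lower semicontinuity of $\mu\mapsto\mu(D)$ for open $D$. First I would note that $\mathcal C$ is nonempty (it contains $\Omega$, since $\theta$ is concentrated on $\Omega$ and $\Omega\in\mathcal K_r(\mathbb R^d)$) and that $C\mapsto\rho^C_T(D)$ takes values in $[0,1]$, so $m\doteq\inf\{\rho^C_T(D)\colon C\in\mathcal C\}$ is finite; here each trajectory $\rho^C$ exists and is unique by Theorem~\ref{thm:main}, because a constant map is trivially Lipschitz in time and $r$-prox-regular, and $\theta(C)=1$ with $C$ closed forces $\spt\theta\subset C$. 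Choose $C_n\in\mathcal C$ with $\rho^{C_n}_T(D)\to m$. Since all $C_n$ lie in the fixed compact set $\Omega$, Blaschke's selection theorem yields a subsequence (not relabeled) and a compact set $C_*$ with $d_H(C_n,C_*)\to 0$.

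Next I must verify that $C_*\in\mathcal C$, i.e.\ check three properties. (i) $C_*\subset\Omega$: every point of $C_*$ is a limit of points of $C_n\subset\Omega$ and $\Omega$ is closed. (ii) $\theta(C_*)=1$: from $\theta(C_n)=1$ we get $\spt\theta\subset C_n$ for all $n$, so each $x\in\spt\theta$ satisfies $d_{C_*}(x)\le d_H(C_n,C_*)\to 0$, hence $x\in C_*$; thus $\spt\theta\subset C_*$ and $\theta(C_*)=1$. (iii) $C_*\in\mathcal K_r(\mathbb R^d)$, i.e.\ $r$-prox-regularity is preserved under the Hausdorff limit. Property (iii) is the genuine obstacle; (i) and (ii) are routine.

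For (iii) I would argue through the single-valuedness of the projection. Fix $p$ with $d_{C_*}(p)=\delta<r$ and suppose $a,b\in P_{C_*}(p)$. Since $|d_{C_n}(p)-d_{C_*}(p)|\le d_H(C_n,C_*)$, for large $n$ one has $d_{C_n}(p)<r$, so $c_n\doteq P_{C_n}(p)$ is well-defined; along a subsequence $c_n\to c\in C_*$ with $|p-c|=\delta$, so $c\in P_{C_*}(p)$. Pick $a_n,b_n\in C_n$ with $a_n\to a$, $b_n\to b$. As $p-c_n\in N_{C_n}(c_n)$, Proposition~\ref{prop:proxreg}(b) gives
\[
\langle p-c_n,\,y-c_n\rangle\le\frac{|p-c_n|}{2r}\,|c_n-y|^2,\qquad y=a_n,\;b_n .
\]
Passing to the limit yields $\langle p-c,\,a-c\rangle\le\frac{\delta}{2r}|c-a|^2$ and the analogue for $b$. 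On the other hand, expanding $|p-a|^2=|(p-c)-(a-c)|^2$ and using $|p-a|=|p-c|=\delta$ gives the identity $\langle p-c,\,a-c\rangle=\tfrac12|c-a|^2$. Combining the two forces $\tfrac12(1-\delta/r)|c-a|^2\le 0$, and since $\delta<r$ this gives $a=c$; likewise $b=c$, so $a=b$. Hence $P_{C_*}$ is single-valued on $C_*+r\bm B^\circ$, which (continuity being automatic) means $C_*\in\mathcal K_r(\mathbb R^d)$, so $C_*\in\mathcal C$.

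Finally I would transfer the convergence of the regions to the crowds. Both constant maps $\bm C(t)\equiv C_n$ and $\tilde{\bm C}(t)\equiv C_*$ satisfy $(\mathbf A_2)$ with the same $r$, both trajectories start at $\theta$ (so $\bm r(0)=0$), and $\Delta(s)\equiv d_H(C_n,C_*)$; thus~\eqref{eq:mainestim} at $t=T$ reads
\[
\tfrac12 W_2^2(\rho^{C_n}_T,\rho^{C_*}_T)\le (6L+2M)\,T\,d_H(C_n,C_*)\,e^{(4L+(3L+M)/(2r))T}\;\xrightarrow[n\to\infty]{}\;0 .
\]
Hence $\rho^{C_n}_T\to\rho^{C_*}_T$ in $\mathcal P_2(\mathbb R^d)$ and, in particular, narrowly. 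Because $D$ is open, the portmanteau theorem gives $\rho^{C_*}_T(D)\le\liminf_n\rho^{C_n}_T(D)=m$; as $C_*\in\mathcal C$ the reverse inequality is trivial, so $\rho^{C_*}_T(D)=m$ and $C_*$ solves the problem. The only delicate point is the closedness of $\mathcal K_r(\mathbb R^d)$ under Hausdorff convergence established in step (iii); everything else reduces to Blaschke selection, the estimate~\eqref{eq:mainestim}, and lower semicontinuity of the measure of the open set $D$.
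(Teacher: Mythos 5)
Your proof is correct, and its overall architecture is the same as the paper's: Blaschke selection inside the compact set \( \Omega \), stability of the admissible class \( \mathcal C \) under Hausdorff limits, the continuous-dependence estimate~\eqref{eq:mainestim} applied to constant set-valued maps to get \( W_{2} \)-convergence \( \rho^{C_n}_T\to\rho^{C_*}_T \), and the Portmanteau theorem for the open set \( D \). The one genuine difference is the step you correctly single out as the crux, namely that \( \mathcal K_{r}(\mathbb{R}^{d}) \) is closed under Hausdorff convergence: the paper simply cites Theorem 4.13 of Federer's curvature-measures paper, whereas you prove it from scratch via the ball-realization characterization of Proposition~\ref{prop:proxreg}(b), passing the inequality \( \langle p-c_n,\,y-c_n\rangle\leq \frac{|p-c_n|}{2r}|c_n-y|^{2} \) to the limit and playing it against the elementary identity \( \langle p-c,\,a-c\rangle=\frac12|a-c|^{2} \) valid for two projections of the same point. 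This self-contained argument is a nice payoff: it keeps the proof elementary and internal to the paper's toolkit (only Proposition~\ref{prop:proxreg} is used), at the cost of a little length; the paper's citation is shorter but imports an external result. Two cosmetic remarks: your claim that continuity of \( P_{C_*} \) is ``automatic'' from single-valuedness does hold (any limit point of \( P_{C_*}(x_k) \) with \( x_k\to x \) is a projection of \( x \), by continuity of \( d_{C_*} \) and closedness of \( C_* \), so single-valuedness forces convergence), but it deserves the one-line justification; and your handling of \( \theta(C_*)=1 \) via \( \spt\theta\subset C_n \) for all \( n \) is a slight variant of the paper's argument (which instead uses \( C_{n(k)}\subset C_*+\frac1k\bm B \) and continuity of \( \theta \) from above) — both are fine.
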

\begin{proof}
  We know that \( C\mapsto \rho_{T}^{C} \) is continuous as a map
  \( \mathcal K_{r}(\mathbb{R}^{d})\to \mathcal P_{2}(\mathbb{R}^{d}) \). Since
  \( D \) is open, we conclude, by the Portmanteau theorem, that
  \( C\mapsto \rho_{T}^{C}(D) \) is lower semicontinuous as
  \( \mathcal K_{r}(\mathbb{R}^{d})\mapsto \mathbb{R} \). To complete the proof, it
  suffices to show that \( \mathcal C \) is compact.
  
  We can always
  extract from any sequence
  \( C_{n}\in \mathcal C \)  a
  subsequence converging to some compact set \(C\subset \Omega \) (see, e.g., \cite[p. 120]{Rockafellar1998}). By Theorem 4.13 in~\cite{FedererCurvMeasures},
  \( C\in\mathcal K_{r}(\mathbb{R}^{d}) \). Hausdorff convergence implies
  that for any \( k\in\mathbb N \) one may find \( n(k) \) such
  that \( C_{n(k)} \subset C +\frac{1}{k}\mathbf B \). This means that
  \( \theta\left(C +\frac{1}{k}\mathbf B \right) = 1 \), for each \( k\in \mathbb N \).
  Therefore,
  \( 1 = \lim\limits_{k\to\infty}\theta\left(C+\frac{1}{k}\bm B\right) = \theta\Big(\bigcap_{k=1}^{\infty}(C+\frac{1}{k}\bm B)\Big) = \theta(C) \).
\end{proof}

\section{Numerical computations}
\label{sec:numerics}

While continuous dependence on the moving set leads to existence
results in environment optimization problems, continuous dependence
on the initial measure provides an algorithm for computing trajectories
of~\eqref{eq:sp}. Indeed, let \( \rho \) be a trajectory issuing
from \( \theta\in \mathcal P_{2}(\mathbb{R}^{d}) \). We can always approximate
\( \theta \) by a discrete measure \( \theta_{N}\doteq\frac{1}{N}\sum_{i=1}^{N}\delta_{x_{i}} \)
(because such measures are dense in
\( \mathcal P_{2}(\mathbb{R}^{d}) \)~\cite{Villani2009}). The corresponding
trajectory \( \rho_{N} \), being absolutely continuous,
consists of discrete measures as well (note that several \( \delta \)-functions could
be glued into one along the way, but they can never be split again). Now,
we can easily compute \( \rho_{N} \) by applying the catching-up scheme.
Theorem~\ref{thm:main} shows that \( \rho_{N}\to \rho \) in \( C([0,T];\mathcal P_{2}(\mathbb{R}^{d})) \) as
\( N\to\infty \).

Below we provide computations for two simple models of crowd dynamics  taken
from~\cite{Mogilner1999} and~\cite{Colombo2011}.

\begin{example}[Attraction/repulsion model]
The first model~\cite{Mogilner1999}
corresponds to \( \mathscr V \) given by
 \begin{displaymath}
   \mathscr{V}(\mu)(x) = w(x) + \int K(x-y)\d\mu(y),
 \end{displaymath}
 where \( w\colon \mathbb{R}^{2}\to\mathbb R^{2} \) is a drift and \( K \) the
 attraction/repulsion kernel of the form
  \begin{displaymath}
    K(x) = - \frac{A_{a}x}{2a^{2}}\exp\Big(\!-\frac{|x|^{2}}{2a^{2}}\Big) +
    \frac{A_{r}x}{2r^{2}}\exp\Big(\!-\frac{|x|^{2}}{2r^{2}}\Big),
  \end{displaymath}
  Here \( a \) and \( r \) determine the attraction and repulsion ranges,
  \( A_{a} \) and \( A_{r} \) the
  attraction and repulsion intensities. It is common to take \( r<a \), so agents repulse each other at short distances and
  attract at large ones. One can
  easily verify that \( \mathscr V \) satisfies our assumptions if \( w \)
  is bounded and Lipschitz.

  For the computations presented in Figure~\ref{fig:mathbiol}, we choose
  \( A_{a}= 4 \), \( A_{r} = 7 \), \( a = 1/\sqrt 2 \), \( r=0.5 \),
  \( w\equiv -0.3 \), \( \tau=0.01 \).
  The moving set is given by
    \(\bm C(t) = \{x\in \mathbb{R}^{2}\;\colon\; f(t,x)\leq 0\}\)
  with
  \begin{displaymath}
    f(t,x) = -(x_{1}-0.5t+2)^2 - 4(x_{2}-0.5t+4)^2 + 2,
  \end{displaymath}
  that is, our obstacle is an ellipse crossing the crowd.
  We approximate the initial measure \( \theta \) (the Gaussian measure with mean
  \( (4,0) \) and variance \( \id \)) by a discrete measure
  \(\frac{1}{N}\sum_{i=1}^{N}\delta_{x_{i}} \) with \( x_{i} \) randomly distributed
  according to \( \theta \), \( N=300 \).
\end{example}

\begin{example}[Congestion model]
  The second model~\cite{Colombo2011} corresponds to the choice
 \begin{displaymath}
   \mathscr{V}(\mu)(x) = w(x)\cdot\psi\left(\int\eta(|x-y|)\d\mu(y)\right),
 \end{displaymath}
 where \( w\colon \mathbb{R}^{2}\to \mathbb{R}^{2} \) is a given vector field, \( \eta\colon \mathbb{R}\to \mathbb{R}_{+} \) is a smooth bell-shaped
 function, \( \psi\colon \mathbb{R}_{+}\to [0,1] \) is Lipschitz and non-increasing. The idea behind this model
 is that the velocity of an agent located at \( x \) decreases as the number
 of agents around \( x \)
 (estimated by \( \int\eta(|x-y|)\d \mu(y) \)) grows.

 To define the non-local vector field we choose the following functions:
 
\begin{minipage}[c]{0.4\textwidth}
  \includegraphics[width=\linewidth,keepaspectratio=true]{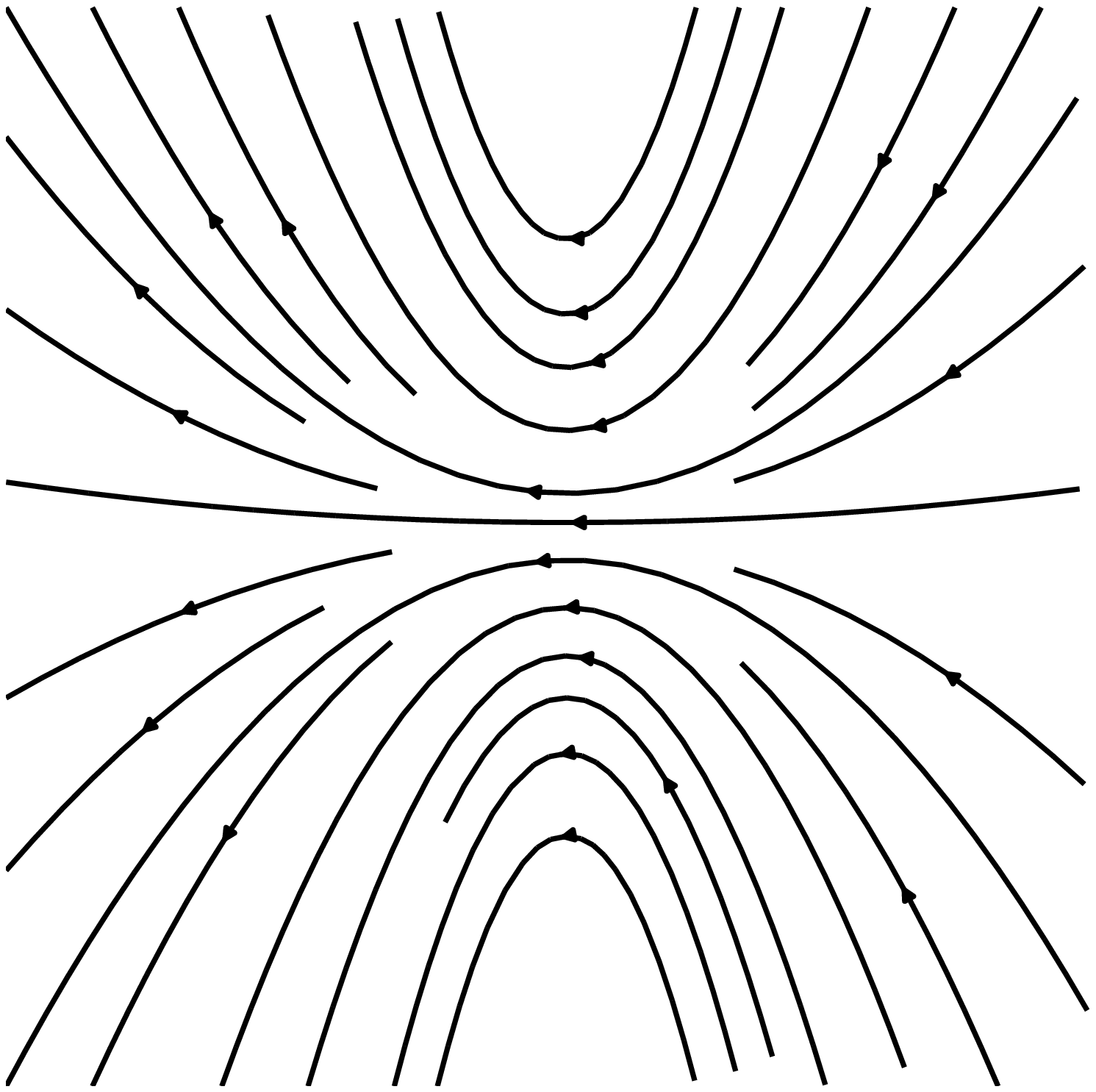}
\end{minipage}
\begin{minipage}[c]{0.5\textwidth}
  \begin{align*}
    &w(x) = -\frac{1}{2|x|}(1+x_{1}^{2},2x_{1}x_{2}) ,\\
    &\psi(r) = 1-\frac{2}{\pi}\arctan \kappa x^{2},\\
    &\eta(r) =
      \begin{cases}
        \frac{1}{\beta}e^{\frac{1}{(r/\epsilon)^{2}-1}}, & r<\epsilon,\\
        0,&\text{otherwise},
      \end{cases}\\
    & \epsilon = 0.3,\; \kappa = 1000, \; \beta = 0.466.
  \end{align*}
   \null
\par\xdef\tpd{\the\prevdepth}
\end{minipage}\\
Field lines of \( w \) are the parabolas depicted above. The moving set is given
by
\begin{gather*}
  \bm C(t) = \{x\in \mathbb{R}^{2}\;\colon\; f(t,x)\leq 0\} \setminus (I+\delta\bm B),
  \quad \text{where}\quad I = \big\{x = 0,\; |y|>b \big\},\quad b,\delta >0,\\
   f(t,x) =\! -\Big(\frac{(x_{1}-c_{1})\cos\omega t - (x_{2}-c_{2})\sin\omega t}{a_{1}}\Big)^{2}
   \!-\Big(\frac{(x_{1}-c_{1})\sin\omega t + (x_{2}-c_{2})\cos\omega t}{a_{2}}\Big)^{2}\!+1.
 \end{gather*}
 Here \( I+\delta \bm B \) models a wall with an exit and \( f \) an elliptic
 obstacle with semi-axis \( a_{1} \), \( a_{2} \) rotating
 around its center \( c=(c_{1},c_{2}) \). In our case, \( b = 0.6 \),
 \( \delta = 0.1 \), \( \tau = 0.01 \). The initial measure is absolutely
 continuous with density \(\frac{1}{32} \bm 1_{[2,6]\times[-4,4]} \). We approximate
 it by a discrete measure
  \(\frac{1}{N}\sum_{i=1}^{N}\delta_{x_{i}} \), where \( x_{i} \) are uniformly  distributed
  on the rectangle \( [2,6]\times[-4,4] \), \( N=300 \).  Solutions of~\eqref{eq:sp}
  for various \( c \), \( a \), \( \omega \) are presented in
  Figure~\ref{fig:colombo}. Note that, by the time moment \( T=20 \),  the dangerous region \( D = \{x>0\} \)
  contains \( 19.67 \% \) of the total mass if there
 are no obstacles, \( 14.67\% \) for the stationary obstacle, \( 0.00\% \)
  for the moving obstacle. Hence Braess's paradox may indeed occur in~\eqref{eq:sp}.

\end{example}
\begin{figure}
  {\includegraphics[width=0.19\textwidth]{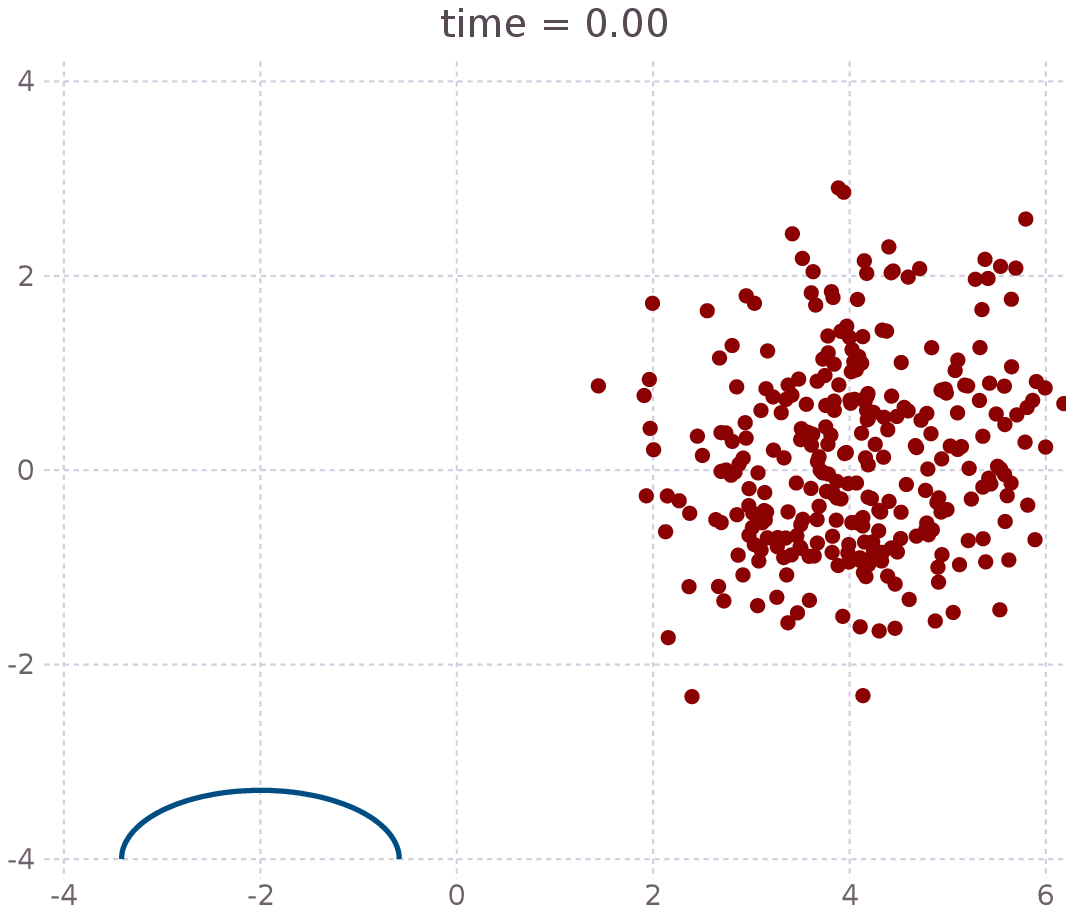}}
  {\includegraphics[width=0.19\textwidth]{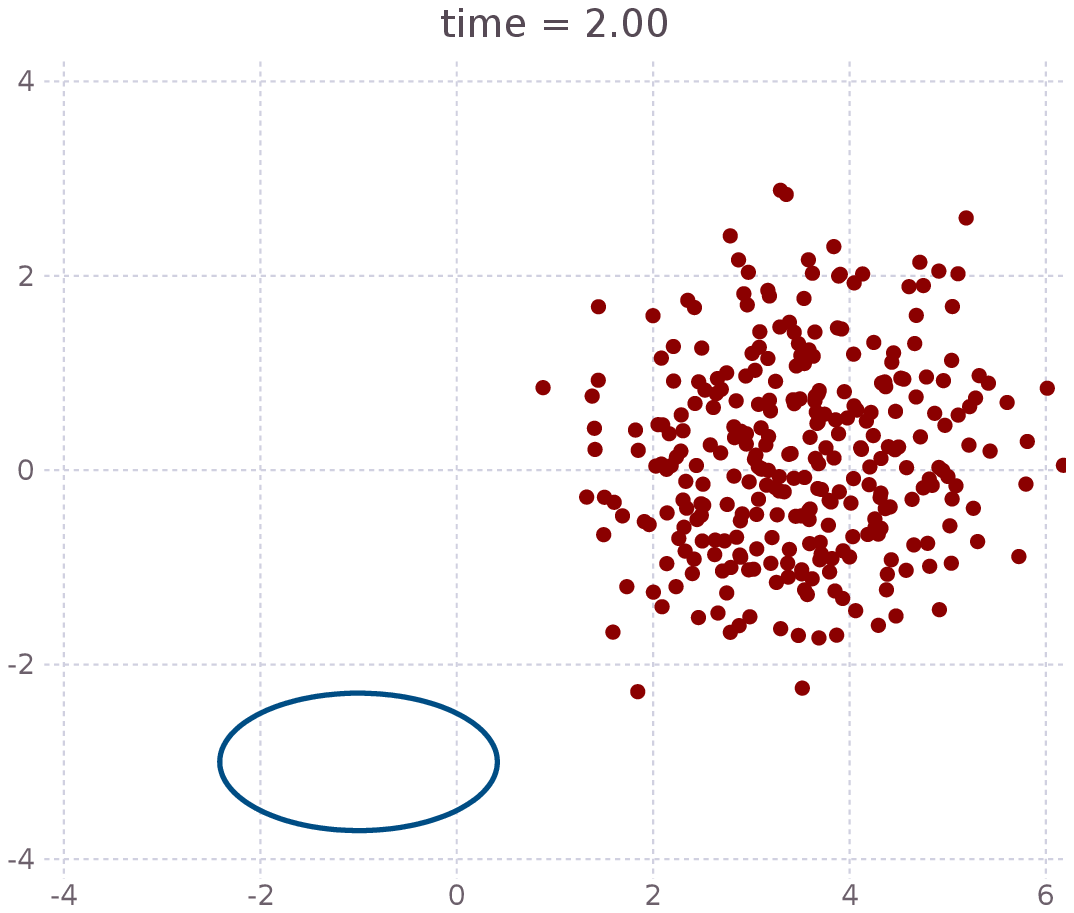}}
  {\includegraphics[width=0.19\textwidth]{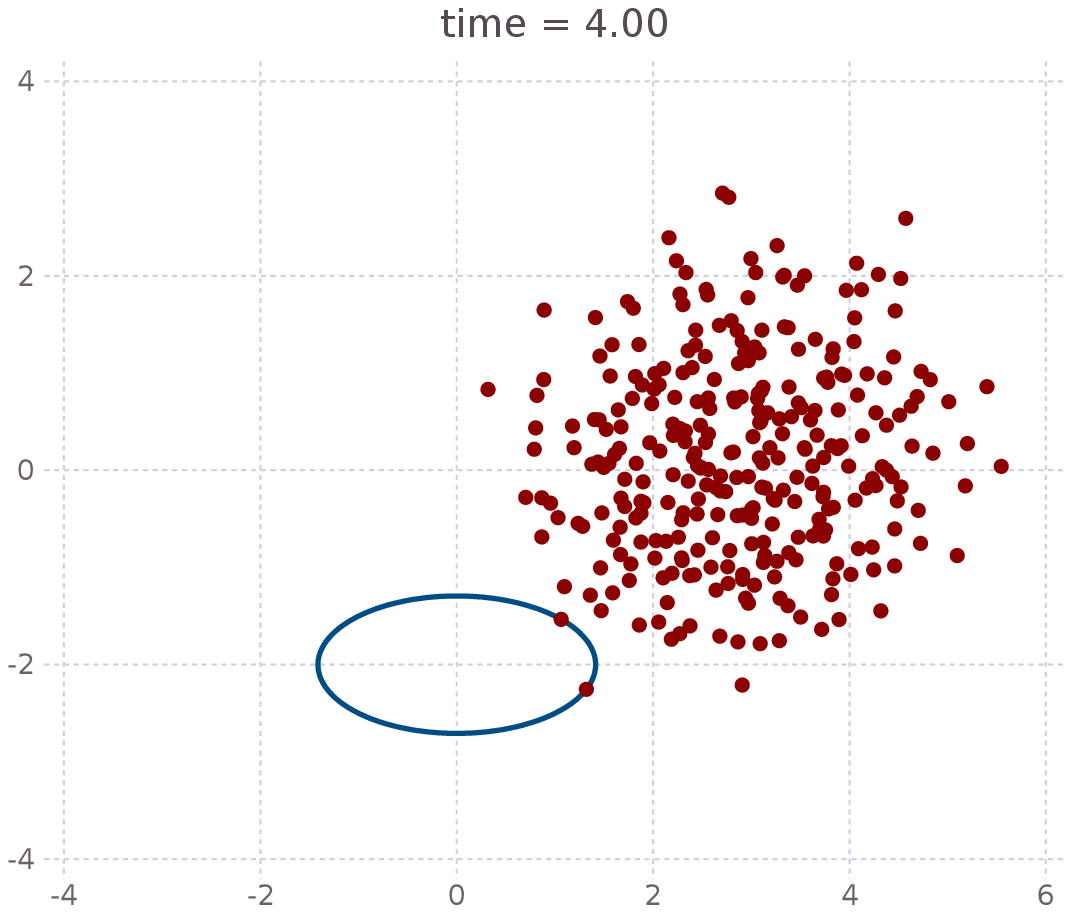}}
  {\includegraphics[width=0.19\textwidth]{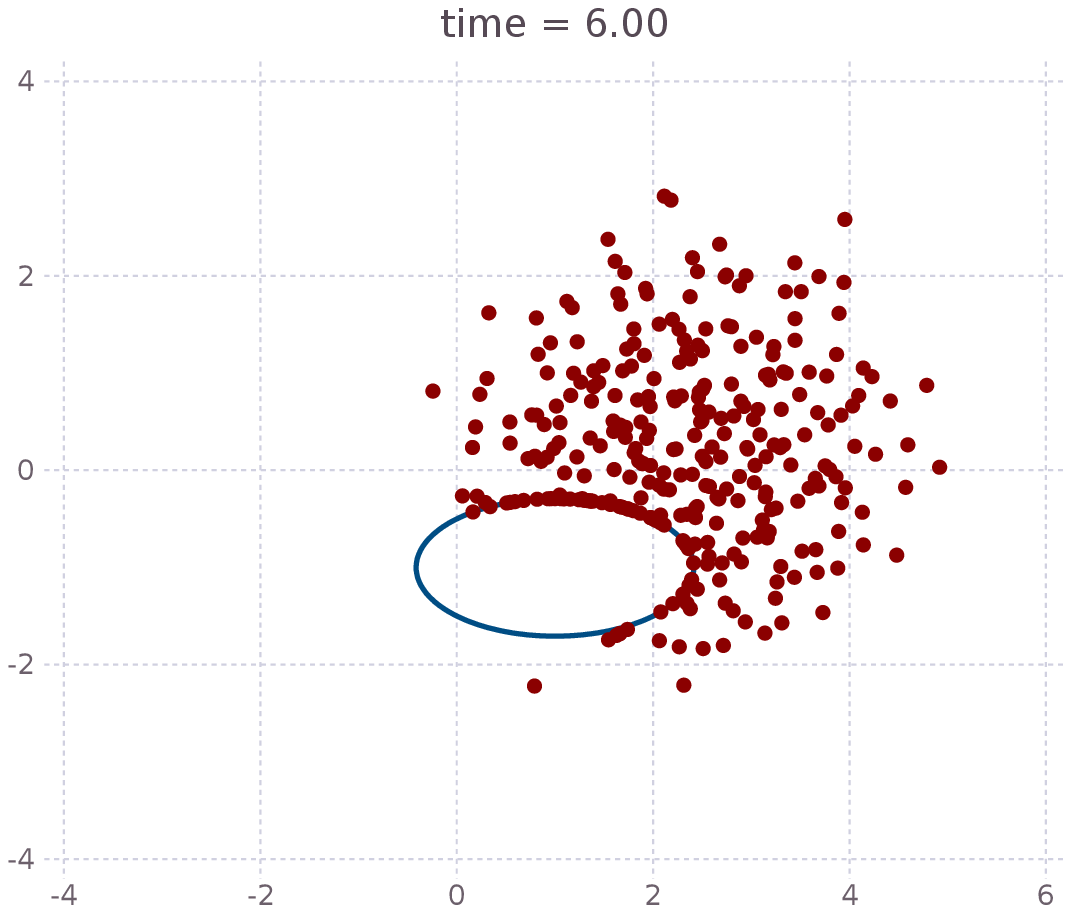}}
  {\includegraphics[width=0.19\textwidth]{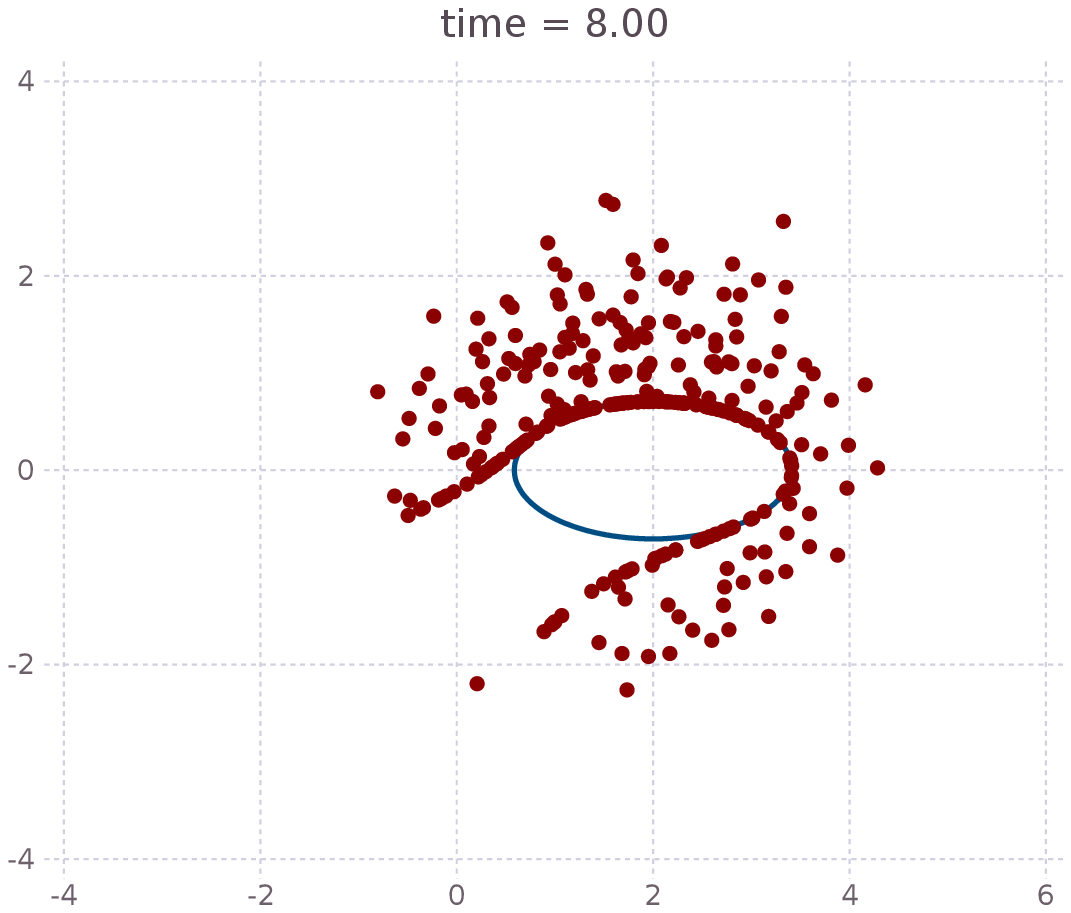}}\\
  {\includegraphics[width=0.19\textwidth]{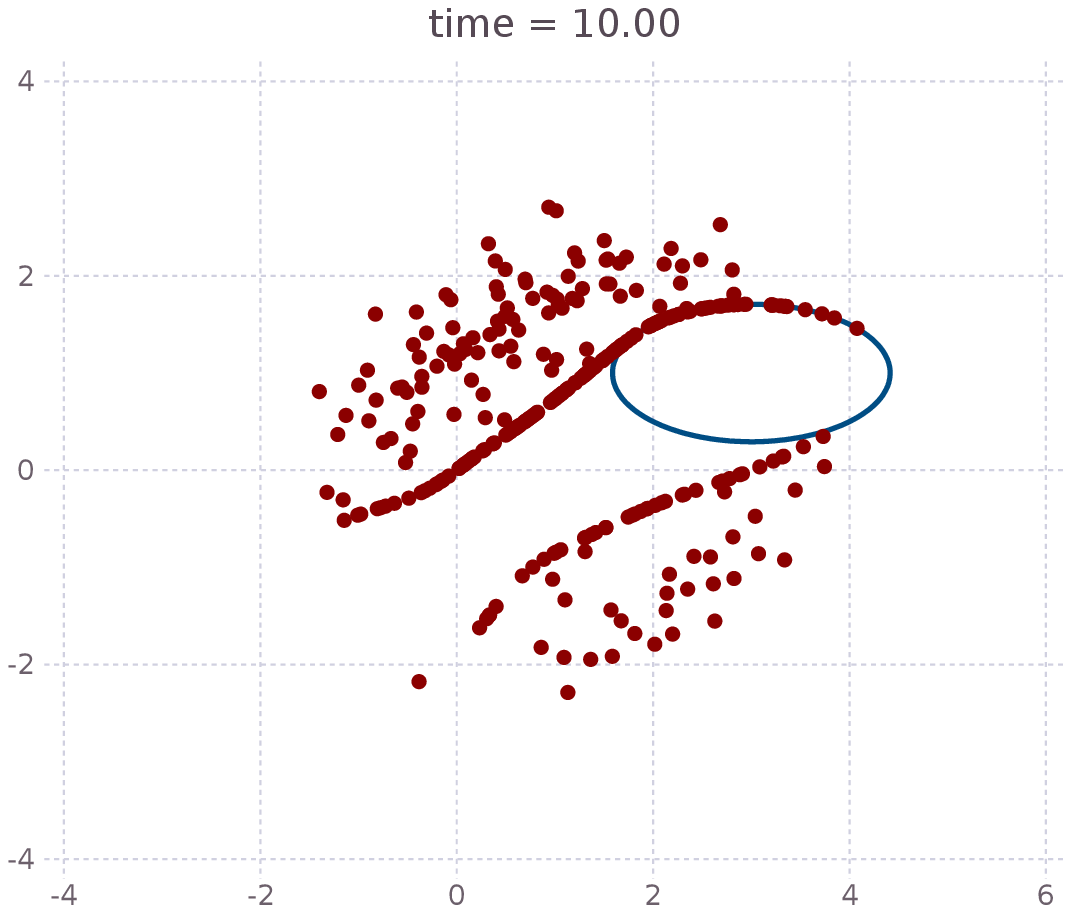}}
  {\includegraphics[width=0.19\textwidth]{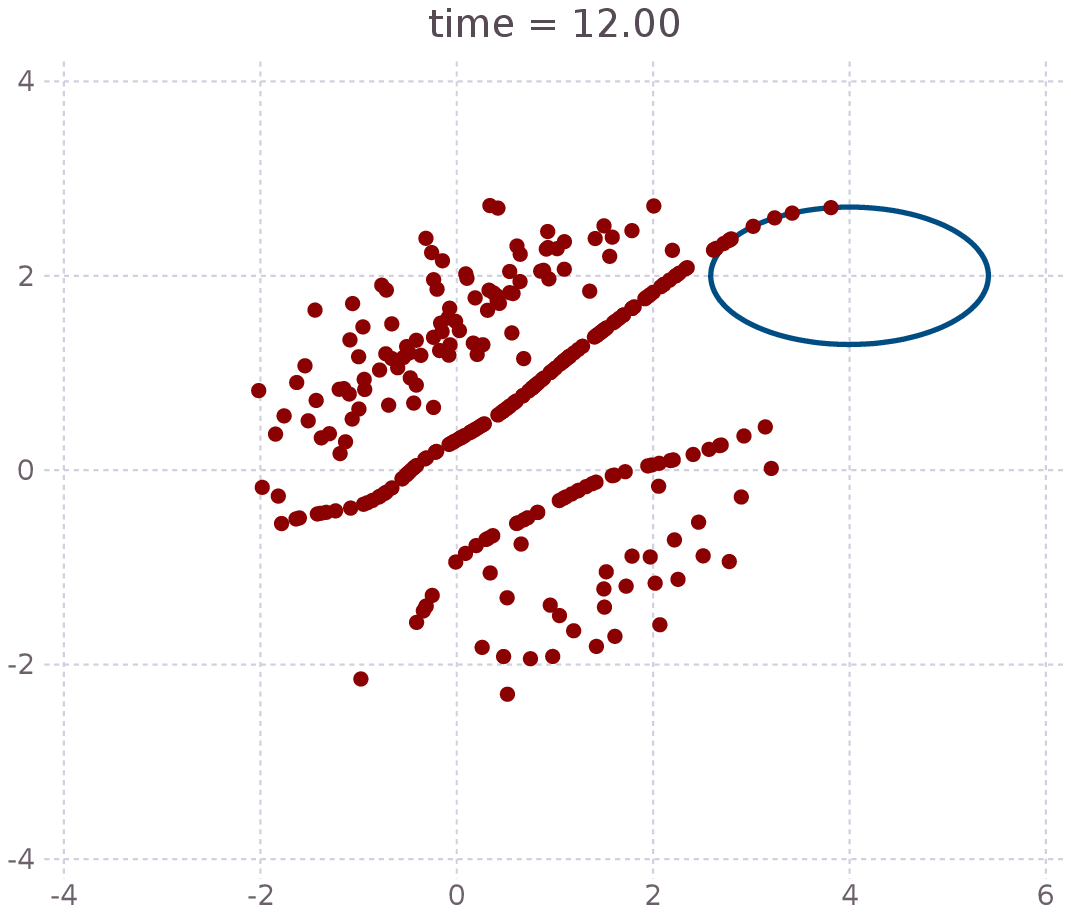}}
  {\includegraphics[width=0.19\textwidth]{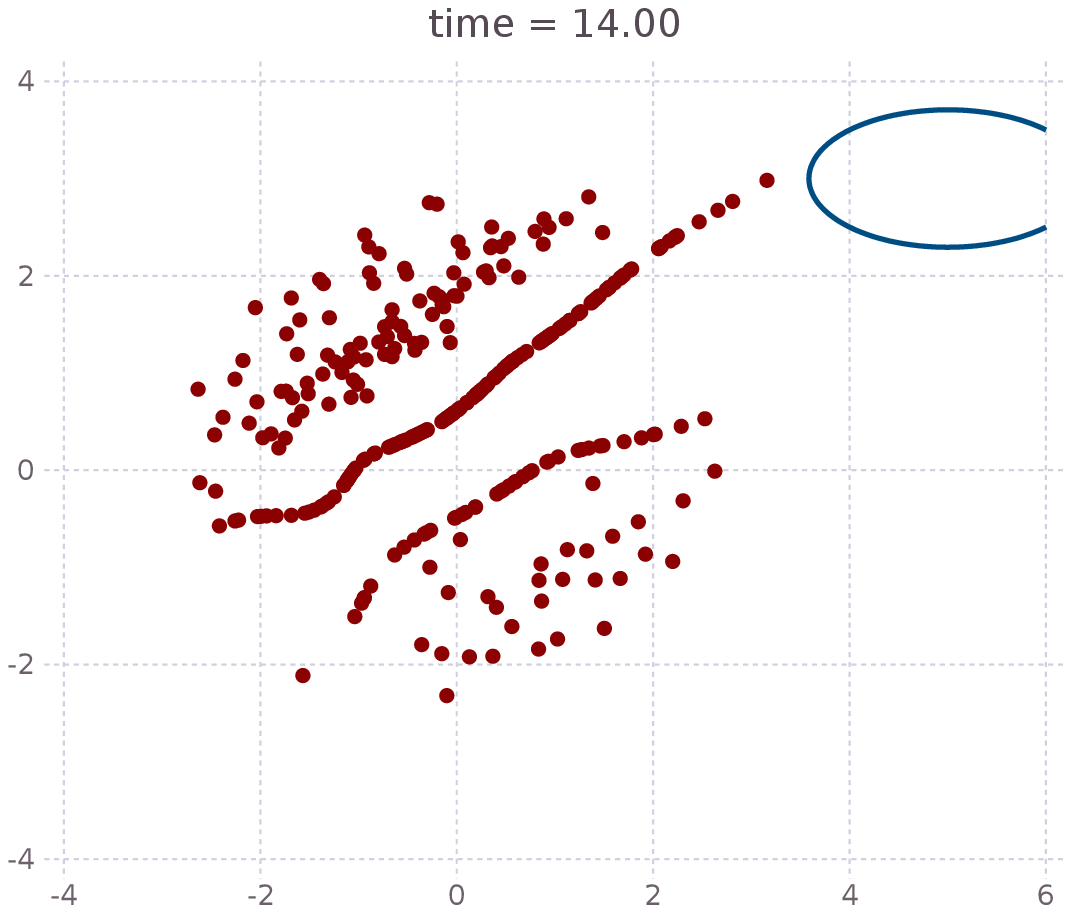}}
  {\includegraphics[width=0.19\textwidth]{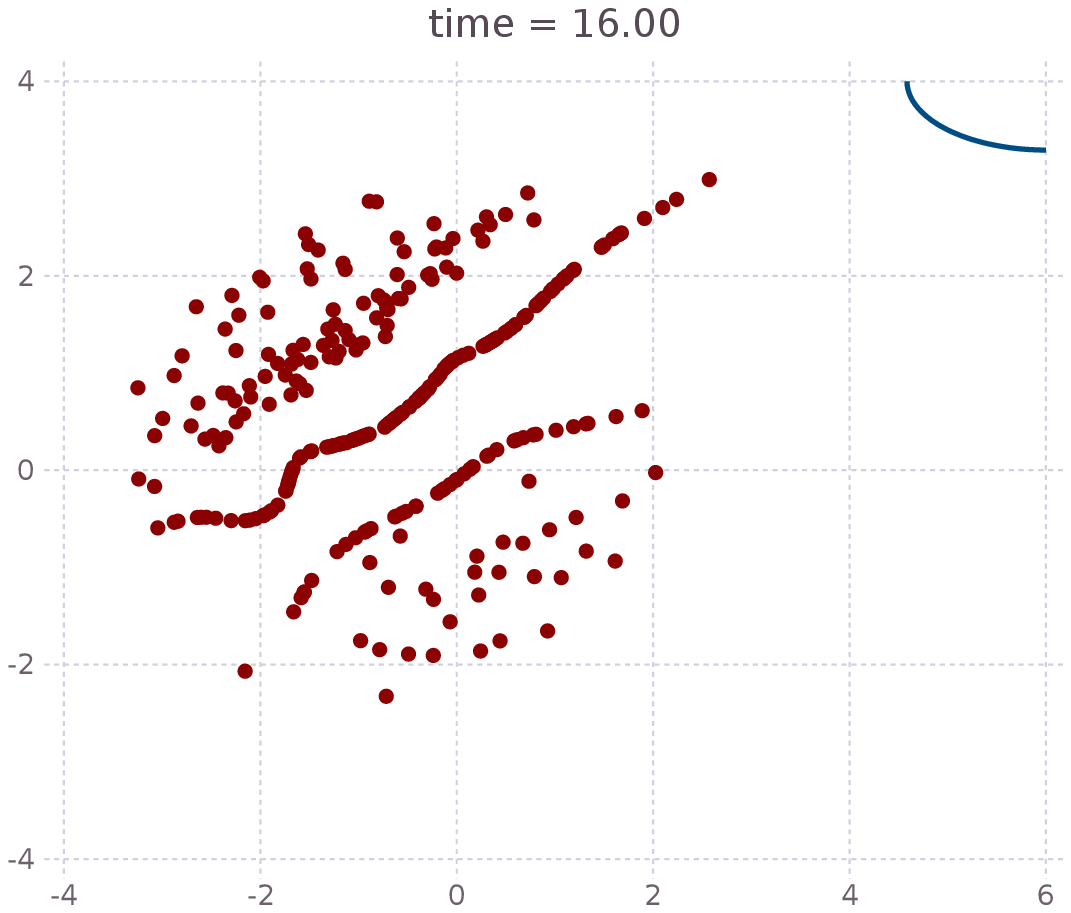}}
  {\includegraphics[width=0.19\textwidth]{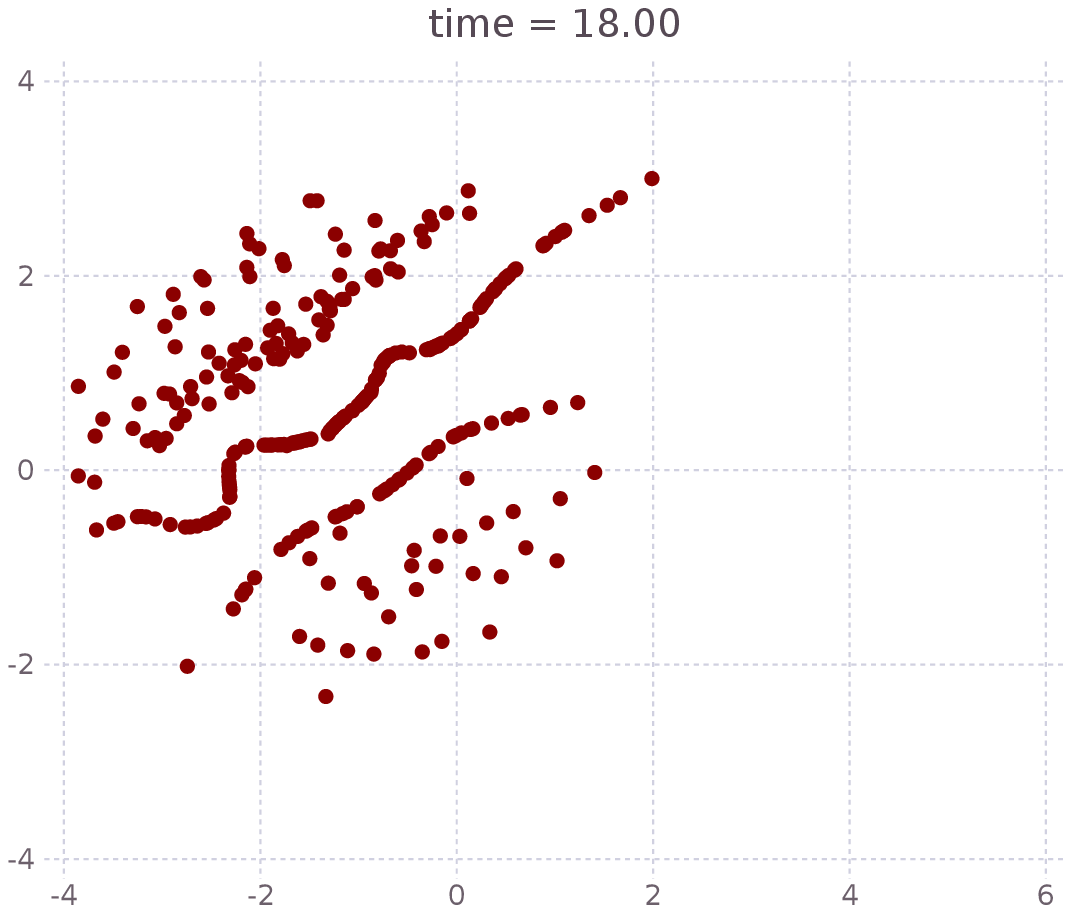}}
  \caption{The attraction/repulsion model: solutions at time moments \( t = 0,2,4,\ldots,18 \).}
  \label{fig:mathbiol}
\end{figure}
\begin{figure}
  \centering
  \begin{subfigure}{\textwidth}
  \includegraphics[width=0.19\textwidth]{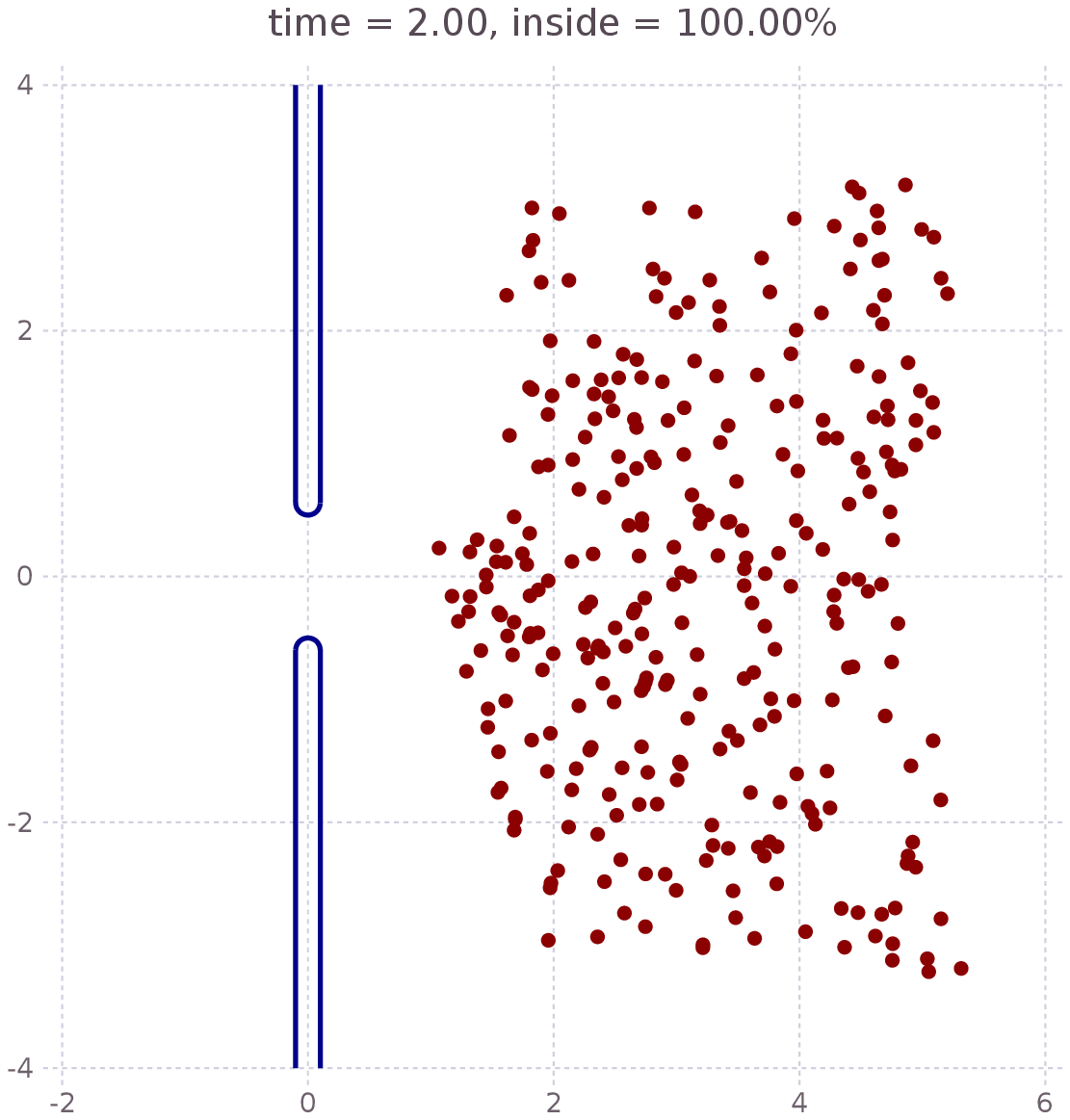}
  \includegraphics[width=0.19\textwidth]{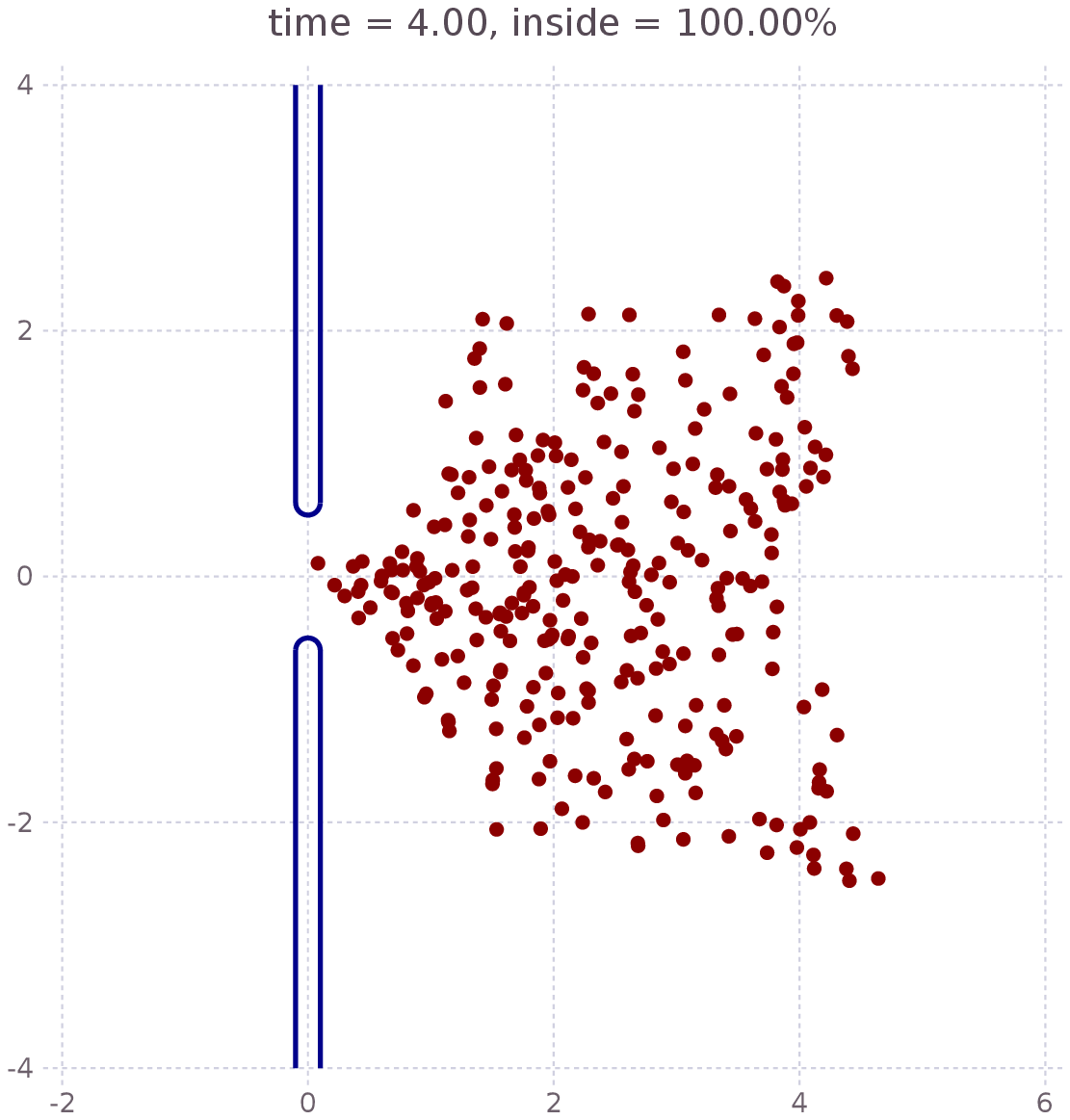}
  \includegraphics[width=0.19\textwidth]{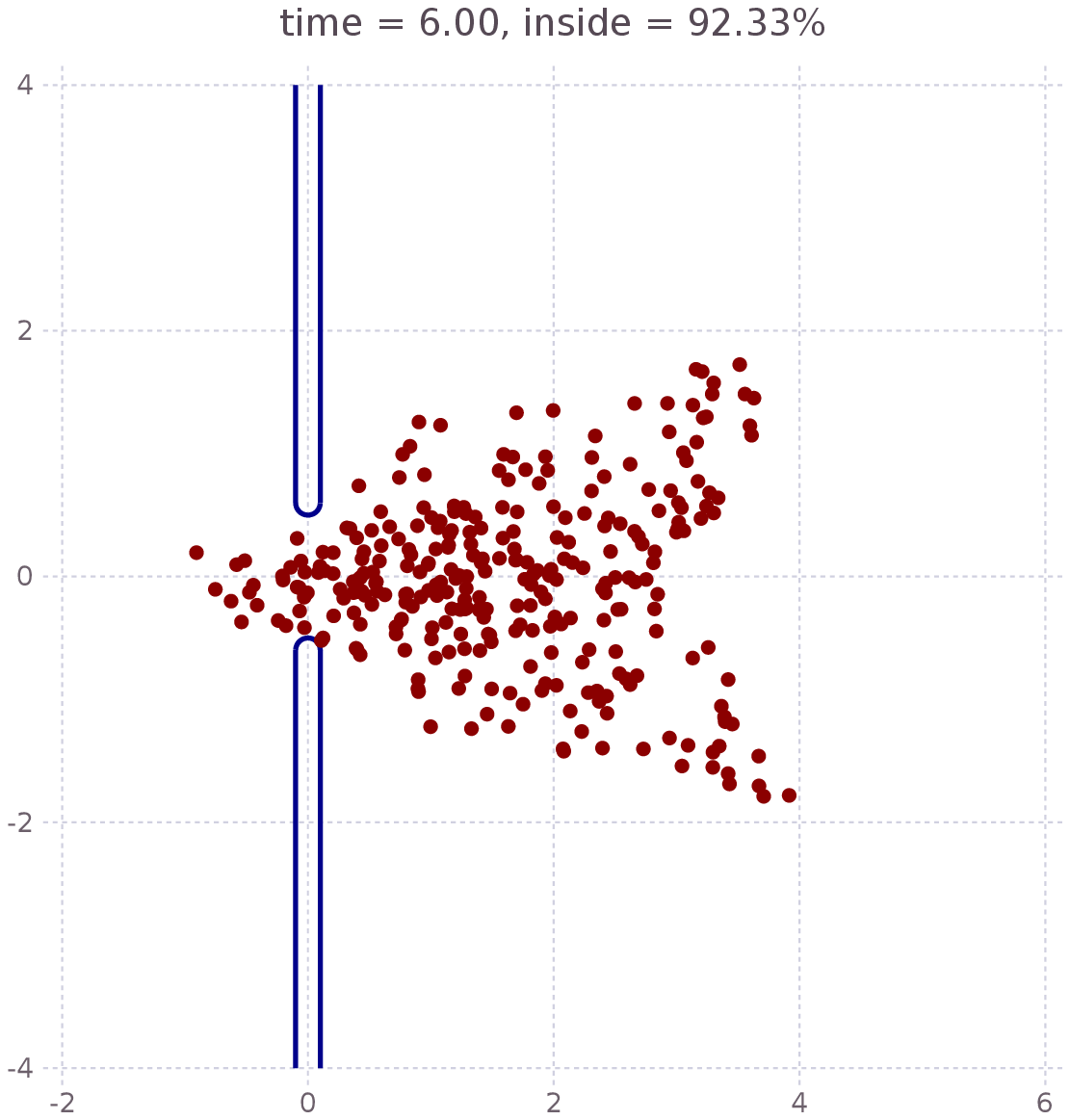}
  \includegraphics[width=0.19\textwidth]{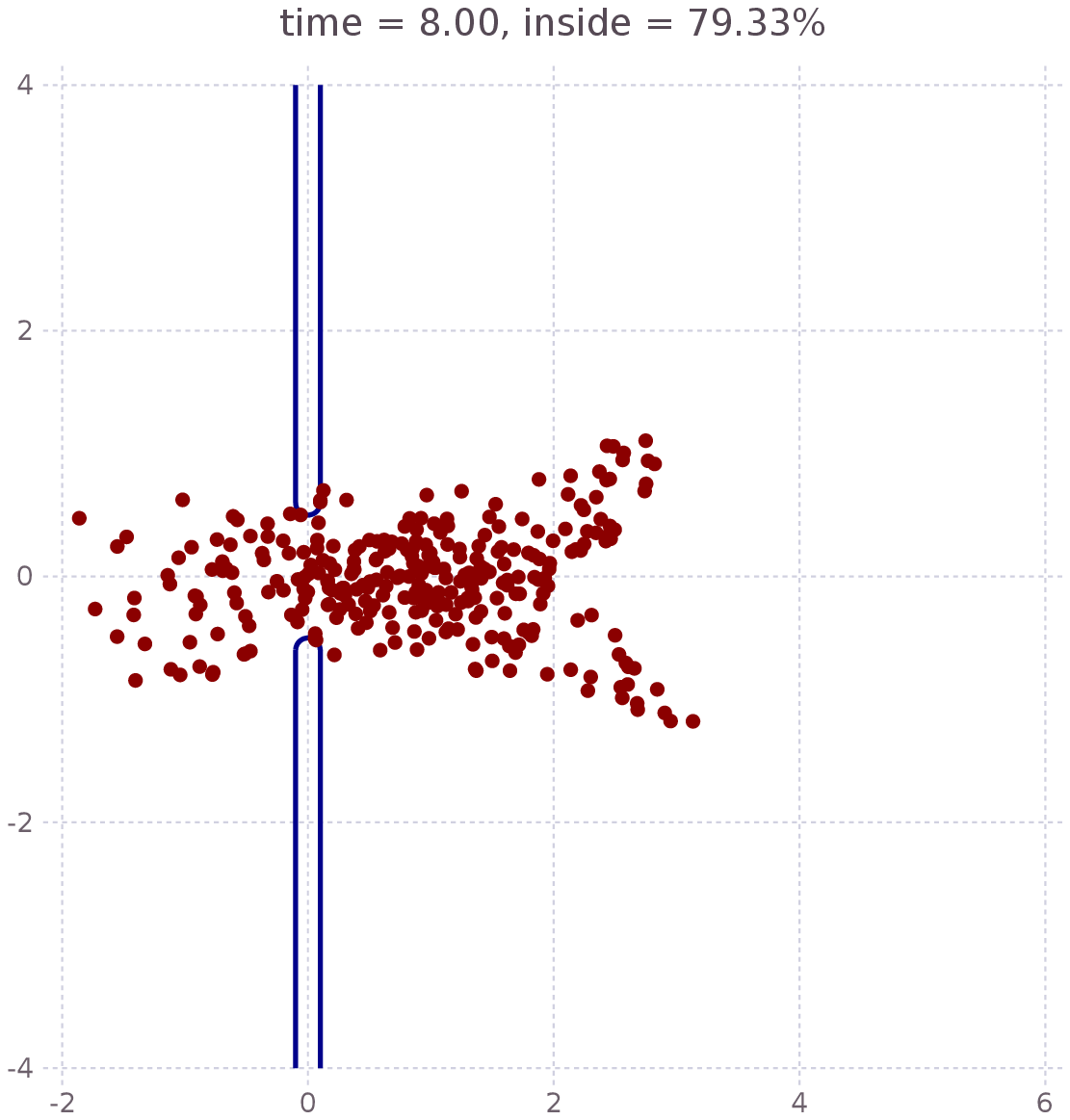}
  \includegraphics[width=0.19\textwidth]{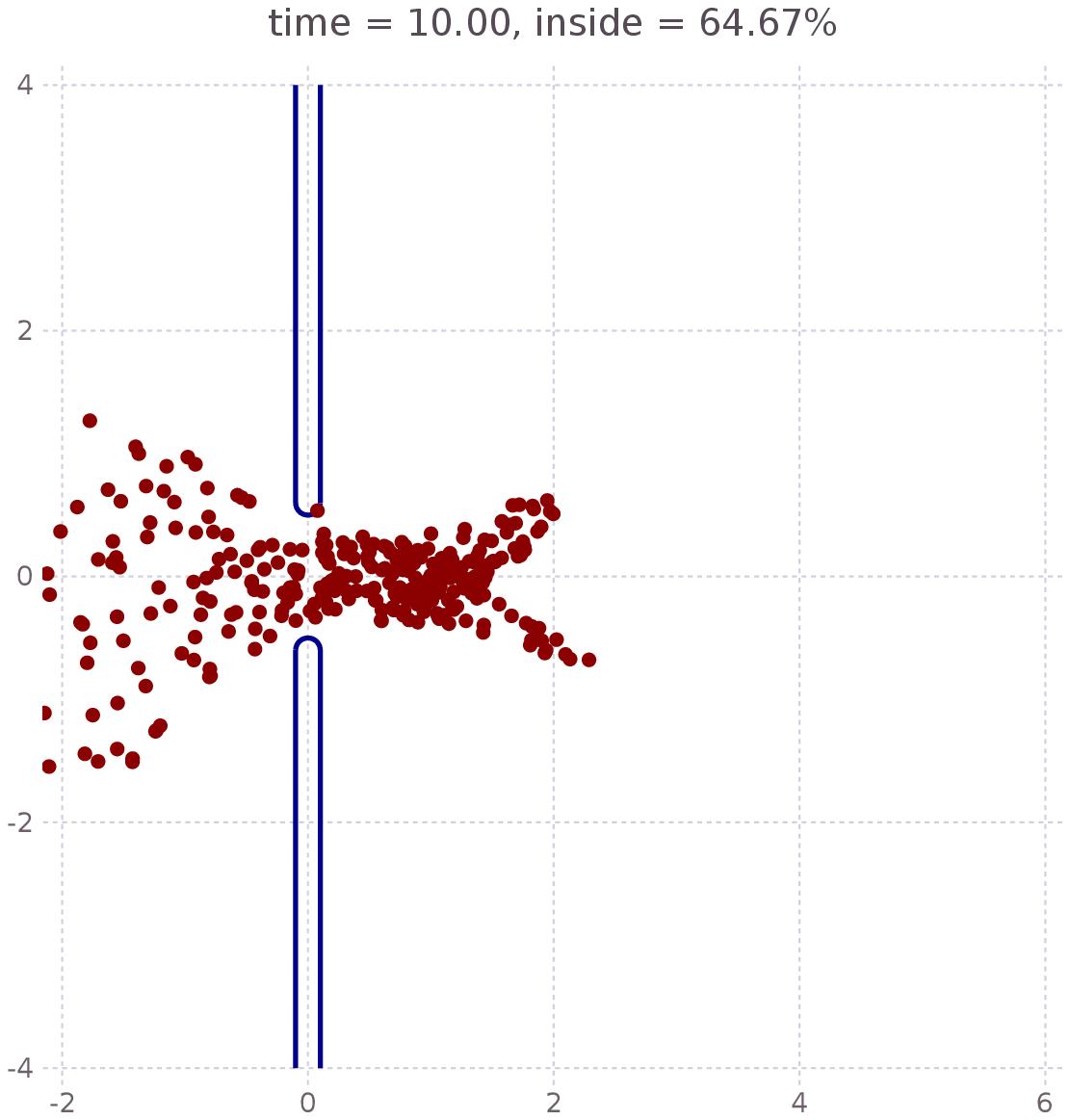}\\
  \includegraphics[width=0.19\textwidth]{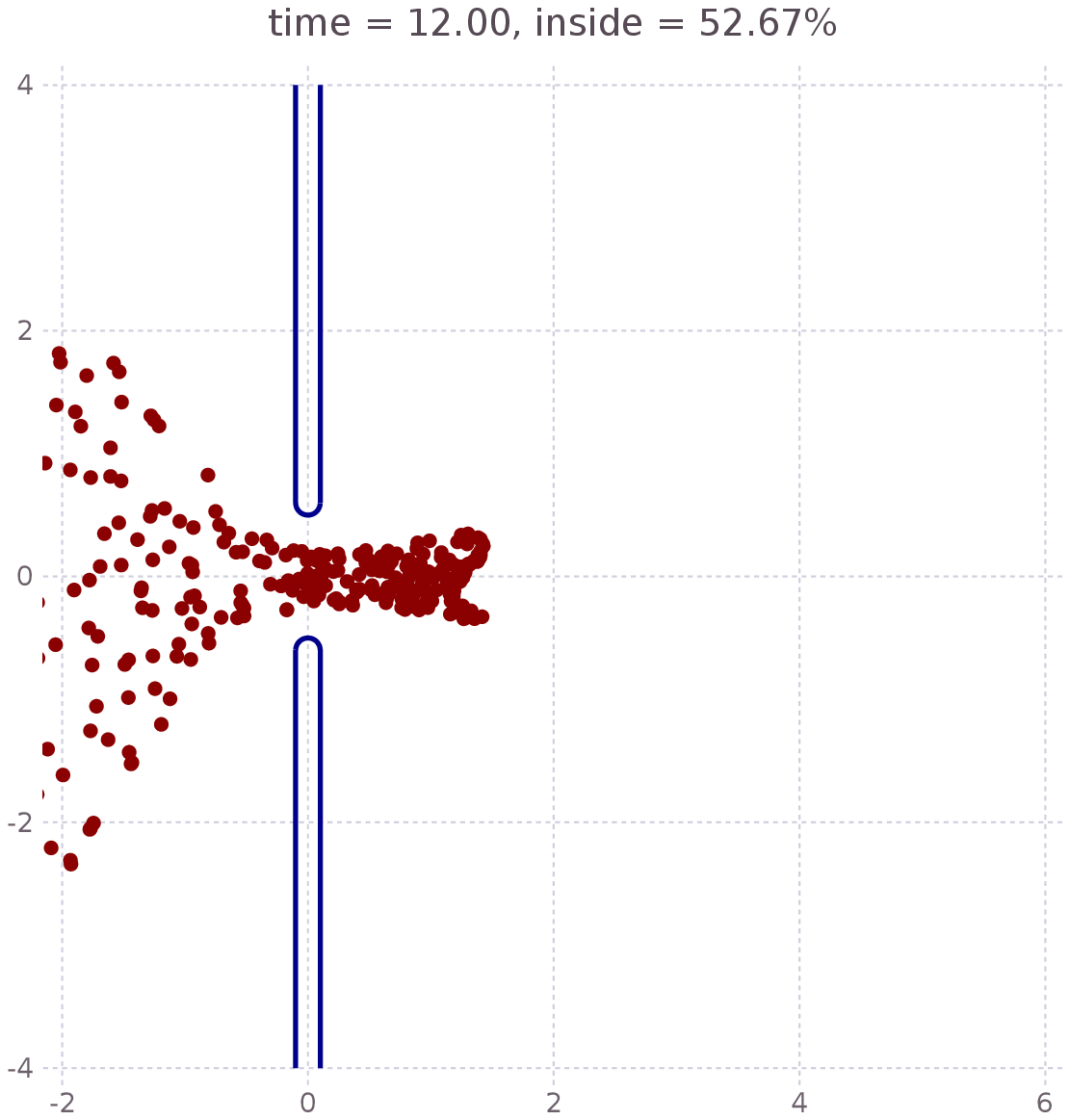}
  \includegraphics[width=0.19\textwidth]{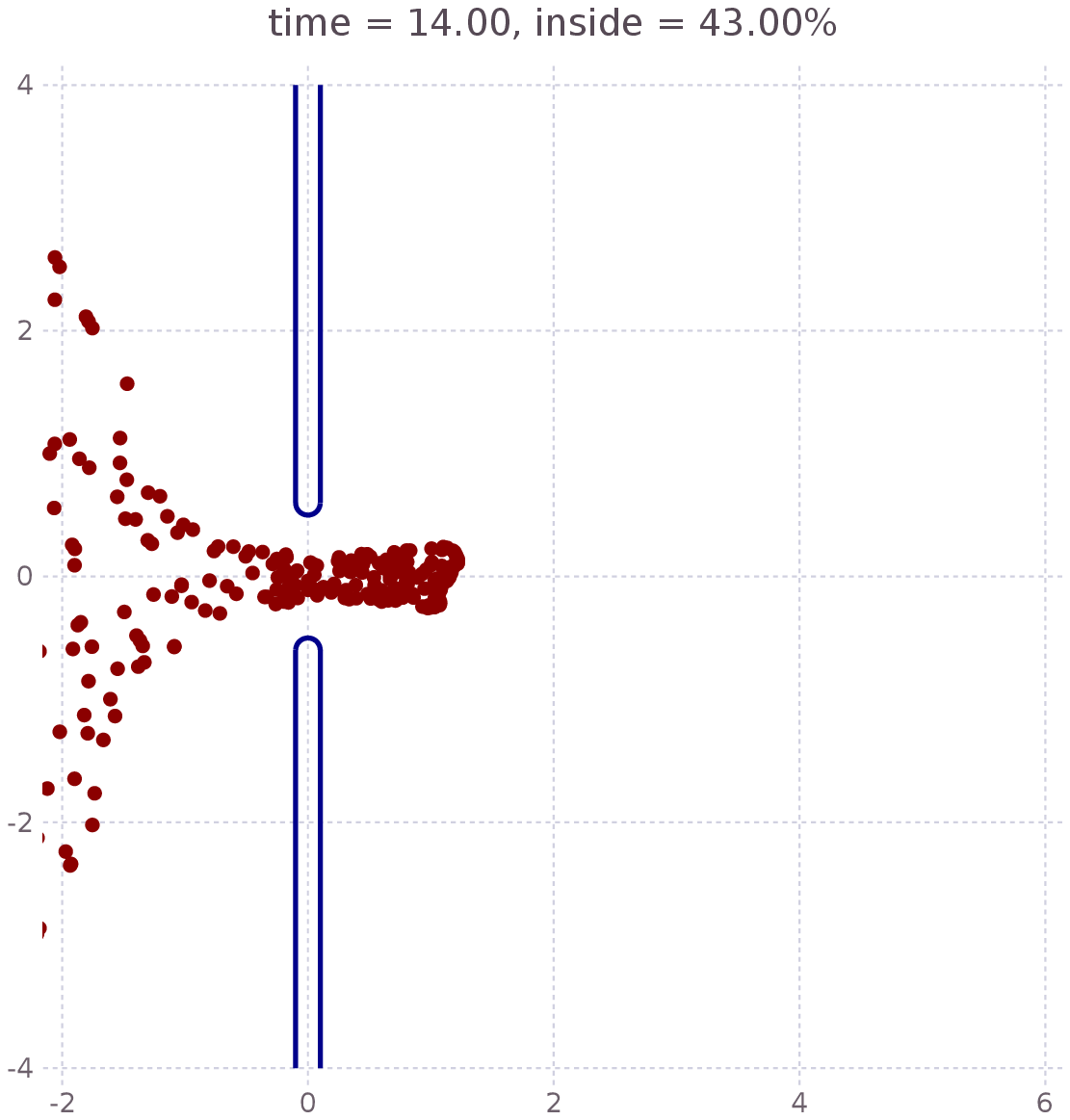}
  \includegraphics[width=0.19\textwidth]{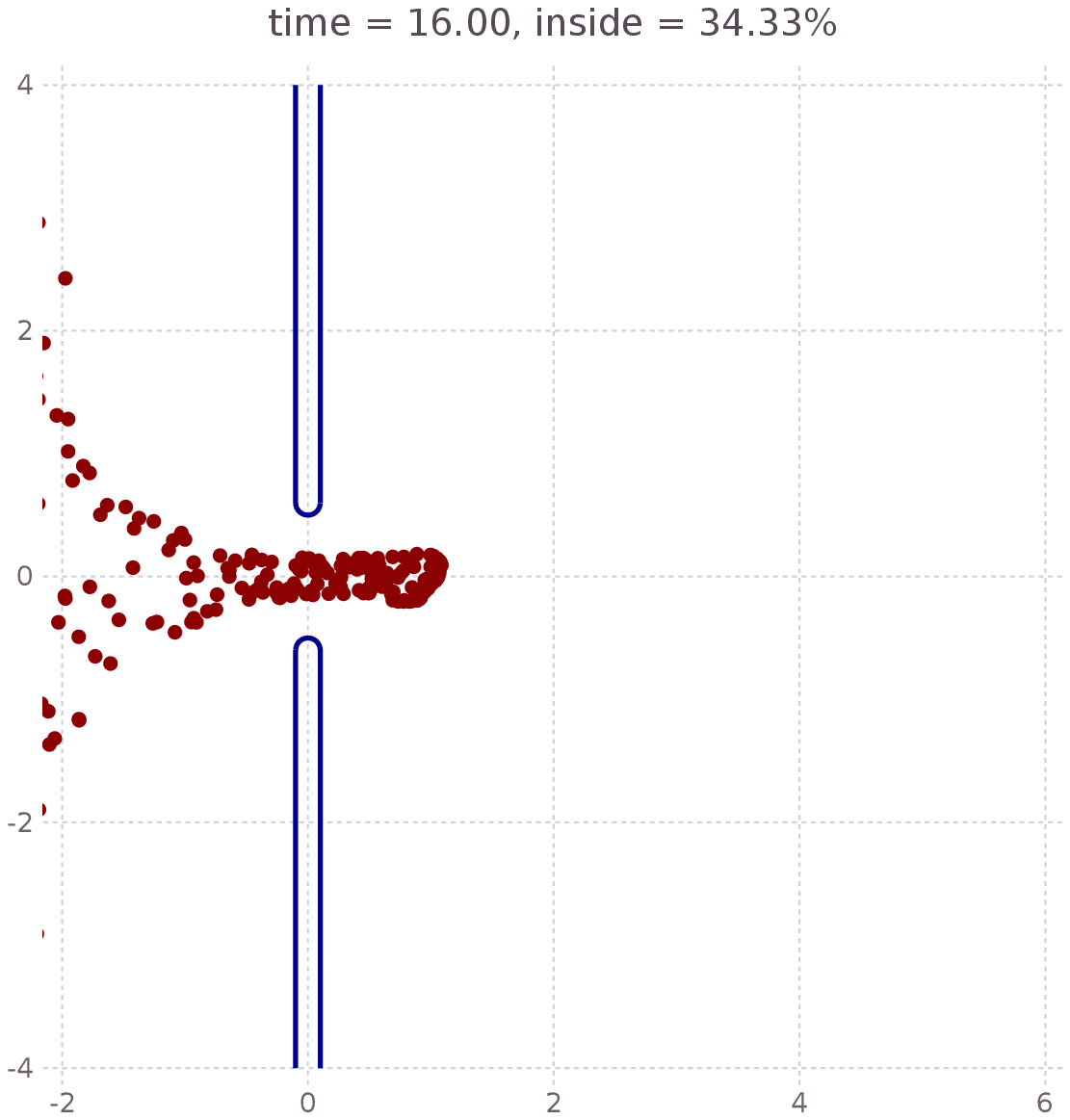}
  \includegraphics[width=0.19\textwidth]{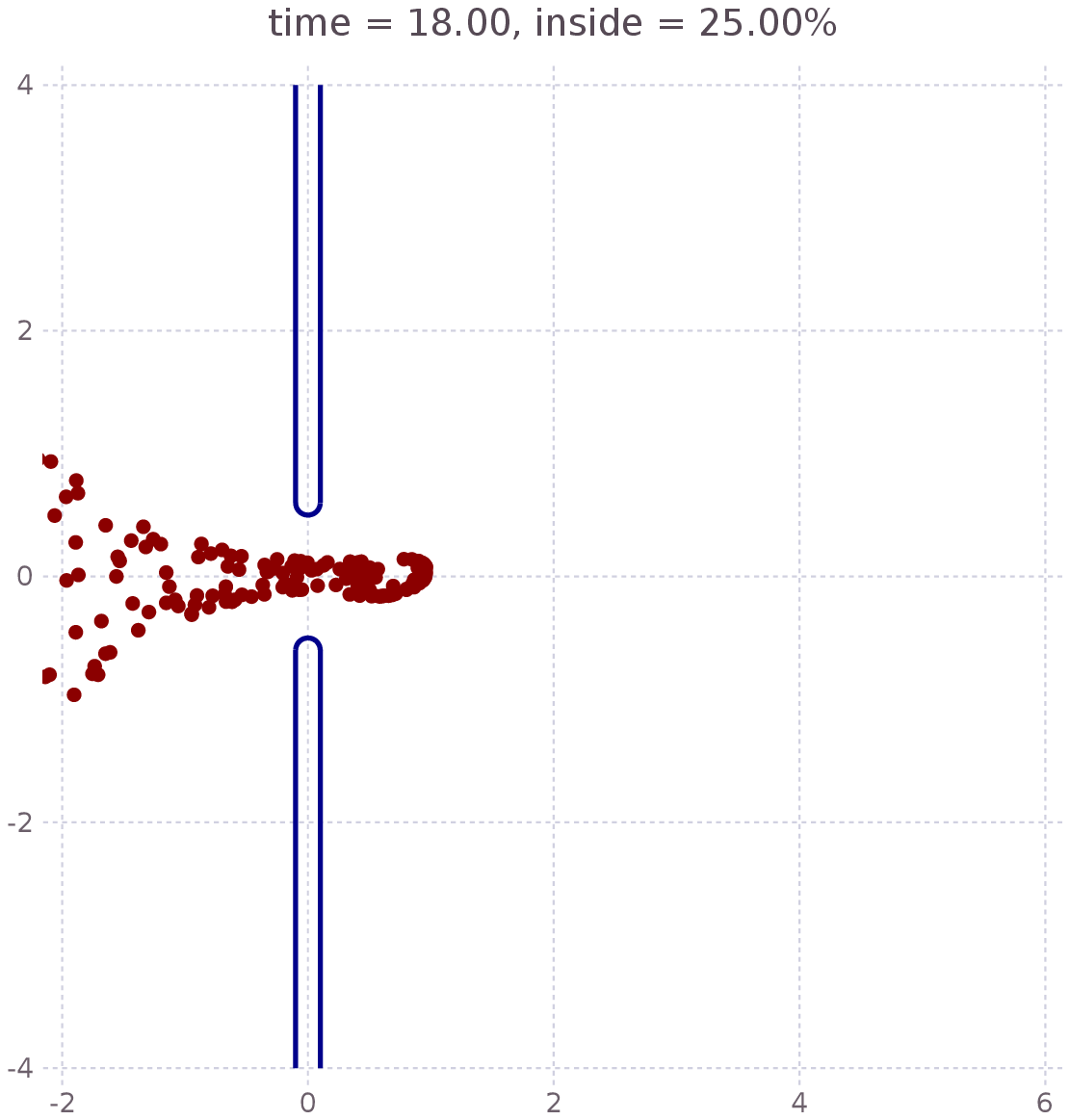}
  \includegraphics[width=0.19\textwidth]{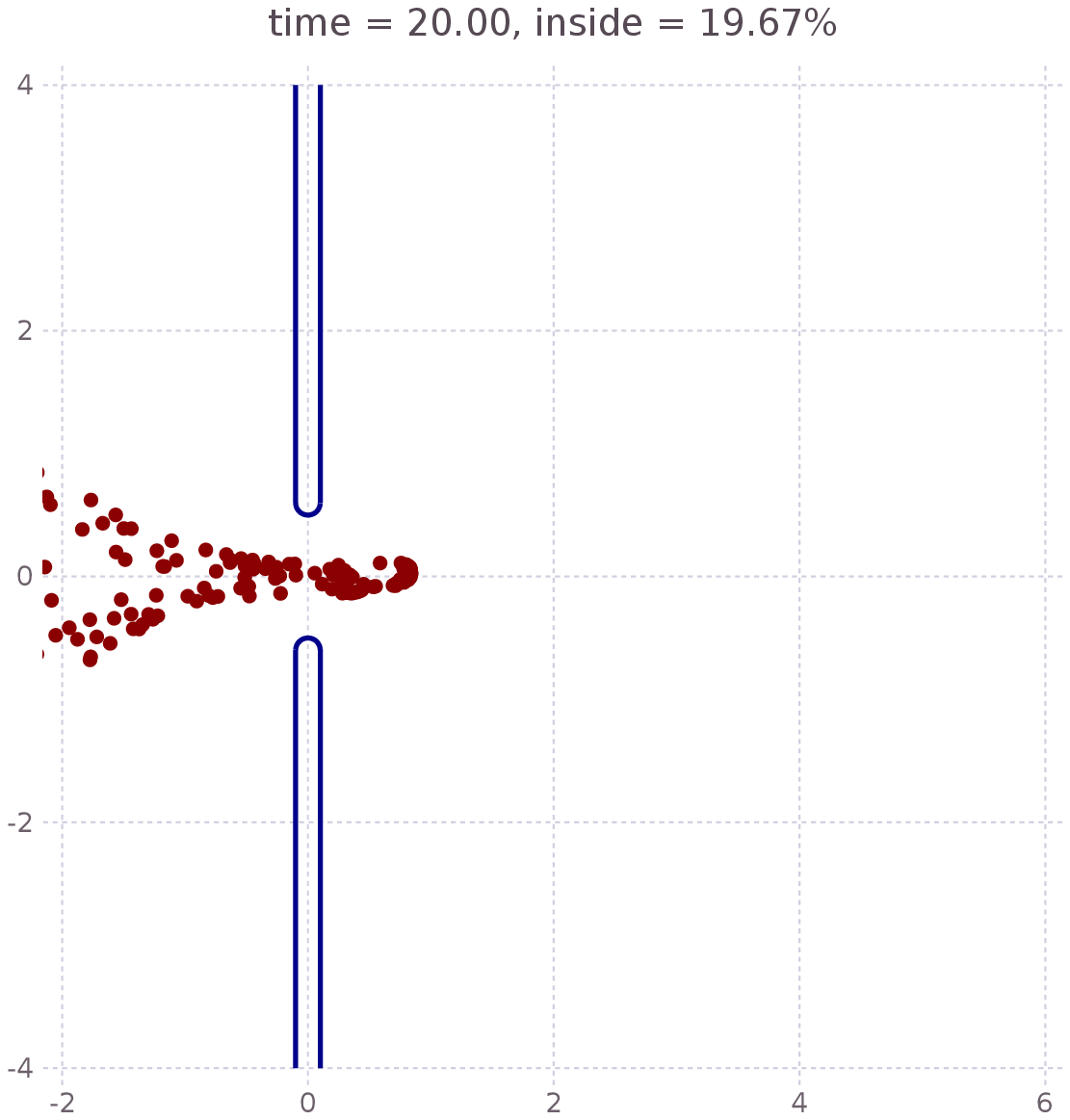}
  \caption{No obstacle: \( c = (100,100) \), \( a = (0.9,0.16) \), \( \omega = 0 \).}
\end{subfigure}\\
  \begin{subfigure}{\textwidth}
  \includegraphics[width=0.19\textwidth]{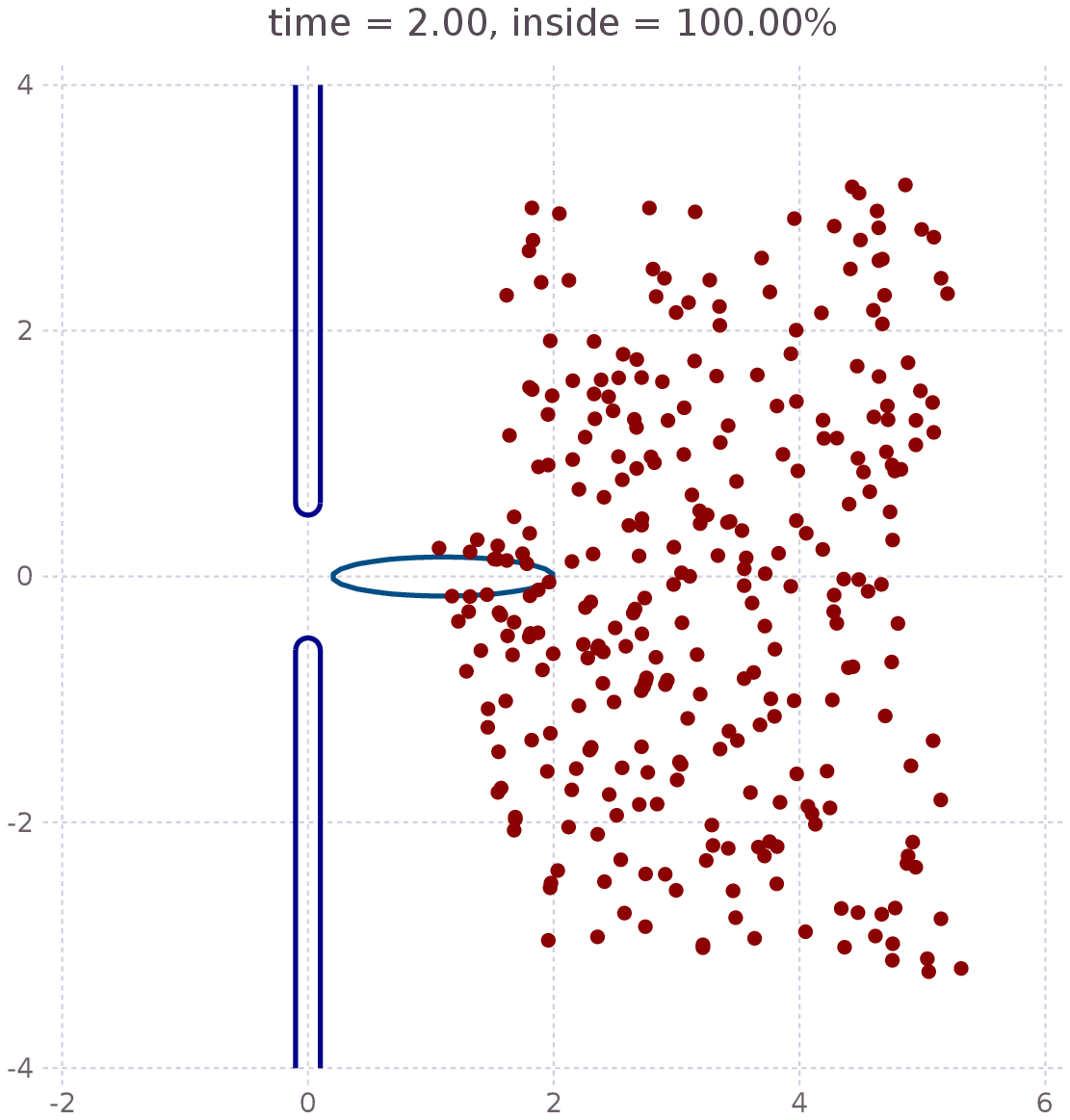}
  \includegraphics[width=0.19\textwidth]{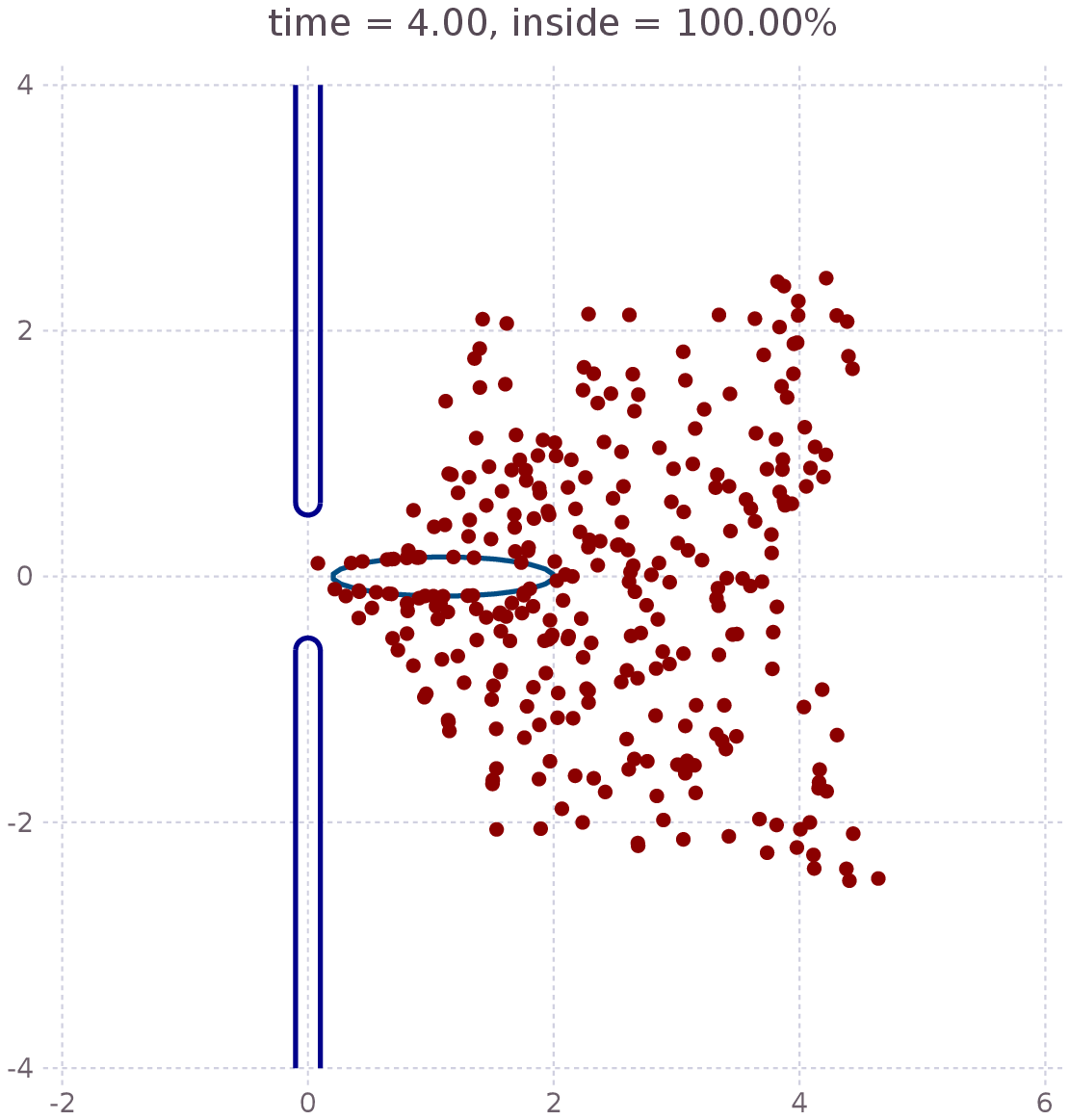}
  \includegraphics[width=0.19\textwidth]{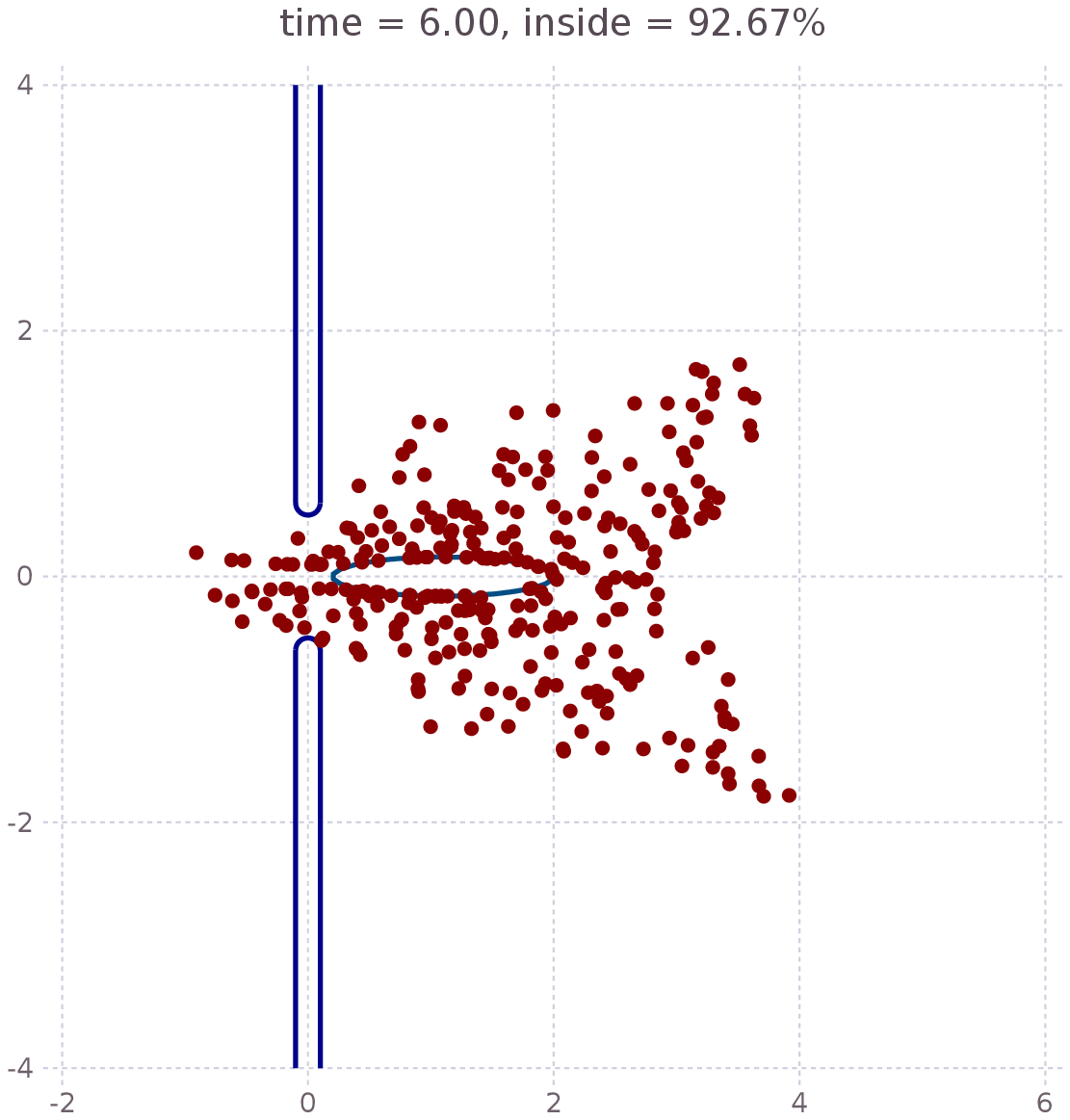}
  \includegraphics[width=0.19\textwidth]{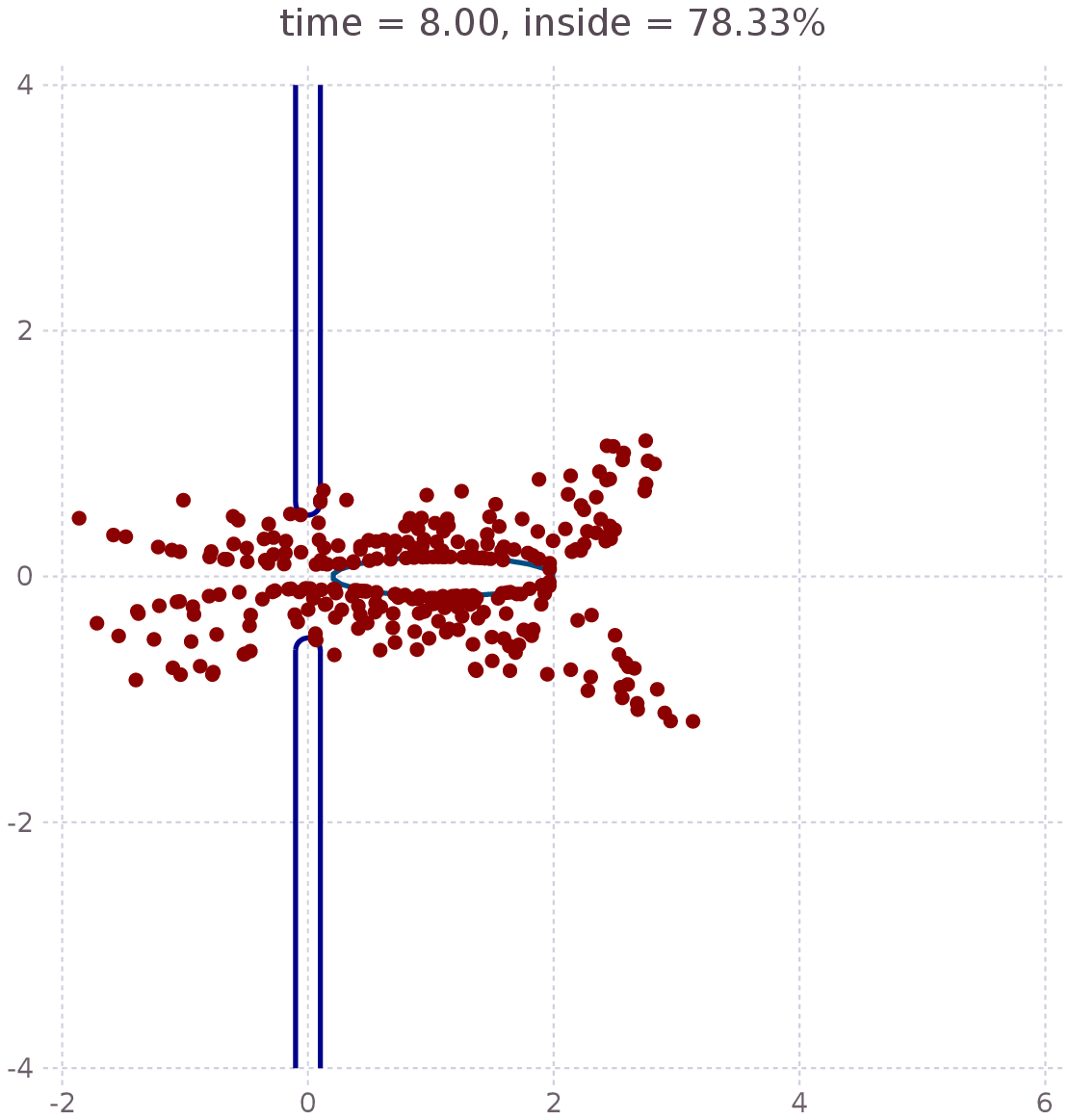}
  \includegraphics[width=0.19\textwidth]{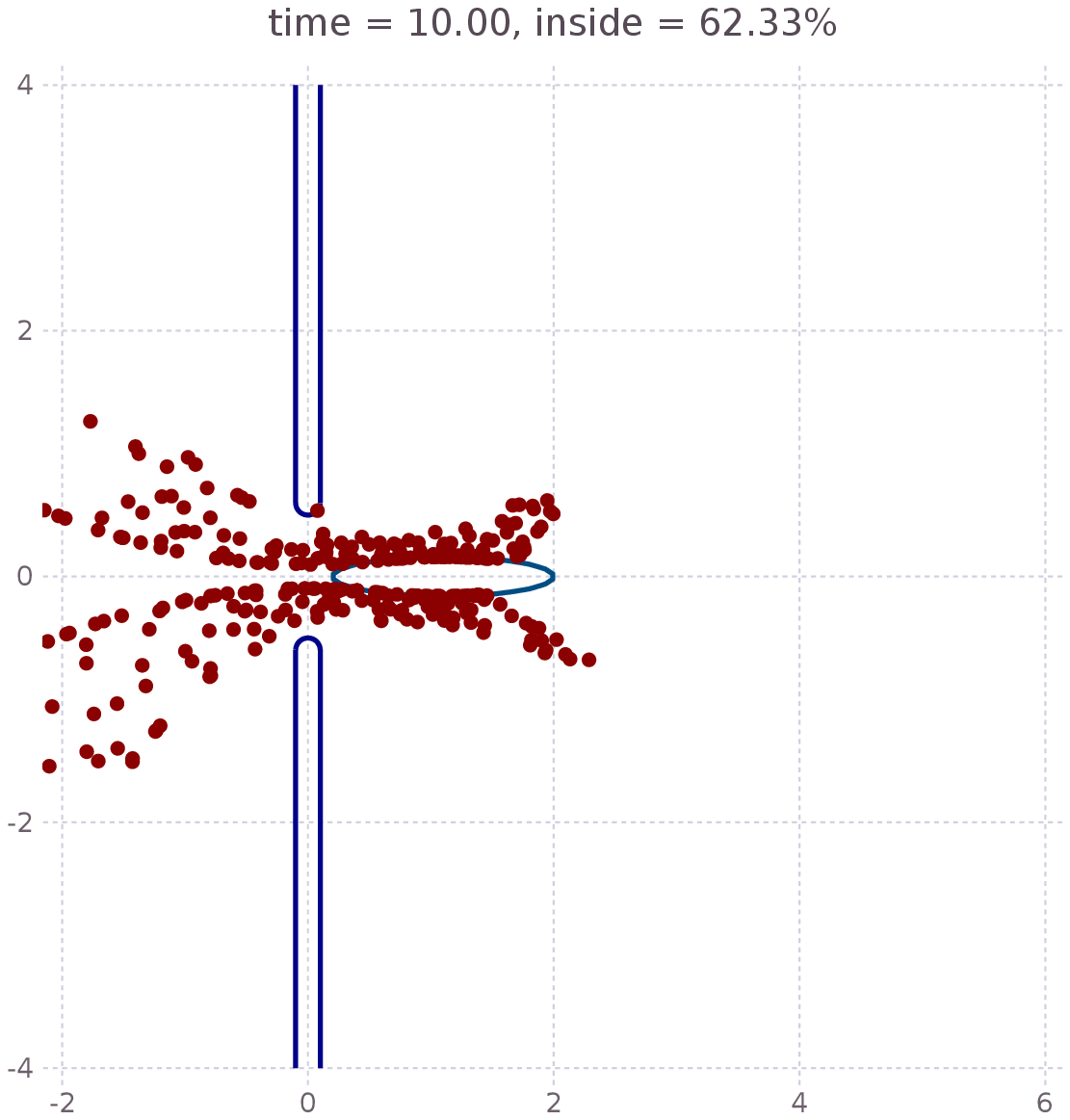}\\
  \includegraphics[width=0.19\textwidth]{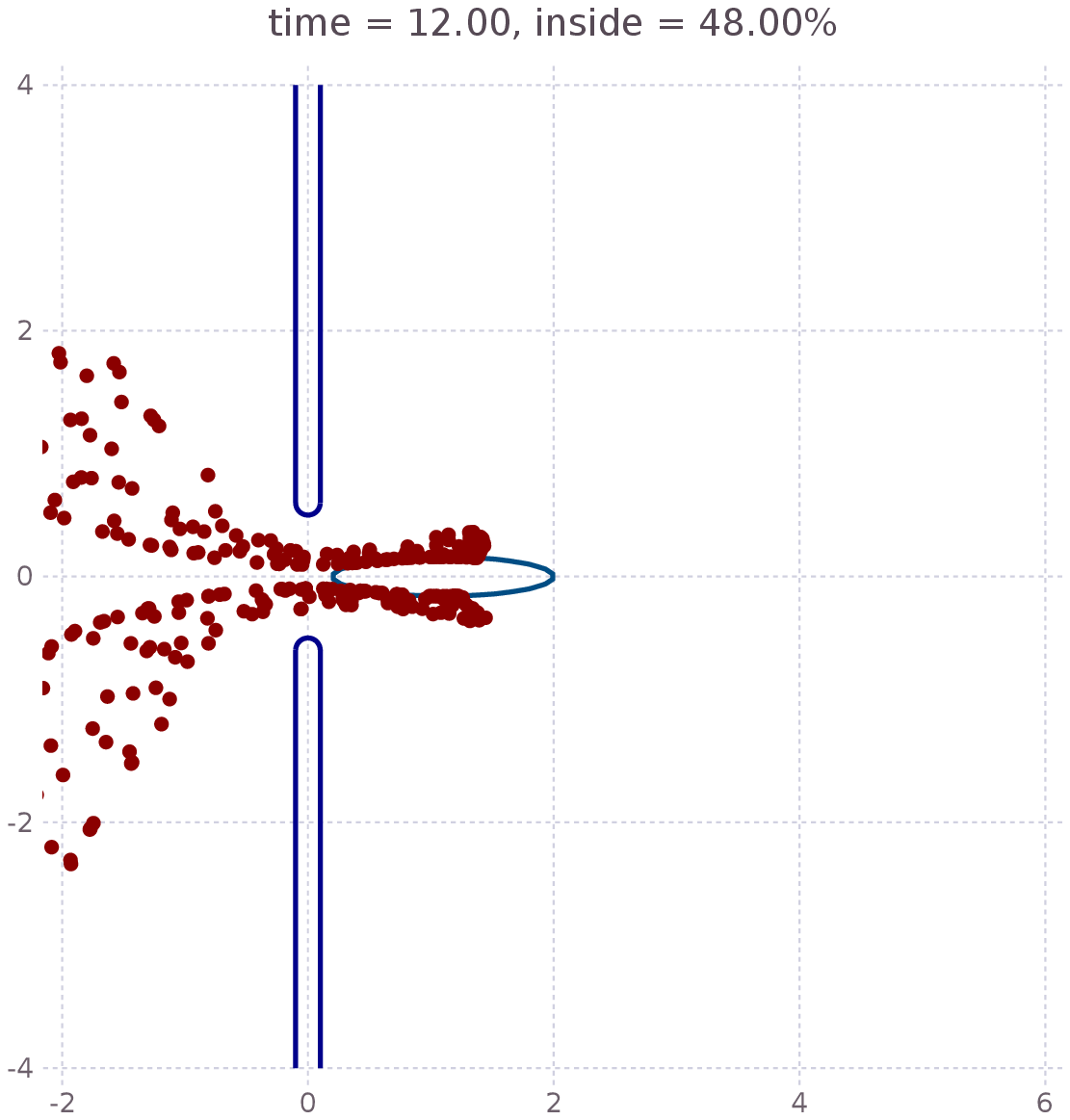}
  \includegraphics[width=0.19\textwidth]{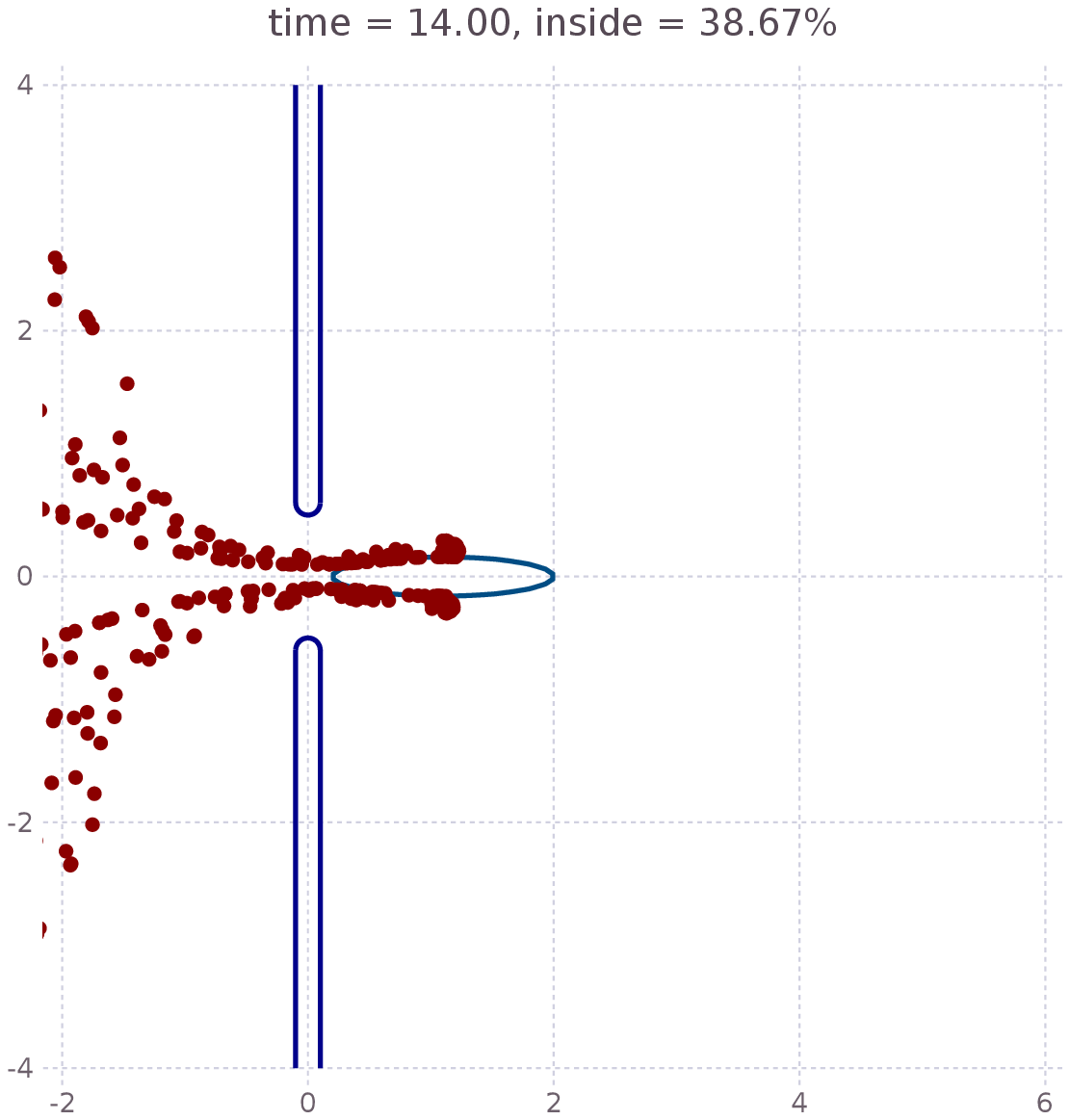}
  \includegraphics[width=0.19\textwidth]{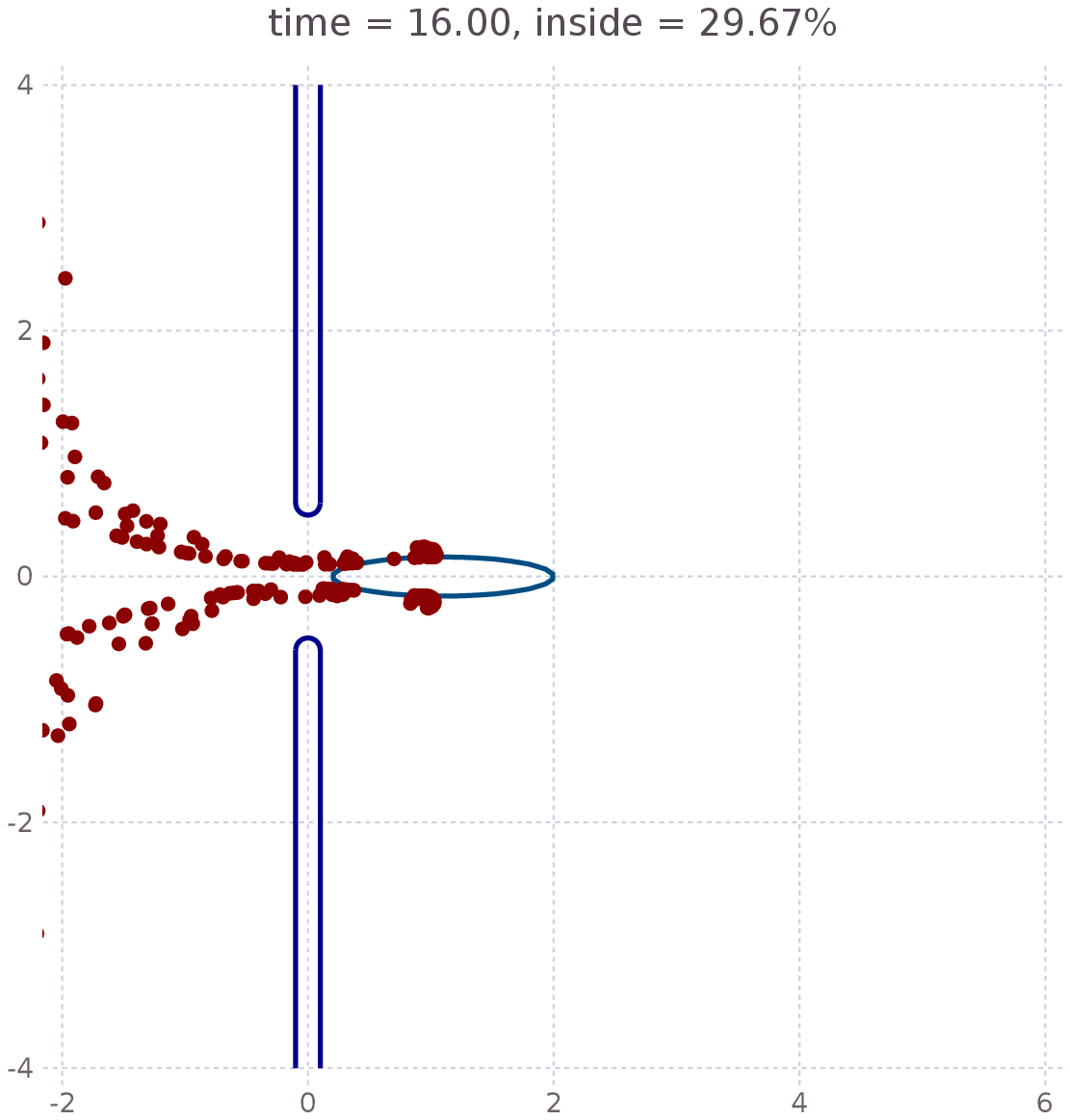}
  \includegraphics[width=0.19\textwidth]{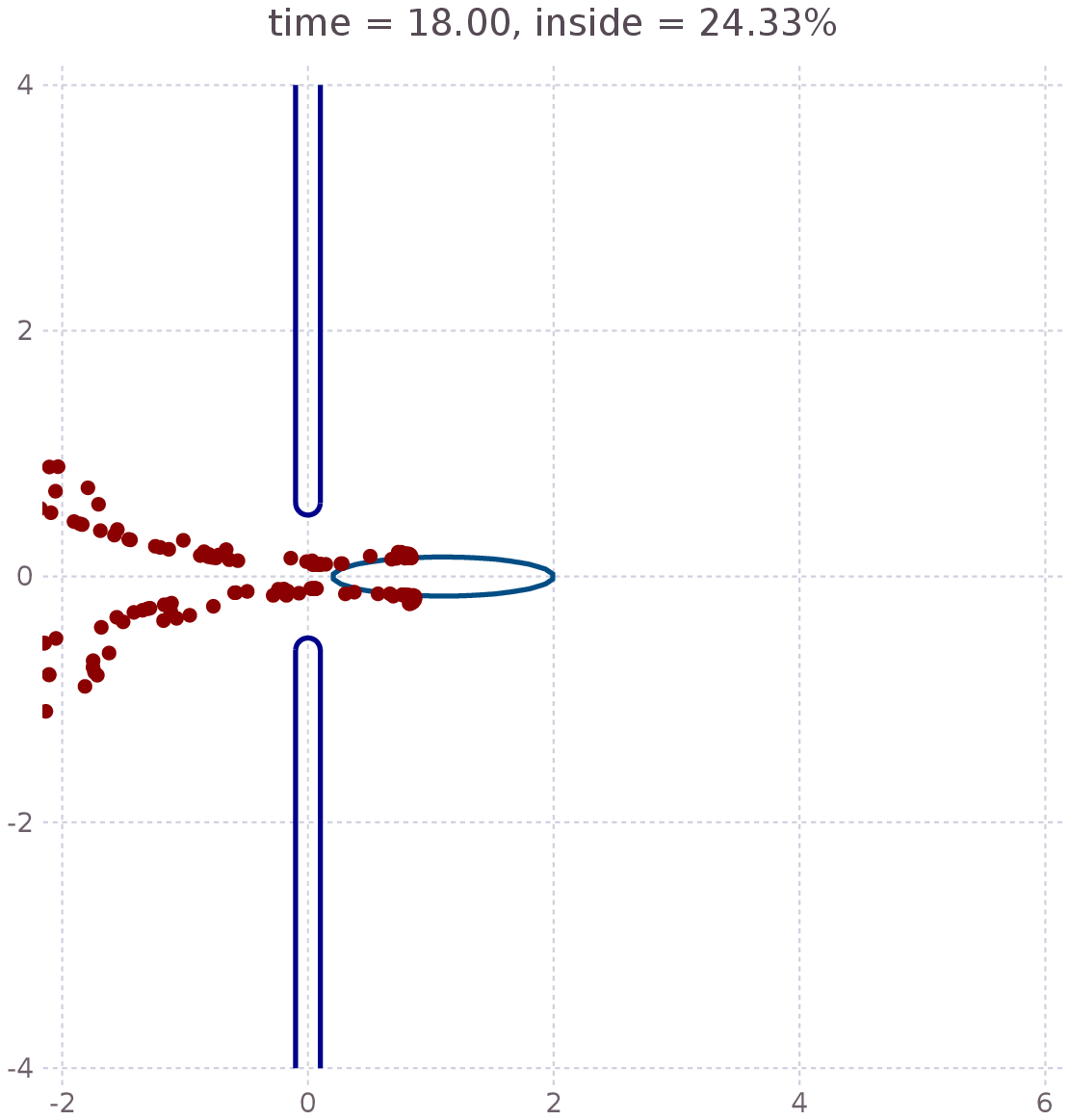}
  \includegraphics[width=0.19\textwidth]{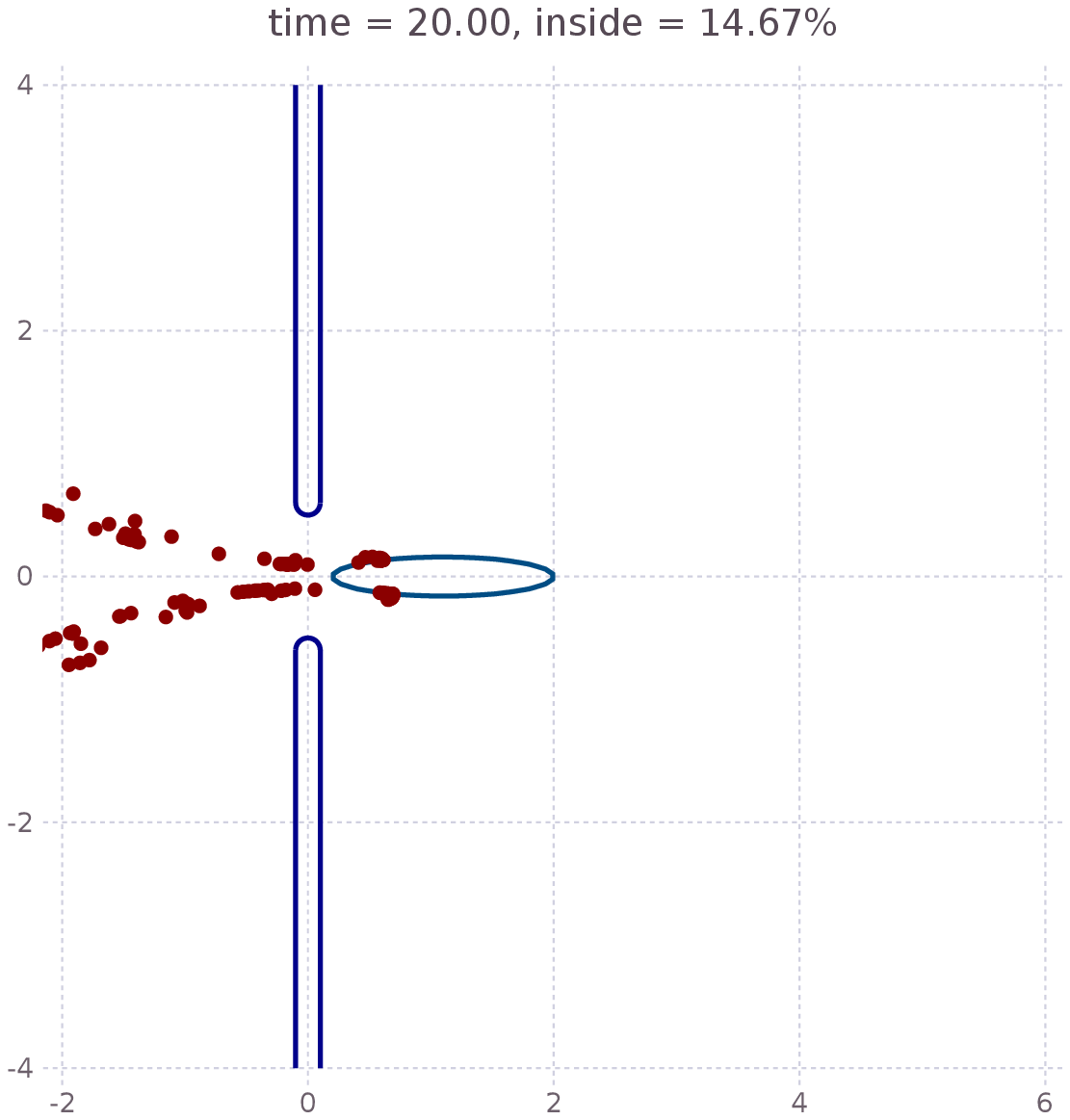}
  \caption{Stationary obstacle: \( c = (1.1,0) \), \( a = (0.9,0.16) \), \( \omega = 0 \).}
\end{subfigure}\\
\begin{subfigure}{\textwidth}
  \includegraphics[width=0.19\textwidth]{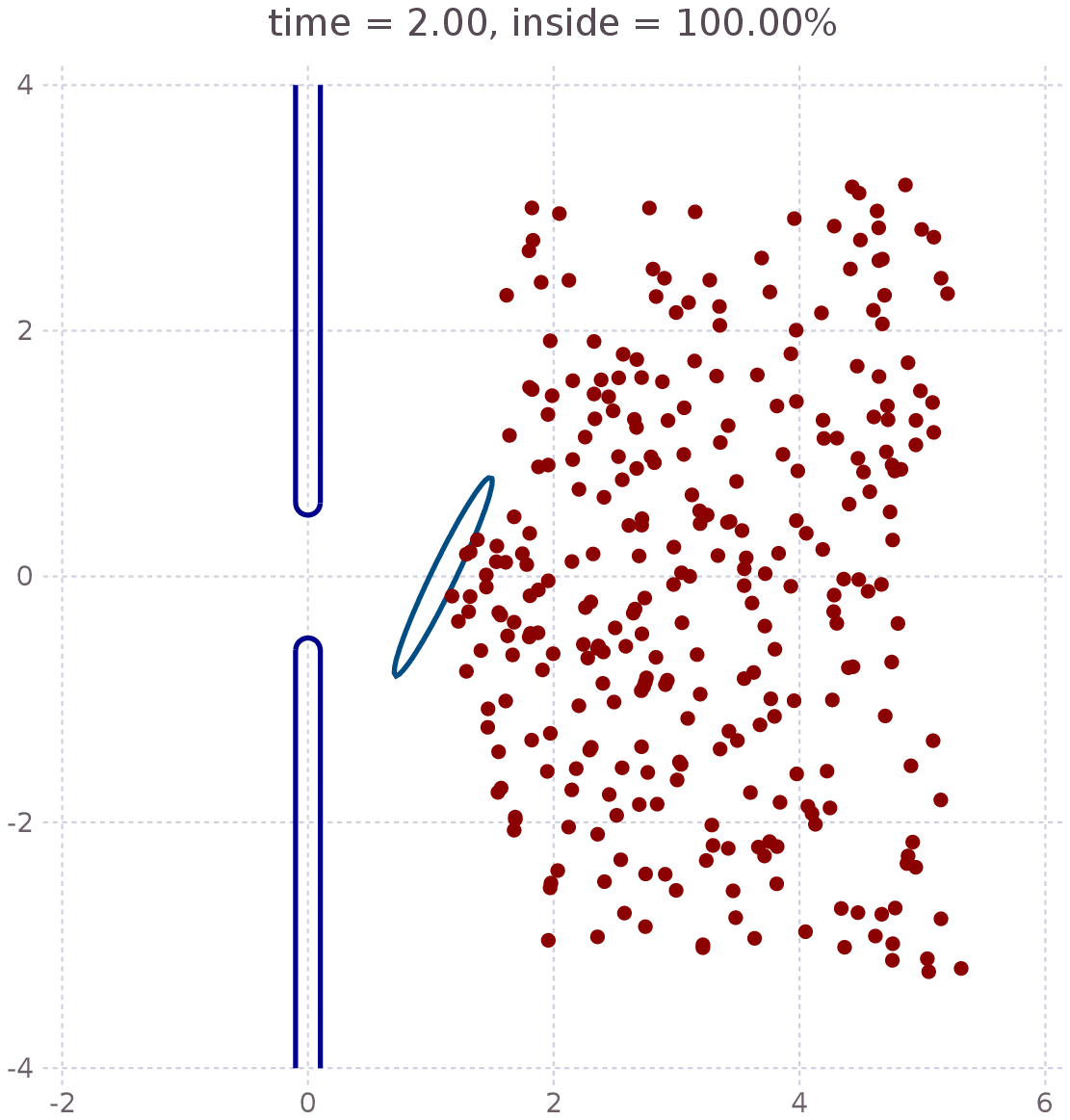}
  \includegraphics[width=0.19\textwidth]{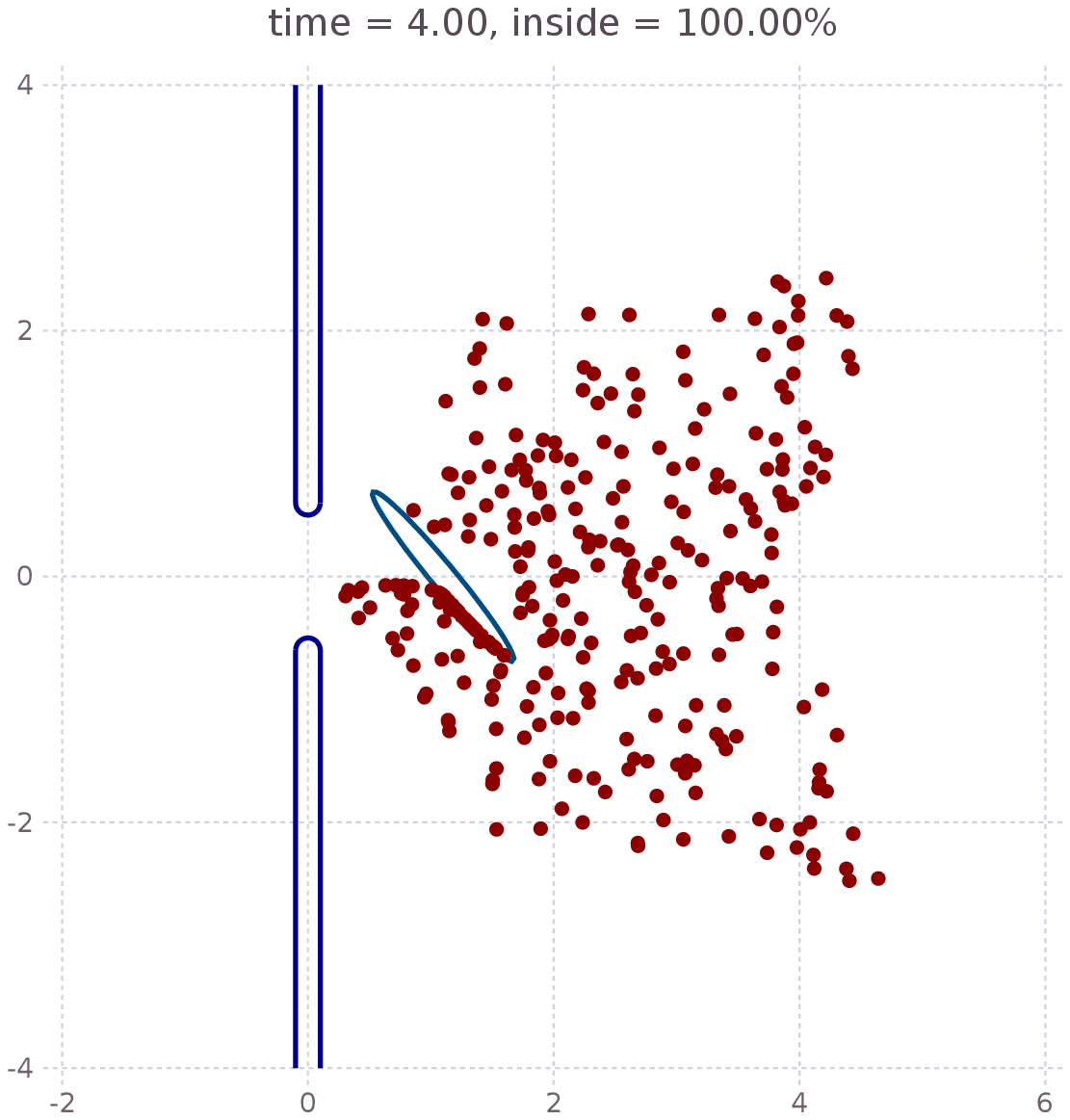}
  \includegraphics[width=0.19\textwidth]{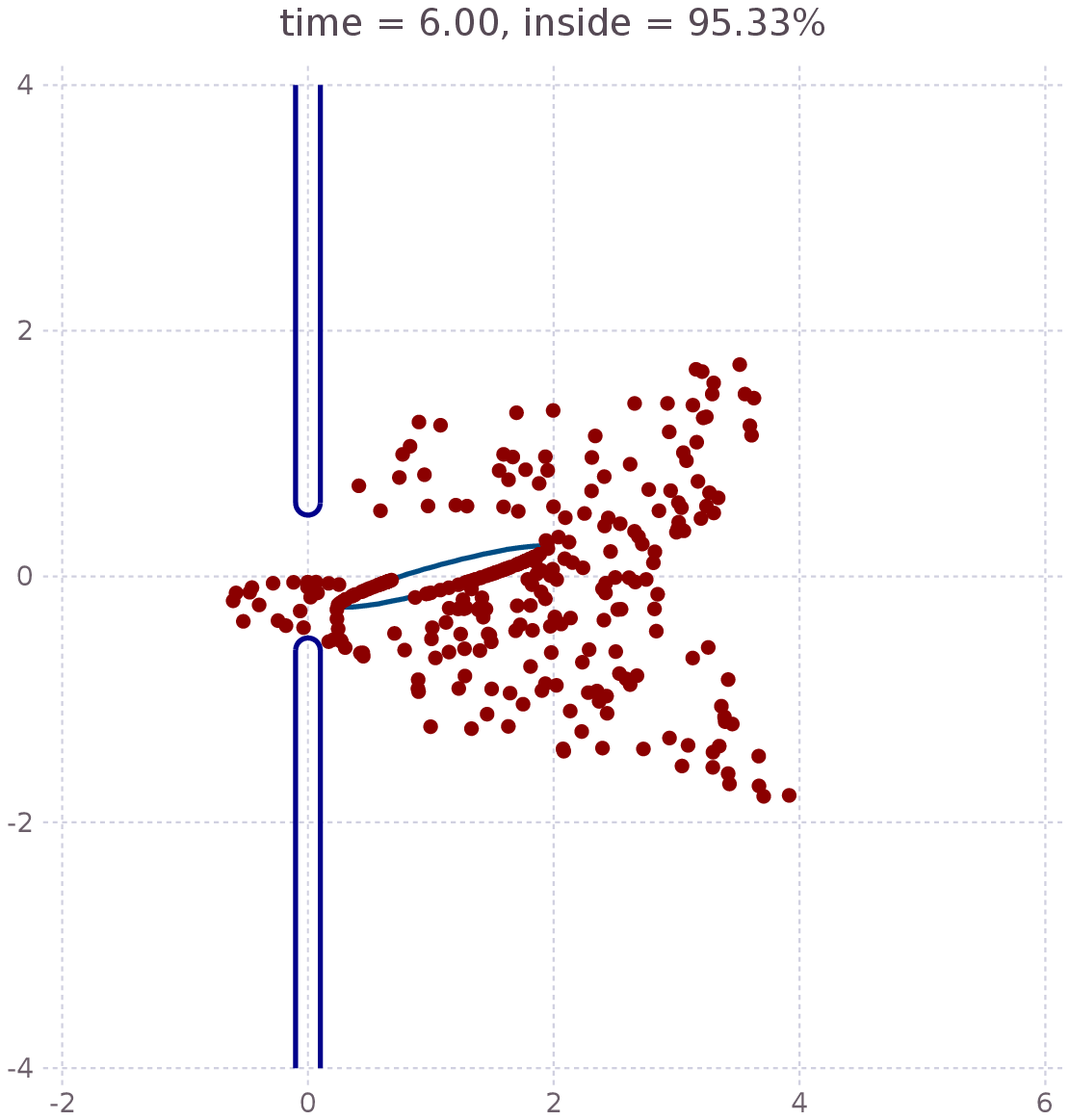}
  \includegraphics[width=0.19\textwidth]{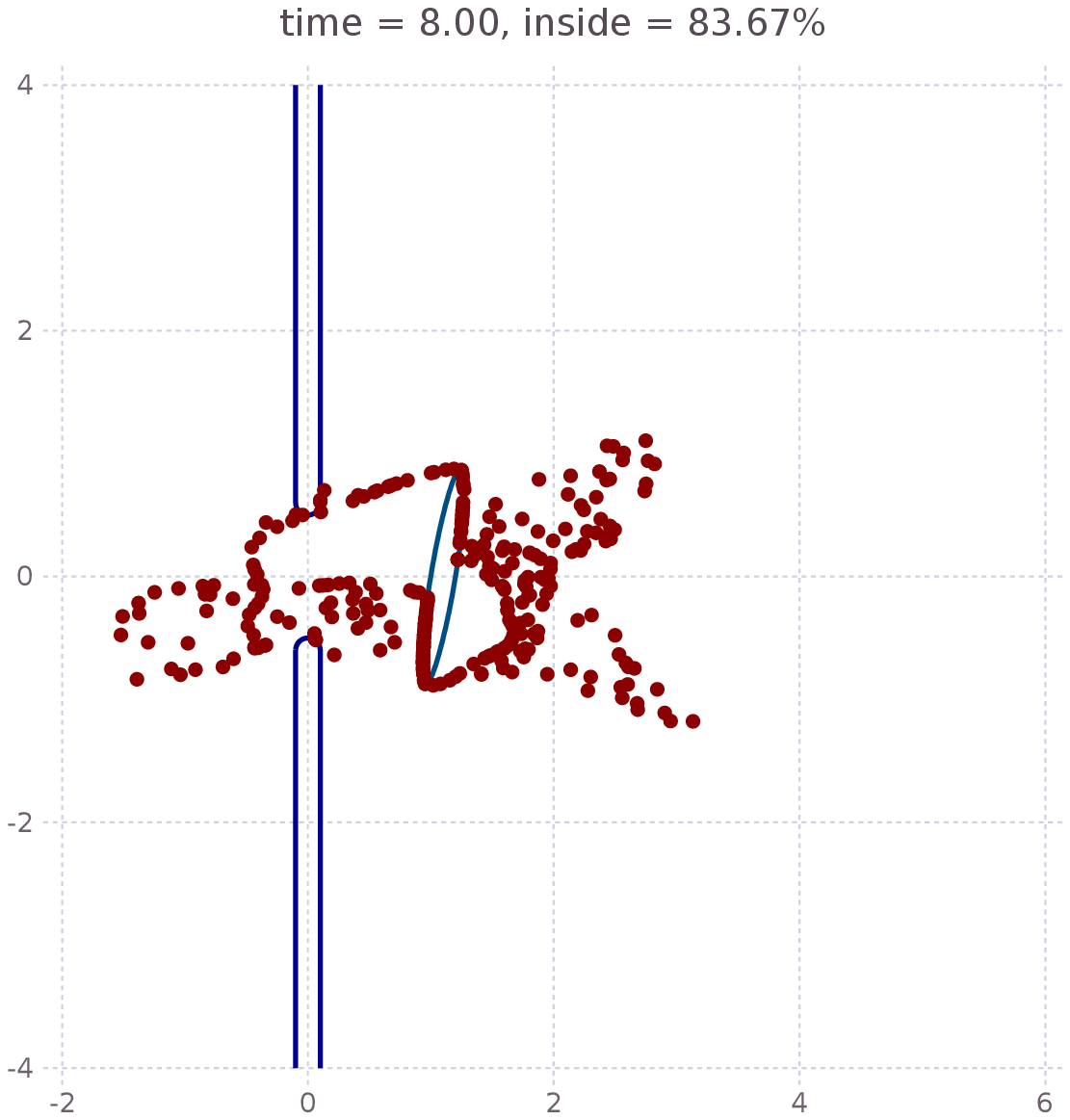}
  \includegraphics[width=0.19\textwidth]{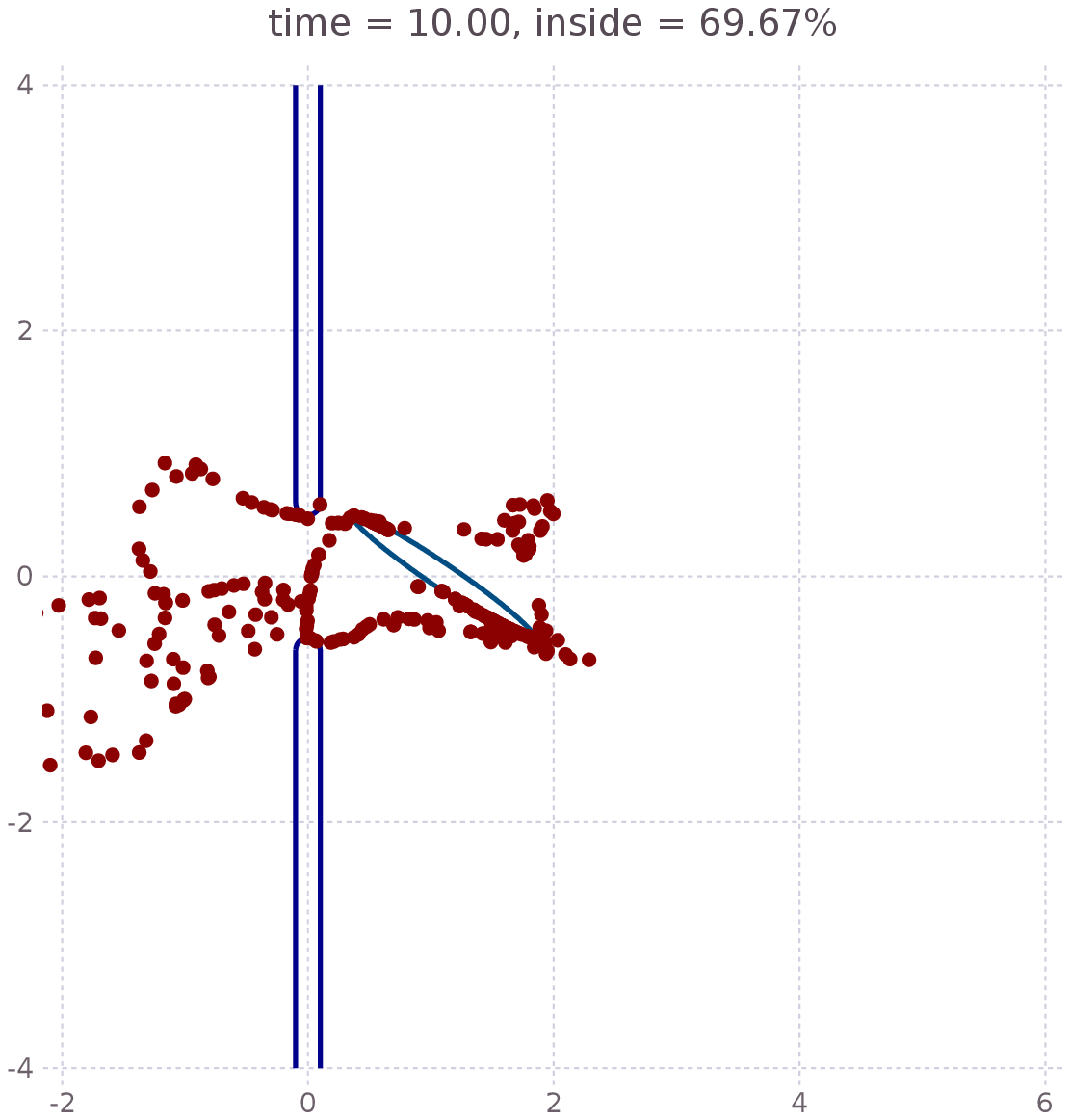}\\
  \includegraphics[width=0.19\textwidth]{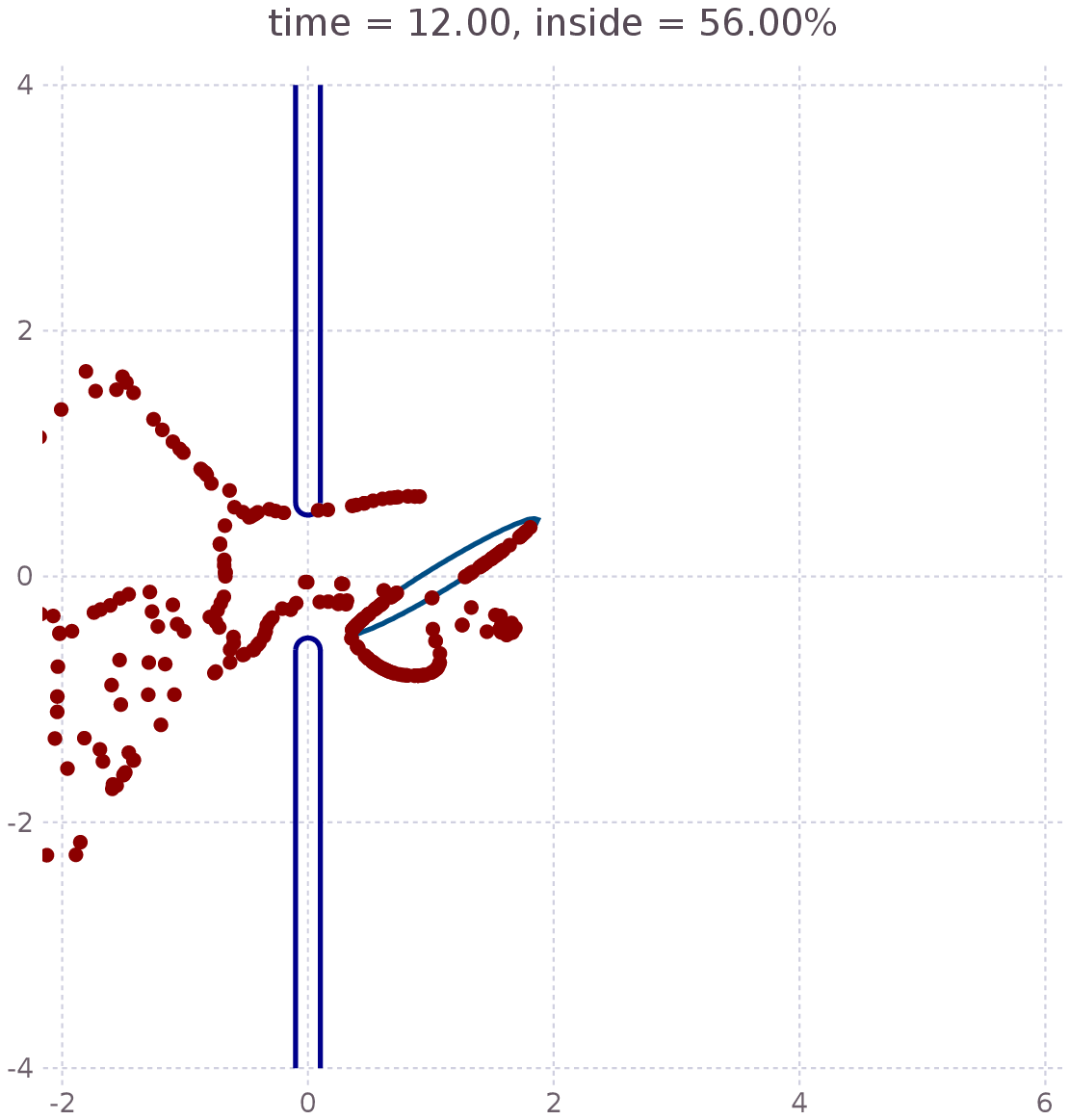}
  \includegraphics[width=0.19\textwidth]{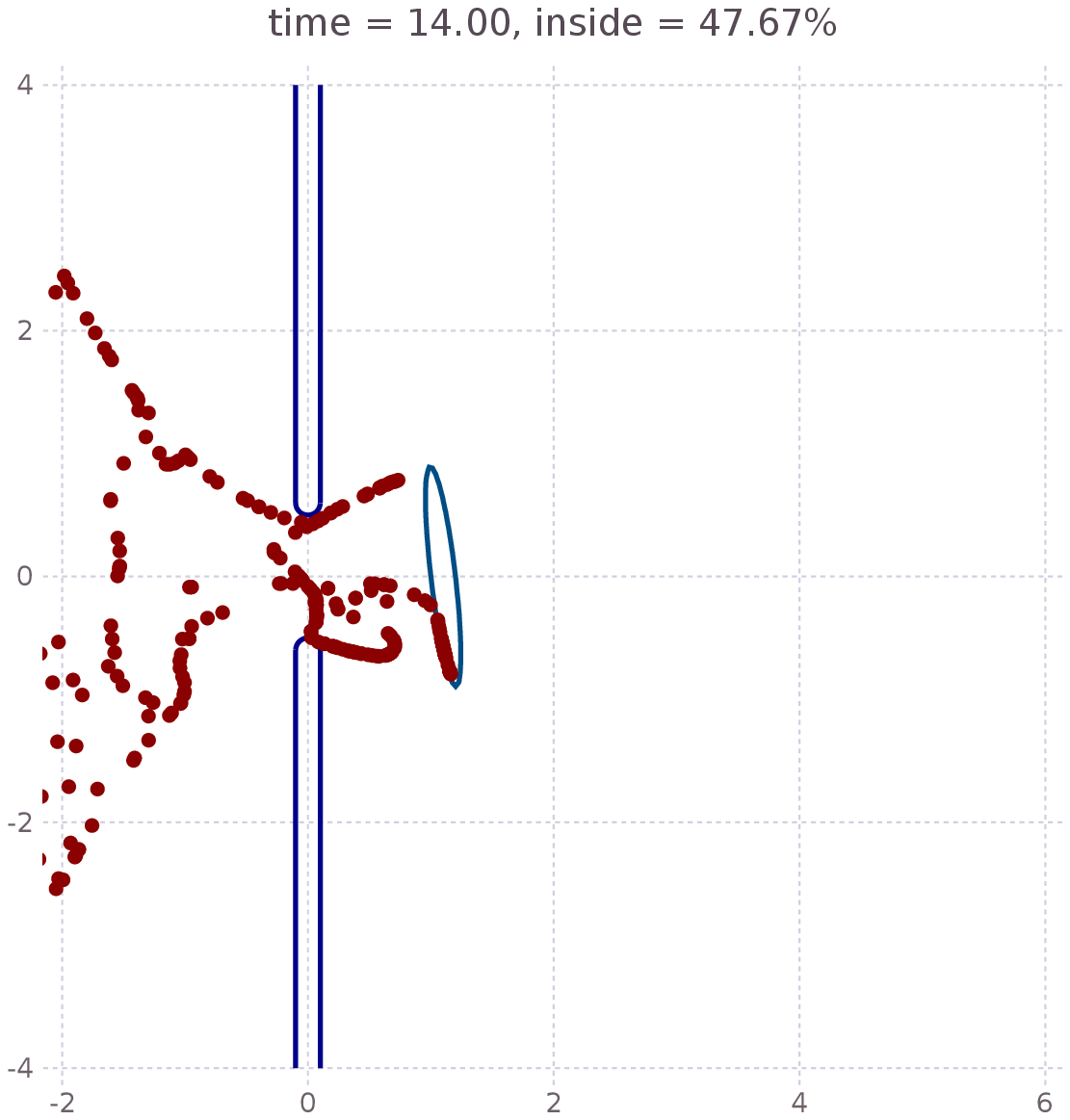}
  \includegraphics[width=0.19\textwidth]{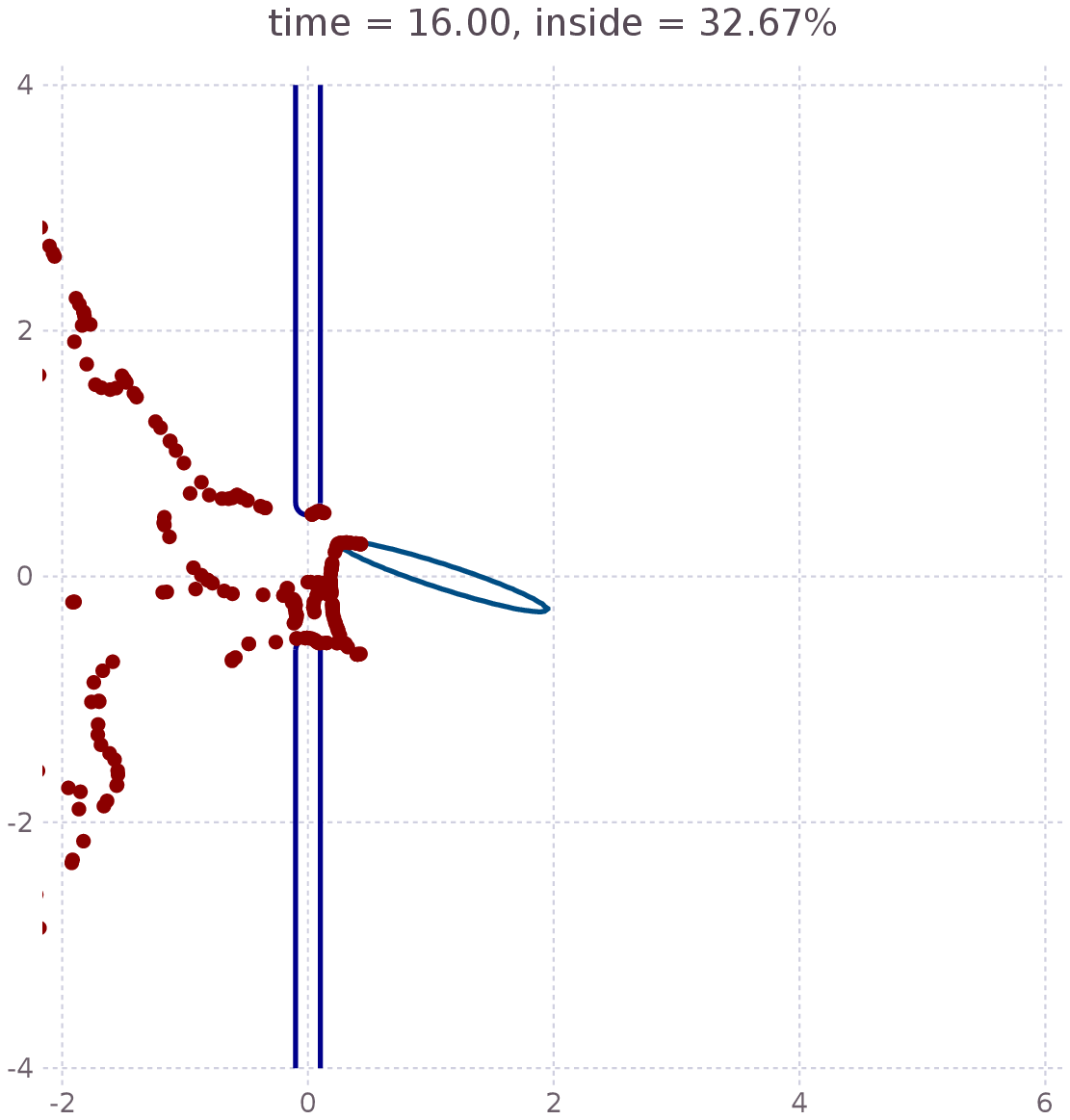}
  \includegraphics[width=0.19\textwidth]{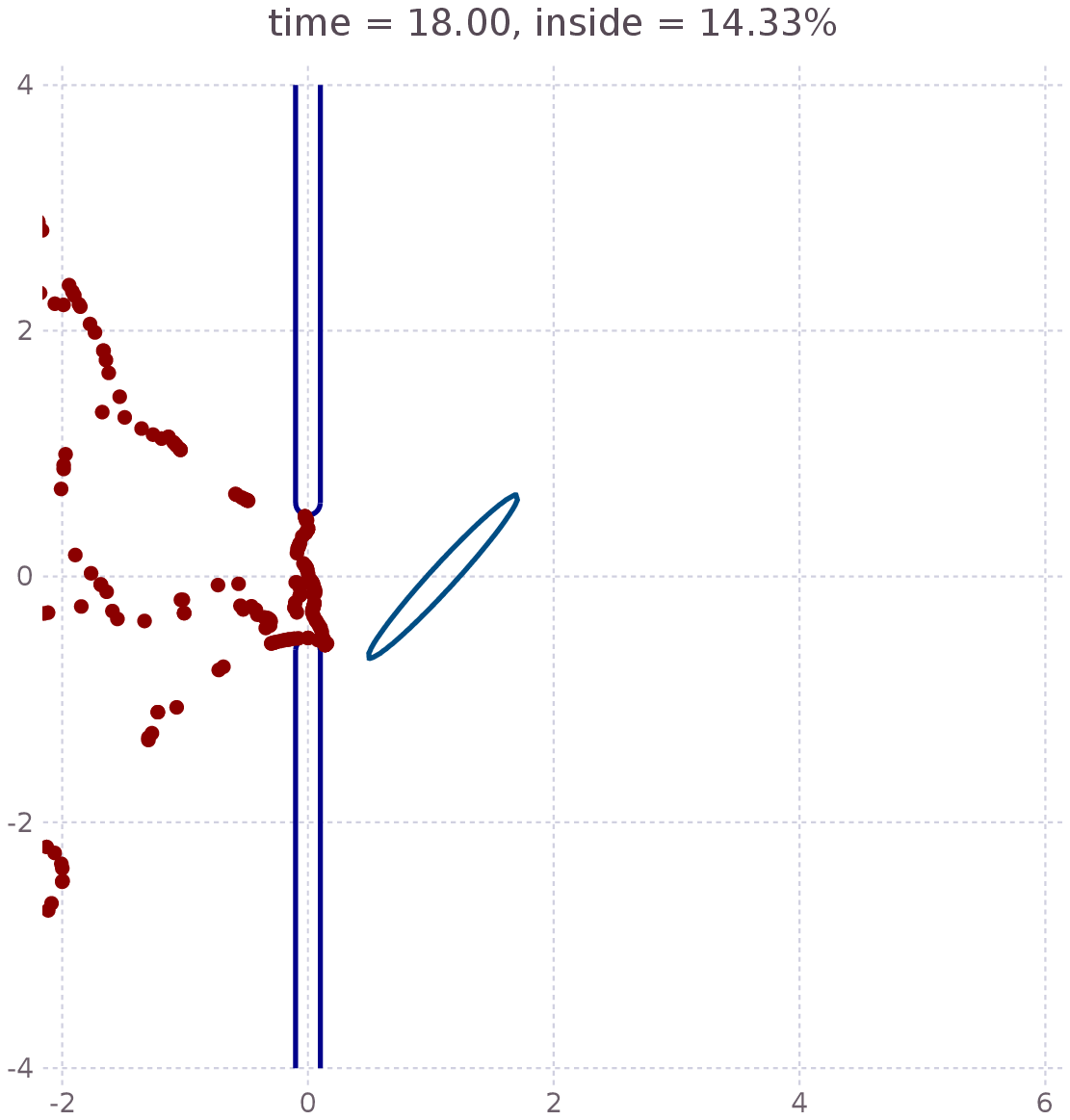}
  \includegraphics[width=0.19\textwidth]{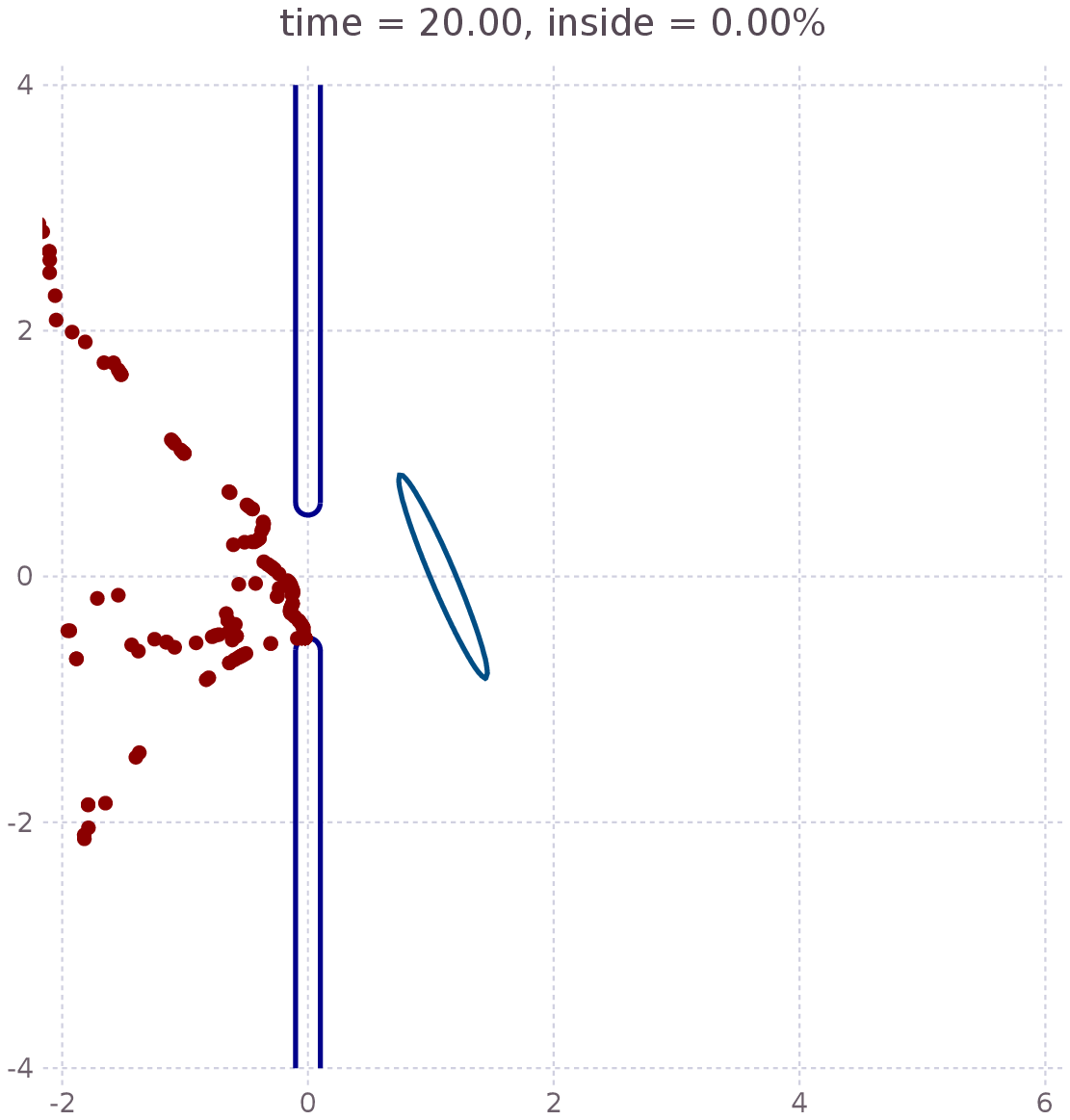}
  \caption{Moving obstacle: \( c = (1.1,0) \), \( a = (0.9,0.1) \), \( \omega = 1 \).}
\end{subfigure}
\caption{The congestion model: solutions at time moments \( t = 2,4,\ldots,20 \).}
\label{fig:colombo}
\end{figure}

\appendix

\section{The Benamou-Brenier functional}
\label{sec:A-BB}

For each couple $(\rho,E)$, where $\rho\in\mathcal{M}(X)$ is a
measure and $E\in\mathcal{M}(X;\mathbb{R}^d)$ is a vector measure, we
correspond the number
\begin{displaymath}
  \mathcal{B}_2(\rho,E) = \sup
  \left\{
    \int_X a(x)\d\rho(x) + \int_X b(x)\cdot \d E(x)\;\colon\;
    (a,b)\in C_b(X;K_2)
  \right\},
\end{displaymath}
where 
\begin{displaymath}
  K_2 = 
  \left\{
    (a,b)\in \mathbb{R}\times \mathbb{R}^d\colon a + \frac{1}{2}|b|^2\leq 0
  \right\}.
\end{displaymath}

\begin{proposition}[Proposition 5.18~\cite{Santambrogio2015}]
  \label{prop:BB}
  The map $\mathcal{B}_2$ is convex and lower semicontinuous on
  $\mathcal{M}(X)\times\mathcal{M}(X;\mathbb{R}^d)$. Moreover,
  \begin{enumerate}[(i)]
  \item $\mathcal{B}_2\geq 0$,
  \item $C_b(X;K_2)$ can be replaced with $L^\infty(X;K_2)$ in the
    definition of $\mathcal{B}_2$,
  \item if $\rho$ and $E$ are absolutely continuous with respect to
    a positive measure $\lambda$ then
    \begin{displaymath}
      \mathcal{B}_2(\rho,E) = \int f_2(\rho(x),E(x))\d\lambda(x),
    \end{displaymath}
      where
      \begin{displaymath}
        f_{2}(t,x) = \sup_{(a,b)\in K_{2}} \left(at +b\cdot x\right) =
        \begin{cases}
          \frac{1}{2t}|x|^{2} & \text{if } t=0,\\
          0 & \text{if } t=0,\; x=0,\\
          +\infty&\text{otherwise};
        \end{cases}
      \end{displaymath}
  \item $\mathcal{B}_2(\rho,E)<+\infty$ only if $\rho\geq 0$ and $E
    \ll \rho$,
  \item for $\rho\geq 0$ and $E \ll \rho$, we have $E = v\rho$ and
    $\mathcal{B}_2(\rho,E) = \frac{1}{2}\int|v|^2\d\rho$.
  \end{enumerate}
\end{proposition}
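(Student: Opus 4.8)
The plan is to read $\mathcal{B}_{2}$ as a supremum of affine functionals of the pair $(\rho,E)$ and then to \emph{localise}. For every fixed admissible pair $(a,b)\in C_{b}(X;K_{2})$ the map $(\rho,E)\mapsto\int_{X}a\,\d\rho+\int_{X}b\cdot\d E$ is linear and continuous with respect to narrow convergence; hence $\mathcal{B}_{2}$, being a pointwise supremum of such maps, is automatically convex and lower semicontinuous, with no further argument needed. Assertion (i) is immediate because $(0,0)\in K_{2}$, so the null pair is admissible and already contributes the value $0$ to the supremum.

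For (ii), since $C_{b}(X;K_{2})\subset L^{\infty}(X;K_{2})$, passing to the larger class can only increase the supremum, so I would prove the reverse inequality. Given any bounded Borel selection $(a,b)$ with values in the closed convex set $K_{2}$, I would approximate it by continuous pairs via Lusin's theorem, composing with the (1-Lipschitz) nearest-point projection onto $K_{2}$ to keep the approximants admissible, and then pass to the limit in $\int_{X}a\,\d\rho+\int_{X}b\cdot\d E$ by dominated convergence, using that $\rho$ and $|E|$ are finite. This forces the two suprema to coincide.

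Assertion (iii) is the core. Writing $\rho=\rho(x)\lambda$ and $E=E(x)\lambda$, the functional becomes $\int_{X}\bigl(a(x)\rho(x)+b(x)\cdot E(x)\bigr)\,\d\lambda$, and, using (ii) to admit $L^{\infty}$ selections, I would localise the constraint, replacing the global supremum over $(a,b)$ by the pointwise supremum of the integrand. This identifies the Radon--Nikodym density of $\mathcal{B}_{2}$ with $f_{2}(\rho(x),E(x))$, where $f_{2}(t,w)=\sup_{(a,b)\in K_{2}}(at+b\cdot w)$. The value of $f_{2}$ then follows from an elementary two-step optimisation: for fixed $b$ the optimal choice is $a=-\tfrac12|b|^{2}$ when $t\ge 0$ (while $a\to-\infty$ gives $+\infty$ when $t<0$), after which maximising $-\tfrac{t}{2}|b|^{2}+b\cdot w$ over $b$ yields $\tfrac{|w|^{2}}{2t}$ for $t>0$, the value $0$ when $t=0$ and $w=0$, and $+\infty$ otherwise.

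Finally, (iv) and (v) are corollaries of (iii) with the reference measure $\lambda=\rho+|E|$. Finiteness of $\mathcal{B}_{2}(\rho,E)=\int f_{2}(\rho(x),E(x))\,\d\lambda$ forces $f_{2}(\rho(x),E(x))<+\infty$ for $\lambda$-a.e.\ $x$, which by the form of $f_{2}$ means $\rho(x)\ge 0$ and $E(x)=0$ wherever $\rho(x)=0$; these are exactly $\rho\ge 0$ and $E\ll\rho$. Under those conditions I would take $\lambda=\rho$, so that $E=v\rho$ with $v=\d E/\d\rho$ and the integrand equals $f_{2}(1,v)=\tfrac12|v|^{2}$, giving $\mathcal{B}_{2}(\rho,E)=\tfrac12\int|v|^{2}\,\d\rho$. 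The only genuinely delicate point is the localisation in (iii)---the exchange of the supremum over admissible fields with the integral---which I would justify by the standard theory of integral functionals with convex normal integrands (e.g.\ via measurable selection), the density statement (ii) being the key enabling ingredient.
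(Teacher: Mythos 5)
Your proposal cannot be compared against a proof in the paper, because the paper does not prove this proposition at all: it is quoted as a black box from Santambrogio's book (Proposition 5.18 of the cited reference). What you have written is essentially a reconstruction of the standard textbook argument: convexity and lower semicontinuity as a supremum of narrowly continuous linear functionals, admissibility of $(0,0)\in K_2$ for (i), a Lusin-plus-projection density argument for (ii), localization of the supremum onto the support function $f_2$ of $K_2$ for (iii), and (iv)--(v) as corollaries of (iii) for a suitable reference measure. Your two-step computation of $f_2$ is correct, and it silently repairs a typo in the paper's statement: the first case in the displayed formula for $f_2$ should read $t>0$, not $t=0$.

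Two points need attention before your sketch is complete. First, in (iv) you take the reference measure $\lambda=\rho+|E|$; since nonnegativity of $\rho$ is part of the \emph{conclusion}, $\rho$ may a priori be a signed measure and this $\lambda$ need not be positive, so you should take $\lambda=|\rho|+|E|$ (the rest of the argument is unchanged). Second, the sup--integral exchange in (iii) is indeed the crux, and normal integrands plus measurable selection is the right tool, but one extra device is needed to remain within the class $L^\infty(X;K_2)$ that (ii) makes admissible: near-optimal selections $(a(x),b(x))$ need not be uniformly bounded (for instance, where the density of $\rho$ vanishes while that of $E$ does not, maximizing elements of $K_2$ escape to infinity). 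The standard remedy is to truncate $f_2$ at a level $M$, restrict to the increasing sets on which a selection of norm at most $n$ achieves the truncated value up to $\epsilon$, extend the selection by $(0,0)$ outside, and let $n\to\infty$, $M\to\infty$ by monotone convergence. With these two repairs, your outline is a faithful and correct version of the argument that the paper delegates to the literature.
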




\section{Continuity of the projection map}
\label{sec:Pcont}

\begin{lemma}
  \label{lem:prox}
  Let \( A_n\xrightarrow{d_{H}} A \) and \( x_n\to x \). If the projections \( P_A(x) \) and \( P_{A_n}(x_n) \) are unique then \( P_{A_n}(x_n) \to P_A(x) \).
\end{lemma}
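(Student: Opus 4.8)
The plan is to argue by subsequences. Write $p_n \doteq P_{A_n}(x_n)$ and $p \doteq P_A(x)$, recalling that $p_n$ and $p$ are single points by hypothesis. First I would establish that the sequence $\{p_n\}$ is bounded: since $A_n \xrightarrow{d_H} A$ and $A$ is compact, one has $A_n \subset A + \bm B$ for all large $n$, whence the points $p_n \in A_n$ lie in a fixed bounded set. Consequently every subsequence of $\{p_n\}$ admits a further subsequence converging to some $q \in \mathbb{R}^d$, and it suffices to show that every such subsequential limit equals $p$.

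The key ingredients are two standard Lipschitz properties of the distance function. On the one hand, the map $y \mapsto d_A(y)$ is $1$-Lipschitz, so $|d_A(x_n) - d_A(x)| \leq |x_n - x| \to 0$. On the other hand, for every $y$ one has $|d_{A_n}(y) - d_A(y)| \leq d_H(A_n, A)$, because the inclusions $A_n \subset A + d_H(A_n,A)\bm B$ and $A \subset A_n + d_H(A_n,A)\bm B$ translate directly into this estimate. Combining the two and using $d_H(A_n,A)\to 0$ yields $d_{A_n}(x_n) \to d_A(x)$.

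Now fix a subsequence with $p_{n_k} \to q$. Since $d_A(p_{n_k}) \leq d_H(A_{n_k}, A) \to 0$ and $d_A$ is continuous, passing to the limit gives $d_A(q) = 0$, so $q \in A$ because $A$ is closed. Moreover $|x_{n_k} - p_{n_k}| = d_{A_{n_k}}(x_{n_k}) \to d_A(x)$ by the previous step, and letting $k\to\infty$ in the left-hand side gives $|x - q| = d_A(x)$. Hence $q$ realizes the distance from $x$ to $A$, i.e. $q \in P_A(x)$. Since $P_A(x) = \{p\}$ is a singleton, $q = p$. As every convergent subsequence of the bounded sequence $\{p_n\}$ thus shares the limit $p$, the whole sequence converges to $p$, which is the assertion.

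I expect the only delicate points to be the bookkeeping that produces $d_{A_n}(x_n) \to d_A(x)$ from the two Lipschitz inequalities, and the verification that the subsequential limit $q$ genuinely lies in $A$; both rest on the Hausdorff convergence $d_H(A_n, A) \to 0$ and present no difficulty beyond these elementary estimates. Notably, the hypothesis that the projections are unique is used only at the very last step, to identify the accumulation point $q$ with $p$.
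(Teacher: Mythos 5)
Your proof is correct, and it takes a genuinely different route from the paper. The paper recasts the projections as minimizers of the functionals $F_n(y)=\chi_{A_n}(y)+|x_n-y|^2$ and $F(y)=\chi_A(y)+|x-y|^2$, invokes a result from Dal Maso's book to get $F=\Gamma\text{-}\lim F_n$, and then uses the standard $\Gamma$-convergence property that limits of minimizers (along the convergent subsequences extracted by compactness) are minimizers of the limit functional. You instead work directly with distance functions: the $1$-Lipschitz bound $|d_A(x_n)-d_A(x)|\leq|x_n-x|$ together with the identity-level estimate $\left|d_{A_n}(y)-d_A(y)\right|\leq d_H(A_n,A)$ gives $d_{A_{n_k}}(x_{n_k})\to d_A(x)$, from which any subsequential limit $q$ of $p_{n_k}=P_{A_{n_k}}(x_{n_k})$ satisfies $q\in A$ and $|x-q|=d_A(x)$, hence $q=P_A(x)$ by uniqueness. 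The skeleton (boundedness of $\{p_n\}$, extraction of subsequences, identification of the limit via uniqueness) is the same in both arguments; what differs is the identification step. Your version is more elementary and self-contained, avoiding the two external citations and making explicit exactly where each hypothesis enters --- in particular your closing remark that uniqueness is needed only for $P_A(x)$ is accurate, since for $P_{A_n}(x_n)$ any measurable selection would do. The paper's version is shorter on the page and places the lemma inside a variational framework that generalizes immediately to perturbations of other $\Gamma$-convergent functionals, at the cost of importing the $\Gamma$-convergence machinery for what is ultimately a two-line metric estimate.
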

\begin{proof}
  Consider the following functions
  \begin{displaymath}
    F_n(y) = \chi_{A_n}(y) + |x_n-y|^2,\quad F(y) = \chi_A(y) + |x-y|^2,
  \end{displaymath}
  where \( \chi_{A} \) denotes the indicator function of \( A \).
  From~\cite[Proposition 4.15]{DalMasoBook} it follows that \( F = \Gamma\text{-}\lim
  F_{n} \).
  By our assumptions, each \( P_{A_n}(x_n) \) is a unique minimizer of \( F_n \) and \( P_A(x) \) is
  a unique minimizer of \( F \). Since all \( P_{A_n}(x_n) \) belong to a
  compact set (because \( \{A_{n}\} \) is convergent),
  one may extract a converging subsequence. By one of the key properties of \( \Gamma
  \)-convergence~\cite[Corollary 7.17]{DalMasoBook}, its limit is a minimizer of \( F \), i.e.,
  \( P_A(x) \). This means that \( P_{A_n}(x_n) \to P_A(x)\).
\end{proof}

\section{Derivative of the squared Wasserstein distance}
\label{sec:Diff}

\begin{lemma}
  \label{lem:Wderivative}
  Let \(\mu_t\), \(\nu_t\) be two absolutely continuous curves in \( \mathcal{P}_{2}(\mathbb{R}^{d}) \) and  \(u_t\), \(v_t\) be their velocity vector fields. Then, for a.e. \(t\), one has
  \begin{displaymath}
    \frac{d}{dt} W_{2}^2(\mu_t,\nu_t) = 2\iint \left\langle u_t(x)-v_t(y), x-y\right\rangle\d \Pi_{\mu_t,\nu_t}(x,y),
  \end{displaymath}
  where \(\Pi_{\mu_t,\nu_t}\) is an optimal transport plan from \(\mu_t\) to \(\nu_t\).
\end{lemma}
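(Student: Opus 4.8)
The plan is to follow the scheme of Theorem~8.4.7 in \cite{Ambrosio2005}, reducing the computation to the behaviour of the two curves at a single time and then using optimal plans as competitors. First I would record that the scalar function \( g(t) \doteq W_2^2(\mu_t,\nu_t) \) is absolutely continuous on \( [0,T] \): by the triangle inequality \( |W_2(\mu_t,\nu_t) - W_2(\mu_s,\nu_s)| \le W_2(\mu_t,\mu_s) + W_2(\nu_t,\nu_s) \le \int_s^t (|\mu'|(\tau)+|\nu'|(\tau))\,d\tau \), and since \( t\mapsto W_2(\mu_t,\nu_t) \) is bounded on the compact interval, squaring preserves absolute continuity. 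Hence \( g'(t) \) exists for a.e. \( t \). It then suffices to identify \( g'(t_0) \) at every \( t_0 \) that is simultaneously a differentiability point of \( g \) and a point at which \( u_{t_0} \) and \( v_{t_0} \) describe the curves to first order, in the sense that \( W_2(\mu_{t_0+h},(\id+h\,u_{t_0})_\sharp\mu_{t_0}) = o(h) \) and \( W_2(\nu_{t_0+h},(\id+h\,v_{t_0})_\sharp\nu_{t_0}) = o(h) \) as \( h\to 0 \). This infinitesimal description holds for a.e. \( t_0 \) for the (minimal) velocity fields supplied by Theorem~\ref{thm:AC-curves}, and is the analytic heart of the argument.

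Fix such a \( t_0 \) and an optimal plan \( \Pi \doteq \Pi_{\mu_{t_0},\nu_{t_0}} \). For \( h>0 \) I would use \( \big((\id+h\,u_{t_0})\times(\id+h\,v_{t_0})\big)_\sharp\Pi \) as a competitor coupling between \( (\id+h\,u_{t_0})_\sharp\mu_{t_0} \) and \( (\id+h\,v_{t_0})_\sharp\nu_{t_0} \). Combining the resulting bound with the two \( o(h) \) approximations above (and the boundedness of \( W_2(\mu_{t_0+h},\nu_{t_0+h}) \) near \( t_0 \), so that the cross terms are \( o(h) \)) gives
\[
 W_2^2(\mu_{t_0+h},\nu_{t_0+h}) \le g(t_0) + 2h\iint\langle u_{t_0}(x)-v_{t_0}(y),\,x-y\rangle\,d\Pi(x,y) + o(h),
\]
whence \( \limsup_{h\downarrow 0} \tfrac{g(t_0+h)-g(t_0)}{h} \le 2\iint\langle u_{t_0}(x)-v_{t_0}(y),x-y\rangle\,d\Pi \). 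Running the identical construction with step \( -h \) yields the matching bound \( \liminf_{h\downarrow 0} \tfrac{g(t_0)-g(t_0-h)}{h} \ge 2\iint\langle u_{t_0}(x)-v_{t_0}(y),x-y\rangle\,d\Pi \). Since \( g \) is differentiable at \( t_0 \), the forward and backward difference quotients share the common value \( g'(t_0) \), and the two inequalities squeeze it to
\[
 g'(t_0) = 2\iint\langle u_{t_0}(x)-v_{t_0}(y),\,x-y\rangle\,d\Pi_{\mu_{t_0},\nu_{t_0}}(x,y),
\]
as claimed.

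One point still to settle is that the statement refers to \emph{a} velocity field, not necessarily the minimal one used above. I would close this gap by noting that any two velocities of the same curve differ by a field \( a_t \) with \( \nabla\cdot(a_t\mu_t)=0 \), i.e. \( \int\langle a_t,\nabla\phi\rangle\,d\mu_t = 0 \) for all test \( \phi \); equivalently \( a_t \) is \( L^2(\mu_t) \)-orthogonal to the tangent space. Because the optimal plan is cyclically monotone, its barycentric projection \( x\mapsto \bar y(x) \) satisfies \( x-\bar y(x)\in \mathrm{Tan}_{\mu_t} \) (a limit in \( L^2(\mu_t) \) of gradients of Kantorovich potentials), so \( \iint\langle a_t(x),x-y\rangle\,d\Pi = \int\langle a_t(x),x-\bar y(x)\rangle\,d\mu_t = 0 \), and the same holds on the \( \nu \)-side. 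Thus the right-hand side is unchanged if the minimal velocities are replaced by the given \( u_t,v_t \), and the formula holds for the velocity fields in the statement.

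The main obstacle is precisely the first-order description of an absolutely continuous curve, \( W_2(\mu_{t+h},(\id+h\,u_t)_\sharp\mu_t)=o(h) \) for a.e. \( t \): it is here that the Riemannian structure of \( (\mathcal P_2(\mathbb{R}^d),W_2) \) enters, and it must be invoked for the \emph{minimal} velocity (for a generic velocity the approximation fails — consider a stationary measure transported by a \( \mu \)-divergence-free field). The secondary technical point is the orthogonality \( \iint\langle a_t(x),x-y\rangle\,d\Pi=0 \), which rests on identifying the optimal transport direction with an element of the tangent cone. Both are standard consequences of the theory in \cite{Ambrosio2005}, so the work lies in assembling them rather than in any new estimate.
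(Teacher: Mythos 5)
Your proof is correct and its core is the same as the paper's: the same three ``good time'' conditions (differentiability of \( t\mapsto W_2^2(\mu_t,\nu_t) \), plus the first-order approximations \( W_2\bigl(\mu_{t+h},(\id+hu_t)_\sharp\mu_t\bigr)=o(h) \) and its analogue for \( \nu \), guaranteed for a.e.\ \( t \) by Proposition 8.4.6 of \cite{Ambrosio2005}), the same competitor plan \( \bigl((\id+hu_t)\times(\id+hv_t)\bigr)_\sharp\Pi \), and the same two-sided squeeze using steps \( +h \) and \( -h \). Where you genuinely go beyond the paper is your final step: you note that Proposition 8.4.6 applies only to the minimal (tangent) velocity fields, whereas the lemma---and its actual use in Section~\ref{sec:cont}, where \( v^1_t,v^2_t \) are the fields produced by the normal-cone condition, not minimal ones---asserts the formula for \emph{arbitrary} velocity fields; you close this gap by showing the right-hand side does not depend on the choice of velocity, since two velocities of the same curve differ by a \( \mu_t \)-divergence-free field, which is \( L^2(\mu_t) \)-orthogonal to the barycentric projection of the optimal displacement, an element of \( \mathrm{Tan}_{\mu_t}\mathcal P_2(\mathbb{R}^d) \). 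The paper's proof skips this point entirely: it invokes Proposition 8.4.6 for the given \( u_t,v_t \) as if they were minimal, so strictly speaking it establishes the formula only for minimal velocities, while your version proves the statement as it is applied. One small caveat: your parenthetical claim that the \( o(h) \) approximation \emph{fails} for a generic non-tangent velocity is overstated---for a Lipschitz divergence-free field the Euler step is \( o(h) \)-close to the measure-preserving flow, so the approximation can still hold---but nothing in your argument relies on that remark.
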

\begin{proof}
  We shall prove the formula for all \(t\) satisfying the following three conditions:
  1) \(W_{2}^2(\mu_t,\nu_t)\) is differentiable,
  2) \(\lim_{h\to 0}\frac{W_{2}(\mu_{t+h},S^h_{t\sharp}\mu_t)}{h}=0\),
  3) \(\lim_{h\to 0}\frac{W_{2}(\nu_{t+h},P^h_{t\sharp}\mu_t)}{h}=0\).
  Here
  \begin{displaymath}
    S^h_t = \id + hu_t,\quad P^h_t = \id + hv_t.
  \end{displaymath}
  Proposition 8.4.6 from~\cite{Ambrosio2005} says that all such \(t\) compose a set of full measure.

  First, we show that
  \begin{displaymath}
    \frac{d}{ds}W^2_{2}(\mu_s,\nu_s)|_{s=t} = \lim_{h\to 0}
    \frac{W^2_{2}(S^h_{t\sharp}\mu_t, P^h_{t\sharp}\nu_t) - W_{2}^2(\mu_t,\nu_t)}{h}.
  \end{displaymath}
  Indeed,
  \begin{align*}
    W^2_{2}(\mu_{t+h},\nu_{t+h}) - W^2_{2}(\mu_t,\nu_t)
    &=
    W^2_{2}(S^h_{t\sharp}\mu_t,P^h_{t\sharp}\nu_t) - W^2_{2}(\mu_t,\nu_t)\\
    &+
    W^2_{2}(\mu_{t+h},\nu_{t+h}) - W^2_{2}(S^h_{t\sharp}\mu_t,P^h_{t\sharp}\nu_t),
  \end{align*}
  so if we show that
  \begin{displaymath}
   \lim_{h\to 0} \frac{
    W_{2}^2(\mu_{t+h},\nu_{t+h}) - W_{2}^2(S^h_{t\sharp}\mu_t,P^h_{t\sharp}\nu_t)}{h}=0,
  \end{displaymath}
  we are done. Let us note that
  \begin{displaymath}
    \left|W_{2}^2(\mu_{t+h},\nu_{t+h}) - W_{2}^2(S^h_{t\sharp}\mu_t,P^h_{t\sharp}\nu_t)\right|\leq
    C\left|W_{2}(\mu_{t+h},\nu_{t+h}) - W_{2}(S^h_{t\sharp}\mu_t,P^h_{t\sharp}\nu_t)\right|,
  \end{displaymath}
  for some \(C>0\). Now,
  \begin{align*}
    \left|W_{2}(\mu_{t+h},\nu_{t+h}) - W_{2}(S^h_{t\sharp}\mu_t,P^h_{t\sharp}\nu_t)\right|
    &\leq
    \left|W_{2}(\mu_{t+h},\nu_{t+h}) - W_{2}(S^h_{t\sharp}\mu_t,\nu_{t+h})\right|\\
    &+
    \left|W_{2}(S^h_{t\sharp}\mu_t,\nu_{t+h})- W_{2}(S^h_{t\sharp}\mu_t,P^h_{t\sharp}\nu_t)\right|\\
    &\leq W_{2}(\mu_{t+h},S^h_{t\sharp}\mu_t) + W_{2}(\nu_{t+h},P^h_{t\sharp}\nu_t).
  \end{align*}
  It remains to apply properties 2 and 3.

  Choose any optimal plan \(\Pi\) between \(\mu_t\) and \(\nu_t\).
  Note that the plan \((S^h_t\circ\pi^1,P^h_t\circ\pi^2)_\sharp\Pi\)
  transports \(S^h_{t\sharp}\mu_t\) to \(P^h_{t\sharp}\nu_t\). Hence
  \begin{align*}
    W_{2}^2 \left(S^h_{t\sharp}\mu_t,P^h_{t\sharp}\nu_t\right) &\leq
    \int \left|x + hu_t(x) - y - h v_t(y)\right|^2\d\Pi(x,y)\\
    &\leq W_{2}^2(\mu_t,\nu_t) + h^2\int|u_t(x)-v_t(y)|^2\d \Pi(x,y)\\
    &+ 2h\int\langle u_t(x)-v_t(y),x-y\rangle\d\Pi(x,y).
  \end{align*}
  Therefore, if \(h>0\), we get
  \begin{displaymath}
      \frac{d}{ds}W_{2}^2(\mu_s,\nu_s)|_{s=t} \leq 2 \int\langle u_t(x)-v_t(y),x-y\rangle\d\Pi(x,y).
  \end{displaymath}
  If \(h<0\), we get the opposite inequality.
\end{proof}

\section{No-flux property}

Here we prove a simple property of the perturbed sweeping process
\begin{equation}
  \label{eq:clsp}
  \dot y(t) \in v_{t}(y(t))- N_{\bm C(t)}(y(t)) \quad \text{for a.e. }t\in[0,T].
\end{equation}
that we failed to find in the literature. Below
\( \langle (s,x),(t,y)\rangle \) denotes the scalar product in \( \mathbb{R}^{d+1} \) and
\( x\cdot y \) the scalar product in \( \mathbb{R}^{d} \).

\begin{proposition}
  \label{prop:minnorm}
  Let \( (t,x)\mapsto v_{t}(x) \) be measurable in \( t \), \( L \)-Lipschitz in
  \( x \) and \( L \)-bounded, \( \bm C\colon [0,T]\rightrightarrows \mathbb{R}^{d} \) satisfy
  \( (\mathbf{A_{2}}) \) and have \( r' \)-prox-regular graph,
  \( r'>0 \). Let \( y \) be a solution of~\eqref{eq:clsp}. If
  \( t_{0}\in (0,T) \) is so that \( \dot y(t_{0}) \) exists then
  \( (1,\dot y(t_{0})) \) is tangent to \( \graph \bm C \) at
      \( (t_{0},y(t_{0})) \) in the sense that
  \begin{displaymath}
    \xi + \dot y(t_{0})\cdot \eta = 0\quad \forall (\xi,\eta)\in N_{\graph\bm C}(t_{0},y(t_{0})).
  \end{displaymath}
\end{proposition}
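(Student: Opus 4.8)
The plan is to use the one fact that a solution of a (perturbed) sweeping process cannot leave the moving set: the whole graph curve $t\mapsto(t,y(t))$ stays inside $\graph\bm C$, and then to let the $r'$-prox-regularity of $\graph\bm C$ force the tangent $(1,\dot y(t_0))$ to be orthogonal to every proximal normal. No part of the argument is really delicate; the only thing to watch is that the quadratic error supplied by prox-regularity vanishes to first order, which is exactly what a two-sided difference quotient can absorb.

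First I would record that any solution remains on the graph. The inclusion in \eqref{eq:clsp} can hold only where the right-hand side is nonempty, i.e.\ where the proximal normal cone $N_{\bm C(t)}(y(t))$ is defined, so $y(t)\in\bm C(t)$ for a.e.\ $t$; since $y$ is continuous and $\graph\bm C$ is closed (the values of $\bm C$ are compact and $\bm C$ is $M$-Lipschitz in $d_H$), in fact $(t,y(t))\in\graph\bm C$ for every $t\in[0,T]$. As $t_0\in(0,T)$, the point $(t_0+h,y(t_0+h))$ belongs to $\graph\bm C$ for all sufficiently small $h$ of either sign.

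Next, fix $(\xi,\eta)\in N_{\graph\bm C}(t_0,y(t_0))$; if $(\xi,\eta)=0$ the assertion is trivial, so assume it is nonzero. Applying Proposition~\ref{prop:proxreg}(b) to the $r'$-prox-regular set $\graph\bm C\subset\mathbb{R}^{d+1}$ yields the realization-by-a-ball inequality, which after substituting $(s,z)=(t_0+h,y(t_0+h))\in\graph\bm C$ reads
\[
  \xi h+\langle\eta,\,y(t_0+h)-y(t_0)\rangle
  \leq\frac{|(\xi,\eta)|}{2r'}\Big(h^2+|y(t_0+h)-y(t_0)|^2\Big).
\]
Since $\dot y(t_0)$ exists we have $y(t_0+h)-y(t_0)=h\,\dot y(t_0)+o(h)$, so $|y(t_0+h)-y(t_0)|^2/h\to0$ and the entire right-hand side is $O(h)$.

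The final step is to divide by $h$ and pass to the limit. Dividing by $h>0$ and letting $h\downarrow 0$ gives $\xi+\dot y(t_0)\cdot\eta\leq 0$; dividing by $h<0$ reverses the inequality, and letting $h\uparrow 0$ gives $\xi+\dot y(t_0)\cdot\eta\geq 0$. Combining the two yields the claimed identity $\xi+\dot y(t_0)\cdot\eta=0$. I would remark that the hypotheses on $v$ and on the slices $\bm C(t)$ are not used directly in this computation; they serve only to guarantee that solutions of \eqref{eq:clsp} exist and are (Lipschitz) continuous, so that the statement is non-vacuous and $y(t)\in\bm C(t)$ propagates from a.e.\ $t$ to all $t$.
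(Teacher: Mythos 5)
Your proof is correct and takes essentially the same route as the paper: both rest on the realization-by-an-\(r'\)-ball inequality of Proposition~\ref{prop:proxreg}(b) evaluated along the curve \(t\mapsto(t,y(t))\in\graph\bm C\), and your two-sided difference-quotient limit is exactly what the paper compresses into ``Fermat's rule'' at the interior minimum \(t_{0}\) of the nonnegative function in~\eqref{eq:proxmin}. Only a wording slip: the right-hand side is \(o(h)\), not merely \(O(h)\) (a bound that were genuinely of order \(h\) would not vanish after dividing by \(h\)); your own estimate \(|y(t_{0}+h)-y(t_{0})|^{2}/h\to 0\) is precisely the \(o(h)\) statement your final limits use, so nothing is broken.
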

\begin{proof}
  Pick some \( (\xi,\eta) \in N_{\graph \bm C}(t_{0},y(t_{0})) \). By
  Proposition~\ref{prop:proxreg}(b), we have
  \begin{equation}
    \label{eq:proxmin}
    \frac{|(\xi,\eta)|}{2r'}\left|(t,y(t))-(t_{0},y(t_{0}))\right|^{2} -\left\langle (\xi,\eta),(t,y(t)) - (t_{0},y(t_{0})) \right\rangle \geq 0,
  \end{equation}
  for all \( t \) sufficiently close to \( t_{0} \). Since for \( t=t_{0} \) the
  function on the left-hand side of~\eqref{eq:proxmin} becomes \( 0 \), we
  conclude that \( t_{0} \) is its extremal point. By Fermat's rule,
  \( \xi + \dot y(t_{0})\cdot \eta = 0 \).
\end{proof}

\small{

  \bibliography{references}

  \bibliographystyle{abbrv}

}

\end{document}